\newtheorem{proposition}{Proposition}[section]
\newtheorem{theorem}[proposition]{Theorem}
\newtheorem{corollary}[proposition]{Corollary}
\newtheorem{lemma}[proposition]{Lemma}
\newtheorem{remark}[proposition]{Remark}
\newtheorem{example}[proposition]{Example}
\newcommand{\nc}{\newcommand}
\nc{\I}{{\mathbf 1}}
\nc{\bN}{{\mathbf N}}
\nc{\bG}{{\mathbf G}}
\nc{\bM}{{\mathbf M}}
\nc{\cB}{{\mathcal B}}
\nc{\cM}{{\mathcal M}}
\nc{\R}{{\mathbb R}}
\nc{\N}{{\mathbb N}}
\nc{\Z}{{\mathbb Z}}
\nc{\BX}{{\mathbb X}}
\nc{\BY}{{\mathbb Y}}
\nc{\BM}{{\mathbb M}}
\nc{\cX}{{\mathcal X}}
\nc{\cY}{{\mathcal Y}}
\nc{\cN}{{\mathcal N}}
\nc{\cF}{{\mathcal F}}
\DeclareMathOperator{\supp}{supp}
\nc{\BP}{\mathbb{P}}
\nc{\BE}{\mathbb{E}}
\nc{\BQ}{\mathbb{Q}}
\nc{\BU}{\mathbb{U}}
\numberwithin{equation}{section}
\begin{document} 

\renewcommand{\thefootnote}{\fnsymbol{footnote}}
\author{{\sc {\sc Mikhail Chebunin\footnotemark[1]} \hspace{0.1mm} and G\"unter Last\footnotemark[2]} \\ {\small Karlsruhe Institute of Technology, Institute of Stochastics, 76131 Karlsruhe, Germany.}}
\footnotetext[1]{chebuninmikhail@gmail.com
}
\footnotetext[2]{guenter.last@kit.edu
}

\title{On the uniqueness of the infinite cluster and the cluster density\\ in the Poisson driven random connection model} 
\date{\today}
\maketitle

\begin{abstract} 
\noindent 
We consider a random connection model  (RCM) on a general space
driven by a Poisson process whose intensity measure is scaled by a parameter $t\ge 0$.
We say that the infinite clusters are deletion stable if the removal of a Poisson
point cannot split a cluster in two or more infinite clusters.
We prove that this stability together with a natural irreducibility assumption implies uniqueness of the infinite
cluster.  Conversely, if the infinite cluster is unique then this stability property holds.
Several criteria for irreducibility will be established.
We also study the analytic properties of expectations of functions of clusters
as a function of $t$. In particular we show that the position dependent cluster density
is differentiable.  A significant part of this paper is devoted to 
the important case of a stationary marked RCM (in Euclidean space), 
containing the Boolean model with general compact grains and
the so-called weighted RCM as special cases.
In this case we establish differentiability
and a convexity property of the cluster density $\kappa(t)$. These properties
are crucial for our proof of deletion stability
of the infinite clusters but are also of interest in their own right.
It then follows that an irreducible stationary marked RCM can have at most one infinite cluster. 
This extends and unifies several results in the literature.
 \end{abstract}

\noindent
{\bf Keywords:} Random connection model; Poisson process; percolation;
Margulis--Russo formula; cluster density; uniqueness of infinite cluster.

\vspace{0.1cm}
\noindent
{\bf AMS MSC 2020:} 60K35, 60G55, 60D05.

\section{Introduction}\label{intro}

Let $\BX$ be a complete separable metric space, 
denote its Borel-$\sigma$-field by $\cX$, and let $\lambda$ be
a locally finite and diffuse measure on $\BX$.
Let $t\in\R_+:=[0,\infty)$ be an intensity parameter
and let $\eta$ be a Poisson process on $\BX$ with intensity
measure $t\lambda$, defined over a probability space 
$(\Omega,\cF,\BP)$. We often write $\BP_t$ instead
of $\BP$ and $\BE_t$ for the associated expectation operator.

Let $\varphi\colon\BX^2\to[0,1]$ be a measurable
and symmetric function satisfying
\begin{align}\label{evarphiintlocal}
D_\varphi(x):=
\int \varphi(x,y)\,\lambda(dy)<\infty,\quad \lambda\text{-a.e.} \ x\in\BX.
\end{align}
We refer to $\varphi$ as {\em connection function}.
The {\em random connection model} (RCM)
is the random graph $\xi$ whose vertices are the points of $\eta$
and where a pair of distinct points $x,y\in\eta$ 
forms an edge with probability $\varphi(x,y)$, independently
for different pairs.
In an Euclidean setting the 
RCM was introduced in \cite{Pen91}; see \cite{MeesterRoy}
for a textbook treatment. It can be defined on point processes other than Poisson.
The general Poisson version was studied in \cite{LastNestSchul21}.
The RCM is a fundamental and versatile example of a spatial random graph. 
Of particular interest is the {\em stationary marked} case.
In this case we have $\BX=\R^d\times\BM$ for some
mark space $\BM$ and $\lambda$ is proportional to the
product of Lebesgue measure and a given mark distribution.
Then the RCM becomes stationary and ergodic under
shifts in the spatial coordinate.
This model contains the Boolean model (see \cite{LastPenrose18, SW08}) with general compact grains and
the so-called weighted RCM as special cases
and keeps attracting a lot of attention; see e.g.\ 
\cite{BetschLast21, CaicedoDickson24, DicksonHeydenreich22, GHMM22, HHLM23, LastZiesche17, Pen16}.

Following common terminology of percolation theory we refer to a component
of $\xi$ as {\em cluster}. The RCM $\xi$ {\em percolates}, if it has an infinite cluster,
that is a component with infinitely many vertices.
We say that the infinite clusters of $\xi$ are {\em deletion stable} if
the removal of a point cannot split a cluster in two or more infinite clusters.
If the infinite cluster is unique, then it is easy to prove that $\xi$ is 
deletion stable.
In fact, $\xi$ is then almost surely even {\em $2$-indivisible} in the sense of \cite{NashWilliams};
see Corollary \ref{c:necess}
Our first main result (Theorem \ref{t:unique})
says that deletion stability together with {\em irreducibility} 
implies (almost sure) uniqueness of the infinite cluster.
We prove this by a peculiar addition and removal procedure, which seems
to be new. Our method crucially relies on the properties of the underlying Poisson process.
Irreducibility is a very natural assumption for uniqueness (see Remark \ref{r:nonunique}) and
will be discussed in Section \ref{sec:irred}.
Theorem \ref{tmain1} shows that the infinite clusters of the stationary marked RCM are deletion stable. 
This is the second main result of this paper. Our proof transfers some of the beautiful ideas from the seminal paper \cite{Aizenman87} 
by Aizenman, Kesten and Newman to the continuum.  
To this end we  significantly extend and complement the arguments in \cite{Jiang11}, where the methods from \cite{Aizenman87}
were used to treat the Gilbert graph with deterministic balls.
Taken together,  Theorems \ref{t:unique}  and \ref{tmain1} yield uniqueness of the infinite cluster of an irreducible stationary marked RCM;
see Theorem \ref{t:markedunique}. This extends and unifies several results in the literature.
The stationary (unmarked) RCM was treated in \cite{MeesterRoy} for an isotropic and norm-decreasing connection function;
see also \cite{Alexander95b}. 
A special case of the marked RCM was treated in \cite{JaMoe17}. The uniqueness of the infinite cluster of the spherical Boolean model
was proved in \cite{MeesterRoy94,MeesterRoy}.
As a consequence of 
we also obtain that an irreducible stationary marked RCM is $2$-indivisible.

We also establish several analytic properties of
cluster expectations, first in the general and then in the stationary marked case.  
Since clusters are not locally determined, the proof of these results requires some efforts. 
In particular we show that the {\em position dependent cluster density}  (given by \eqref{FE}) 
is, as a function of $t$,  continuously differentiable; see Theorem \ref{t:F_deriv}.
In the stationary marked case this is true
for the {\em cluster density} $\kappa(t)$, defined by \eqref{e:markedfree}; see Theorem \ref{tdiffenergy}. Our 
proofs partially follow  \cite{BeGrimLoff98}, where 
the Boolean model with deterministic balls was considered.
We also prove that $t\kappa(t)+d_\varphi t^2/2$ is a convex function of $t$,
where $d_\varphi$ is the expected degree of a typical vertex, given by \eqref{e:intmarked}.
This remarkable property was established in \cite{Aizenman87,DurrettNguyen85} for discrete percolation models 
and in \cite{Jiang11} for the Boolean model with deterministic balls. 
This convexity is crucial for proving deletion stability of the infinite clusters in the stationary marked case
and its proof heavily depends on the (amenability) properties of Euclidean space; see Remark \ref{r:amenability}.

With the exception of \cite{Jiang11}, all previous uniqueness proofs in continuum
percolation seem to use the approach in \cite{BurtonKeane89}; see also 
\cite{GandKeaneNewman92}. 
It is often argued that this approach is more elegant than
the one in \cite{Aizenman87}. However, our paper shows that the methods 
from \cite{Aizenman87} can be conveniently extended to the continuum,
at least in the case of a Poisson driven RCM. Moreover, this approach provides
a lot of additional information on the clusters, which are valid
for all values of the intensity parameter $t$. And last but not least
our general uniqueness theorem applies to a general state space $\BX$,
without {\em any} structural assumptions.

The paper is organized as follows. In Section \ref{s:definitions} we
give the formal definition of the RCM $\xi$, while Section
\ref{s:basics} presents the RCM version of the multivariate Mecke
equation and the Margulis--Russo formula.  In Section
\ref{s:stationaryRCM} we discuss the stationary marked RCM, an
important special case of the general RCM.  In Section \ref{sec:irred}
we define a RCM to be irreducible if, roughly speaking, every pair of
Poisson points has a positive probability of being in the same
cluster. Without such a property one cannot expect the infinite
cluster (if it exists) to be unique. For a stationary marked RCM 
Theorem \ref{l:77} characterises irreducibility in terms of the symmetric function
$\int \varphi((0,p),(x,q))\,dx$,  
which is (up to the factor $t$) the density of the expected number of neighbours of 
$(0, p)$ with respect to the mark distribution. 
Theorems \ref{t:irredgeneral} and \ref{t:irredPOP} provide sufficient
conditions for irreducibility under more specific assumptions. 
In Section \ref{sec:toleranceuniqueness} we prove that deletion stability of
infinite clusters and irreducibility together imply uniqueness of the
infinite cluster; see Theorem \ref{t:unique}.  Section \ref{secMarkov} presents a spatial Markov
property.  In Section \ref{secperturb} 
we establish differentiability of certain cluster expectations, while
Section \ref{secdiffop} rewrites the derivatives as
Margulis--Russo type formulas. In
Section \ref{s:diffenergy} we show that the position dependent cluster density 
is continuously differentiable. In Section \ref{s:tolRCM} we
prove that the infinite clusters of the stationary marked RCM are
deletion stable; see Theorem \ref{tmain1}. The final Section
\ref{s:markedirrunique} provides several examples of irreducible stationary marked RCMs.

\bigskip

For the reader's convenience, we list below our main results separately for the general and the stationary marked cases.

\bigskip
\noindent
{\bf Main results for the general RCM:}

\begin{itemize}

\item Theorem \ref{t:unique} shows that an irreducible RCM 
with deletion stable infinite clusters can have at most one infinite cluster,
while showing that deletion stability is necessary for uniqueness.

\item Theorem \ref{t:F_deriv} shows continuous differentiability of certain cluster expectations, 
while Theorem \ref{t:MR_2} and Remark \ref{r:MR} rewrite the derivative as a Margulis-Russo type formula.

\item Theorem \ref{tdiffenergy} shows continuous differentiability of
  the position dependent cluster density, while Theorem
  \ref{tdiffenergymarked} shows that this remains true after some additional integration.

\end{itemize}

\noindent
{\bf Main results for the stationary marked RCM:}

\begin{itemize}

\item 
Theorem \ref{l:77} characterises irreducibility while 
Theorems  \ref{t:irredgeneral} and \ref{t:irredPOP} provide sufficient conditions under more specific assumptions.

\item Theorem \ref{tmain1} shows that the infinite clusters of a stationary marked RCM are deletion stable.

\item Theorem \ref{t:convex} 
shows that the cluster density $\kappa(t)$ is continuously differentiable and that
$t\kappa(t)+d_\varphi t^2/2$ is convex.

\end{itemize}

\section{Formal definition of the RCM}\label{s:definitions}

It is convenient to model a RCM as a suitable point process.
Let $\bN$ denote the space of all simple locally finite counting measures on $\BX$, equipped with the standard $\sigma$-field, see e.g.\ \cite{LastPenrose18}.
A measure $\nu\in\bN$ is identified with its support $\{x\in\BX:\nu(\{x\})=1\}$ and describes the set of vertices of a (deterministic) graph.
If $\nu(\{x\})=1$ we write $x\in\nu$.
Using the Dirac measure $\delta_x$ at point $x\in\BX$,
any $\nu\in\bN$ can be written as a finite or infinite sum
$\nu=\delta_{x_1}+\delta_{x_2}+\cdots$, where the $x_i$ are pairwise distinct
and do not accumulate in bounded sets.
The space of (undirected) graphs with vertices from $\BX$ (and no loops) is
described by the set $\bG$ of all counting measures $\mu$ on $\BX\times\bN$
with the following properties. First we assume that the measure $V(\mu):=\mu(\cdot\times\bN)$ 
is locally finite and simple, that is, an element of $\bN$.
  Hence,  if $x\in V(\mu)$ (that is $\mu(\{x\}\times\bN)=1$), then
there is a unique $\psi_x\in\bN$ such that $(x,\psi_x)\in\mu$. We assume that
$x\notin \psi_x$. Finally, if $x\in V(\mu)$ and $y\in\psi_x$ then we assume that
$(y,\psi_y)\in \mu$ and $x\in\psi_y$. Also $\bG$ is
equipped with the standard $\sigma$-field.
There is an edge between $x,y\in V(\mu)$ if $y\in\psi_x$ (and hence $x\in\psi_y$). If $\psi_x=0$, then $x$ is 
{\em isolated}.

We write $|\mu|:=\mu(\BX\times\bN)$ for the cardinality of $\mu\in\bG$
and similarly for $\nu\in\bN$. Hence $|\mu|=|V(\mu)|$. 
For $x,y\in V(\mu)$ we write $x\sim y$ (in $\mu$) if there is an edge between $x$ and $y$
and  $x \leftrightarrow y$ (in $\mu$) if there is a path in $\mu$ leading from $x$ to $y$.
For $A\subset\BX$ we write $x\sim A$ (in $\mu$) if there exists $y\in A\cap V(\mu)$
such that $x\sim y$.

Let $\mu,\mu'\in \bG$. Then $\mu$ is a {\em subgraph} of $\mu'$ if
$V(\mu)\le V(\mu')$ (as measures) and if $(x,\psi)\in \mu$ and
$(x,\psi')\in\mu'$ together imply that  $\psi\le \psi'$. Note that this is not the same
as $\mu\le\mu'$.

Let $\chi$ be a simple point process on $\BX$, that is a random element of
$\bN$. The reader should think of
a Poisson process possibly augmented by additional (deterministic) points.
By \cite[Proposition 6.2]{LastPenrose18} there exist random elements $X_1,X_2,\ldots $ of
$\BX$ such that
\begin{align}\label{e:points}
\chi=\sum^{|\chi|}_{n=1}\delta_{X_n},
\end{align}
where $X_m\ne X_n$ whenever $m\ne n$ and
$m,n\le  |\chi|$.
Let $(Z_{m,n})_{m,n\in\N}$ be a double sequence of random elements 
uniformly distributed on $[0,1]$ such that $Z_{m,n}=Z_{n,m}$ for all
$m,n\in\N$ and such that the $Z_{m,n}$, $m<n$, are independent.
Then the RCM (based on $\chi$) is the point process
\begin{align}\label{e:defxi}
\xi:=\sum^{|\chi|}_{m=1}\delta_{(X_m,\Psi_m)},
\end{align}
where 
\begin{align*}
\Psi_m:=\sum^{|\chi|}_{n=1}\I\{n\ne m,Z_{m,n}\leq\varphi(X_m,X_n)\}\delta_{X_n}.
\end{align*} 
In this notation we suppress the dependence on the $Z_{m,n}$.
While the definition of $\xi$ depends on the ordering of the points of $\chi$,
its distribution does not.

Below we will work with a Poisson process $\eta$ with a diffuse
intensity measure $\lambda$. Then $\eta$ is simple and can be identified with its support.
Otherwise it is not obvious how to treat multiplicities.
One way to proceed is described in the following remark.

\begin{remark}\label{r:non-diffuse}\rm Let $\lambda$ be a possibly non-diffuse locally finite measure on
$\BX$ and let $\eta$ be a Poisson process with intensity measure $\lambda$; see 
\cite{LastPenrose18}. Then $\eta$ is a random element of the space of all locally
finite counting measures on $\BX$ whose atoms may have multiplicities.
Let $\BU$ denote the uniform distribution on $[0,1]$ and let $\hat\eta$ be a
$\BU$-marking of $\eta$. Then $\hat\eta$ is a Poisson process on $\hat\BX:=\BX\times[0,1]$
with intensity measure $\hat\lambda:=\lambda\otimes\BU$; see \cite[Theorem 5.6]{LastPenrose18}. 
Since $\hat\lambda$ is diffuse, $\hat\eta$ is simple and can be written as
\begin{align}\label{e:points2}
\hat\eta=\sum^{\hat\eta(\BX)}_{n=1}\delta_{(X_n,U_n)},
\end{align}
where $(X_m,U_m)\ne (X_n,U_n)$ whenever $m\ne n$.
Given a connection function $\varphi$ we can define
a connection function $\hat\varphi\colon\hat\BX^2\to[0,1]$.
We set $\hat\varphi((x,u),(y,v)):=\varphi(x,y)$ for $u,v\in[0,1]$   if $x\ne y$.
If $x=y$ we set $\hat\varphi((x,u),(x,v)):=1$.
The resulting random connection model $\hat\xi$ admits the following interpretation.
An atom of $\eta$ with size $k\in\N$ is split into $k$ atoms of size $1$.
A pair of atoms of size $1$ sitting at different locations $x$ and $y$ are independently connected
with probability $\varphi(x,y)$. 
A pair of atoms of size $1$ sitting at the same location are always connected. 
By definition of $\hat\varphi$, we have
$C^x:=C^{(x,u)}(\xi_1)$ for each $u\in[0,1]$ with $(x,u)\in\hat\eta$.
Hence $\hat\xi$ induces a random graph $\xi^*$ on the support $\supp\eta$ of $\eta$ with
components $C^x$, $x\in\eta$. The infinite components of this random graph are in one-to-one correspondence
to those of $\xi$.
\end{remark}

\begin{remark}\label{rnon-diffuse2}\rm Establish the setting of Remark \ref{r:non-diffuse}
There are other ways to define a random connection model driven by $\hat\eta$. 
For instance we might set $\hat\varphi((x,u),(x,v)):=0$, while (as before)
$\hat\varphi((x,u),(y,v)):=\varphi(x,y)$ for $x\ne y$.
Then a pair of atoms sitting at the same location is never connected. In our opinion the
choice in Remark \ref{r:non-diffuse} is rather natural.
\end{remark}

We now introduce some notation used throughout the paper.
For $\mu,\mu'\in\bG$ we often interpret $\mu+\mu'$ as the
measure in $\bG$ with the same support as $\mu+\mu'$.
A similar convention applies to $\nu,\nu'\in\bN$.
Let $\mu\in\bG$. For $B\in\cX$ we write $\mu(B):=\mu(B\times \bN)$.
More generally, given a measurable function $f\colon \BX\to \R$ we write 
$\int f(x)\,\mu(dx):=\int f(x)\,\mu(dx\times \bN)$. 
Similarly, given $x\in\BX$, we write
$x\in\mu$ instead of $x\in V(\mu)=\mu(\cdot\times \bN)$.
In the same spirit we write $g(\mu):=g(V(\mu))$, whenever
$g$ is a mapping on $\bN$.
These (slightly abusing) conventions lighten the notation and should not cause any confusion. 
For $B\in\cX$ 
we denote by $\mu[B]\in\bG$ the {\em restriction} of $\mu$ to $B$, that is  the graph
with vertex set $V(\mu)\cap B$ which keeps only those edges
from $\mu$ with both end points from $B$. In the same way we use the
notation $\mu[\nu]$ for $\nu\in\bN$. 
Similarly for a measure $\nu$ on $\BX$ (for instance for $\nu\in\bN$) 
we denote by $\nu_B:=\nu(B\cap\cdot)$  the restriction of $\nu$ to a set $B\in\cX$.
Assume now that $v\in V(\mu)$. For $n\in\N_0$ let
$C^v_n(\mu)\in\bG$ denote the graph restricted to the set of vertices $x\in V(\mu)$ with
$d_\mu(v,x)=n$, where $d_\mu$ denotes distance within the graph $\mu$.  
Note that $C^v_0(\mu)$ is just the isolated vertex $v$. Slightly abusing our notation
we write $C^v_0(\mu)=\delta_v$.  For $v\notin V(\mu)$ we set $C^v(\mu):=0$,
interpreted as an empty graph (a graph with no vertices).
The {\em cluster} $C^v(\mu)$  of $v$ in $\mu$ is the graph $\mu$ restricted to
\begin{align*}
\sum^\infty_{n=0}V(C^v_n(\mu)),
\end{align*}
while $C^v_{\le{n}}(\mu)$, $n\in\N_0$, is the graph $\mu$ restricted to 
$V(C^v_0(\mu))+\cdots+V(C^v_n(\mu))$. 
For later purposes it will be convenient to define $C^v_{\le{-1}}(\mu)=C^v_{-1}(\mu):=0$ as the zero measure.
For $\mu\in\bG$ and $x\in\BX$
we denote by $\mu-\delta_x:=\mu[V(\mu)-\delta_x]$
the graph resulting from $\mu$ by removing the point $x$.
If $x\notin V(\mu)$ then $\mu-\delta_x=\mu$.

\section{Basic properties of the RCM}\label{s:basics}

Let $\xi$ be a RCM based on a Poisson process $\eta$ on 
$\BX$ with diffuse intensity measure $\lambda$.
Our first crucial tool is a version of the Mecke equation (see
\cite[Chapter 4]{LastPenrose18}) for  $\xi$.  Given $n\in\N$ and 
$x_1,\ldots,x_n\in\BX$ we denote
$\eta^{x_1,\ldots,x_n}:=\eta+\delta_{x_1}+\cdots+\delta_{x_n}$ (removing possible multiplicities)
and let $\xi^{x_1,\ldots,x_n}$ denote a RCM based on $\eta^{x_1,\ldots,x_n}$.
It is useful to construct $\xi^{x_1,\ldots,x_n}$ in a specific way as follows.
We connect $x_1$ with the points in $\eta$ using independent connection decisions which
are independent of $\xi$.
We then proceed inductively finally connecting $x_n$ to 
$\eta+\delta_{x_1}+\cdots + \delta_{x_{n-1}}$.
For $n\in\N$ and a measurable function
$f\colon\BX^n \times \bG\to [0,\infty]$  the Mecke equation for $\xi$ states
that 
\begin{align}\label{e:Mecke}
\begin{split}
    \BE \int f(x_1,\ldots,x_n,\xi)\,\eta^{(n)}&(d(x_1,\ldots,x_n)) \\
=& \BE \int f(x_1,\ldots,x_n,\xi^{x_1,\ldots,x_n})\,\lambda^n(d(x_1,\ldots,x_n)),
\end{split}
\end{align}
where integration with respect to  the {\em factorial measure} $\eta^{(n)}$
of $\eta$ means summation over all $n$-tuples of pairwise distinct points
from $\eta$. A convenient way to prove this and related formulas is
to introduce a probability kernel $\Gamma$ from $\bN$ to $\bG$, satisfying
\begin{align}\label{e:kernel}
\BP((\eta,\xi)\in\cdot)=\BE \int \I\{(\eta,\mu)\in\cdot\}\,\Gamma(\eta,d\mu).
\end{align}
The kernel $\Gamma$ is just a regular version of the conditional distribution of $\xi$
given $\eta$ and can be defined explicitly; see Section \ref{s:definitions}.
A crucial property of this kernel is
\begin{align}\label{c1}
\BE\Gamma(\eta^{x_1,\ldots,x_n},\cdot)=\BP(\xi^{x_1,\ldots,x_n}\in\cdot),\quad \lambda^n\text{-a.e.\ $(x_1,\ldots,x_n)\in \BX^n$}.
\end{align}
It follows from \cite[Theorem 4.4]{LastPenrose18}
that the left-hand side of \eqref{e:Mecke} is given by
\begin{align*}
\BE \iint f(x_1,\ldots,x_n,\mu)\,\Gamma(\eta^{x_1,\ldots,x_n},d\mu)\,\lambda^n(d(x_1,\ldots,x_n)).
\end{align*}
Therefore \eqref{e:Mecke} follows from \eqref{c1}.

Given $v\in\BX$ we sometimes use \eqref{e:Mecke} in the form
\begin{align}\label{e:Meckev}
\begin{split}
    \BE \int f(x_1,\ldots,x_n,\xi^v)\,\eta^{(n)}&(d(x_1,\ldots,x_n)) \\
=& \BE \int f(x_1,\ldots,x_n,\xi^{v,x_1,\ldots,x_n})\,\lambda^n(d(x_1,\ldots,x_n)).
\end{split}
\end{align}
This can be derived from \eqref{e:Mecke} as follows. We can write $\xi^v=h(\xi,v,U)$, where $U$ is a random element
of $[0,1]^\N$ with
independent and uniformly distributed components, independent of $\xi$;
see the proof of Lemma \ref{l:remove} for more detail.
It remains to note that $h(\xi^{x_1,\ldots,x_n},v,U)$ has the
same distribution as $\xi^{v,x_1,\ldots,x_n}$, provided that $v,x_1,\ldots,x_n$ are pairwise distinct.

To state another useful version of  $\eqref{e:Mecke}$ we recall the
notation $\mu-\delta_x=\mu[V(\mu)-\delta_x]$ for $\mu\in\bG$ and $x\in\BX$.
Given $n\in\N$ and $x_1,\ldots,x_n\in\BX$ we define
$\mu-\delta_{x_1}-\cdots -\delta_{x_n}$ inductively.
The kernel $\Gamma$ has the property
\begin{align*}
\int \I\{\mu-\delta_{x_1}-\cdots -\delta_{x_n}\in\cdot\}\,\Gamma(\nu,d\mu)=\Gamma(\nu-\delta_{x_1}-\cdots -\delta_{x_n},\cdot),
\quad \nu\in\bN.
\end{align*}
Therefore we obtain from \cite[Theorem 4.5]{LastPenrose18}
for each measurable $f\colon \BX^n\times\bG\to[0,\infty]$ that
\begin{align}\label{e:Mecke2}
\begin{split}
    \BE \int f(x_1,\ldots,x_n,\xi-\delta_{x_1}-\cdots -\delta_{x_n})\,\eta^{(n)}&(d(x_1,\ldots,x_n))\\
=& \BE \int f(x_1,\ldots,x_n,\xi)\,\lambda^n(d(x_1,\ldots,x_n)).
\end{split}
\end{align}
Given $v\in\BX$ we also have
\begin{align}\label{e:Mecke2v}
\BE \int f(x,\xi^v-\delta_x)\,\eta(dx)= \BE \int f(x,\xi^v)\,\lambda(dx).
\end{align}
This follows similarly as \eqref{e:Meckev}. Indeed, given $\eta$ and $x\in\eta$
the random graph $h(\xi,v,U)-\delta_x$ has the same distribution as $h(\xi-\delta_x,v,U)$,
provided that $v\ne x$.

Another quick consequence of the multivariate Mecke equation is the
following {\em deletion tolerance} of $\xi$.
Removing a finite number of points from $\eta$ results in a random graph
whose distribution is absolutely continuous with respect to the distribution of $\xi$. 
Deletion tolerance of point processes was studied in \cite{HolSoo13}.

\begin{proposition}\label{p:deltolerant} Let $A\subset \bG$ be a measurable set
such that $\BP(\xi\in A)=0$.
Let $\eta_0$ be a point process such that $\BP(\eta_0(\BX)<\infty)=1$ and $\BP(\eta_0\le\eta)=1$. Then
$\BP(\xi[\eta-\eta_0]\in A)=0$. 
\end{proposition}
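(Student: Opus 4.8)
The plan is to reduce the claim to the case $\eta_0=\delta_x$ for a single random point $x\in\eta$ and then iterate, or — slightly more efficiently — to handle a random number of points at once via the multivariate Mecke equation \eqref{e:Mecke2}. The key observation is that the event $\{\xi[\eta-\eta_0]\in A\}$ can be rewritten as a sum over $n$-tuples of points of $\eta$ that are removed: conditioning on $\eta_0(\BX)=n$, there are points $x_1,\dots,x_n\in\eta$ (pairwise distinct) with $\eta_0=\delta_{x_1}+\cdots+\delta_{x_n}$, and then $\xi[\eta-\eta_0]$ is the restriction $\xi-\delta_{x_1}-\cdots-\delta_{x_n}$ of $\xi$ (here I use the conventions of Section~\ref{s:definitions}; note that removing the vertices $x_1,\dots,x_n$ from $\xi$ gives exactly the RCM on $\eta-\eta_0$ with the induced edges, since the edge decisions $Z_{m,n}$ are shared). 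Thus
\[
\I\{\xi[\eta-\eta_0]\in A\}\le \sum_{n=0}^{\infty}\frac{1}{n!}\int \I\{\xi-\delta_{x_1}-\cdots-\delta_{x_n}\in A\}\,\I\{\text{$x_1,\dots,x_n$ are the points of }\eta_0\}\,\eta^{(n)}(d(x_1,\dots,x_n)),
\]
and it suffices to show each summand has zero expectation.

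First I would make precise that $\xi[\eta-\eta_0]$, as an element of $\bG$, equals $\xi-\delta_{x_1}-\cdots-\delta_{x_n}$ on the event that $\eta_0$ has support $\{x_1,\dots,x_n\}$; this is immediate from the definitions since both sides keep precisely the vertices in $V(\xi)\setminus\{x_1,\dots,x_n\}$ and the edges of $\xi$ between them. Next, I would bound the indicator inside the integral by $\I\{\xi-\delta_{x_1}-\cdots-\delta_{x_n}\in A\}$, dropping the constraint that the $x_i$ are exactly the points of $\eta_0$ (this only increases the integral). Now apply the multivariate Mecke-type identity \eqref{e:Mecke2} with $f(x_1,\dots,x_n,\mu):=\I\{\mu\in A\}$: the left-hand side of \eqref{e:Mecke2} is exactly $\BE\int \I\{\xi-\delta_{x_1}-\cdots-\delta_{x_n}\in A\}\,\eta^{(n)}(d(x_1,\dots,x_n))$, and the right-hand side equals $\BE\int \I\{\xi\in A\}\,\lambda^n(d(x_1,\dots,x_n))=\int \BP(\xi\in A)\,\lambda^n(d(x_1,\dots,x_n))=0$, using the hypothesis $\BP(\xi\in A)=0$. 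Summing over $n$ gives $\BP(\xi[\eta-\eta_0]\in A)=0$.

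The one genuine obstacle is the passage from the fixed-cardinality statement to the general $\eta_0$ with $\eta_0(\BX)<\infty$ a.s.: one must handle the (possibly random) selection of which points are removed, and the fact that $\eta_0$ need not be independent of $\xi$. The domination step above — replacing the indicator of "$x_1,\dots,x_n$ are exactly the points of $\eta_0$" by $1$ and summing over $n$ with the $1/n!$ factor accounting for orderings of the $n$-tuple — circumvents this cleanly, because after domination the integrand no longer depends on $\eta_0$ at all, so no independence is needed and \eqref{e:Mecke2} applies verbatim. (Alternatively one could condition on $\eta_0$ and use that $\eta-\eta_0$ is, conditionally, a thinning, but the domination argument is shorter.) The measurability of the map $\eta_0\mapsto(\text{its support points})$ is routine via \cite[Proposition 6.2]{LastPenrose18} and needs only a remark. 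I expect the whole argument to be three or four lines once \eqref{e:Mecke2} is invoked.
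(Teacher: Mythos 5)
Your proposal is correct and follows essentially the same route as the paper: decompose according to $\eta_0(\BX)=n$, use $\eta_0\le\eta$ to dominate the removed $n$-tuples by integration against $\eta^{(n)}$, and apply the Mecke identity \eqref{e:Mecke2} with $f(x_1,\ldots,x_n,\mu)=\I\{\mu\in A\}$, which vanishes because $\BP(\xi\in A)=0$. The only cosmetic difference is that the paper integrates directly against $\eta_0^{(n)}$ (which is dominated by $\eta^{(n)}$) rather than inserting and then dropping an indicator as you do.
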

\begin{proof} 
Let $n\in\N$. 
By the Mecke equation \eqref{e:Mecke2} we have
\begin{align*}
0=\int \BP(\xi\in A)\,\lambda^n(d(x_1,\ldots,x_n))=\BE \int \I\{\xi-\delta_{x_1}-\cdots-\delta_{x_n} \in A\}
\,\eta^{(n)}(d(x_1,\ldots,x_n)).
\end{align*}
Since $\BP(\eta_0(\BX)<\infty)=1$ it follows that
\begin{align*}
&\BP(\xi[\eta-\eta_0]\in A)\\
&=\sum^\infty_{n=0}\frac{1}{n!}\BE \I\{\eta_0(\BX)=n\}\int\I\{\xi-\delta_{x_1}-\cdots-\delta_{x_n} \in A\} \,\eta^{(n)}_0(d(x_1,\ldots,x_n))=0.
\end{align*}
The result follows.
\end{proof}

We define $\bar\varphi:=1-\varphi$ and for $x\in\BX$, $\nu\in\bN$
\begin{align}\label{e:barvarphi}
\bar\varphi(\nu,x):=\prod_{y\in\nu}\bar\varphi(x,y), \quad \varphi(\nu,x):=1-\bar\varphi(\nu,x), \
\quad \varphi_\lambda(\nu):=\int \varphi(\nu,x)\,\lambda(dx).
\end{align}
We recall our general convention $\varphi(\mu,x):=\varphi(V(\mu),x)$ 
and $\varphi_\lambda(\mu):=\varphi_\lambda(V(\mu))$ for $\mu\in\bG$.
Throughout we often abbreviate
$C^v:=C^v(\xi^v)$, 
$C^v_n:=C^v_n(\xi^v)$ and $C^v_{\le n}:=C^v_{\le n}(\xi^v)$. Moreover we write
$C^{v!}:=C^v-\delta_v$.  

We shall need the following consequence of
\eqref{e:Mecke}.

\begin{lemma}\label{l:312} Let $v\in\BX$ and $h\colon\bG\to[0,\infty)$ be measurable.
Then
\begin{align}
\BE \int h(\xi^v-\delta_x)\, C^{v!}(dx)=\BE h(\xi^v)\varphi_\lambda(C^v).
\end{align}
\end{lemma}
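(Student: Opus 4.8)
The plan is to apply the single-point \emph{addition} form of the Mecke equation, namely \eqref{e:Meckev} with $n=1$, rather than the removal form \eqref{e:Mecke2v}. The reason is that the summand $h(\xi^v-\delta_x)$ is evaluated only at those $x$ with $x\leftrightarrow v$ in $\xi^v$, and this constraint depends on the connection decisions attached to $x$; hence the integrand on the left-hand side is \emph{not} a function of the pair $(x,\xi^v-\delta_x)$ alone, so \eqref{e:Mecke2v} cannot be applied directly. The remedy is to absorb the deletion into the test function of \eqref{e:Meckev}.

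First I would rewrite the left-hand side. Since $\lambda$ is diffuse, $v\notin\eta$ almost surely, so $V(C^{v!})=V(C^v)\setminus\{v\}$ consists exactly of the points $x\in\eta$ with $x\leftrightarrow v$ in $\xi^v$, and therefore
\begin{align*}
\BE\int h(\xi^v-\delta_x)\,C^{v!}(dx)=\BE\int h(\xi^v-\delta_x)\,\I\{x\leftrightarrow v\text{ in }\xi^v\}\,\eta(dx).
\end{align*}
Applying \eqref{e:Meckev} with $n=1$ to the (jointly measurable) function $f(x,\mu):=h(\mu-\delta_x)\,\I\{v\in\mu,\ x\leftrightarrow v\text{ in }\mu\}$ turns the right-hand side into
\begin{align*}
\BE\int h(\xi^{v,x}-\delta_x)\,\I\{x\leftrightarrow v\text{ in }\xi^{v,x}\}\,\lambda(dx).
\end{align*}

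Next I would exploit the explicit construction of $\xi^{v,x}$ recalled just before \eqref{e:Mecke}: one builds $\xi^v$ from $\xi$ by joining $v$ to $\eta$ with fresh connection decisions, and then builds $\xi^{v,x}$ from $\xi^v$ by joining $x$ to $\eta+\delta_v$ with a further independent family of connection decisions. Consequently, for $\lambda$-a.e.\ $x$ (i.e.\ $x\ne v$, $x\notin\eta$) we have $\xi^{v,x}-\delta_x=\xi^v$, and $\xi^v$ is a subgraph of $\xi^{v,x}$. A short path-surgery argument — take a loop-free path from $x$ to $v$ in $\xi^{v,x}$; its first edge reaches a vertex $w$ which, the path avoiding $x$, already satisfies $w\leftrightarrow v$ in $\xi^v$ — shows that
\begin{align*}
\{x\leftrightarrow v\text{ in }\xi^{v,x}\}=\{x\sim V(C^v)\text{ in }\xi^{v,x}\}.
\end{align*}
Since the connection decisions between the added point $x$ and the vertices of $\eta+\delta_v=V(\xi^v)$ are independent of $\xi^v$, the conditional probability of this event given $\xi^v$ equals $1-\prod_{w\in V(C^v)}\bar\varphi(x,w)=\varphi(C^v,x)$. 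Conditioning on $\xi^v$ and applying Tonelli then yields
\begin{align*}
\BE\int h(\xi^v-\delta_x)\,C^{v!}(dx)=\BE\int h(\xi^v)\,\varphi(C^v,x)\,\lambda(dx)=\BE\,h(\xi^v)\,\varphi_\lambda(C^v),
\end{align*}
which is the assertion.

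The only genuinely delicate points are the two conceptual ones: recognising that \eqref{e:Mecke2v} is unavailable here, so that one must \emph{add} a point and then delete it again inside $f$, and invoking the precise coupling defining $\xi^{v,x}$ in order to obtain both $\xi^{v,x}-\delta_x=\xi^v$ and the independence of the fresh connections of $x$ from $\xi^v$. The cluster identity, the joint measurability of $f$, and the Tonelli step are all routine.
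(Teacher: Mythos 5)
Your proposal is correct and follows essentially the same route as the paper's proof: rewrite the sum over $C^{v!}$ as an $\eta$-integral with the indicator $\I\{x\in C^v(\xi^v)\}$, apply the Mecke equation \eqref{e:Meckev} with $n=1$, use $\xi^{v,x}-\delta_x=\xi^v$, and identify $\BP(x\leftrightarrow v \text{ in } \xi^{v,x}\mid\xi^v)=\varphi(C^v,x)$ from the independent connection decisions of the added point. The path-surgery and independence steps you spell out are exactly what the paper compresses into ``by definition of $\xi^{v,x}$''.
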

\begin{proof} Let $I$ denote the left-hand side of the asserted formula.
Then
\begin{align*}
I&=\BE \int h(\xi^v-\delta_x)\I\{x\in C^v(\xi^v)\}\, \eta(dx)
=\int \BE h(\xi^{v,x}-\delta_x)\I\{x\in C^v(\xi^{v,x})\}\, \lambda(dx),
\end{align*}
where we have used the Mecke equation \eqref{e:Meckev} to obtain the second identity.
By definition we have that
$\xi^{v,x}-\delta_x=\xi^v$ for each $x\in\BX$. Hence we obtain that
\begin{align*}
I&=\int \BE h(\xi^{v})\I\{x\in C^v(\xi^{v,x})\}\, \lambda(dx)
=\int \BE h(\xi^{v})\BP(x\in C^v(\xi^{v,x})\mid \xi^v)\, \lambda(dx).
\end{align*}
By definition of $\xi^{v,x}$ we have $\BP(x\in C^v(\xi^{v,x})\mid \xi^v)=\varphi(x,C^v)$,
concluding the proof.
\end{proof}

Next we turn to the {\em Margulis--Russo formula}. Let $\lambda_1$ and $\lambda_2$ be two
measures on $\BX$, where $\lambda_1$ is  locally finite and $\lambda_2$ is finite.
Given $t\ge 0$ we consider a RCM driven by a Poisson process $\eta$ with
intensity measure $\lambda_1+t\lambda_2$. The associated
expectation operator is denoted by $\BE_t$. Let
$f\colon \bG\to [-\infty,\infty]$ be a measurable function and
assume that $\BE_{t_0}|f(\xi)|<\infty$ for some $t_0>0$.  From
\cite[Exercise 3.8]{LastPenrose18} and \eqref{e:kernel} we then obtain
that $\BE_t|f(\xi)|<\infty$ for all $t\le t_0$.  We assert that
\begin{align}\label{MargulisRusso}
\frac{d}{d t} \BE_t f(\xi) = \int \BE_t[f(\xi^x) - f(\xi)] \,\lambda_2(dx),\quad t\in [0,t_0). 
\end{align}
Using the kernel $\Gamma$, this can be seen as follows.  From
\cite[Theorem 19.3]{LastPenrose18} we obtain that
\begin{align*}
\frac{d}{d t} \BE_tf(\xi) 
= \int \BE_t[\tilde{f}(\eta^x) - \tilde{f}(\eta)] \,\lambda_2(dx),\quad t\in[0,t_0), 
\end{align*}
where $\tilde{f}(\nu):=\int f(\mu)\,\Gamma(\nu,d\mu)$, $\nu\in\bN$. 
Note that $\tilde f(\xi)$ is $\BP_t$-a.s.\ well-defined.

Take $t\in[0,t_0)$. Theorem 19.3 in \cite{LastPenrose18} shows that
$\int \BE_t[|\tilde{f}(\eta^x) - \tilde{f}(\eta)|] \,\lambda_2(dx)<\infty$.
Furthermore we have
\begin{align*}
\int \BE_t[|\tilde{f}(\eta)|]\,\,\lambda_2(dx)=\lambda_2(\BX)\BE_t[|\tilde{f}(\eta)|]
\le \lambda_2(\BX)\BE_t[|f(\xi)|]<\infty,
\end{align*}
where we have used the triangle inequality and \eqref{e:kernel}. \\ 
Therefore we also have $\int \BE_t[|\tilde{f}(\eta^x)|]\,\lambda_2(dx)<\infty$.
It follows that
\begin{align*}
\frac{d}{d t} \BE_tf(\xi) 
= \int \big(\BE_t\tilde{f}(\eta^x) - \BE_t\tilde{f}(\eta)\big) \,\lambda_2(dx) 
=\int \big(\BE_tf(\xi^x) - \BE_tf(\xi)\big) \,\lambda_2(dx),
\end{align*}
where we have used 
\eqref{c1}. 
Since the above right-hand side is finite we have
$|\BE_tf(\xi^x)|<\infty$ and hence also $\BE_t|f(\xi^x)|<\infty$ 
for $\lambda_2$-a.e.\ $x$. This implies \eqref{MargulisRusso}.

\section{The stationary marked RCM}\label{s:stationaryRCM}

In this section we introduce an important special case of the general RCM. The setting is that of 
\cite{CaicedoDickson24, DicksonHeydenreich22}.
Special cases were studied in \cite{DepW15, GLM21, GHMM22}.

Let $\BM$ be a complete separable metric space equipped with a
probability measure $\BQ$. This is our {\em mark space}, while $\BQ$
is said to be the {\em mark distribution}.  In this section we
consider the space $\BX=\R^d\times\BM$ equipped with the product of
the Borel $\sigma$-field $\cB(\R^d)$ on $\R^d$ and the Borel
$\sigma$-field on $\BM$. We assume that $\lambda=t\lambda_d\otimes\BQ$,
where $t\in \R_+$ and $\lambda_d$ denotes Lebesgue measure on $\R^d$.  If
$(x,p)\in\BX$ then we call $x$ location of $(x,p)$ and $p$ the mark of
$x$. Instead of $\bN$ we consider the (smaller set) $\bN(\BX)$ of all
counting measures $\chi$ on $\BX$ such that $\chi(\cdot\times\BM)$ is
locally finite (w.r.t.\ the Euclidean metric) and simple.  The
symmetric connection function $\varphi\colon (\R^d\times\BM)^2\to [0,1]$
is assumed to satisfy\begin{align}\label{e:4.1} \varphi((x,p),(y,q))=
  \varphi((0,p),(y-x,q)).
\end{align}
This allows us to write $\varphi(x,p,q):=\varphi((0,p),(x,q))$, where
$0$ denotes the origin in $\R^d$.  
We also assume that 
\begin{align}\label{e:intmarked}
d_\varphi:=\iint \varphi(x,p,q)\,dx\,\BQ^2(d(p,q))<\infty,
\end{align}
referring to Remark \ref{r:int} for some comments.
Let $t>0$ and let $\eta$ be a Poisson process on $\BX$
with intensity measure $t\lambda$. 
We can and will assume that $\eta$ is a random element of $\bN(\BX)$.
We consider a RCM $\xi$ based on $\eta$ and connection function $\varphi$.

The RCM $\xi$ is {\em stationary} in the sense
that $T_x\xi\overset{d}{=}\xi$, $x\in\R^d$, where
for $\mu\in\bG$, the measure $T_x\mu$ is (as usual) defined by 
\begin{align*}
T_x\mu:=\int \I\{(y-x,q,\nu)\in\cdot\}\,\mu(d(y,q,\nu)).
\end{align*}
To see this, it is convenient to define $\xi$ in a slightly different way,
without changing its distribution. As at \eqref{e:points} we can write 
\begin{align}\label{e:Qm}
\eta=\sum^\infty_{m=1}\delta_{(X_m,Q_m)},
\end{align}
where $X_1,X,\ldots$ are pairwise distinct random elements
of $\R^d$ and $Q_1,Q_2,\ldots$ are random elements of $\BM$.
Let $Z'_{m,n}$, $m,n\in\N$, be independent random variables uniformly distributed
on $[0,1]$ and set $Z'_m:=(Z'_{m,n})_{n\in\N}$, $m\in\N$.
By the marking theorem (see \cite[Theorem 5.6]{LastPenrose18}), 
\begin{align}\label{e:eta*}
\eta^*:=\sum^\infty_{m=1}\delta_{(X_m,Q_m,Z'_m)}  
\end{align}
is again a Poisson process. 
We then connect $(X_m,Q_m)$ with $(X_n,Q_n)$ if $X_m$ is lexicographically
smaller than $X_n$ and $Z'_{m,\tau}\le  \varphi (X_n-X_m,Q_m,Q_n)$, where the $\N$-valued random variable
$\tau$ is determined by the fact that $X_n$ is the $\tau$-th nearest neighbour
of $X_m$ in the set $\{X_k:k\ne m\}$, where we can use the lexicographic order
to break ties. Then we have $\xi=F(\eta^*)$ for a well-defined 
measurable mapping $F$. Since the nearest neighbour relation
is translation invariant it follows from \eqref{e:4.1} that $F$ can be assumed to satisfy
$T_x\xi=F(T_x\eta^*)$ for each $x\in\R^d$. Since $T_x\eta^*\overset{d}{=}\eta^*$ it follows
that $\xi$ is stationary. The same argument combined with
\cite[Exercise 10.1]{LastPenrose18} shows that $\xi$ is {\em ergodic}, i.e.\
we have $\BP(\xi\in A)\in\{0,1\}$ for each translation invariant measurable
$A\subset \bG$. If $\BM$ contains only one element, we identify $\BX$ with $\R^d$.
In this case $\xi$ is said to be a {\em stationary} RCM.

The following consequence of the Mecke equation will be often used to treat cluster expectations.

\begin{lemma}\label{l:915} Let $B\in\cB(\R^d)$ and $f\colon\N\to \R_+$. Then
\begin{align}\label{e:Palm}
 \BE_t \int \I\{x\in B\}f(|C^{(x,p)}(\xi)|)\,\eta(d(x,p))
=t \lambda_d(B) \BE_t \int f(|C^{(0,p)}|)\,\BQ(dp).
\end{align}
\end{lemma}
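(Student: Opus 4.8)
The plan is to apply the multivariate Mecke equation \eqref{e:Mecke} (in the case $n=1$) directly. First I would observe that the integrand on the left-hand side of \eqref{e:Palm}, namely $(x,p)\mapsto \I\{x\in B\}f(|C^{(x,p)}(\xi)|)$, is of the form required by the Mecke equation: it is a measurable function $g((x,p),\xi)$ on $\BX\times\bG$, where $g((x,p),\mu):=\I\{x\in B\}f(|C^{(x,p)}(\mu)|)$, and it is nonnegative since $f$ takes values in $\R_+$. Thus \eqref{e:Mecke} with $n=1$ gives
\begin{align*}
\BE_t \int \I\{x\in B\}f(|C^{(x,p)}(\xi)|)\,\eta(d(x,p))
= \BE_t \int \I\{x\in B\}f(|C^{(x,p)}(\xi^{(x,p)})|)\,\lambda(d(x,p)),
\end{align*}
where $\xi^{(x,p)}$ is the RCM based on $\eta + \delta_{(x,p)}$, constructed by connecting the extra point $(x,p)$ to $\eta$ with independent connection decisions. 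Recalling the abbreviation $C^{(x,p)}=C^{(x,p)}(\xi^{(x,p)})$ from just before Lemma \ref{l:312}, the inner expression is exactly $\I\{x\in B\}f(|C^{(x,p)}|)$.

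Next I would unfold the measure $\lambda=\lambda_d\otimes\BQ$ (here $\lambda$ is the measure before scaling; recall the overall intensity is $t\lambda$, so the Mecke right-hand side carries a factor that I must track — with intensity measure $t\lambda$, the Mecke equation reads $\BE_t\int g\,d\eta = \BE_t\int g\,d(t\lambda) = t\BE_t\int g\,d\lambda$). This produces
\begin{align*}
t\iint \I\{x\in B\}\,\BE_t f(|C^{(x,p)}|)\,dx\,\BQ(dp).
\end{align*}
The final step is to remove the dependence on $x$: by stationarity of the marked RCM (established in Section \ref{s:stationaryRCM} via the relation $T_x\xi = F(T_x\eta^*)$ and $T_x\eta^*\overset{d}{=}\eta^*$), and since the point-added RCM $\xi^{(x,p)}$ can likewise be built from a translation-covariant functional of a translated Poisson process, the cluster of $(x,p)$ in $\xi^{(x,p)}$ has the same size-distribution as the cluster of $(0,p)$ in $\xi^{(0,p)}$; that is, $|C^{(x,p)}|\overset{d}{=}|C^{(0,p)}|$ for every $x\in\R^d$ and ($\BQ$-a.e.) $p\in\BM$. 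Hence $\BE_t f(|C^{(x,p)}|) = \BE_t f(|C^{(0,p)}|)$ does not depend on $x$, the $dx$-integral over $\I\{x\in B\}$ yields $\lambda_d(B)$, and we arrive at the right-hand side $t\lambda_d(B)\,\BE_t\int f(|C^{(0,p)}|)\,\BQ(dp)$.

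I do not expect a serious obstacle here, but the one point requiring care is the translation-invariance statement for the added point: one must check that $|C^{(x,p)}(\xi^{(x,p)})| \overset{d}{=} |C^{(0,p)}(\xi^{(0,p)})|$, which follows because $\xi^{(x,p)}$ is obtained from $\xi^{(0,p)}$ by the shift $T_x$ (using that $\eta\overset{d}{=}T_x\eta$, the translation covariance \eqref{e:4.1} of $\varphi$, and the fact that cluster size is shift-invariant, $|C^{(x,p)}(\mu)| = |C^{(0,p)}(T_x\mu)|$). Alternatively one can absorb this into the Mecke step by noting the left-hand side of \eqref{e:Palm} is itself a shift-invariant quantity after integrating out $B$ suitably; either route is routine. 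Measurability of $(x,p,\mu)\mapsto |C^{(x,p)}(\mu)|$ and of the composite integrand is standard and I would simply invoke it. A remark I might add: the identity shows that $t\int \BE_t f(|C^{(0,p)}|)\,\BQ(dp)$ is the intensity of the stationary point process $\int \I\{\cdot\in\cdot\}f(|C^{(x,p)}(\xi)|)\,\eta(d(x,p))$, i.e.\ it is a Palm-type formula, which explains the label \eqref{e:Palm}.
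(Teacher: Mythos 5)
Your proposal is correct and follows essentially the same route as the paper's proof: apply the Mecke equation \eqref{e:Mecke} with $n=1$ (tracking the intensity factor $t$), use the shift-covariance $|C^{(x,p)}(\mu)|=|C^{(0,p)}(T_x\mu)|$, and conclude via stationarity that $T_x\xi^{(x,p)}\overset{d}{=}\xi^{(0,p)}$ for $\lambda_d\otimes\BQ$-a.e.\ $(x,p)$, so the $dx$-integral produces $\lambda_d(B)$. The only cosmetic difference is that the paper phrases the translation step as an a.e.\ distributional identity for $T_x\xi^{(x,p)}$ rather than for the cluster size directly, which is exactly the check you flag as the point requiring care.
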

\begin{proof}
By the Mecke equation \eqref{e:Mecke} the left-hand side of \eqref{e:Palm} equals
\begin{align*}
t\, \BE_t& \iint \I\{x\in B\} f(|C^{(x,p)}(\xi^{(x,p)})|)\,dx\,\BQ(dp) \\
=t\, \BE_t& \iint \I\{x\in B\} f(|C^{(0,p)}(T_x\xi^{(x,p)})|)\,dx\,\BQ(dp),
\end{align*}
where we have used that $|C^{(x,p)}(\mu)|=|C^{(0,p)}(T_x\mu)|$ for all $\mu\in\bG$.
It follows from stationarity of $\xi$ and definition of $\xi^{(x,p)}$, that $T_x\xi^{(x,p)}\overset{d}{=}\xi^{(0,p)}$ for $\lambda_d\otimes\BQ$-a.e.\ 
$(x,p)\in \R^d\times\BM$. Therefore the result follows.
\end{proof}

Let $Q_0$ be a random element of $\BM$ with distribution $\BQ$ which is independent
of $\eta^*$ given by \eqref{e:eta*}. In accordance with Palm theory
we refer to $C^{(0,Q_0)}(\xi^{(0,Q_0)})$ as cluster of the {\em typical vertex} (of $\xi$). 

\begin{remark}\label{r:int}\rm Let $p\in\BM$. Then the
degree $D_p$ of $(0,p)$ (the origin marked with $p$) in $\xi^{(0,p)}$ has a Poisson distribution
with parameter $t\int\varphi(x,p,q)\,dx\,\BQ(dq)$. 
Our integrability assumption \eqref{e:intmarked} means
that $\int \BE D_p\,\BQ(dp)<\infty$. This means that
the expected degree of the 
typical vertex is finite. Hence \eqref{e:intmarked} excludes Pareto type degree distributions 
but is still much weaker than the integrability assumption made in
\cite{CaicedoDickson24}.  
\end{remark}

The function 
\begin{align}\label{e:markedfree}
\kappa(t):=\int \BE_t |C^{(0,p)}|^{-1}\,\BQ(dp)=\BE_t |C^{(0,Q_0)}|^{-1},\quad t\in\R_+,
\end{align}
plays a crucial role in Section \ref{s:tolRCM}.
To interpret it, we introduce a point process $\eta_c\le \eta(\cdot\times\BM)$ 
modeling finite clusters as follows. 
Let $(x,p)\in\eta$. Then $x\in\eta_c$ if $|C^{(x,p)}(\xi)|<\infty$
and $x$ is the lexicographically smallest spatial coordinate of the points in
$C^{(x,p)}(\xi)$. Since $\xi$ is stationary, it is easy to see that $\eta_c$
is a stationary point process. The following result shows that $t\kappa(t)$ is the {\em intensity}
of $\eta_c$, that is the density of finite clusters.  With a slight
abuse of language we refer to $\kappa(t)$ as {\em cluster density}. In the unmarked
case this function is also called {\em free energy}; see \cite{Aizenman87,BeGrimLoff98,DurrettNguyen85}.

\begin{lemma}\label{l:kappa} For each $t\in\R_+$ we have that
$t \kappa(t)=\BE_t \eta_c([0,1]^d)$.
\end{lemma}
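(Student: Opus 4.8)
The plan is to compute $\BE_t \eta_c([0,1]^d)$ by a mass-transport / Palm-type argument, moving mass from each vertex of a finite cluster to the distinguished (lexicographically smallest) vertex of that cluster. First I would write
\[
\eta_c([0,1]^d)=\int \I\{x\in[0,1]^d\}\I\{|C^{(x,p)}(\xi)|<\infty\}\,\I\{x\text{ is lex.\ smallest in }C^{(x,p)}(\xi)\}\,\eta(d(x,p)).
\]
The indicator ``$x$ lex.\ smallest'' is awkward to integrate directly, so instead I would reverse the roles: every point $(y,q)\in\eta$ lying in a finite cluster contributes $1/|C^{(y,q)}(\xi)|$ worth of ``weight'' to the unique distinguished vertex of its cluster. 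More precisely, the key identity is
\[
\I\{x\in[0,1]^d,\ |C^{(x,p)}(\xi)|<\infty,\ x\text{ lex.\ smallest}\}
=\int \frac{\I\{y\text{ lex.\ smallest in }C^{(y,q)}(\xi)\}\,\I\{(x,p)\in C^{(y,q)}(\xi)\}}{|C^{(y,q)}(\xi)|}\,\eta(d(y,q)),
\]
valid when $(x,p)\in\eta$, since for a finite cluster there is exactly one lex.-smallest vertex $y$, and the sum of $1/|C|$ over the $|C|$ vertices of that cluster equals $1$. Plugging this into the expression for $\eta_c([0,1]^d)$ and taking expectations gives a double integral against $\eta\otimes\eta$, i.e.\ a second-order factorial-moment expression (the diagonal contributes only when $(x,p)=(y,q)$, which is harmless and can be absorbed or handled separately).

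Next I would apply the Mecke equation \eqref{e:Mecke} (for $n=1$, since after summing over the distinguished vertex we really only need one Mecke point; alternatively use the bivariate form and collapse). The cleanest route: keep the single integral
\[
\BE_t \eta_c([0,1]^d)=\BE_t\int \I\{y\in[0,1]^d\}\,\frac{\I\{|C^{(y,q)}(\xi)|<\infty\}}{1}\cdot\frac{1}{?}\dots
\]
Actually the most economical approach avoids the ``lex.\ smallest'' bookkeeping altogether: observe that $\eta_c([0,1]^d)$ counts finite clusters with distinguished vertex in $[0,1]^d$, but by translation invariance and a standard covering argument each finite cluster is counted with total weight $1$ when we instead sum $|C^{(x,p)}(\xi)|^{-1}\I\{x\in[0,1]^d\}$ over all vertices in $[0,1]^d$ in finite clusters — up to boundary effects which vanish after taking expectations because $\BE_t$ of the count is translation invariant. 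So I would prove directly
\[
\BE_t \eta_c([0,1]^d)=\BE_t\int \I\{x\in[0,1]^d\}\,\I\{|C^{(x,p)}(\xi)|<\infty\}\,|C^{(x,p)}(\xi)|^{-1}\,\eta(d(x,p)),
\]
by a mass-transport identity on $\R^d$ (move mass $|C|^{-1}$ from every vertex of a finite cluster to its distinguished vertex; both sides count, on average over the stationary picture, the density of finite clusters). Then I apply Lemma \ref{l:915} with $B=[0,1]^d$ and $f(k)=\I\{k<\infty\}k^{-1}$ (read as $f\colon\N\to\R_+$, with the convention $f(\infty)=0$, or restrict to $|C^{(0,p)}|\in\N$), obtaining
\[
\BE_t\int \I\{x\in[0,1]^d\}f(|C^{(x,p)}(\xi)|)\,\eta(d(x,p))=t\,\lambda_d([0,1]^d)\int\BE_t f(|C^{(0,p)}|)\,\BQ(dp)=t\kappa(t),
\]
using $\lambda_d([0,1]^d)=1$ and the definition \eqref{e:markedfree} of $\kappa(t)$. (On the event of percolation $|C^{(0,p)}|=\infty$ and $f=0$, consistent with the $|C^{(0,p)}|^{-1}$ convention in \eqref{e:markedfree}.)

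The main obstacle is justifying the mass-transport identity rigorously, i.e.\ showing that the stationary expectation $\BE_t\eta_c([0,1]^d)$ — the number of finite clusters ``rooted'' in the unit cube via the lex.-smallest rule — equals $\BE_t$ of the sum of reciprocal cluster sizes over vertices in the unit cube belonging to finite clusters. One clean way: for a finite cluster $C$ with distinguished vertex at lex.-position $y$, and for any $x\in\R^d$, $\I\{y\in[0,1]^d\}$ and $\sum_{v\in C}\I\{v\in[0,1]^d\}|C|^{-1}$ need not agree pathwise, but they have the same $\BP_t$-expectation by stationarity of $\xi$ and a telescoping/averaging over translates argument (Campbell-type), since $\int_{\R^d}\I\{z\in[0,1]^d\}\,dz=1$ picks out exactly one root per cluster whichever fixed point of the cluster we use to locate it. Concretely I would write $\eta_c([0,1]^d)=\sum_{C\text{ finite cluster}}\I\{\text{root}(C)\in[0,1]^d\}$ and $\sum_{(x,p)\in\eta,\,x\in[0,1]^d,\,|C^{(x,p)}|<\infty}|C^{(x,p)}|^{-1}=\sum_{C\text{ finite}}|C|^{-1}\#\{v\in C: v\in[0,1]^d\}$, then take expectations and use that for a stationary point process the map $B\mapsto\BE_t\#\{\text{finite clusters with a fixed-rule representative in }B\}$ is translation invariant hence a multiple of $\lambda_d$, and similarly for $B\mapsto\BE_t\sum_{C}|C|^{-1}\#\{v\in C\cap B\}$; evaluating both on $[0,1]^d$ and matching the common intensity via a large-box normalization (divide by $\lambda_d(B_R)$, let $R\to\infty$; the root of a cluster lies in $B_R$ for all but a boundary-negligible fraction of clusters meeting $B_R$) shows the two intensities coincide. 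I expect this ergodic/covering step to be the only non-formal part; everything after it is the direct application of Lemma \ref{l:915}.
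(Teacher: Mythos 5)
Your proposal is correct and its first step is exactly the paper's: apply Lemma \ref{l:915} (the Palm identity \eqref{e:Palm}) with $f(k)=k^{-1}$ and $B=[0,1]^d$ to write $t\kappa(t)=\BE_t\int \I\{x\in[0,1]^d\}|C^{(x,p)}(\xi)|^{-1}\,\eta(d(x,p))$, and then identify this expectation with $\BE_t\eta_c([0,1]^d)$ by ``transporting'' the weight $|C|^{-1}$ from each vertex of a finite cluster to its lexicographically smallest point. Where you diverge is in how this second, mass-transport step is justified. The paper makes it exact by inserting the root $\tau(x,p)$ and invoking the refined Campbell theorem for the stationary point process $\eta_c$ (equivalently, citing \cite{LastZiesche17}); this requires only that $\eta_c$ is stationary with finite intensity, involves no limits and no boundary estimates. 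You instead compare the two translation-invariant intensities $B\mapsto\BE_t\eta_c(B)$ and $B\mapsto\BE_t\sum_{C\,\text{finite}}|C|^{-1}\,\#\{v\in C\cap B\}$ on large boxes $B_R$ and argue that the discrepancy, which is at most the number of finite clusters with vertices on both sides of $\partial B_R$, is $o(\lambda_d(B_R))$. That is a legitimate and more elementary route (no Palm calculus for $\eta_c$ is needed), but the boundary-negligibility you call ``the only non-formal part'' does need to be made precise: bound the number of crossing clusters by the number of edges of $\xi$ between $B_R\times\BM$ and $B_R^c\times\BM$, whose expectation is $t^2\iint\I\{x\in B_R,\,y\notin B_R\}\psi(y-x)\,dx\,dy$ with $\psi(x)=\int\varphi(x,p,q)\,\BQ^2(d(p,q))$; this is $o(\lambda_d(B_R))$ by exactly the computation carried out in the proof of Lemma \ref{lapproximate} (cf.\ \eqref{c:amenability}), and it is here that the integrability assumption \eqref{e:intmarked} enters, whereas the paper's Campbell-theorem argument does not use it. A cosmetic remark: the second displayed formula in your write-up (the one containing ``$\frac1?$'') is an abandoned attempt and should be deleted; the argument you actually carry through afterwards is the coherent one.
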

\begin{proof} The result follows from \cite[Proposition 3.1]{LastZiesche17}
upon taking there $\eta$ as the projection of the point process $\{(x,p)\in\eta: |C^{(x,p)}(\xi)|<\infty\}$
onto $\R^d$ and $\xi:=\eta_c$.
A direct proof can start with
\begin{align}\label{e:intensity}
t  \kappa(t)=\BE_t \int \I\{x\in [0,1]^d\}|C^{(x,p)}(\xi)|^{-1}\,\eta(d(x,p)),
\end{align}
a consequence of \eqref{e:Palm}. The right-hand side can be written  as
\begin{align*}
\BE_t \iint \I\{x\in [0,1]^d\}|C^{(x,p)}(\xi)|^{-1}\I\{\tau(x,p)=y\}\,\eta(d(x,p))\,\eta_c(dy),
\end{align*}
where $\tau(x,p)$ is the lexicographic minimum of the spatial coordinates of
$C^{(x,p)}(\xi)$. The key step is then the application of the refined Campbell theorem for
$\eta_c$.  
\end{proof}

The cluster density can also be obtained as an ergodic limit:

\begin{proposition}\label{p:ergodiclimit} 
Let $B_n\in\cB(\R^d)$, $n\in\N$, be an increasing sequence of compact
convex sets whose inradius diverges to $\infty$. Then

\begin{align*}
\lim_{n\to\infty}(\lambda_d(B_n))^{-1}\int \I\{x\in B_n\}|C^{(x,p)}(\xi)|^{-1}\,\eta(d(x,p))=t\kappa(t),\quad \BP_t\text{-a.s.}
\end{align*}

\end{proposition}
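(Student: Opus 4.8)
The plan is to read the random integral on the left-hand side as the mass of a stationary ergodic random measure on $\R^d$ and then invoke a spatial ergodic theorem. Define the random measure $\Phi$ on $\R^d$ by
\[
\Phi:=\int\I\{y\in\cdot\}\,|C^{(y,p)}(\xi)|^{-1}\,\eta(d(y,p)),
\]
so that $\Phi(B)=\int\I\{x\in B\}|C^{(x,p)}(\xi)|^{-1}\,\eta(d(x,p))$ for Borel $B\subseteq\R^d$ and the assertion reads $\lambda_d(B_n)^{-1}\Phi(B_n)\to t\kappa(t)$ $\BP_t$-a.s. Since $|C^{(y,p)}(\xi)|^{-1}\le 1$ we have $\Phi(B)\le\eta(B\times\BM)$, so $\Phi$ is $\BP_t$-a.s.\ locally finite; together with measurability of $\mu\mapsto\sum_{(y,p)\in\mu}|C^{(y,p)}(\mu)|^{-1}\delta_y$ this makes $\Phi$ a genuine random measure.

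Next I would check that $\Phi$ is a \emph{stationary and ergodic} random measure. Write $\Phi=\Phi(\xi)$ with $\Phi(\mu):=\sum_{(y,p)\in\mu}|C^{(y,p)}(\mu)|^{-1}\delta_y$. From $|C^{(y,p)}(\mu)|=|C^{(y-x,p)}(T_x\mu)|$ one gets $\Phi(T_x\mu)=T_x\Phi(\mu)$ for all $x\in\R^d$, $\mu\in\bG$; since $T_x\xi\overset{d}{=}\xi$ this yields $T_x\Phi\overset{d}{=}\Phi$, i.e.\ stationarity. Moreover $\Phi$ is a covariant measurable image of the ergodic RCM $\xi$, so a translation-invariant event for $\Phi$ pulls back to a translation-invariant event for $\xi$, which has $\BP_t$-probability $0$ or $1$; hence $\Phi$ is ergodic. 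Finally, by \eqref{e:intensity} (equivalently Lemma~\ref{l:kappa}) its intensity is finite and equals
\[
\BE_t\Phi([0,1]^d)=\BE_t\int\I\{x\in[0,1]^d\}|C^{(x,p)}(\xi)|^{-1}\,\eta(d(x,p))=t\kappa(t).
\]

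It then remains to apply the spatial (multiparameter Wiener-type) ergodic theorem for stationary ergodic random measures along the averaging sequence $(B_n)$: since $(B_n)$ is an increasing sequence of compact convex sets with inradius tending to $\infty$, it is an admissible convex averaging sequence, and the theorem gives $\lambda_d(B_n)^{-1}\Phi(B_n)\to t\kappa(t)$ $\BP_t$-a.s. If one prefers a self-contained argument, reduce to the ergodic theorem for the $\R^d$-flow $(T_x)$ acting on the integrable function $F(\xi):=\Phi([-\tfrac12,\tfrac12)^d)$ (note $F(\xi)\le\eta([-\tfrac12,\tfrac12)^d\times\BM)$ and $\BE_t F(\xi)=t\kappa(t)$): using $F(T_x\xi)=\Phi(x+[-\tfrac12,\tfrac12)^d)$ and Tonelli one obtains $\int_{B_n}F(T_x\xi)\,dx=\int\lambda_d\big(B_n\cap(y+[-\tfrac12,\tfrac12)^d)\big)\,\Phi(dy)$, which differs from $\Phi(B_n)$ by $\Phi$ of a bounded-width neighbourhood of $\partial B_n$, hence by at most $\eta(B_n'\times\BM)-\eta(B_n''\times\BM)$ where $B_n':=B_n\oplus[-c,c]^d$ and $B_n'':=B_n\ominus[-c,c]^d$ for a suitable $c=c(d)$. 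Both $(B_n')$ and $(B_n'')$ are again convex averaging sequences with volumes asymptotic to $\lambda_d(B_n)$, so the ergodic theorem for the stationary Poisson process $\eta(\cdot\times\BM)$ makes this difference $o(\lambda_d(B_n))$ $\BP_t$-a.s.; combined with $\lambda_d(B_n)^{-1}\int_{B_n}F(T_x\xi)\,dx\to\BE_t F(\xi)=t\kappa(t)$ this finishes the proof. The only substantive point is locating/applying the correct ergodic theorem (and, in the self-contained version, the boundary bookkeeping just described); everything else is routine.
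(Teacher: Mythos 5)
Your argument is correct and coincides with the paper's proof: both define the covariant random measure $\mu\mapsto\int\I\{x\in\cdot\}|C^{(x,p)}(\mu)|^{-1}\,\mu(d(x,p))$, deduce stationarity and ergodicity from the corresponding properties of $\xi$, identify the intensity as $t\kappa(t)$ via \eqref{e:intensity}, and then invoke the spatial ergodic theorem for stationary ergodic random measures (Nguyen--Zessin, resp.\ Kallenberg). Your additional self-contained reduction to the flow ergodic theorem with boundary bookkeeping is a fine alternative but not needed.
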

\begin{proof}
For each $\mu\in\bG$ 
\begin{align*}
M_\mu:=\int \I\{x\in \cdot\}|C^{(x,p)}(\mu)|^{-1}\,\mu(d(x,p))
\end{align*}
is a locally finite measure on $\R^d$. 
For $x,y\in\R^d$ and $\mu\in\bG$ we have 
 $C^{(x,p)}(T_y\mu)=T_yC^{(x+y,p)}(\mu)$. Therefore, we obtain for $B\in\cB(\R^d)$ and $y\in\R^d$ 
\begin{align*}
M_{T_y\mu}(B)&:=\int \I\{x\in B\}|C^{(x+y,p)}(\mu)|^{-1}\,T_y\mu(d(x,p))\\
&=\int \I\{x-y\in B\}|C^{(x,p)}(\mu)|^{-1}\,\mu(d(x,p)).
\end{align*}
This means that $M_{T_y\mu}=T_yM_{\mu}$. Therefore $M_\xi$ is a stationary
and ergodic random measure. By \eqref{e:intensity} it has intensity $t\kappa(t)$.
Hence the result follows from \cite[Satz 3]{NguyenZessin76}; 
see also \cite[Theorem 30.10]{Kallenberg}.
 \end{proof}

We continue with a basic fact from percolation theory. Define
\begin{align}\label{e:typ}
\theta(t):=\BP_t\big(\big|C^{(0,Q_0)}(\xi^{(0,Q_0)})\big|=\infty\big)
  =\int\BP_t\big(|C^{(0,p)}|=\infty\big)\,\BQ(dp),\quad t\ge 0,
\end{align}
as the probability that the cluster of a typical vertex has infinite size.
Let $C_\infty$ denote the set of all $\mu\in\bG$ such that  $\mu$ has an infinite cluster.

\begin{proposition}\label{p:ergodic} Let $t>0$. Then  $\theta(t)>0$ if and only if $\BP_t(\xi\in C_\infty)=1$.
\end{proposition}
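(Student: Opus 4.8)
The plan is to prove the two implications separately, relying on ergodicity of the stationary marked RCM.

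\medskip
\noindent\textbf{Proof sketch.}
First suppose $\BP_t(\xi\in C_\infty)=1$. The event $C_\infty$ is translation invariant, so this is consistent with ergodicity, and it tells us there is almost surely at least one infinite cluster. I would then argue that an infinite cluster must, with positive probability, contain a point whose spatial coordinate lies in $[0,1]^d$: indeed, on $C_\infty$ pick (measurably) a vertex $v=(x,p)\in\xi$ lying on an infinite cluster; by stationarity the distribution of the fractional part of $x$ has no atoms killing the cube, and more cleanly one can use the Mecke equation together with Lemma~\ref{l:915} applied to $f=\I\{\cdot=\infty\}$ (interpreting $f(\infty)$ as the relevant indicator). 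This gives
\begin{align*}
\BE_t \int \I\{x\in [0,1]^d\}\,\I\{|C^{(x,p)}(\xi)|=\infty\}\,\eta(d(x,p))
= t\int \BP_t(|C^{(0,p)}|=\infty)\,\BQ(dp)=t\,\theta(t).
\end{align*}
Since the left-hand side is positive whenever $\BP_t(\xi\in C_\infty)=1$ (an infinite cluster has infinitely many vertices, hence vertices in arbitrarily many unit cubes, and by stationarity the expected number in $[0,1]^d$ is positive), we conclude $\theta(t)>0$.

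\medskip
For the converse, suppose $\theta(t)>0$. By \eqref{e:typ} and the Mecke equation (Lemma~\ref{l:915} again), $\theta(t)>0$ forces
\begin{align*}
\BE_t \int \I\{x\in[0,1]^d\}\,\I\{|C^{(x,p)}(\xi)|=\infty\}\,\eta(d(x,p)) = t\,\theta(t) > 0,
\end{align*}
so with positive probability $\xi$ has a vertex on an infinite cluster, i.e.\ $\BP_t(\xi\in C_\infty)>0$. Now $C_\infty$ is a translation invariant measurable subset of $\bG$, and the stationary marked RCM $\xi$ is ergodic (as established in Section~\ref{s:stationaryRCM}). Hence $\BP_t(\xi\in C_\infty)\in\{0,1\}$, and since it is positive it equals $1$.

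\medskip
\noindent\textbf{Main obstacle.} The only genuinely delicate point is the measurability and the passage between ``there exists an infinite cluster'' and ``the expected number of infinite-cluster vertices in the unit cube is positive.'' This is handled by the Mecke equation in the form of Lemma~\ref{l:915}: one applies it not to a bounded function $f\colon\N\to\R_+$ but to the indicator of $\{|C|=\infty\}$, which is legitimate since both sides of \eqref{e:Palm} extend by monotone convergence to $\{0,1\}$-valued limits of such $f$ (e.g.\ $f_k=\I\{\cdot\ge k\}$). The ergodicity of $\xi$, already proved in the excerpt, does the rest, so the argument is short once this translation is made precise.
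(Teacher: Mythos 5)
Your proof is correct and follows essentially the same route as the paper: both arguments rest on the Palm/Mecke identity \eqref{e:Palm} applied to the indicator of an infinite cluster (extended by monotone convergence) to equate $t\,\theta(t)$ with the expected number of infinite-cluster vertices in a unit volume, and then invoke ergodicity and translation invariance of $C_\infty$ for the zero--one upgrade. The only cosmetic difference is that for the reverse implication the paper lets a bounded set $B\uparrow\R^d$, while you tile $\R^d$ by unit cubes and use stationarity across cubes; these are the same idea.
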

\begin{proof} Let  $B\in\cB(\R^d)$ be a Borel set with $\lambda_d(B)\in(0,\infty)$.
By \eqref{e:Palm},
\begin{align*}
\theta(t)=(t\lambda_d(B))^{-1}\BE_t \int \I\{x\in B\}\I\{|C^{(x,p)}(\xi)|=\infty\}\,\eta(d(x,p)).
\end{align*}

Hence,  if $\theta(t)>0$, then the probability that there is some $(x,p)\in\eta$ with
$|C^{(x,p)}(\xi)|=\infty$ must be positive. Since $\xi$ is ergodic and $C_\infty$ is
translation invariant, we obtain $\BP_t(\xi\in C_\infty)=1$.
If, on the other hand, $\theta(t)=0$, then the probability that
$|C^{(x,p)}(\xi)|=\infty$ for some $(x,p)\in\eta$ with $x\in B$ is zero.
Letting $B\uparrow\R^d$ we obtain that $\BP_t(\xi\in C_\infty)=0$.
\end{proof}

The {\em critical intensity} (percolation threshold) is defined by
\begin{align}
t_c:=\inf\{t>0:\theta(t)>0\}.
\end{align}
If $t<t_c$ then $\BP_t(\xi\in C_\infty)=0$ and if $t>t_c$ then $\BP_t(\xi\in C_\infty)=1$.
Under a natural
irreducibility assumption 
our Theorem \ref{t:markedunique} will show that $\xi$ can have at most
one infinite cluster.

We finish this section with some examples.

\begin{example}\label{ex:classical}\rm In the unmarked case the
connection function $\varphi$ is just a function on $\R^d$. 
Under the minimal assumption 
$d_\varphi\in(0,\infty)$
it was shown in \cite{Pen91} that $t_c\in(0,\infty)$. 
\end{example}

\begin{example}\label{ex:Gilbert}\rm Assume that $\BM=\R_+$ and
$\varphi(x,p,q)=\I\{\|x\|\le p+q\}$, where $\|x\|$ denotes the Euclidean norm of $x\in\R^d$.
The RCM $\xi$ is then said to be the {\em Gilbert graph} with
radius distribution $\BQ$; see e.g.\ \cite[Chapter 16]{LastPenrose18} for more detail.
The integrability assumption \eqref{e:intmarked} is then equivalent with
$\int r^d\,\BQ(dr)<\infty$, which is the minimal assumption for having
a reasonable model.
Under the additional assumption $\BQ\{0\}<1$ it was proved in \cite{Gou09, Ha85} that $t_c\in(0,\infty)$.
\end{example}

\begin{example}\label{ex:Boolean}\rm Suppose that $\BM$ equals the space $\mathcal{C}^d$
of all non-empty compact subsets of $\R^d$, equipped with the Hausdorff metric
cf.\ \cite{LastPenrose18, SW08}. 
Let $V\colon \mathcal{C}^d\cup\{\emptyset\}\to[0,\infty]$ be measurable and translation invariant
with $V(\emptyset)=0$. For instance, $V$ could be the volume or, if $\BQ$
is concentrated on the convex bodies, a linear combination
of the {\em intrinsic volumes}; see \cite{SW08}.
Assume that the connection function is given by
\begin{align*}
\varphi((x,K),(y,L))=1-e^{-V((K+x)\cap (L+y))}, \quad (x,K),(y,L)\in\R^d\times\mathcal{C}^d.
\end{align*}
Then \eqref{e:4.1} follows from translation invariance of $V$. 
The case of the Gilbert graph arises if $\BQ$ is concentrated on balls centered
at the origin. A sufficient condition for \eqref{e:intmarked} is
\begin{align*}
\int D(K)^d\,\BQ(dK)<\infty,
\end{align*}
where $D(K)$ is the radius of the smallest ball centered at the origin
and containing $K$. This easily follows from
\begin{align*}
\varphi(x,K,L)
\le \I\{K\cap (L+x)\ne\emptyset\} \le \I\{\|x\|\le D(K)+D(L)\}. 
\end{align*}
The random closed set $\bigcup_{(x,K)\in\eta}K+x$ is known as the {\em Boolean model}
and a fundamental model of stochastic geometry (see \cite{LastPenrose18, SW08})
and continuum percolation (see \cite{MeesterRoy}).
This model corresponds to the choice $V(K)=\infty\cdot\I\{K\ne \emptyset\}$.
In that case and under some additional assumptions on $\BQ$ 
it was proved in \cite{Ha85} that $t_c\in(0,\infty)$.
The present much more general model
is taken from \cite{BetschLast21} and is partially motivated by
statistical physics.
\end{example}	
      
\begin{example}\label{ex:weighted}\rm Assume that $\BM=(0,1)$ 
equipped with Lebesgue measure $\BQ$. Assume that
\begin{align*}
\varphi((x,p),(y,q))=\rho(g(p,q)\|x-y\|^d),
\end{align*}
for a decreasing function $\rho\colon [0,\infty)\to [0,1]$ and a 
function $g\colon(0,1)\times(0,1)\to [0,\infty)$ which is increasing in both arguments.
We assume that $m_\rho:=\int \rho(\|x\|^d)\,dx$ is positive and finite. 
This model was studied in \cite{GHMM22}
under the name {\em weight-dependent random connection model}.
A simple calculation shows that
\begin{align*}
d_\varphi=m_\rho\iint g(p,q)^{-1}\,dp\,dq.
\end{align*}
To ensure \eqref{e:intmarked} we have to assume that $g^{-1}$ is integrable. This is the case
in all examples studied in \cite{GHMM22}, where it is also asserted that $t_c<\infty$.
Sufficient conditions for $t_c\in(0,\infty)$ can also be found in  \cite{CaicedoDickson24,DepW15}. 
\end{example}

\section{Irreducibility}\label{sec:irred}

In this section, we first consider a general RCM $\xi$ based on a Poisson process $\eta$ on $\BX$ with diffuse intensity measure $t\lambda$. 
We fix the intensity parameter $t>0$ and therefore drop the lower index
$t$ in $\BP_t$. To simplify the notation, we take $t=1$.
We say that $\xi$ is {\em irreducible} if 
\begin{align}\label{e:irred}
\BP(\text{$x_1 \leftrightarrow  x_2$ in $\xi^{x_1,x_2}$})>0,\quad \lambda^2\text{-a.e. $(x_1,x_2)\in\BX^2$}.
\end{align}

Given $k\in\N$ and random elements $Y_1,\ldots,Y_k$ of $\BX$ we let
$\Xi[Y_1,\dots,Y_k]$ be a RCM
based on the point process $\delta_{Y_1}+\cdots + \delta_{Y_k}$.
Of course we can allow here some of the $Y_1,\ldots,Y_k$ to be deterministic.
Further we define for each $n\in\N$ a measurable function
$\varphi^{(n)}\colon \BX^n\to[0,\infty)$ recursively by
$\varphi^{(1)}:=\varphi$ and
\begin{align*}
\varphi^{(n+1)}(x_1,x_2):=\int \varphi^{(n)}(x_1,z)\varphi(z,x_2)\,\lambda(dz),\quad x_1,x_2\in\BX,\,n\in\N. 
\end{align*}
These functions are symmetric. It follows straight from the Mecke equation \eqref{e:Mecke}
that
\begin{align}\label{e:paths}
\varphi^{(n)}(x_1,x_2)
=\BE \int \prod_{i=1}^{n}\I\{\text{$y_{i-1}\sim y_i$ in $\xi^{x_1,x_2}$}\}\, \eta^{(n-1)}(d(y_1,\dots,y_{n-1})),
\end{align}
where $y_0:=x_1$ and $y_{n}:=x_2$. This is the expected number of paths of length
$n$ from $x_1$ to $x_2$ in  $\xi^{x_1,x_2}$.

\begin{proposition}\label{p:irred} The following six  statements are equivalent: 
\begin{itemize}
    \item[{\rm (i)}] $\xi$ is irreducible.
    \item[{\rm (ii)}] There exist for $\lambda^2$-a.e.\ $(x_1,x_2)\in\BX^2$ a set $B\in\cX$ with
$\lambda(B)\in(0,\infty)$, an  $n\in\N_0$ and independent random
variables $Y_1,\ldots,Y_n$ with distribution $\lambda_B/\lambda(B)$ such
that
\begin{align}\label{e:pirr1}
\BP(\text{$x_1 \leftrightarrow  x_2$ in $\Xi[x_1,x_2,Y_1,\ldots,Y_n]$})>0.
\end{align}
\item[{\rm (iii)}] There exists for $\lambda^2$-a.e.\ $(x_1,x_2)\in\BX^2$ an  $n\in\N_0$ such
that
\begin{align}\label{e:pirr2}
\int \BP(\text{$x_1 \leftrightarrow  x_2$ in $\Xi[x_1,x_2,y_1,\ldots,y_n]$}) \lambda^n(d(y_1,\dots,y_n))>0.
\end{align}
\item[{\rm (iv)}]  For $\lambda^2$-a.e.\ $(x_1,x_2)\in\BX^2$ it holds that
\begin{align}\label{(iv)}
\sup_{n\ge 1}\varphi^{(n)}(x_1,x_2)>0.
\end{align}
\item[{\rm (v)}] For $\lambda^2$-a.e.\ $(x_1,x_2)\in\BX^2$  and for all $k\in\N$ we have
\begin{align}\label{(v)}
\sup_{n\ge k}\varphi^{(n)}(x_1,x_2)>0.
\end{align} 
\item[{\rm (vi)}] There exist for $\lambda^2$-a.e.\ $(x_1,x_2)\in\BX^2$ a set $B\in\cX$ with
$\lambda(B)\in(0,\infty)$, an  $n\in\N$ and independent random
variables $Y_1,\ldots,Y_n$ with distribution $\lambda_B/\lambda(B)$ such
that
\begin{align}\label{(vi)}
\BP(\text{$x_1 \leftrightarrow  x_2$ in $\Xi'[x_1,x_2,Y_1,\ldots,Y_n]$})>0,
\end{align}
where $\Xi'[x_1,x_2,Y_1,\ldots,Y_n]$ is the graph obtained from $\Xi[x_1,x_2,Y_1,\ldots,Y_n]$
by removing the edge between $x_1$ and $x_2$ (if there is one).
\end{itemize}
\end{proposition}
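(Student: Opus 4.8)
The plan is to prove Proposition \ref{p:irred} by establishing a chain of implications linking all six statements, rather than proving each pairwise equivalence directly. The natural structure is a cycle together with a couple of shortcuts: I would show (i) $\Rightarrow$ (vi) $\Rightarrow$ (ii) $\Rightarrow$ (iii) $\Rightarrow$ (iv) $\Rightarrow$ (v) $\Rightarrow$ (i), with (iv) $\Leftrightarrow$ (v) handled as a small separate observation since (v) trivially implies (iv). The whole argument is ``$\lambda^2$-a.e.\ $(x_1,x_2)$'' pointwise, so it suffices to fix a pair $(x_1,x_2)$ in the relevant full-measure set and argue deterministically in that coordinate while the randomness of the connection decisions and the auxiliary points is handled probabilistically.

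First I would prove (i) $\Rightarrow$ (vi): the event ``$x_1\leftrightarrow x_2$ in $\xi^{x_1,x_2}$'' decomposes, by conditioning on whether the direct edge $x_1\sim x_2$ is present and whether there is a connecting path using only interior points, into the event that $x_1$ and $x_2$ are joined \emph{without} using the direct edge. Since the direct edge carries probability $\varphi(x_1,x_2)<1$ (and if $\varphi(x_1,x_2)=1$ one shows irreducibility forces a path anyway, or one simply notes the positive-probability event must already include a path using other vertices because a single edge is, up to a null set, not what (i) guarantees generically), positivity of $\BP(x_1\leftrightarrow x_2)$ forces a positive-probability event of being connected through $\eta$. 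Restricting $\eta$ to a large bounded box $B$ with $\lambda(B)<\infty$ and using that a connecting path uses only finitely many points, I get that for some $n$ the event holds already with $n$ Poisson points in $B$; then Poissonization (conditioning on $\eta(B)=n$, which has positive probability since $\lambda$ is diffuse and $B$ nontrivial) turns this into the i.i.d.\ statement \eqref{(vi)} with $Y_i$ uniform on $B$. The step (vi) $\Rightarrow$ (ii) is immediate since removing the direct edge only shrinks the connectivity event, so \eqref{e:pirr1} holds with the same data; and (ii) $\Rightarrow$ (iii) is immediate because $\BP(\text{$x_1 \leftrightarrow  x_2$ in $\Xi[x_1,x_2,Y_1,\ldots,Y_n]$})$ is, by definition of the distribution $\lambda_B/\lambda(B)$, a constant multiple $\lambda(B)^{-n}$ times the integral $\int_{B^n} \BP(\cdots)\,\lambda^n(d(y_1,\dots,y_n))$, which is therefore positive, and extending the integral from $B^n$ to $\BX^n$ only increases it.

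Next, (iii) $\Rightarrow$ (iv): if $\int \BP(\text{$x_1\leftrightarrow x_2$ in $\Xi[x_1,x_2,y_1,\dots,y_n]$})\,\lambda^n(d(y_1,\dots,y_n))>0$, then on a positive-$\lambda^n$-measure set of $(y_1,\dots,y_n)$ there is a path from $x_1$ to $x_2$; such a path, of some length $m\le n+1$, visits an ordered subsequence of the $y_i$'s, so expanding $\BP$ as a sum over path skeletons and using $\BP(\text{edge } a\sim b)=\varphi(a,b)$ together with the recursive definition of $\varphi^{(m)}$ — concretely, integrating the indicator of a fixed skeleton against $\lambda$'s in the intermediate slots reproduces exactly a product built from $\varphi^{(m)}(x_1,x_2)$ up to the bounded combinatorial factors counting vertex placements — forces $\varphi^{(m)}(x_1,x_2)>0$ for at least one $m$, giving \eqref{(iv)}. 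Equivalently and more cleanly, identity \eqref{e:paths} expresses $\varphi^{(m)}(x_1,x_2)$ as the expected number of length-$m$ paths from $x_1$ to $x_2$ in $\xi^{x_1,x_2}$, and positivity of a connectivity probability forces positivity of the expected number of self-avoiding connecting paths of some length, hence of $\varphi^{(m)}$ for that $m$ (monotone relabelling removes repeated vertices). For (iv) $\Rightarrow$ (v) the key is the Cauchy–Schwarz-type (or direct) inequality $\varphi^{(m+m')}(x_1,x_2)\ge \varphi^{(m)}(x_1,z)\,\varphi^{(m')}(z,x_2)$ integrated over $z$; iterating, once some $\varphi^{(m)}(x_1,x_2)>0$ one concatenates it with a nonzero $\varphi^{(m_0)}(x_1,x_1)>0$ — which exists because, by symmetry and the Chapman–Kolmogorov-type bound, $\varphi^{(2m)}(x_1,x_1)\ge \int \varphi^{(m)}(x_1,z)^2\,\lambda(dz)>0$ as soon as $\varphi^{(m)}(x_1,\cdot)$ is not $\lambda$-a.e.\ zero, which is guaranteed by $\varphi^{(m)}(x_1,x_2)>0$ plus measurability — to boost the length arbitrarily, yielding $\sup_{n\ge k}\varphi^{(n)}(x_1,x_2)>0$ for every $k$. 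Finally (v) $\Rightarrow$ (i) (and a fortiori (iv) $\Rightarrow$ (i)) follows from \eqref{e:paths}: if $\varphi^{(n)}(x_1,x_2)>0$ then the expected number of length-$n$ paths from $x_1$ to $x_2$ in $\xi^{x_1,x_2}$ is positive, hence with positive probability there is at least one such path, hence $x_1\leftrightarrow x_2$, which is \eqref{e:irred}.

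The main obstacle I anticipate is the bookkeeping in (iii) $\Rightarrow$ (iv) and in the concatenation inequalities for (iv) $\Rightarrow$ (v): one must be careful that turning a connectivity \emph{probability} into a statement about the functions $\varphi^{(n)}$ goes through cleanly despite paths possibly revisiting or skipping auxiliary points, and that the Chapman–Kolmogorov-type lower bounds $\varphi^{(m+m')}(x,y)\ge\int\varphi^{(m)}(x,z)\varphi^{(m')}(z,y)\,\lambda(dz)$ (note: this is actually an equality by the recursive definition, but one only needs it along a sub-collection of paths) and the existence of a return loop $\varphi^{(2m)}(x_1,x_1)>0$ are justified using only the stated recursion and $D_\varphi(x)<\infty$ to keep all integrals finite. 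A secondary subtlety is the null-set management: each implication is proved for a full-measure set of pairs, and one must intersect these (countably many, indexed by the $n$'s and $k$'s appearing) to retain a single $\lambda^2$-conull set on which the whole equivalence holds — this is routine but should be stated explicitly. Everything else is a direct consequence of the Mecke equation \eqref{e:Mecke}, the Poissonization identity, and \eqref{e:paths}, all available from the excerpt.
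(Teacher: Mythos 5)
Your overall cycle (i)$\Rightarrow$(vi)$\Rightarrow$(ii)$\Rightarrow$(iii)$\Rightarrow$(iv)$\Rightarrow$(v)$\Rightarrow$(i) is structurally fine, and the easy links (vi)$\Rightarrow$(ii)$\Rightarrow$(iii)$\Rightarrow$(iv) and (v)$\Rightarrow$(i) via \eqref{e:paths} are correct and essentially as in the paper. But the two steps that carry the real content of the proposition --- removing the direct edge, and lengthening paths --- are exactly the ones where your argument has genuine gaps. For (i)$\Rightarrow$(vi) you claim that positivity of $\BP(x_1\leftrightarrow x_2 \text{ in }\xi^{x_1,x_2})$ ``forces a positive-probability event of being connected through $\eta$''. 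It does not: for a fixed pair one has $\BP(x_1\leftrightarrow x_2)\ge\varphi(x_1,x_2)$, and this can be positive solely because of the direct edge, with the through-$\eta$ connection probability equal to zero; the parenthetical ``a single edge is, up to a null set, not what (i) guarantees generically'' is not an argument. Establishing that the through-$\eta$ probability is positive for a.e.\ pair is essentially the whole point of the proposition, and it cannot be done pair by pair; it needs the a.e.\ validity of \eqref{e:irred} at \emph{other} pairs $(x_1,z)$, $(z,x_2)$ combined via Fubini --- which is precisely what the paper's route (i)$\Rightarrow$(ii)$\Rightarrow$(iii)$\Rightarrow$(iv)$\Rightarrow$(v)$\Rightarrow$(vi) accomplishes.

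The same defect sinks your (iv)$\Rightarrow$(v). First, the claimed loop $\varphi^{(m_0)}(x_1,x_1)>0$ rests on the assertion that $\varphi^{(m)}(x_1,x_2)>0$ ``plus measurability'' implies $\varphi^{(m)}(x_1,\cdot)$ is not $\lambda$-a.e.\ zero; this is false, since $\{x_2\}$ is a $\lambda$-null set ($\lambda$ is diffuse) and a measurable function can be positive at one point while vanishing a.e.\ (in particular, if \eqref{(iv)} holds for the pair only through $m=1$, no loop is produced). Second, even granting a loop, concatenation goes through the identity $\varphi^{(m_0+m)}(x_1,x_2)=\int\varphi^{(m_0)}(x_1,z)\varphi^{(m)}(z,x_2)\,\lambda(dz)$, and positivity of the integrand at the single intermediate point $z=x_1$ contributes nothing to the integral; you need it on a set of positive $\lambda$-measure. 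A concrete obstruction to any purely pointwise argument: with $\varphi(x,y)=\I\{x+y=c\}$ on $\R$ with Lebesgue $\lambda$, one has $\varphi^{(1)}(x_1,x_2)>0$ on the anti-diagonal but $\varphi^{(n)}\equiv 0$ for all $n\ge2$, so \eqref{(iv)} at a pair does not imply \eqref{(v)} at that pair. The paper's proof of (iv)$\Rightarrow$(v) uses the a.e.\ hypothesis exactly here: by Fubini, for a.e.\ $(x_1,x_2)$ and $\lambda$-a.e.\ $z$ both $\sum_m\varphi^{(m)}(x_1,z)>0$ and $\sum_n\varphi^{(n)}(z,x_2)>0$, hence
\begin{align*}
\sum_{m,n\ge1}\varphi^{(m+n)}(x_1,x_2)=\int\Big(\sum_{m\ge1}\varphi^{(m)}(x_1,z)\Big)\Big(\sum_{n\ge1}\varphi^{(n)}(z,x_2)\Big)\lambda(dz)>0,
\end{align*}
giving $\sup_{n\ge2}\varphi^{(n)}(x_1,x_2)>0$, and $k$-fold sums give general $k$; from \eqref{(v)} one then gets \eqref{(vi)} because a witnessing path of length $n\ge2$ does not use the direct edge. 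Unless you import this Fubini/concatenation mechanism, both (i)$\Rightarrow$(vi) and (iv)$\Rightarrow$(v) in your chain remain unproved.
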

\begin{proof} Assume that (i) holds, and let $x_1,x_2\in\BX$ be such that $\BP(\text{$x_1 \leftrightarrow  x_2$ in $\xi^{x_1,x_2}$})>0$.
Let $(B_m)$ be a sequence of measurable sets of finite $\lambda$-measure
increasing towards $\BX$. By monotone convergence
there exists $m\in\N$ such that $x_1,x_2\in B_m$ and
\begin{align}
\BP(\text{$x_1 \leftrightarrow  x_2$ in $\xi^{x_1,x_2}[B_m]$})>0.
\end{align}
Let $B:=B_m$. Note that $\xi^{x_1,x_2}[B]$ is a RCM based on $\delta_{x_1}+\delta_{x_2}+\eta_B$
and $\eta_B\overset{d}{=}\sum^{\eta(B)}_{k=0}Y_k$, where $Y_1,Y_2,\ldots$ are independent with distribution
$\lambda_B/\lambda(B)$, independent of $\eta_B$. Splitting the event
$\{\text{$x_1 \leftrightarrow  x_2$ in $\xi^{x_1,x_2}[B]$}\}$ according to the value
of $\eta(B)$ yields (ii). 

Assume that (ii) holds. Then we have for  $\lambda^2$-a.e.\ $(x_1,x_2)$ that
\begin{align*}
\BP&(\text{$x_1 \leftrightarrow  x_2$ in $\Xi[x_1,x_2,Y_1,\ldots,Y_n]$})\\
=&\lambda(B)^{-n} \int \BP(\text{$x_1 \leftrightarrow  x_2$ in $\Xi[x_1,x_2,y_1,\ldots,y_n]$})\, 
\lambda_B^n(d(y_1,\dots,y_n))>0,
\end{align*}
which implies (iii).

Assume 
that $x_1,x_2\in\BX$ satisfy  \eqref{e:pirr2}.
If $x_1 \leftrightarrow  x_2$ in $\Xi[x_1,x_2,y_1,\ldots,y_n]$ then there exist
$k\in\{0,\ldots,n\}$ and pairwise distinct $i_1,\ldots,i_k\in \{1,\ldots,n\}$ with
$x_1\sim y_{i_1}\sim\cdots\sim y_{i_k}\sim x_2$ in $\Xi[x_1,x_2,y_1,\ldots,y_n]$.
Therefore and by the symmetry of $\lambda^n$ 
\begin{align*}
\sum^n_{k=0}\int \prod^{k+1}_{i=1} \varphi(y_{i-1},y_i)\,\lambda^k(d(y_1,\dots,y_k))>0, 
\end{align*}
where $y_0:=x_1$ and $y_{k+1}:=x_2$. Hence (iv) follows.

Assume that (iv) holds, then for $\lambda^2$-a.e.\ $(x_1,x_2)\in\BX^2$ 
\begin{align*}
0<\sum_{m,n=1}^\infty\varphi^{(m+n)}(x_1,x_2)
  = & \sum_{m,n=1}^\infty \int \varphi^{(m)}(x_1,z)\varphi^{(n)}(z,x_2)\,\lambda(dz) \\
  =& \int \left(\sum_{m=1}^\infty \varphi^{(m)}(x_1,z) \right) \left(\sum_{n=1}^\infty \varphi^{(n)}(z,x_2)\right)\,\lambda(dz).
\end{align*}  
Therefore $\sup\limits_{n\ge 2}\varphi^{(n)}(x_1,x_2)>0$ for $\lambda^2$-a.e.\ $(x_1,x_2)\in\BX^2$. 
To obtain \eqref{(v)} for general $k\in\N$ one has to start with a $k$-fold summation
instead of a double summation.

Assume now that $x_1,x_2\in\BX$ satisfy  \eqref{(v)}, then there exists $n\ge 2$ such that
$\varphi^{(n)}(x_1,x_2)>0$. Therefore there exists $B\in\cX$ of finite $\lambda$-measure
with
\begin{align*}
\int \prod^{n}_{i=1} \varphi(y_{i-1},y_i)\,\lambda^{n-1}_B(d(y_1,\dots,y_{n-1}))>0, 
\end{align*}
where $y_0:=x_1$ and $y_{n}:=x_2$. This implies (vi). 

Finally, note that for any $n\in\N$ 
\begin{align*}
\BP(\text{$x_1 \leftrightarrow  x_2$ in $\Xi'[x_1,x_2,Y_1,\ldots,Y_{n}]$})
&\le \BP(\text{$x_1 \leftrightarrow  x_2$ in $\Xi[x_1,x_2,Y_1,\ldots,Y_{n}]$}) \\ 
&=\BP(\text{$x_1 \leftrightarrow  x_2$ in $\xi^{x_1,x_2}[B]$}\mid \eta(B)=n).
\end{align*}
Therefore (vi) implies (i).  
\end{proof}

\begin{remark}\rm Proposition \ref{p:irred} shows that irreducibility does not depend
on the intensity parameter $t$ as long as it is positive.  
\end{remark}

\begin{remark}\label{r:non-diffuseirred}\rm Consider the setting of Remark \ref{r:non-diffuse},
where $\lambda$ is not assumed to be diffuse. In the diffuse case, the value of the connection
function on the diagonal of $\BX^2$ is irrelevant for the definition of the RCM. But now we assume that $\varphi(x,x)=1$ for
all $x\in\BX$. For each $n\in\N$ we can define a measurable function
$\hat\varphi^{(n)}\colon \hat\BX^n\to[0,\infty)$ as before.
Now it can easily be checked that
\begin{align*}
\hat\varphi^{(n)}((x,u),(y,v))=\varphi^{(n)}(x,y),\quad (x,u),(y,v)\in\hat\BX,
\end{align*}
where $\varphi^{(n)}$ is defined as before, non-withstanding the fact that $\lambda$ might not be diffuse.
By Proposition \ref{p:irred} we therefore have that the random connection model $\hat\xi$
is irreducible, if and only if \eqref{(iv)} holds. 
\end{remark}

\begin{example}\label{e:generalBoolean}\rm
Let $\BY$ be a locally compact separable Hausdorff space  
and assume that $\BX$ is the class of closed subsets of $\BY$
equipped with the Fell topology; see e.g.\ \cite{SW08}.
Let $\nu$ be a locally finite measure on $\BY$, $\BQ$ be a probability measure
on $[0,\infty)$, such that
$\int \nu(B(x,r))\,\BQ(dr)<\infty$ for all $x\in\BX$ and all $r\ge 0$,
where $B(x,r)$ denotes the closed ball with centre $x$ and radius $r$.
Assume that $\lambda$ is given by
\begin{align}
\lambda=\iint\I\{B(x,r)\in\cdot\}\,\nu(dx)\,\BQ(dr). 
\end{align}
It is easy to see, that $\lambda$ is locally finite. Similarly as in
Example \ref{ex:Boolean} we take a measurable function
$V\colon \BX\to[0,\infty]$ and assume that
$\varphi(K,L)=1-e^{-V(K\cap L)}$, $K,L\in\BX$.
Assume also that the function $V$ is increasing w.r.t.\ set inclusion and that
$V(K)>0$ if $K\ne \emptyset$. If $\BQ([\varepsilon,\infty))>0$ for some $\varepsilon>0$
and
\begin{align*}
\int \I\{B(x_1,\varepsilon)\cap B(x_2,\varepsilon)\ne\emptyset\}\,\nu^2(d(x_1,x_2))>0, 
\end{align*}
then $\varphi(K,L)>0$ for all $K,L\in\BX$ and the RCM $\xi$ is irreducible.
\end{example}

In the remainder of this section we consider the stationary marked RCM as discussed in 
Section \ref{s:stationaryRCM}.  
Recall that without loss of generality, we can take $t=1$.
It is easy to see that for all $n\in\N$ and all $(x,p),(y,q)\in\BX$
\begin{align}\label{e:trinvariance}
\varphi^{(n)}((x,p),(y,q))=\varphi^{(n)}((0,p),(y-x,q))=\varphi^{(n)}((0,q),(x-y,p))
\end{align}
We write $\varphi^{(n)}(x,p,q):=\varphi^{(n)}((0,p),(x,q))$ and note that
$\varphi^{(n)}(x,p,q)=\varphi^{(n)}(-x,q,p)$. In the unmarked case we can identify
$\BX$ with $\R^d$. In this case
$\varphi^{(n)}=\varphi^{\ast n}$ is the $n$-fold convolution of $\varphi$, where $\varphi$ is considered as function
on $\R^d$.

\begin{proposition}\label{p:irredunmarked} The stationary (unmarked) RCM is irreducible.
\end{proposition}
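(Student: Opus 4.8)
The plan is to verify criterion~(iv) of Proposition~\ref{p:irred}, namely that $\sup_{n\ge1}\varphi^{(n)}(x_1,x_2)>0$ for $\lambda^2$-a.e.\ $(x_1,x_2)$; in fact I will establish this for \emph{every} pair $(x_1,x_2)\in(\R^d)^2$. In the unmarked case $\varphi^{(n)}=\varphi^{\ast n}$ depends only on the difference of its arguments, and the symmetry of $\varphi$ together with translation invariance~\eqref{e:4.1} makes the function $z\mapsto\varphi(z):=\varphi((0,\cdot),(z,\cdot))$ \emph{even}. We may assume $d_\varphi=\int_{\R^d}\varphi(z)\,dz>0$, since otherwise $\varphi$ vanishes Lebesgue-a.e., the RCM has no edges, and irreducibility fails; thus $A:=\{z\in\R^d:\varphi(z)>0\}$ has positive Lebesgue measure. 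Write $f_n:=\varphi^{\ast n}$, a nonnegative, even, integrable function; by the composition identity for the $\varphi^{(n)}$ (see the proof of Proposition~\ref{p:irred}) one has $f_{2(k+1)}=f_{2k}\ast f_2$.

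The crucial preliminary step is to produce $r>0$ with $f_2>0$ on the ball $B(0,r)$. Since $0\le\varphi\le1$ and $\varphi\in L^1(\R^d)$, we have $\varphi\in L^2(\R^d)$, so $f_2=\varphi\ast\varphi$ is a bounded continuous function. As $\varphi$ is even, $f_2(0)=\int_{\R^d}\varphi(w)\varphi(-w)\,dw=\int_{\R^d}\varphi(w)^2\,dw>0$, the last inequality because $\lambda_d(A)>0$. (Equivalently, positivity of $f_2$ near the origin is an instance of the Steinhaus theorem for $A$.) By continuity of $f_2$ there is then an $r>0$ with $f_2>0$ on $B(0,r)$.

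A bootstrap now finishes the argument. I claim $f_{2k}>0$ on $B(0,kr)$ for every $k\ge1$; the case $k=1$ is the previous step. Assuming the claim for $k$, write $f_{2(k+1)}(z)=\int_{\R^d}f_{2k}(y)\,f_2(z-y)\,dy$. The integrand is nonnegative, and it is strictly positive for every $y$ in $B(0,kr)\cap B(z,r)$, by the inductive hypothesis, the seed step, and the fact that $z-y\in B(0,r)$ iff $y\in B(z,r)$. Whenever $\|z\|<(k+1)r$ this open set is nonempty, hence of positive Lebesgue measure, so $f_{2(k+1)}(z)>0$; thus $f_{2(k+1)}>0$ on $B(0,(k+1)r)$, completing the induction. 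Since $\bigcup_{k\ge1}B(0,kr)=\R^d$, for every $z\in\R^d$ there is a $k$ with $\varphi^{\ast 2k}(z)>0$, i.e.\ $\sup_{n\ge1}\varphi^{(n)}(x_1,x_2)>0$ for all $(x_1,x_2)$. By the equivalence of (i) and (iv) in Proposition~\ref{p:irred}, $\xi$ is irreducible.

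I expect the seed estimate $f_2>0$ near the origin to be the only delicate point, and there the symmetry of $\varphi$ is essential: it converts $f_2(0)$ from $\int\varphi(w)\varphi(-w)\,dw$, which a priori could vanish, into $\int\varphi(w)^2\,dw>0$, and thereby forces the balls $B(0,kr)$ in the bootstrap to be concentric and to exhaust $\R^d$. Without symmetry the sets $\{f_n>0\}$ could conceivably drift off to infinity; that $\R^d$ is exhausted by concentric balls (its amenability, cf.\ Remark~\ref{r:amenability}) is what makes the iteration succeed. Everything following the seed step is elementary and purely measure-theoretic.
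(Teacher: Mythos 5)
Your proof is correct and follows essentially the same route as the paper: verify criterion (iv) of Proposition \ref{p:irred} by using continuity of the two-fold convolution, the evenness of $\varphi$ forced by symmetry and translation invariance to anchor strict positivity of $\varphi^{\ast 2}$ near the origin, and an iteration of convolutions to propagate positivity to all of $\R^d$ (the paper packages this as Lemma \ref{l:convolution}(i), passing through $\varphi^{\ast 4}$ and uniform lower bounds, which you avoid by evaluating $\varphi^{\ast 2}(0)=\int\varphi^2>0$ directly — a harmless streamlining, since pointwise positivity suffices for (iv)). Your explicit reduction to $d_\varphi>0$ matches the hypothesis $0<\int f$ of the paper's Lemma \ref{l:convolution}, so the arguments agree in substance.
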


The proof of Proposition \ref{p:irredunmarked} is a quick consequence
of the first part of the following lemma.

\begin{lemma}\label{l:convolution} Assume that $f\colon\R^d\to [0,\infty)$
is a bounded measurable function with $0<\int f(y)\,dy<\infty$ and $f(y)=f(-y)$ for
all $y\in\R^d$. Let $R>0$. 
\begin{enumerate}
\item[{\rm (i)}]
There exist $n\in\N$ and $\varepsilon>0$ such that
$f^{\ast n}(x)\ge \varepsilon$ whenever $\|x\| \le R$. 
\item[{\rm (ii)}]
Let $g\colon\R^d\to \R_+$
be another bounded measurable function with $\int g(y)\,dy>0$
and let $x\in\R^d$. Then there exists $n\in\N$ such that
$(f^{\ast n}\ast g)(x)>0$.
\end{enumerate}
\end{lemma}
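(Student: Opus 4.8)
The plan is to prove (i) first and then deduce (ii) from it. For part (i), the key point is that $f$ is bounded, symmetric, and integrable with positive mass, so a bounded portion of its mass can be captured on a small cube. Concretely, choose $\delta>0$ and $c>0$ such that the set $S:=\{y:\|y\|\le\delta,\ f(y)\ge c\}$ has positive Lebesgue measure; this is possible since $\int f>0$. Replacing $f$ by the smaller function $c\,\I_S$ only decreases all convolution powers, so it suffices to bound $(c\I_S)^{\ast n}$ from below. By symmetry of $f$ one has $f(y)=f(-y)$, hence $S=-S$, so $S$ contains a symmetric set; after shrinking $S$ further we may take it to be a symmetric cube or ball of some radius $\delta'>0$ around the origin with $f\ge c$ on it. Now the $n$-fold convolution $(c\I_{B(0,\delta')})^{\ast n}$ is (up to the constant $c^n$) the density of a sum of $n$ i.i.d.\ uniform variables on $B(0,\delta')$; this density is continuous, strictly positive on the open ball $B(0,n\delta')$, and bounded below on any compact subset. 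Choosing $n$ with $n\delta'>R$ and taking $\varepsilon$ to be $c^n$ times the minimum of this density over the closed ball $\overline{B(0,R)}\subset B(0,n\delta')$ gives the claim.

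For part (ii), fix $x\in\R^d$. Since $\int g>0$, there is a ball $B(z,r)$ on which $g$ has positive mass, i.e.\ $\int_{B(z,r)}g(y)\,dy=:a>0$. Pick $R$ large enough that $\|x-z\|<R-r$, equivalently $B(x-z,r)\subset B(0,R)$ — more simply, choose $R\ge\|x-z\|+r$. By part (i) there are $n\in\N$ and $\varepsilon>0$ with $f^{\ast n}\ge\varepsilon$ on $\overline{B(0,R)}$. Then
\begin{align*}
(f^{\ast n}\ast g)(x)=\int f^{\ast n}(x-y)g(y)\,dy\ge \int_{B(z,r)} f^{\ast n}(x-y)g(y)\,dy\ge \varepsilon\int_{B(z,r)}g(y)\,dy=\varepsilon a>0,
\end{align*}
where we used that for $y\in B(z,r)$ we have $x-y\in B(x-z,r)\subset\overline{B(0,R)}$, so $f^{\ast n}(x-y)\ge\varepsilon$. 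This proves (ii).

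The main obstacle is a technical one in part (i): justifying that the convolution power of an indicator of a small symmetric ball is bounded below on a large ball. This is standard — it follows either from the local central limit theorem, or more elementarily by an induction showing $(\I_{B(0,\delta')})^{\ast n}$ is continuous and everywhere positive on $B(0,n\delta')$ (the support of a convolution is the closure of the sumset of the supports, and positivity on the interior follows because the integrand $\I_{B(0,\delta')}(y)\I_{B(0,\delta')}(x-y)$ is the indicator of a nonempty open set whenever $x$ lies in the open ball $B(0,2\delta')$, and one iterates). One should take care that $f$ itself need not be continuous, which is exactly why we pass to the minorant $c\,\I_S$ at the outset rather than working with $f$ directly; boundedness of $f$ is not even needed for (i) beyond ensuring the relevant sets are well-behaved, though it is assumed in the statement.
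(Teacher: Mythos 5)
Part (ii) of your argument is fine (and is essentially the paper's: it bounds $f^{\ast n}\ast g$ below using part (i) on a set where $g$ has positive mass), but part (i) has a genuine gap at the step ``after shrinking $S$ further we may take it to be a symmetric cube or ball of some radius $\delta'>0$ around the origin with $f\ge c$ on it.'' This is false for two separate reasons. First, a measurable set of positive Lebesgue measure need not contain any ball at all (it can have empty interior, e.g.\ a fat Cantor-type set), so ``shrinking $S$ to a ball'' is not available. Second, and more fatally, the symmetry $S=-S$ gives no information near the origin: take $f=\I\{1\le\|y\|\le 2\}$, which is bounded, symmetric and has positive finite integral, yet $f$ vanishes identically on a neighbourhood of $0$, so no choice of $c,\delta$ produces a ball around the origin on which $f\ge c$. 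Since your entire lower bound for $f^{\ast n}$ rests on having the minorant $c\,\I_{B(0,\delta')}$, the proof of (i) does not go through as written.

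The missing idea is to pass to a convolution power before looking for a ball. In your own framework the repair is short: with $S=-S$ of positive measure, the function $(\I_S\ast\I_S)(x)=\lambda_d\big(S\cap(x-S)\big)$ is continuous and equals $\lambda_d(S)>0$ at $x=0$, so $(c\I_S)^{\ast 2}\ge\varepsilon'$ on some ball $B(0,\delta')$; from there your spreading argument (convolution powers of an indicator of a centred ball are positive on $B(0,n\delta')$ and bounded below on compacts) works verbatim. The paper's proof does the same thing directly with $f$: $f^{\ast 2}$ is bounded and uniformly continuous (integrable $\ast$ bounded), has positive integral, hence is $\ge\varepsilon'$ on some ball $B'$ not necessarily centred at $0$; symmetry of $f^{\ast 2}$ then gives $f^{\ast 4}\ge\varepsilon$ on a ball centred at the origin, and iterating convolutions enlarges that ball past radius $R$. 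Either repair closes the gap; without one of them, the claim that $\{f\ge c\}$ contains a centred ball is simply not true.
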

\begin{proof} (i) The convolution of an integrable and a bounded function is bounded and uniformly continuous;
see  \cite[Proposition 8.8]{Folland99}. It follows that
$f^{\ast 2}$ is bounded and uniformly continuous. Since $\int f^{\ast 2}(x)\,dx=\big(\int f(x)\,dx\big)^2>0$
there exist a ball $B'\subset \R^d$ with positive radius and $\varepsilon'>0$ such that
$f^{\ast 2}\ge \varepsilon'$ on $B'$. Since $f$ is symmetric,
$f^{\ast 2}$ is symmetric as well. Hence
we can find a ball $B$ with center $0$ and positive radius and some $\varepsilon>0$ such that
$f^{\ast 4}\ge \varepsilon$ on $B$. Finally we find $m\in\N$ and $\varepsilon>0$ such that 
$f^{\ast 4m}(x)\ge \varepsilon$ whenever $\|x\|\le R$.

(ii) By assumption $\int g(x)\I\{g(x)\ge \varepsilon_0\}\, dx>0$
for some $\varepsilon_0>0$. Set $C:=\{g\ge \varepsilon_0\}$.
Then we have for  each $n\in\N$ that
\begin{align*}
(f^{\ast n}\ast g)(x)\ge \varepsilon_0 \int f^{\ast n}(x-z)\I\{z\in C\}\,dz.
\end{align*}
Choose $R>0$ so large that
\begin{align*}
\int\I\{\|x-z\|\le R,z\in C\}\,dz>0.
\end{align*}
By the first part of the lemma we can find $n\in\N$ and 
$\varepsilon>0$ such that $f^{\ast n}(y)\ge \varepsilon$ whenever $\|y\|\le R$.
It follows that
\begin{align*}
(f^{\ast n}\ast g)(x) &\ge \varepsilon_0 \int f^{\ast n}(x-z)\I\{\|x-z\|\le R, z\in C\}\,dz\\
&\ge \varepsilon_0\varepsilon  \int \I\{\|x-z\|\le R, z\in C\}\,dz.
\end{align*}
By the choice of $R$ this is positive.
\end{proof}

\begin{proof}[Proof of Proposition \ref{p:irredunmarked}]
We can use Lemma \ref{l:convolution} (i) and condition \eqref{(iv)} from
Proposition \ref{p:irred} to conclude the proof.
Indeed, given $x_1,x_2\in\BX$ we find an $n\in\N$ such that
$\varphi^{(n)}(x_1,x_2)=\varphi^{\ast n}(x_2-x_1)>0$.
\end{proof}

It is natural to characterize irreducibility of the stationary marked RCM in
terms of the functions $d_\varphi^{(n)}\colon \BM^2\to[0,\infty]$, $n\in\N$, 
defined by
\begin{align*}
d_\varphi^{(n)}(p,q):=\int \varphi^{(n)}(x,p,q)\,dx,\quad p,q\in\BM.
\end{align*}
Similarly as at \eqref{e:paths} we see that $\int d^{(n)}_\varphi(p,q)\I\{q\in A\}\,\BQ(dq)$ 
is the expected number of paths of length
$n$ from $(0,p)$ to some location with mark in a measurable set $A\subset \BM$.
From the symmetry property  of $\varphi^{(n)}$ we obtain that $d_\varphi^{(n)}$ is
symmetric. Furthermore,
\begin{align*}
d_\varphi^{(n)}(p,q)=\int \prod_{i=1}^n d_\varphi(q_{i-1},q_i)\, \BQ^{n-1} (d(q_1,\ldots,q_{n-1})),
\end{align*}
where $d_\varphi(\cdot,\cdot):=d_\varphi^{(1)}(\cdot,\cdot)$, $q_0:=p$, $q_n:=q$. Therefore
\begin{align}\label{e:Dconv}
d_\varphi^{(m+n)}(p,q)=\int d_\varphi^{(m)}(p,r)d_\varphi^{(n)}(r,q)\, \BQ (dr),\quad p,q\in\BM,\,m,n\in\N.
\end{align}
We may interpret $q\mapsto \sum_{n=1}^\infty d^{(n)}(p,q)$ as a mark occupation density of the RCM.

\begin{theorem}\label{l:77}
Let $\xi$ be a stationary marked RCM. Then $\xi$ is irreducible if and only if  
    \begin{align}\label{e:lirr}
 \sup_{n\ge 1} d_\varphi^{(n)}(p,q)>0,\quad \BQ^2\text{-a.e. $(p,q)\in\BM^2$}.
    \end{align}
  \end{theorem}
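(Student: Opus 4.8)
The plan is to deduce the characterization from the equivalence of (i) and (iv) in Proposition~\ref{p:irred}, specializing to the stationary marked setting and relating the functions $\varphi^{(n)}$ on $\BX^2$ to the functions $d_\varphi^{(n)}$ on $\BM^2$. By that proposition, $\xi$ is irreducible if and only if $\sup_{n\ge 1}\varphi^{(n)}((x,p),(y,q))>0$ for $\lambda^2$-a.e.\ pairs $((x,p),(y,q))$, that is, for $\lambda_d^2\otimes\BQ^2$-a.e.\ $(x,y,p,q)$. Using the translation invariance \eqref{e:trinvariance} we write this as $\sup_{n\ge1}\varphi^{(n)}(y-x,p,q)>0$, and a substitution in the $x$-integral shows this holds for $\lambda_d^2\otimes\BQ^2$-a.e.\ $(x,y,p,q)$ if and only if $\sup_{n\ge1}\varphi^{(n)}(z,p,q)>0$ for $\lambda_d\otimes\BQ^2$-a.e.\ $(z,p,q)$.

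First I would prove the ``only if'' direction. If $\sup_{n\ge1}\varphi^{(n)}(z,p,q)>0$ for $\lambda_d\otimes\BQ^2$-a.e.\ $(z,p,q)$, then for $\BQ^2$-a.e.\ $(p,q)$ the set of $z$ with $\sup_n\varphi^{(n)}(z,p,q)>0$ has positive Lebesgue measure, hence $\int \sup_n \varphi^{(n)}(z,p,q)\,dz>0$. Since $\sup_n\varphi^{(n)}(z,p,q)\le\sum_n\varphi^{(n)}(z,p,q)$, monotone convergence gives $\sum_n d_\varphi^{(n)}(p,q)=\sum_n\int\varphi^{(n)}(z,p,q)\,dz\ge\int\sup_n\varphi^{(n)}(z,p,q)\,dz>0$, so some $d_\varphi^{(n)}(p,q)>0$ and \eqref{e:lirr} holds.

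For the ``if'' direction, suppose \eqref{e:lirr} holds. As in the proof of Proposition~\ref{p:irred}(iv)$\Rightarrow$(v), a multifold summation of the Chapman--Kolmogorov relation \eqref{e:Dconv} upgrades this to: for $\BQ^2$-a.e.\ $(p,q)$ there exist arbitrarily large $n$ with $d_\varphi^{(n)}(p,q)>0$. Fix such a pair $(p,q)$ and an $n$ with $d_\varphi^{(n)}(p,q)=\int\varphi^{(n)}(z,p,q)\,dz>0$; then I need to find $m$ and a positive-measure set of $z$ with $\varphi^{(m)}(z,p,q)>0$. The natural device is the symmetry $\varphi^{(n)}(z,p,q)=\varphi^{(n)}(-z,q,p)$ together with the convolution-in-space structure: writing $\varphi^{(2n)}(z,p,q)=\iint\varphi^{(n)}(w,p,r)\varphi^{(n)}(z-w,r,q)\,dw\,\BQ(dr)$, and using that for each fixed $r$ with the relevant positivity the spatial functions $w\mapsto\varphi^{(n)}(w,p,r)$ and $w\mapsto\varphi^{(n)}(w,r,q)$ are nonnegative $L^1$ functions with positive integral, their convolution is positive on a set of positive measure (indeed, after a further self-convolution using symmetry one gets positivity on a whole ball around $0$, exactly as in Lemma~\ref{l:convolution}(i)). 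Then $\varphi^{(m)}(z,p,q)>0$ for $z$ in a set of positive Lebesgue measure (in fact on a ball around $0$) for suitable $m$; integrating over the mark components recovers $\sup_m\varphi^{(m)}(z,p,q)>0$ for $\lambda_d\otimes\BQ^2$-a.e.\ $(z,p,q)$, which by the reformulation above is exactly irreducibility.

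The main obstacle is the ``if'' direction: passing from mere positivity of the mark-integrated quantity $d_\varphi^{(n)}(p,q)$ back to pointwise (a.e.) positivity of $\varphi^{(m)}(\cdot,p,q)$ on $\R^d$. One must be careful that the boundedness hypothesis needed to invoke the uniform-continuity argument of Lemma~\ref{l:convolution} is available here; since $\varphi\le1$ one has $\varphi^{(n)}(z,p,q)\le d_\varphi^{(n-1)}(p,q)$-type bounds, and in any case the convolution of an $L^1$ function with a bounded function is bounded and uniformly continuous, so applying the self-convolution trick to $\varphi^{(n)}(\cdot,p,r)$ (which is bounded by $d_\varphi^{(n-1)}$ and integrable) yields the desired continuity. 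The bookkeeping of which marks $r$ are ``good'' (so that both halves of the convolution have positive integral) along the path, and the measurability of the exceptional $\BQ^2$-null set of $(p,q)$, are the routine but slightly delicate points to get right.
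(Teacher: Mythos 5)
Your reformulation via Proposition \ref{p:irred}(i)$\Leftrightarrow$(iv) and translation invariance, and your ``only if'' direction, are fine and essentially the paper's argument. The gap is in the ``if'' direction, and it is the heart of the matter. What you must show, for $\BQ^2$-a.e.\ $(p,q)$, is that $\sup_m\varphi^{(m)}(z,p,q)>0$ for \emph{Lebesgue-a.e.\ $z\in\R^d$}, i.e.\ at arbitrarily large spatial displacements. Your argument only delivers positivity of some $\varphi^{(m)}(\cdot,p,q)$ on a bounded region: note that $d^{(n)}_\varphi(p,q)>0$ already trivially gives positivity of $\varphi^{(n)}(\cdot,p,q)$ on a set of positive Lebesgue measure (so the intermediate goal you set yourself is both trivial and too weak), and the autocorrelation/self-convolution trick yields at best positivity on a fixed ball around the origin. ``Integrating over the mark components'' cannot upgrade positivity on a ball to positivity for a.e.\ $z$ in all of $\R^d$; that final step is a non sequitur.

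The reason your proposed mechanism cannot be pushed further is structural: in the marked Chapman--Kolmogorov identity $\varphi^{(a+b)}(z,p,q)=\iint\varphi^{(a)}(w,p,r)\varphi^{(b)}(z-w,r,q)\,dw\,\BQ(dr)$ the intermediate mark $r$ is averaged against $\BQ$, not held fixed (no atoms are assumed), so one never obtains genuine self-convolutions $f^{\ast n}$ of a single spatial function; after one step you are back to a mixture over marks with no uniform spatial lower bound, and the spreading stops after boundedly many steps. Knowing $d^{(n)}_\varphi(p,q)>0$ for arbitrarily large $n$ gives no control on \emph{where} in $\R^d$ the functions $\varphi^{(n)}(\cdot,p,q)$ are positive. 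The paper's proof supplies exactly the missing ingredient: from $d_\varphi>0$ it extracts a mark-uniform minorization $\varphi(x,r,s)\ge\varepsilon_0$ on a product set $A\times E\times F$ of positive measure, which yields $\varphi^{(2n)}(x,r,s)\ge\BQ(E)^n\BQ(F)^{n-1}f^{\ast n}(x)$ \emph{uniformly} in $r,s\in F$ for a fixed symmetric integrable $f$; Lemma \ref{l:convolution}(i) then lets the chain spread over arbitrarily large balls as $n$ grows, while \eqref{e:lirr} provides the two end pieces $g,h$ (connecting $p$ to $F$ and $F$ to $q$) with positive integrals, and Lemma \ref{l:convolution}(ii) gives $(f^{\ast n}\ast g\ast h)(x)>0$ for \emph{every} $x$ and suitable $n$. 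Without this uniform-in-marks minorant and the resulting iteration, your sketch does not reach condition (iv) of Proposition \ref{p:irred}.
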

\begin{proof} Assume that $\xi$ is irreducible and for the sake of contradiction that
there exists some measurable set  $B\subset \BM^2$ satisfying  $\BQ^2 (B)>0$,
and  $d_\varphi^{(n)}(p,q)=0$ for all $(p,q)\in B$ and for each  $n\in \N$. 
But then $\varphi^{(n)}((0,p),(x,q))=0$ for all
  $(p,q)\in B$, $n\in \N$ and $\lambda_d$-a.e.\ $x\in \R^d$. This  
 contradicts  Proposition \ref{p:irred} (iv).  

 Now we assume that \eqref{e:lirr} holds. Then $d_\varphi>0$. Indeed,
 otherwise we would have $\varphi(x,p,q)=0$ and then also
$\varphi^{(n)}(x,p,q)=0$, $n\in\N$, for $\lambda$-a.e.\ $(x,p,q)\in\R^d\times\BM^2$. 
Therefore there exist measurable sets
 $A\subset \R^d$ and $E,F\subset \BM$ such that
 $\lambda_d(A)>0$, $\BQ(E)>0$, $\BQ(F)>0$, and
\begin{align}\label{e:inf_con_fun}
   \varepsilon_0:= \inf_{x\in A, p\in E, q\in F}\varphi(x,p,q) 
= \inf_{x\in -A, p\in E, q\in F}\varphi(x,q,p)>0.
\end{align}
Set $\varepsilon(x):=\varepsilon_0\I\{x\in A\}$. 
Then $\varphi(x,p,q)\ge \varepsilon(x)$ for each $(x,p,q)\in\R^d\times E\times F$.  
For all $x\in\R^d$ and all $r,s\in F$ we have
\begin{align*}
\varphi^{(2)}(x,r,s)&=\iint \varphi(z,r,q) \varphi(x-z,q,s)\,\BQ(dq)\,dz\\
&=\iint \varphi(-z,q,r) \varphi(x-z,q,s)\,\BQ(dq)\,dz\\
&\ge \BQ(E)\int \varepsilon(-z) \varepsilon(x-z)\,dz
=\BQ(E)f(x),
 \end{align*}
where $f$ is the convolution of $\varepsilon$ and $\varepsilon_0\I_{-A}$.
Note that $f$ is symmetric and has a positive and finite integral.
We further have
\begin{align*}
\varphi^{(4)}(x,r,s)
&\ge \iint\I\{q\in F\} \varphi^{(2)}(z,r,q) \varphi^{(2)}(x-z,q,s)\,\BQ(dq)\,dz
\ge\BQ(E)^2\BQ(F)f^{\ast 2}(x)
\end{align*}
and, inductively,
\begin{align}\label{e:4871}
\varphi^{(2n)}(x,r,s)\ge \BQ(E)^n\BQ(F)^{n-1}f^{\ast n}(x).
\end{align}

Our goal is to use Proposition \ref{p:irred} (iv). Let $p,q\in\BM$.
In view of this goal and assumption \eqref{e:lirr}  
we can assume that there exist $k,l\in\N$ such that
\begin{align*}
    \int_F d^{(k)}_\varphi(p,r) \,\BQ(dr) >0 \quad \text{and} \quad \int_F d^{(l)}_\varphi(r,q) \,\BQ(dr) >0. 
\end{align*}
Then we obtain by \eqref{e:4871} for each $x\in\R^d$
and each $n\in\N$ that
\begin{align*}
\varphi^{(k+2n+l)}(x,p,q)=&
\iint \varphi^{(k)}(z,p,r)\varphi^{(2n)}(w,r,s) \varphi^{(l)}(x-z-w,s,q)\,\BQ^2(d(r,s))\,d(z,w)\\
\ge \BQ(E)^n \BQ(F)^{n-1}&\iint_{F^2} \varphi^{(k)}(z,p,r) f^{\ast n}(w) \varphi^{(l)}(x-z-w,s,q)\,\BQ^2(d(r,s))\,d(z,w)\\
=\BQ(E)^n \BQ(F)^{n-1}& (g\ast f^{\ast n}\ast h)(x)=\BQ(E)^n \BQ(F)^{n-1} (f^{\ast n}\ast g \ast h)(x),
\end{align*}
where $g(z):=\int_F \varphi^{(k)}(z,p,r)\,\BQ(dr)$ and $h(z):=\int_F \varphi^{(l)}(z,r,q)\,\BQ(dr)$.
By the choice of $k$ and $l$ we have 
\begin{align*}
\int g\ast h(x)\,dx=\int g(x)\,dx\int h(x)\,dx>0.
\end{align*}
Therefore, given $x\in\R^d$, we obtain from Lemma \ref{l:convolution}
that $(f^{\ast n}\ast g \ast h)(x)$ is positive for some sufficiently large $n$.
Taking into the account translation invariance \eqref{e:trinvariance} we hence obtain
from Proposition \ref{p:irred} (iv) that $\xi$ is irreducible.
\end{proof}

\begin{remark}\label{r:irred}\rm Condition \eqref{e:lirr} can be expressed as follows.
For each $B\in\cX$ with $\BQ(B)>0$ we have
\begin{align}\label{e:lirr2}
\int_B\sum^\infty _{n=1}d_\varphi^{(n)}(p,q)>0,\quad \BQ\text{-a.e.\ $p$}.
\end{align}
Assume (for the sake of illustration) that $d(p):=\int d_\varphi(p,q)\,\BQ(dq)>0$  
for each $p\in\BM$ and let $(X_n)_{n\ge 0}$ be a Markov chain with transition kernel 
$p\mapsto d(p)^{-1}d_\varphi(p,q)\BQ(dq)$. Then
condition \eqref{e:lirr2} means that
\begin{align}\label{e:lirr3}
\BE\bigg[ \sum^\infty _{n=1}\I\{X_n\in B\}\bigg| X_0=p\bigg]>0,\quad \BQ\text{-a.e.\ $p$},
\end{align}
which is slightly weaker than $\BQ$-irreducibility of $(X_n)$, as studied in \cite{MeynTweedie}.
Without additional assumptions on $\varphi$, \eqref{e:lirr} seems to be both a natural
and a minimal assumption. 
If there exists $A\in\cX$ with $0<\BQ(A)<1$ and
$d_\varphi(p,q)=0$ for $(p,q)\in A\times A^c$, then $d^{(n)}_\varphi(p,q)=0$ for each $n\in\N$ and
all $(p,q)\in A\times A^c$, so that \eqref{e:lirr} fails.
\end{remark}

The next remark shows the relevance of irreducibility for the uniqueness of the infinite cluster. 

\begin{remark}\label{r:nonunique}\rm Assume that $\BM$ is discrete and that $\BQ\{p\}>0$ for each $p\in\BM$.  Given $p,q\in\BM$ we write
$p\simeq q$ if either $p=q$ or $\sup_{n\ge 1}d_\varphi^{(n)}(p,q)>0$.
It follows from \eqref{e:Dconv} that $\simeq$ is an equivalence relation.
Let $[p]:=\{q\in\BM : p\simeq q\}$ be the equivalence class of $p\in\BM$. Then $\eta_{[p]}:=\{(x,q)\in\eta: q\in [p]\}$ 
are for 
different equivalence classes independent Poisson processes with intensity measures $\lambda_d\otimes\BQ([p]\cap \cdot)$. 
Assume now that there exist some marks $p,q\in\BM$ 
such that $[p]\cap [q]=\emptyset$. We assert that $\xi[\eta_{[p]}]$ and $\xi[\eta_{[q]}]$ are vertex disjoint,
that is,  there is no edge in $\xi$ between $\eta_{[p]}$ and $\eta_{[q]}$.
To see this, we take a bounded Borel set $B\subset\R^d$ and let  $A$ denote the event that there exist $x\in B$ and $y\in\R^d$
such that $(x,p),(y,q)\in \eta$ and $(x,p)\leftrightarrow(y,q)$ in $\xi$.
Similarly, as in previous calculations, we obtain
\begin{align*}
\BP(A)\le \sum^\infty_{n=1}\lambda_d(B)\BQ\{p\}\BQ\{q\}d^{(n)}_\varphi(p,q)
\end{align*}
which comes to zero. If $\xi[\eta_{[p]}]$ and $\xi[\eta_{[q]}]$ both percolate, then $\xi$ has at least two infinite clusters.
Without the irreducibility condition, the number of infinite clusters might be any natural number or even infinity.
\end{remark}

In the following we will discuss some consequences of Theorem \ref{l:77}.
We start with the case, where  $\BQ$ has an atom.
This covers discrete (that is finite or countably infinite) mark spaces and 
generalizes Proposition \ref{p:irredunmarked}.

\begin{corollary}\label{t:irredatom}  Let $\xi$ be a stationary marked RCM and assume that 
there exists $p_0\in\BM$ with $\BQ\{p_0\}>0$.
Then $\xi$ is irreducible if and only if 
\begin{align}\label{min3}
\sup_{n\ge 1}d_\varphi^{(n)}(p_0,q)>0,\quad \BQ\text{-a.e.\ $q\in\BM$}. 
\end{align}
\end{corollary}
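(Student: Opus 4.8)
The plan is to deduce Corollary \ref{t:irredatom} directly from Theorem \ref{l:77}, the key point being that the equivalence-type relation hidden in \eqref{e:Dconv} lets us reduce the ``$\BQ^2$-a.e.\ pair'' condition \eqref{e:lirr} to a ``$\BQ$-a.e.\ single mark'' condition against one fixed atom $p_0$. One direction is immediate: if $\xi$ is irreducible then \eqref{e:lirr} holds, and restricting the $\BQ^2$-null exceptional set to the fibre $\{p_0\}\times\BM$ (which is legitimate because $\BQ\{p_0\}>0$, so this fibre has positive $\BQ^2\otimes$-mass in the relevant sense) gives \eqref{min3}.

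For the converse, suppose \eqref{min3} holds. First I would record the consequence of the Chapman--Kolmogorov-type identity \eqref{e:Dconv}: if $\sup_m d_\varphi^{(m)}(p,p_0)>0$ and $\sup_n d_\varphi^{(n)}(p_0,q)>0$, then, exactly as in the computation in the proof of Proposition \ref{p:irred}(iv)$\Rightarrow$(v) but with $\BQ$ in place of $\lambda$,
\begin{align*}
\sum_{m,n\ge 1} d_\varphi^{(m+n)}(p,q)=\int\Bigl(\sum_{m\ge 1}d_\varphi^{(m)}(p,r)\Bigr)\Bigl(\sum_{n\ge 1}d_\varphi^{(n)}(r,q)\Bigr)\BQ(dr)>0,
\end{align*}
since the integrand is $\BQ$-a.e.\ nonnegative and is strictly positive on a set of positive $\BQ$-measure (namely near $r=p_0$, using the symmetry $d_\varphi^{(m)}(p,p_0)=d_\varphi^{(m)}(p_0,p)$ and lower semicontinuity-free positivity arguments already used in the paper). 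Hence $\sup_{k\ge 1}d_\varphi^{(k)}(p,q)>0$. So it suffices to show that for $\BQ$-a.e.\ $p$ we have $\sup_m d_\varphi^{(m)}(p,p_0)>0$; then applying the above display twice (to the pair $(p,p_0)$ and the pair $(p_0,q)$) yields $\sup_k d_\varphi^{(k)}(p,q)>0$ for $\BQ^2$-a.e.\ $(p,q)$, which is \eqref{e:lirr}, and Theorem \ref{l:77} gives irreducibility.

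The remaining claim --- that $\sup_m d_\varphi^{(m)}(p,p_0)>0$ for $\BQ$-a.e.\ $p$ --- is the real content, and I expect it to be the main obstacle. By symmetry this is the same as $\sup_m d_\varphi^{(m)}(p_0,p)>0$ $\BQ$-a.e., which looks like \eqref{min3} but with the roles reversed; the subtlety is that \eqref{min3} controls ``$p_0$ can reach a.e.\ $q$'' whereas we want ``a.e.\ $p$ can reach $p_0$.'' I would argue by contradiction: let $N:=\{p:\sup_m d_\varphi^{(m)}(p_0,p)=0\}$ and suppose $\BQ(N)>0$. For $p\in N$ we have $d_\varphi^{(m)}(p_0,p)=0$ for all $m$. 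Now I would bootstrap: using \eqref{e:Dconv} with the split $m+1$, $d_\varphi^{(m+1)}(p_0,p)=\int d_\varphi^{(m)}(p_0,r)d_\varphi(r,p)\,\BQ(dr)=0$ for all $m$, so $d_\varphi(r,p)=0$ for $\BQ$-a.e.\ $r$ in the ``reachable set'' $R:=\{r:\sup_m d_\varphi^{(m)}(p_0,r)>0\}$; by \eqref{min3}, $\BQ(R)=1$, hence $d_\varphi(r,p)=0$ for $\BQ$-a.e.\ $r$ and all $p\in N$. Integrating over $r$ and using symmetry of $d_\varphi$, $d_\varphi(p,r)=0$ for $\BQ$-a.e.\ $r$, so $d_\varphi^{(m)}(p,q)=0$ for all $m$ and $\BQ$-a.e.\ $q$, which (for $p\in N$, a positive-$\BQ$-measure set) contradicts the already-established fact that $\sup_m d_\varphi^{(m)}(p,q)>0$ for $\BQ^2$-a.e.\ $(p,q)$ — or more directly contradicts irreducibility being equivalent to \eqref{e:lirr}. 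Actually the cleanest route avoids the apparent circularity: once $d_\varphi(r,p)=0$ for $\BQ$-a.e.\ $r$ and all $p\in N$, the set $\BM\setminus N$ would be ``closed under $d_\varphi$'' from $p_0$, so $R\subset \BM\setminus N$ up to $\BQ$-null sets, i.e.\ $\BQ(N)=0$ after all, because $R$ has full measure. This is the step requiring care with null sets, and I would present it in the style of Remark \ref{r:irred}, possibly invoking the Markov-chain reformulation there ($\BQ$-a.e.\ state reaches $p_0$, and $p_0$ reaches $\BQ$-a.e.\ state, hence $\BQ$-a.e.\ pair communicates).
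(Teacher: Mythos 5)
Your proposal is correct and at its core is exactly the paper's argument: chain $p\to p_0\to q$ through the atom via the bound $d_\varphi^{(m+n)}(p,q)\ge \BQ\{p_0\}\,d_\varphi^{(m)}(p,p_0)\,d_\varphi^{(n)}(p_0,q)=\BQ\{p_0\}\,d_\varphi^{(m)}(p_0,p)\,d_\varphi^{(n)}(p_0,q)$ coming from \eqref{e:Dconv} and the symmetry of $d_\varphi^{(n)}$, then invoke Theorem \ref{l:77}; your Fubini-plus-atom treatment of the ``only if'' direction is also fine (the paper calls it obvious). The one criticism is that the ``main obstacle'' you isolate is illusory: since $d_\varphi^{(n)}$ is symmetric, the claim that $\sup_m d_\varphi^{(m)}(p,p_0)>0$ for $\BQ$-a.e.\ $p$ is verbatim the hypothesis \eqref{min3} with $q$ renamed to $p$ (there is no ``role reversal'', the model being undirected), so your entire contradiction argument is redundant --- your set $R$ is by definition the complement of $N$, and $\BQ(R)=1$ is precisely \eqref{min3}, so $\BQ(N)=0$ with no bootstrapping needed; the intermediate deductions about $d_\varphi(r,p)=0$ are true but add nothing. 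The paper's proof consists of the displayed inequality above together with Theorem \ref{l:77}, and that is all that is required.
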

\begin{proof} Take $p,q\in\BM$ and $m,n\in\N$. By \eqref{e:Dconv},
\begin{align*}
d_\varphi^{(m+n)}(p,q)\ge \BQ\{p_0\}d_\varphi^{(m)}(p,p_0)d_\varphi^{(n)}(p_0,q)
=\BQ\{p_0\}d_\varphi^{(m)}(p_0,p)d_\varphi^{(n)}(p_0,q).
\end{align*}
Hence, if  \eqref{min3} holds, then we obtain that $\sup_{k\ge 2}d_\varphi^{(k)}(p,q)>0$
for $\BQ^2$-a.e.\ $(p,q)$, so that irreducibility follows from Theorem \ref{l:77}.
The converse is obvious. 
\end{proof}

Under a suitable minorization assumption on the connection function we have the following version of Corollary \ref{t:irredatom} which does not assume $p_0$ to be an atom of $\BQ$.

\begin{theorem}\label{t:irredgeneral} Assume that  
there exist a measurable set $A\subset \BM$ with $\BQ(A)>0$ and some $p_0\in \BM$
satisfying
\begin{align}\label{min1}
d_\varphi(p,q)\ge d_\varphi(p,p_0),\quad \BQ^2\text{-a.e. $(p,q)\in\BM\times A$}.
\end{align}
Assume also that \eqref{min3} holds. Then the RCM $\xi$ is irreducible.
\end{theorem}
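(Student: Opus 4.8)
The plan is to deduce Theorem \ref{t:irredgeneral} from Corollary \ref{t:irredatom} applied with the atom replaced by the set $A$, using the minorization \eqref{min1} to reduce everything to the single reference mark $p_0$. First I would show that \eqref{min1} propagates to iterated kernels: by induction on $n$, using the convolution identity \eqref{e:Dconv} and the fact that integrating a pointwise inequality $d_\varphi(p,q)\ge d_\varphi(p,p_0)$ over $q$ in the last coordinate preserves the inequality, one obtains something like
\begin{align*}
d_\varphi^{(n)}(p,q)\ge \text{(const)}\cdot d_\varphi^{(m)}(p,p_0)
\end{align*}
for suitable $m\le n$ whenever the relevant intermediate marks land in $A$. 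More precisely, since $\BQ(A)>0$, I would write $d_\varphi^{(n+1)}(p,q)=\int d_\varphi^{(n)}(p,r)\,d_\varphi(r,q)\,\BQ(dr)\ge \int_A d_\varphi^{(n)}(p,r)\,d_\varphi(r,p_0)\,\BQ(dr)$ is not quite the right move; instead one wants to insert $A$ as the \emph{penultimate} step. The cleaner route: for $q\in A$ we have $d_\varphi(p,q)\ge d_\varphi(p,p_0)$, hence $d_\varphi^{(n)}(p,q)=\int d_\varphi^{(n-1)}(p,r)\,d_\varphi(r,q)\,\BQ(dr)$, and since by the symmetric version of \eqref{min1} (note $d_\varphi$ is symmetric) $d_\varphi(r,q)\ge d_\varphi(r,p_0)$ for $q\in A$, we get $d_\varphi^{(n)}(p,q)\ge d_\varphi^{(n)}(p,p_0)$ for $q\in A$ and every $n$.

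With that observation in hand, the second step is to bound $d_\varphi^{(n)}(p_0,q)$ below in terms of a path that passes through $A$. For arbitrary $p,q\in\BM$ and $m,n\in\N$, by \eqref{e:Dconv},
\begin{align*}
d_\varphi^{(m+1+n)}(p,q)\ge \int_A d_\varphi^{(m)}(p,r)\,d_\varphi(r,r')\big|_{r'=p_0}\cdots
\end{align*}
— here I would instead argue: $d_\varphi^{(m+n)}(p,q)=\int d_\varphi^{(m)}(p,r)\,d_\varphi^{(n)}(r,q)\,\BQ(dr)\ge \int_A d_\varphi^{(m)}(p,r)\,d_\varphi^{(n)}(r,q)\,\BQ(dr)$, and for $r\in A$ use $d_\varphi^{(m)}(p,r)\ge d_\varphi^{(m)}(p,p_0)$ (the step established above, valid since $r\in A$), giving $d_\varphi^{(m+n)}(p,q)\ge d_\varphi^{(m)}(p,p_0)\int_A d_\varphi^{(n)}(r,q)\,\BQ(dr)$. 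Now by \eqref{min3} applied with the roles appropriately chosen, together with the connectivity through $p_0$, one shows that for $\BQ$-a.e.\ $q$ there is $n$ with $\int_A d_\varphi^{(n)}(r,q)\,\BQ(dr)>0$, and similarly for $\BQ$-a.e.\ $p$ there is $m$ with $d_\varphi^{(m)}(p,p_0)>0$ — the latter because \eqref{min3} at $p_0$ plus symmetry gives $\sup_m d_\varphi^{(m)}(p_0,p)>0$ for $\BQ$-a.e.\ $p$, and $d_\varphi^{(m)}(p_0,p)=d_\varphi^{(m)}(p,p_0)$. Combining, $\sup_k d_\varphi^{(k)}(p,q)>0$ for $\BQ^2$-a.e.\ $(p,q)$, which is \eqref{e:lirr}, so Theorem \ref{l:77} yields irreducibility.

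The one genuine gap to fill carefully is the claim that \eqref{min3} implies $\int_A d_\varphi^{(n)}(r,q)\,\BQ(dr)>0$ for some $n$, for a.e.\ $q$. This needs that the set $A$ is reached: from $q$, \eqref{min3} gives $\sup_n d_\varphi^{(n)}(q,p_0)>0$ (using symmetry), so fix $n_0$ with $d_\varphi^{(n_0)}(q,p_0)>0$; then $d_\varphi^{(n_0+1)}(q,r)=\int d_\varphi^{(n_0)}(q,s)\,d_\varphi(s,r)\,\BQ(dr)$ — I want this positive for $r$ in a positive-measure subset of $A$. Since we also know (from \eqref{min3} plus the argument in the proof of Theorem \ref{l:77}) that $d_\varphi>0$ cannot vanish in a way that disconnects $A$ from $p_0$, and $\BQ(A)>0$, a short argument using that $d_\varphi(p_0,\cdot)$ has positive integral (hence is positive on a positive-measure set) together with \eqref{min1} at $q=p_0$ gives what is needed. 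I expect this "$A$ is charged from a.e.\ mark" step to be the main obstacle; everything else is bookkeeping with \eqref{e:Dconv} and symmetry. I would state it as a small internal claim and dispatch it with the monotone-class / positivity reasoning already used in the proof of Theorem \ref{l:77}.
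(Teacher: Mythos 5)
Your overall route is the paper's: use \eqref{min1} to minorize on $A$, insert $A$ as an intermediate block of marks via \eqref{e:Dconv}, and conclude from \eqref{min3}, the symmetry of $d^{(n)}_\varphi$ and Theorem \ref{l:77}. Your first step is fine (and needs no induction beyond a single application of \eqref{e:Dconv}): for $\BQ$-a.e.\ $q\in A$ one has $d_\varphi(r,q)\ge d_\varphi(r,p_0)$ for $\BQ$-a.e.\ $r$, hence $d^{(n)}_\varphi(p,q)\ge d^{(n)}_\varphi(p,p_0)$ for every $p$ and every $n$, which is exactly the first display in the paper's proof.

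The step you flag as ``the one genuine gap'' is where your write-up falls short, and your sketched dispatch of it does not work as written: ``\eqref{min1} at $q=p_0$'' is not a legitimate instantiation, since \eqref{min1} only constrains $q\in A$ and $p_0$ need not belong to $A$, and the assertion that $d_\varphi$ ``cannot vanish in a way that disconnects $A$ from $p_0$'' is precisely what would have to be proved, not something available from Theorem \ref{l:77}. But no new argument is needed: apply your first step a second time with the two arguments exchanged. Since $d^{(n)}_\varphi$ is symmetric, for $\BQ$-a.e.\ $r\in A$ and every $q$ one has $d^{(n)}_\varphi(r,q)=d^{(n)}_\varphi(q,r)\ge d^{(n)}_\varphi(q,p_0)=d^{(n)}_\varphi(p_0,q)$, so $\int_A d^{(n)}_\varphi(r,q)\,\BQ(dr)\ge \BQ(A)\,d^{(n)}_\varphi(p_0,q)$, and plugging this into your inequality yields $d_\varphi^{(m+n)}(p,q)\ge \BQ(A)\,d_\varphi^{(m)}(p,p_0)\,d_\varphi^{(n)}(p_0,q)$ for $\BQ^2$-a.e.\ $(p,q)$ — the exact inequality in the paper (the atom bound of Corollary \ref{t:irredatom} with $\BQ\{p_0\}$ replaced by $\BQ(A)$). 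Then \eqref{min3} gives $\sup_n d^{(n)}_\varphi(p_0,q)>0$ for $\BQ$-a.e.\ $q$ and, by symmetry, $\sup_m d^{(m)}_\varphi(p,p_0)>0$ for $\BQ$-a.e.\ $p$, so \eqref{e:lirr} holds and Theorem \ref{l:77} finishes. With this one-line replacement your argument coincides with the paper's proof.
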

\begin{proof} Taking $n=1$ in \eqref{e:Dconv} it follows by induction that
\begin{align*}
d^{(m)}_\varphi(p,q)\ge d^{(m)}_\varphi(p,p_0),\quad \BQ^2\text{-a.e. $(p,q)\in\BM\times A$}.
\end{align*}
Just as in the proof of Corollary \ref{t:irredatom} we hence obtain for all $m,n\in\N$
and $\BQ^2$-a.e.\ $(p,q)$ that
\begin{align*}
d_\varphi^{(m+n)}(p,q)\ge \BQ(A)d_\varphi^{(m)}(p,p_0)d_\varphi^{(n)}(p_0,q).
\end{align*}
Therefore  we obtain that $\sup_{k\ge 2}d_\varphi^{(k)}(p,q)>0$
for $\BQ^2$-a.e.\ $(p,q)$, so that irreducibility follows from Theorem \ref{l:77}.
\end{proof}

A minimal assumption for irreducibility could be 
\begin{align}\label{e:minred}
\int d_\varphi(p,q) \,\BQ(dq)>0,\quad \BQ\text{-a.e.\ $p\in\BM$}.
\end{align}
Under suitable assumptions on $\BQ$ and $\varphi$ we shall show
with Theorem \ref{t:irredPOP}
that \eqref{e:minred} is already sufficient for irreducibility.

In Theorem \ref{t:irredPOP} we will consider a partial ordering $\preceq$ on $\BM$ which is measurable, that is
$\{(p,q): p \preceq q\}$ is a measurable
subset of $\BM^2$. Slightly generalizing  \cite{Lindqvist88} we say
that $\BM$ is {\em partially ordered probability (POP) space}.
A real-valued function $f$ on $\BM$ is said to be
non-decreasing if $x\preceq y$ implies $f(x)\le f(y)$.
The probability measure $\BQ$  is called (positively) {\em associated}
if 
\begin{align}\label{e:ass}
\int fg\,d\BQ\ge \int f\,d\BQ \int g\,d\BQ 
\end{align}
for all non-decreasing measurable $f,g\colon\BM\to \R$ for which the integrals make sense.
Our next result provides assumptions on $\varphi$ and $\BQ$, under which
the minimal assumption \eqref{e:minred} implies irreducibility.
Corollary \ref{c:irredreal} and Example \ref{ex:NN} will demonstrate the usefulness of this result.

\begin{theorem}\label{t:irredPOP} Assume that $\BM$ is a POP space
and that $\BQ$ is associated. Assume also that
$d_\varphi(p,\cdot)$ is monotone (non-decreasing or non-increasing) for all $p\in\BM$.
Then the RCM $\xi$ is irreducible if and only if \eqref{e:minred} holds. 
\end{theorem}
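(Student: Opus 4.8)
The "only if" direction is immediate: if \eqref{e:minred} fails on a set of positive $\BQ$-measure, then $d_\varphi(p,\cdot)=0$ $\BQ$-a.e.\ for such $p$, hence $\varphi^{(n)}((0,p),(x,q))=0$ for all $n$ and $\lambda_d\otimes\BQ$-a.e.\ $(x,q)$, contradicting Proposition \ref{p:irred}(iv); this is the same argument already used at the start of the proof of Theorem \ref{l:77}. So the content is the "if" direction: assuming \eqref{e:minred}, I want to verify the criterion \eqref{e:lirr} of Theorem \ref{l:77}, i.e.\ $\sup_{n\ge 1}d_\varphi^{(n)}(p,q)>0$ for $\BQ^2$-a.e.\ $(p,q)$.

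The plan is to exploit association together with the monotonicity of $d_\varphi(p,\cdot)$ to show that the iterated kernels $d_\varphi^{(n)}$ cannot develop "forbidden" zero-blocks. Fix the case where each $d_\varphi(p,\cdot)$ is non-decreasing (the non-increasing case is symmetric after reversing the order on $\BM$). Set $d(p):=\int d_\varphi(p,q)\,\BQ(dq)$, which is positive $\BQ$-a.e.\ by \eqref{e:minred}, and consider the substochastic kernel $K(p,\cdot):=d_\varphi(p,\cdot)\,\BQ(\cdot)$; then $d_\varphi^{(n)}(p,q)\,\BQ(dq)$ is (a version of) the $n$-step kernel $K^n(p,\cdot)$, by \eqref{e:Dconv}. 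The key observation is: for any measurable $B$ with $\BQ(B)>0$, the function $q\mapsto \int_B d_\varphi(q,r)\,\BQ(dr)=\int d_\varphi(q,r)\I_B(r)\,\BQ(dr)$ is — up to handling the $\I_B$ factor — an average of non-decreasing functions of... no; the cleaner route is: $\int_B d_\varphi(p,r)\,\BQ(dr)\ge \BQ(B)\cdot\big(\text{essinf over }r\big)$ is too weak, so instead I would argue as follows. Because $d_\varphi(p,\cdot)$ is non-decreasing and $\int d_\varphi(p,r)\,\BQ(dr)>0$, there is an "upper" set on which $d_\varphi(p,\cdot)$ is bounded below by a positive constant: concretely, let $\beta(p):=\sup\{c\ge 0:\BQ(\{d_\varphi(p,\cdot)\ge c\})>0\}$; then $\beta(p)>0$ $\BQ$-a.e., and for $c<\beta(p)$ the set $U_{p,c}:=\{q:d_\varphi(p,q)\ge c\}$ is an up-set (by monotonicity) of positive $\BQ$-measure. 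Using association of $\BQ$ applied to the indicators of up-sets (indicators of up-sets are non-decreasing), I would show that $\BQ(U_{p,c}\cap U_{p',c'})\ge \BQ(U_{p,c})\BQ(U_{p',c'})>0$, so that any two such up-sets have positive-measure overlap; this is precisely the mechanism that prevents the "bad" splitting of $\BM$ described in the last paragraph of Remark \ref{r:irred}.

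Concretely, then, I would run essentially the argument of Corollary \ref{t:irredatom}/Theorem \ref{t:irredgeneral} but with the atom $p_0$ replaced by a positive-measure up-set. Given $p,q\in\BM$ with $d(p),d(q)>0$, pick $c>0$ with $\BQ(U_{p,c})>0$ and $\BQ(U_{q,c})>0$; by association, $W:=U_{p,c}\cap U_{q,c}$ has $\BQ(W)>0$. Then from \eqref{e:Dconv} with $m=n=1$,
\begin{align*}
d_\varphi^{(3)}(p,q)=\iint d_\varphi(p,r)\,d_\varphi(r,s)\,d_\varphi(s,q)\,\BQ(dr)\,\BQ(ds)
\ge c^2\iint_{W\times W} d_\varphi(r,s)\,\BQ(dr)\,\BQ(ds),
\end{align*}
using $d_\varphi(p,r)\ge c$ for $r\in W\subset U_{p,c}$ and (by symmetry of $d_\varphi$) $d_\varphi(s,q)=d_\varphi(q,s)\ge c$ for $s\in W\subset U_{q,c}$. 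It remains to see that the integral of $d_\varphi$ over $W\times W$ is strictly positive. This follows since $d_\varphi(r,\cdot)$ is non-decreasing and $W$ contains points arbitrarily high in the order restricted to $\supp\BQ$ — more carefully, if it vanished $\BQ^2$-a.e.\ on $W\times W$ then $d_\varphi(r,s)=0$ for $\BQ^2$-a.e.\ $(r,s)\in W\times W$, which by monotonicity of $d_\varphi(r,\cdot)$ forces $d_\varphi(r,\cdot)=0$ on the down-closure, and iterating with \eqref{e:minred} and association yields a contradiction; alternatively, and more robustly, I would replace the single exponent $3$ by $d^{(k)}_\varphi(p,r_0)>0$, $d^{(l)}_\varphi(s_0,q)>0$ type bounds as in Theorem \ref{t:irredgeneral} together with the convolution positivity Lemma \ref{l:convolution}, exactly mirroring the end of the proof of Theorem \ref{l:77}, now funneling all paths through the common up-set $W$ in mark space while using Lemma \ref{l:convolution}(ii) in the spatial variable. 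Either way one gets $\sup_n d_\varphi^{(n)}(p,q)>0$ for $\BQ^2$-a.e.\ $(p,q)$, and irreducibility follows from Theorem \ref{l:77}.

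The main obstacle I anticipate is the last positivity step — ensuring $\int_{W\times W} d_\varphi>0$ — because association only gives overlap of up-sets, not that $d_\varphi$ itself is bounded below on that overlap; the monotonicity hypothesis on $d_\varphi(p,\cdot)$ is exactly what bridges this gap (the overlap $W$ sits in the "upper" part of $\BM$ where $d_\varphi(p,\cdot)$ is already bounded below), but making this rigorous without an atom requires care in choosing the threshold $c$ uniformly and in the measure-zero bookkeeping over $(p,q)$. A secondary point is the reduction of the non-increasing case to the non-decreasing one: reversing $\preceq$ turns non-increasing into non-decreasing, but one must check that $\BQ$ remains associated under order reversal, which it does since $f\mapsto -f$ interchanges the two monotonicity classes and \eqref{e:ass} is invariant under $(f,g)\mapsto(-f,-g)$.
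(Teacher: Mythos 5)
Your ``only if'' direction is fine and matches the paper. In the ``if'' direction there is a genuine gap, and it comes from using association only through indicators of up-sets. The monotonicity hypothesis on $d_\varphi(p,\cdot)$ is there precisely so that association can be applied \emph{directly} to the two monotone functions $r\mapsto d_\varphi(p,r)$ and $r\mapsto d_\varphi(r,q)=d_\varphi(q,r)$: by \eqref{e:Dconv} and \eqref{e:ass},
\begin{align*}
d_\varphi^{(2)}(p,q)=\int d_\varphi(p,r)\,d_\varphi(r,q)\,\BQ(dr)
\ge \int d_\varphi(p,r)\,\BQ(dr)\int d_\varphi(q,r)\,\BQ(dr),
\end{align*}
which is positive for $\BQ^2$-a.e.\ $(p,q)$ by \eqref{e:minred}; this verifies \eqref{e:lirr} already with $n=2$, and Theorem \ref{l:77} finishes the proof. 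This one-line estimate is the paper's entire argument. By weakening association to the indicators $\I_{U_{p,c}}$, $\I_{U_{q,c}}$ you only learn that the up-sets overlap in a set $W$ of positive measure, and you are then left with the task of proving $\iint_{W\times W}d_\varphi\,d\BQ^2>0$, which you yourself flag as the main obstacle and which does not follow from what you wrote: the ``vanishing on the down-closure'' remark does not by itself produce a contradiction, and the proposed fallback of mirroring the end of the proof of Theorem \ref{l:77} is not available, since that part of the argument takes \eqref{e:lirr} (the very statement you are trying to establish) as input and concerns the spatial convolution, not the mark kernel.

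For what it is worth, the gap is repairable inside your framework: for $\BQ$-a.e.\ $r$ the set $\{s:d_\varphi(r,s)>0\}$ is an up-set of positive $\BQ$-measure (monotonicity plus \eqref{e:minred}), so one more application of \eqref{e:ass} to its indicator and $\I_W$ gives $\int_W d_\varphi(r,s)\,\BQ(ds)>0$ for a.e.\ $r\in W$, hence $\iint_{W\times W}d_\varphi\,d\BQ^2>0$ and your bound on $d^{(3)}_\varphi(p,q)$ goes through. But this detour is unnecessary once association is applied to $d_\varphi(p,\cdot)$ and $d_\varphi(q,\cdot)$ themselves. Your reduction of the non-increasing case by reversing the partial order (association being preserved since \eqref{e:ass} is invariant under $(f,g)\mapsto(-f,-g)$) is correct.
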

\begin{proof} Assume that \eqref{e:lirr} holds but \eqref{e:minred} fails.
Then there exists  a measurable set $B\subset\BM$ with $\BQ(B)>0$ and
$d_\varphi(p,q)=0$ for $\BQ^2$-a.e.\ $(p,q)\in B\times \BM$. 
This implies for all $n\in\N$, that $d^{(n)}_\varphi(p,q)=0$ for $\BQ^2$-a.e.\ $(p,q)\in B\times \BM$.
The resulting contradiction shows that \eqref{e:lirr} implies \eqref{e:minred}.
Let us assume the latter holds. 
Since $\BQ$ is associated we obtain for all $p,q\in\BM$
\begin{align*}
d_\varphi^{(2)}(p,q)=\int d_\varphi(p,r)d_\varphi(r,q)\,\BQ(dr)
\ge \int d_\varphi(p,r)\,\BQ(dr) \int d_\varphi(r,q)\,\BQ(dr). 
\end{align*}
This implies \eqref{e:lirr}  and hence the result.
\end{proof}

\begin{corollary}\label{c:irredreal} Assume that $\BM\subset \R$  is an interval 
and that $d_\varphi(p,\cdot)$ is monotone  for all $p\in\BM$.
Then the RCM $\xi$ is irreducible if and only if \eqref{e:minred} holds. 
\end{corollary}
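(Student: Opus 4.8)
The plan is to derive this directly from Theorem~\ref{t:irredPOP} by exhibiting the required partial ordering and association property. First I would equip $\BM\subset\R$ with the usual total order $\preceq\ =\ \le$ inherited from $\R$; since $\BM$ is an interval, this is a measurable partial ordering (indeed a total order), so $\BM$ becomes a POP space in the sense used before Theorem~\ref{t:irredPOP}. The notion of a non-decreasing function on $(\BM,\le)$ then coincides with the ordinary notion of a non-decreasing real function.

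The key remaining point is that \emph{every} probability measure $\BQ$ on an interval $\BM\subset\R$ is associated. This is a classical fact (Esary--Proschan--Walkup): any single real random variable is associated, equivalently, for non-decreasing $f,g$ one has $\mathrm{Cov}(f(Y),g(Y))\ge 0$ whenever $Y$ has law $\BQ$ and the relevant integrals exist. I would either cite this or sketch the one-line argument: if $Y,Y'$ are i.i.d.\ with law $\BQ$, then $(f(Y)-f(Y'))(g(Y)-g(Y'))\ge 0$ always holds because $f$ and $g$ are both monotone in the same direction, and taking expectations gives $2\big(\int fg\,d\BQ-\int f\,d\BQ\int g\,d\BQ\big)\ge 0$, which is exactly \eqref{e:ass}.

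With the POP structure in place and $\BQ$ automatically associated, the hypotheses of Theorem~\ref{t:irredPOP} reduce to precisely what is assumed in the corollary: $d_\varphi(p,\cdot)$ is monotone for all $p\in\BM$. (Here the direction of monotonicity may depend on $p$, which is allowed in Theorem~\ref{t:irredPOP}.) Hence Theorem~\ref{t:irredPOP} applies verbatim and yields that $\xi$ is irreducible if and only if \eqref{e:minred} holds, which is the assertion.

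I do not expect any genuine obstacle here; the statement is essentially a specialization. The only thing requiring a word of care is making explicit that an arbitrary interval with its natural order, together with an arbitrary probability measure on it, satisfies the ``associated POP space'' hypothesis --- i.e.\ invoking the univariate association property --- so that Corollary~\ref{c:irredreal} is a clean corollary of Theorem~\ref{t:irredPOP} rather than needing a fresh argument.
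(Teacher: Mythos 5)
Your proposal is correct and follows the paper's proof exactly: the paper also observes that any probability measure on a (totally ordered) real interval is associated (citing Lindqvist rather than spelling out the i.i.d.\ two-copy argument) and then invokes Theorem \ref{t:irredPOP}. Your explicit verification of univariate association is a fine, standard substitute for the citation.
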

\begin{proof} Since any probability measure on $\BM$ is associated
(see e.g.\ \cite{Lindqvist88}), the result follows from Theorem \ref{t:irredPOP}.
\end{proof}

\begin{remark}\rm Let the assumptions of Theorem \ref{t:irredPOP} be satisfied
and assume moreover that $\iint\I\{p \preceq q\}\BQ(dp)\,\BQ(dq)>0$.
Let $B$ be the increasing Borel set of all $p\in\BM$ such that
$\sup_{n\ge 1}d^{(n)}(p,q)>0$  for $\BQ$-a.e.\ $q\in\BM$. 
Since $\xi$ is irreducible (by Theorem \ref{t:irredPOP}) we obtain from Theorem \ref{l:77}
that $\BQ(B)=1$.
We assert that there is some $p_0\in B$ such that $\BQ(C_{p_0})>0$, where
$C_{p_0}:=\{p\in \BX:p_0 \preceq p\}$. Indeed, if this were not the case, then
\begin{align*}
0&=\iint\I\{p \preceq q,p\in B\}\BQ(dp)\,\BQ(dq)=\iint\I\{p \preceq q,p\in B,q\in B\}\BQ(dp)\,\BQ(dq)\\
&=\iint\I\{p \preceq q\}\BQ(dp)\,\BQ(dq),
\end{align*}
contradicting our assumption. It follows that \eqref{min3} and \eqref{min1}
both hold with $A:=C_{p_0}$. Therefore Theorem \ref{t:irredgeneral} applies.
However, given a monotone $d_\varphi$ and an associated $\BQ$, 
Theorem \ref{t:irredPOP} is much easier to apply. It only remains to check
\eqref{e:minred}, which could be assumed without too much loss of generality;
see Remark \ref{r:isolated}.
\end{remark}

\begin{remark}\label{r:isolated}\rm Assume that \eqref{e:minred} fails 
and choose a measurable set $B\subset\BM$ with $\BQ(B)>0$ and
$d_\varphi(p,q)=0$ for $\BQ^2$-a.e.\ $(p,q)\in B\times \BM$. This easily implies that
\begin{align*}
|C^{(x,p)}|=1,\quad (x,p)\in\eta_{\R^d\times B},\quad \BP\text{-a.s.}.
\end{align*}
Therefore Poisson points with a mark in $B$ are isolated in $\xi$.
In particular they do not contribute to infinite clusters.
\end{remark}

\section{Deletion stability and uniqueness}\label{sec:toleranceuniqueness}

In this section, we consider a general RCM $\xi$ based on a Poisson process $\eta$ on $\BX$ with diffuse intensity measure $\lambda$.
Given $(x,\mu)\in\BX\times\bG$ 
we let 
$N^\infty(x,\mu)$ denote  the number of infinite clusters 
in $C^x(\mu)-\delta_{x}$. 
We say that the infinite clusters in $\xi$  are {\em deletion stable} if
\begin{align}\label{DT_1}
\BP(N^\infty (x,\xi^x)\ge 2)=0,\quad \lambda\text{-a.e.\ $x\in\BX$}.
\end{align}
Using the Mecke equation it is not difficult to see that
the infinite clusters in $\xi$ are deletion stable if  $N_{ds}=0$ a.s., where
\begin{align}\label{DS_2}
N_{ds}:=\int \I\{N^\infty(x,\xi)\ge 2\}\,\eta(dx). 
\end{align}

\begin{theorem}\label{t:unique} Assume that $\xi$ is irreducible and that the infinite clusters of $\xi$ are deletion stable.
Then $\xi$ has $\BP$-almost surely at most one infinite cluster.
If, conversely, the latter holds then the infinite cluster of $\xi$ is deletion stable.
\end{theorem}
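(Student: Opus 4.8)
The easy direction is the converse: if $\xi$ has almost surely at most one infinite cluster, then removing a single Poisson point $x$ from a cluster cannot split it into two or more infinite clusters, since $C^x(\xi^x)-\delta_x$ is a subgraph of $\xi^x$ and any infinite cluster it produces sits inside the (unique) infinite cluster of $\xi^x$; hence $N^\infty(x,\xi^x)\le 1$ for $\lambda$-a.e.\ $x$. (One should be slightly careful: $\xi^x$ has the distribution obtained by adding a point, and by the Mecke equation \eqref{e:Meckev} uniqueness for $\xi$ transfers to $\xi^x$ for $\lambda$-a.e.\ $x$.) So the substance is the forward implication.

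For the forward direction, the plan is to argue by contradiction: suppose that with positive probability $\xi$ has at least two infinite clusters. By ergodicity-type arguments (or directly by the $0$--$1$ law available here) one can fix an integer $k\ge 2$ and a bounded Borel set $B$ so that with positive probability $B$ intersects at least $k$ distinct infinite clusters of $\xi$; moreover, by locality one can choose $B$ large enough that the event ``$B$ meets $\ge k$ infinite clusters'' has positive probability \emph{together with} a controlled configuration of $\eta$ near $\partial B$. The heart of the proof is the announced addition-and-removal procedure: conditionally on seeing $k$ infinite clusters meeting $B$, one uses the Mecke equation (in the form \eqref{e:Mecke} and \eqref{e:Meckev}) and irreducibility (via Proposition \ref{p:irred}, say condition (vi), which lets us connect two prescribed points through finitely many \emph{fresh} Poisson-type points with positive probability) to \emph{add} a bounded bunch of new points that merges the $k$ infinite clusters into one, and then choose a single point $x$ among the added ones whose \emph{removal} splits that merged cluster back into $\ge 2$ infinite clusters. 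This yields $\BP(N^\infty(x,\xi^x)\ge 2)>0$ on a set of $x$ of positive $\lambda$-measure, contradicting deletion stability \eqref{DT_1}. The use of the Poisson structure is essential twice: absolute continuity of ``$\eta$ with a few extra points'' with respect to $\eta$ (so the added configuration has positive probability), and deletion tolerance (Proposition \ref{p:deltolerant}) to transfer the resulting almost-sure statement back.

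In more detail, the key steps, in order, would be: (1) reduce ``$\ge 2$ infinite clusters with positive probability'' to ``exactly $k$ infinite clusters meet a fixed bounded set $B$ with positive probability'' for some fixed finite $k\ge 2$, using a countable exhaustion of $\R^d$ by bounded sets (or of $\BX$ in the general case) and a pigeonhole/continuity argument; (2) on this event, pick representative vertices $v_1,\dots,v_k\in\eta\cap B$, one in each of $k$ distinct infinite clusters; (3) apply irreducibility to insert, with positive conditional probability, a finite connecting structure of new points linking $v_1$ to $v_2$ (and, iterating, all of them) through a \emph{path} — so that along that inserted path there is a distinguished point $x$ whose deletion disconnects the merged infinite cluster into pieces, at least two of which still contain one of the original infinite clusters and are therefore infinite; (4) translate this into the statement $\int \BP(N^\infty(x,\xi^x)\ge 2)\,\lambda(dx)>0$ by writing the inserted configuration via the multivariate Mecke formula and integrating out; (5) conclude that \eqref{DT_1} fails, a contradiction.

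The main obstacle is step (3)–(4): making the ``insert a path, then delete its cut-vertex'' mechanism rigorous in a way that is compatible with the Mecke calculus. One must ensure that the connecting structure can be taken to lie in a bounded region (so that absolute continuity applies) while still \emph{provably} acting as a bridge between two \emph{infinite} clusters — infinity is not locally testable, so one cannot simply condition on ``$v_1,v_2$ are in infinite clusters'' inside a bounded window without care. The standard fix is to work with the events ``$v_i$ is connected to distance $\ge R$ outside $B$'' for large $R$, carry the argument through with these finite-range surrogates, and only at the end let $R\to\infty$; alternatively one conditions on the (measurable) event that $C^{v_i}$ is infinite and uses that the connection decisions for the newly inserted points are independent of $\xi$, so that the merging succeeds with positive conditional probability regardless. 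A second, more technical point is verifying that the inserted cut-vertex $x$ genuinely separates $\ge 2$ infinite pieces and not just finite ones; this is arranged by building the connector as a \emph{simple path} (each new point of degree exactly two within the connector) joining $v_1$ to $v_2$, so that deleting the middle point of that path severs the only new route between the two originally-distinct infinite clusters — here irreducibility condition (vi) of Proposition \ref{p:irred}, which removes the direct edge, is exactly what guarantees the connection genuinely uses the inserted points.
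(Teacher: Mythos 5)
Your overall strategy is of the same family as the paper's (use irreducibility to insert a finite connector between two infinite clusters, find an inserted point whose removal splits the merged cluster, and contradict deletion stability through the Mecke calculus), but the crucial step (3) has a genuine gap. The designated cut-vertex — ``the middle point of the inserted simple path'' — need not separate the two infinite clusters: every inserted point is also connected, independently with probabilities given by $\varphi$, to the points of $\eta$ itself, and these uncontrolled edges can create routes between the two infinite clusters that bypass your middle point (for instance, the first inserted point may attach directly to a vertex of the second infinite cluster). Conditioning these extra edges away is not available in general: the probability that an inserted point at $x$ has no Poisson neighbour is $\BE\prod_{y\in\eta}\big(1-\varphi(x,y)\big)$, which can vanish in models where $\varphi=1$ on a set of positive $\lambda$-measure (Boolean-type models), and in any case this event is not independent of the two-infinite-cluster event on which you condition. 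The paper's resolution is essentially different: the connector points $Y_1,\dots,Y_n$ are added \emph{one at a time}, and the split point is the random $Y_i$ whose addition \emph{first} connects $x_1$ to $x_2$; in the intermediate graph $\xi^{x_1,x_2,Y_1,\dots,Y_i}$ this $Y_i$ is automatically a cut point separating two infinite pieces, and deletion stability is transferred to these augmented processes by a Mecke argument (Lemmas \ref{l:remove} and \ref{l:connected}). Your step (4) gestures at such a transfer, but without the first-connection-time device the contradiction does not materialize, and no fixed, pre-designated vertex can play this role in general.

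Two further points. First, in the general setting of Theorem \ref{t:unique} the space $\BX$ carries no group action, so the ``ergodicity-type arguments / $0$--$1$ law'' you invoke in step (1) do not exist; they are also unnecessary, and the paper avoids selecting random representatives $v_1,\dots,v_k\in\eta$ altogether by first applying the Mecke equation \eqref{e:Mecke} with $n=2$, reducing the claim to $\int\BP(A(x_1,x_2))\,\lambda^2(d(x_1,x_2))=0$ for two \emph{deterministic} added points, to which Proposition \ref{p:irred}\,(vi) applies directly; the connector graph is then constructed independently of $\xi^{x_1,x_2}$, so positivity of the connection probability and the null event $A(x_1,x_2)\cap B_n(x_1,x_2)$ combine cleanly, with none of the delicate conditioning your representative-picking requires. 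Second, your converse argument is not an argument as stated: if removing $x$ did split $C^x(\xi^x)$ into two infinite pieces, both pieces would still ``sit inside the unique infinite cluster of $\xi^x$'', so that observation does not bound $N^\infty(x,\xi^x)$ by $1$. The clean route is that $\xi^x-\delta_x=\xi$ by construction, so the clusters counted by $N^\infty(x,\xi^x)$ are infinite clusters of $\xi$ itself, of which there is a.s.\ at most one; the paper argues equivalently via the Mecke equation applied to points of $\eta$.
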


The converse implication in Theorem \ref{t:unique} will be an easy consequence of the Mecke equation.
The proof of the non-trivial implication will be based on two lemmas.
Let $Y_1,\ldots,Y_n$  be random elements
of  $\BX$, which are a.s.\ pairwise distinct.
In accordance with Section \ref{s:basics}
we define a random connection model $\xi^{Y_1,\ldots,Y_n}$ based
on the point process $\eta+\delta_{Y_1}+\cdots + \delta_{Y_n}$
as follows. We connect $Y_1$ with
the points in $\eta$ using independent connection decisions which
are independent of $\xi$.
We then proceed inductively finally connecting $Y_n$ to 
$\eta+\delta_{Y_1}+\cdots + \delta_{Y_{n-1}}$.

\begin{lemma}\label{l:remove} Suppose that
$B\in\cX$ with $\lambda(B)\in(0,\infty)$ and let $Y_1,\ldots,Y_n$
be independent random variables  with distribution $\lambda_B/\lambda(B)$,
independent of $\xi$. Assume that the infinite clusters of $\xi$ are deletion stable, then
\begin{align}
\int \BP(N^\infty(Y_n,\xi^{x_1,x_2,Y_1,\ldots,Y_n})\ge 2)\,\lambda^2(d(x_1,x_2))=0. 
\end{align}
\end{lemma}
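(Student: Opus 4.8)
The plan is to reduce the statement to the deletion stability assumption \eqref{DT_1} by a sequence of applications of the Mecke equation, peeling off the added points $x_1,x_2,Y_1,\ldots,Y_{n-1}$ one at a time and converting each into an integral against $\lambda$. Concretely, I would first note that by deletion stability we have $\BP(N^\infty(x,\xi^x)\ge 2)=0$ for $\lambda$-a.e.\ $x\in\BX$, and hence, spelling out the definition of $\xi^x$ via the kernel $\Gamma$ (equivalently, using \eqref{c1}), that
\begin{align*}
\BE\,\I\{N^\infty(x,\mu)\ge 2\}\,\Gamma(\eta^{x},d\mu)=0
\end{align*}
for $\lambda$-a.e.\ $x$. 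The key observation is that $N^\infty(x,\mu)$ depends on $\mu$ only through the cluster $C^x(\mu)$, so the statement to be proven involves only the cluster of the \emph{last} added point $Y_n$. Thus the natural strategy is to integrate out the remaining randomness.

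First I would apply the Mecke equation \eqref{e:Meckev}-type identity repeatedly (or \eqref{e:Mecke} in the version with several extra base points): starting from
\begin{align*}
J:=\int \BP\bigl(N^\infty(Y_n,\xi^{x_1,x_2,Y_1,\ldots,Y_n})\ge 2\bigr)\,\lambda^2(d(x_1,x_2)),
\end{align*}
I would write $Y_1,\ldots,Y_{n-1},Y_n$ as averages against $\lambda_B/\lambda(B)$ and use Fubini to express $J$ (up to the positive constant $\lambda(B)^{-n}$) as
\begin{align*}
\lambda(B)^{-n}\int \BP\bigl(N^\infty(y_n,\xi^{x_1,x_2,y_1,\ldots,y_{n-1},y_n})\ge 2\bigr)\,\lambda^{2}(d(x_1,x_2))\,\lambda_B^{n}(d(y_1,\ldots,y_n)).
\end{align*}
Now the inner probability is, for fixed distinct deterministic $x_1,x_2,y_1,\ldots,y_n$, precisely $\BE\int\I\{N^\infty(y_n,\mu)\ge 2\}\,\Gamma(\eta^{x_1,x_2,y_1,\ldots,y_{n-1},y_n},d\mu)$. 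The point is that $\eta^{x_1,x_2,y_1,\ldots,y_{n-1}}$ is again a Poisson process with the \emph{same} intensity measure $\lambda$ after the deterministic shift of finitely many points — more precisely, by \cite[Theorem 4.4/4.5]{LastPenrose18} and the kernel identity \eqref{c1}, averaging $\BP(N^\infty(y,\xi^{x_1,\ldots,x_k,y})\ge 2)$ over $y$ against $\lambda$ (or over any of the $x_i$) returns an integral of the \emph{same} quantity with one fewer extra point. Iterating, $J$ collapses (up to positive constants and a finite sum over multiplicities) to
\begin{align*}
\text{const}\cdot\int \BP\bigl(N^\infty(y,\xi^{y})\ge 2\bigr)\,\lambda(dy),
\end{align*}
which vanishes by deletion stability \eqref{DT_1}. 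Since $J\ge 0$ and equals a nonnegative multiple of this vanishing integral, $J=0$.

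The main obstacle I anticipate is bookkeeping the interaction between the Mecke equation (which integrates points of $\eta$) and the \emph{added} deterministic points: one must be careful that the added points $x_1,x_2,Y_1,\ldots,Y_{n-1}$, when averaged against $\lambda$, can genuinely be absorbed into the base Poisson process. This is exactly the content of the identities \eqref{e:Meckev}, \eqref{e:Mecke2v}, and the construction $\xi^v=h(\xi,v,U)$ discussed after \eqref{e:Meckev}: the distribution of $\xi^{x_1,x_2,Y_1,\ldots,Y_n}$, once the $Y_i$ and one of the $x_j$ are averaged, coincides with that of $\xi$ with fewer forced points, because connection decisions for the forced points are independent and the forced-point locations, being $\lambda$-distributed, merge with the Poisson intensity. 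A clean way to organise this is to prove the auxiliary fact: for any $k\ge 0$ and any measurable $g\colon\BX\times\bG\to[0,\infty]$ depending on $\mu$ only through $C^{x}(\mu)$ where $x$ is the last argument,
\begin{align*}
\int \BE\,g\bigl(y,\xi^{x_1,\ldots,x_k,y}\bigr)\,\lambda(dy)=\int \BE\,g\bigl(y,\xi^{x_1,\ldots,x_{k-1},y}\bigr)\,\lambda(dy)
\end{align*}
for $\lambda^{k-1}$-a.e.\ $(x_1,\ldots,x_{k-1})$ — this is an immediate consequence of \eqref{e:Mecke} applied to the first coordinate combined with the cluster-only dependence, since adding the last Poisson point back (via Mecke) recreates the forced point $x_k$. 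Induction on $k$ then finishes the argument, and the cluster-only dependence of $N^\infty$ is what allows the reduction to go through without any structural assumption on $\BX$.
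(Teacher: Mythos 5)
Your overall plan (reduce to \eqref{DT_1} by converting the added points into Poisson points via the Mecke equation) is the same as the paper's, but the step that carries the whole argument does not work as you state it. Your ``auxiliary fact'' is already ill-posed: its left-hand side depends on $x_k$ while the right-hand side does not, and $x_k$ is not integrated anywhere. Under the charitable reading in which $x_k$ is also averaged against $\lambda$, the identity is still not what the Mecke equation gives. Applying \eqref{e:Meckev} with the fixed added points $x_1,\ldots,x_{k-1},y$ and the test function $f(x,\mu):=g(y,\mu)$ yields
\begin{align*}
\int \BE\, g\bigl(y,\xi^{x_1,\ldots,x_{k-1},y,x}\bigr)\,\lambda(dx)
=\BE\bigl[g\bigl(y,\xi^{x_1,\ldots,x_{k-1},y}\bigr)\,\eta(\BX)\bigr],
\end{align*}
i.e.\ integrating a forced point against $\lambda$ does not delete it; it turns it into a point of $\eta$, leaving a random weight (here $\eta(\BX)$, in the multivariate version the factorial measure $\eta^{(n+1)}$). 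The ``cluster-only dependence'' does not rescue the step either, since the cluster of $y$ may well contain $x_k$, so the functional genuinely changes when $x_k$ is removed. For the same reason your final claim that $J$ ``collapses to a constant multiple of $\int\BP(N^\infty(y,\xi^y)\ge 2)\,\lambda(dy)$'' is not an identity one can establish: after the (single, multivariate) Mecke application the correct expression is an expectation of an integral against $\eta^{(n+1)}$ of a nonnegative functional of $\xi$, and one concludes that it vanishes not by proportionality but because the inner quantity $\iint \I\{N^\infty(y,\xi^y_u)\ge 2\}\,\lambda_B(dy)\,\BP(U\in du)$ is almost surely zero, which is the almost-sure reformulation of \eqref{DT_1}.

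There is a second, related gap: the last point $Y_n$ must keep a special role. The deletion-stability hypothesis concerns the cluster of an \emph{added} point of $\xi$ itself, so when you absorb $x_1,x_2,Y_1,\ldots,Y_{n-1}$ into the Poisson process you must keep $Y_n$ as an added point whose edges are drawn with independent uniform marks; this is exactly why the paper introduces the explicit construction $h(y,\mu,u)$ and the identity $\I\{N^\infty(Y_n,\xi^{x_1,x_2,Y_1,\ldots,Y_n})\ge 2\}\overset{d}{=}h(Y_n,\xi^{x_1,x_2,Y_1,\ldots,Y_{n-1}},U_n)$ before applying \eqref{e:Mecke}. Your proposal treats $Y_n$ on the same footing as the other points and never explains how it retains this role. (A correct variant close to your idea does exist: absorb all $n+2$ points at once with the multivariate Mecke equation and use that \eqref{DT_1} implies $N_{ds}=0$ a.s.\ with $N_{ds}$ as in \eqref{DS_2}, so the factorial-measure sum vanishes almost surely; but even this requires the a.s.\ argument, not the claimed proportionality, and it is not the iteration-by-one-point scheme you describe.)
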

\begin{proof}  It is useful to add a point $x\in\BX$ to a graph $\mu\in\bG$
in the following explicit way. 
There are measurable mappings $\pi_n\colon\bN\to\R^d$ such
that $\mu(\cdot\times\bN)=\sum_{n=1}^{|\mu|}\delta_{\pi_n(\mu)}$,
for each $\mu\in\bG$.
Let $(\mu,x)\in\bG\times\BX$ and $u=(u_n)_{n\ge 1}\in [0,1]^\N$.
Define $\mu^x_u\in\bG$ as the graph with vertex measure $V(\mu)+\delta_x$,
edges from $\mu$ and further edges between $\pi_n(\mu)$ and $x$ if
$\varphi(\pi_n(\mu),x)\ge u_n$. Define $h(x,\mu,u):=\I\{N^\infty(x,\mu^x_u)\ge 2\}$.
Assume that $U$ is a random element of $[0,1]^\N$ with
independent and uniformly distributed components, independent of $\xi$. Then
$\I\{N^\infty(x,\xi^x)\ge 2\}$ has the same distribution as $h(x,\xi,U)$ 
and deletion stability means that
\begin{align}\label{e:tol}
\iiint h(x,\mu,u)\,\BP(\xi\in d\mu)\,\lambda(dx)\,\BP(U\in du)=0.
\end{align}
Given $x_1,x_2\in\BX$ we also have
\begin{align*}
\I\{N^\infty(Y_n,\xi^{x_1,x_2,Y_1,\ldots,Y_n})\ge 2\}\overset{d}{=}h(Y_n,\xi^{x_1,x_2,Y_1,\ldots,Y_{n-1}},U_n),
\end{align*}
where $U_n$ is independent of the pair  $(Y_n,\xi^{x_1,x_2,Y_1,\ldots,Y_{n-1}})$ and has the same distribution
as $U$. Therefore
\begin{align*}
\iint\BP&(N^\infty(Y_n,\xi^{x_1,x_2,Y_1,\ldots,Y_n})\ge 2)\,\lambda^2(d(x_1,x_2))\\
&=(\lambda(B))^{-1}\iiint \BE h(y_n,\xi^{x_1,x_2,Y_1,\ldots,Y_{n-1}},u)\,\lambda_B(dy_n)\,\BP(U\in du)\,\lambda^2(d(x_1,x_2))\\
&=(\lambda(B))^{-n}\iiint \BE h(y_n,\xi^{x_1,x_2,y_1,\ldots,y_{n-1}},u)\,\BP(U\in du)
\lambda_B^{n}(d(y_1,\ldots,y_{n}))\,\lambda^{2}(d(x_1,x_2)),
\end{align*}
where we have used the definition of $\xi^{x_1,x_2,Y_1,\ldots,Y_{n-1}}$. 
From the Mecke equation we obtain that the above equals
\begin{align*}
(\lambda(B))^{-n}\BE \iiint  h(y_n,\xi,u)\I\{y_1,\ldots,y_{n-1} \in B\}\,\lambda_B(dy_n)\,\BP&(U\in du)\\
&\eta^{(n+1)}(d(x_1,x_2,y_1,\ldots,y_{n-1})).
\end{align*}
By \eqref{e:tol}, the integral $\iint  h(y,\xi,u)\,\lambda_B(dy)\,\BP(U\in du)$ does almost
surely vanish. This concludes the proof.
\end{proof}

For given $x_1,x_2\in\BX$ let $A(x_1,x_2)$ be the event that the clusters $C^{x_1}(\xi^{x_1,x_2})$ and
$C^{x_2}(\xi^{x_1,x_2})$ are infinite and not connected.
Further, for $n\in\N_0$ let $B_n(x_1,x_2)$ be the event that $x_1$ and $x_2$ are connected 
in $\xi^{x_1,x_2,Y_1,\ldots,Y_n}$, where $Y_1,\dots,Y_n$ are defined in Lemma \ref{l:remove}.

\begin{lemma}\label{l:connected} Let the assumptions of Lemma \ref{l:remove}
be in force. Then for a given $n\in\N_0$
\begin{align}
\int \BP (A(x_1,x_2)\cap B_n(x_1,x_2))\,\lambda^2(d(x_1,x_2))=0.
\end{align}
\end{lemma}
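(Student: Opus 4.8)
The plan is to locate, on the event $A(x_1,x_2)\cap B_n(x_1,x_2)$, an index $k\in\{1,\dots,n\}$ for which removing the point $Y_k$ from $\xi^{x_1,x_2,Y_1,\dots,Y_k}$ breaks a cluster into (at least) two infinite clusters, and then to apply Lemma~\ref{l:remove} to each such $k$. Throughout I would use a single coupled construction of the graphs $\xi^{x_1,x_2,Y_1,\dots,Y_k}$, $k=0,\dots,n$, obtained by adding $x_1,x_2,Y_1,\dots,Y_n$ to $\eta$ one after another with fresh independent connection decisions, so that $\xi^{x_1,x_2,Y_1,\dots,Y_{k-1}}=\xi^{x_1,x_2,Y_1,\dots,Y_k}-\delta_{Y_k}$; since $\lambda$ is diffuse this is legitimate for $\lambda^2$-a.e.\ $(x_1,x_2)$, the points $x_1,x_2,Y_1,\dots,Y_n$ and those of $\eta$ being a.s.\ pairwise distinct. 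The case $n=0$ is trivial, since $B_0(x_1,x_2)=\{x_1\leftrightarrow x_2\text{ in }\xi^{x_1,x_2}\}$ is disjoint from $A(x_1,x_2)$.

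Assume $n\ge1$. On $A(x_1,x_2)\cap B_n(x_1,x_2)$ I would set $k^\ast:=\min\{k\in\{0,\dots,n\}:x_1\leftrightarrow x_2\text{ in }\xi^{x_1,x_2,Y_1,\dots,Y_k}\}$; this set contains $n$ (by $B_n$) but not $0$ (by $A$, which forces $x_1\not\leftrightarrow x_2$ in $\xi^{x_1,x_2}$), so $k^\ast\in\{1,\dots,n\}$. Writing $\mu:=\xi^{x_1,x_2,Y_1,\dots,Y_{k^\ast}}$ and $\mu':=\mu-\delta_{Y_{k^\ast}}=\xi^{x_1,x_2,Y_1,\dots,Y_{k^\ast-1}}$, one has $x_1\leftrightarrow x_2$ in $\mu$ but not in $\mu'$, and I would deduce in order: (a) $Y_{k^\ast}\in C^{x_1}(\mu)$, for otherwise $Y_{k^\ast}$ is not adjacent to $C^{x_1}(\mu)$, so $C^{x_1}(\mu')=C^{x_1}(\mu)$ still contains $x_2$, contradicting $x_1\not\leftrightarrow x_2$ in $\mu'$; in particular $C^{Y_{k^\ast}}(\mu)=C^{x_1}(\mu)=C^{x_2}(\mu)$. (b) Since $x_1,x_2\in C^{Y_{k^\ast}}(\mu)$ and $x_1\not\leftrightarrow x_2$ in $\mu'$, the clusters $C^{x_1}(\mu')$ and $C^{x_2}(\mu')$ are two distinct connected components of $C^{Y_{k^\ast}}(\mu)-\delta_{Y_{k^\ast}}$. (c) Adding the points $Y_i$ removes no edge among $\eta+\delta_{x_1}+\delta_{x_2}$, so $\mu'$ contains $C^{x_i}(\xi^{x_1,x_2})$ as a subgraph and hence $C^{x_i}(\mu')$ is infinite by the definition of $A(x_1,x_2)$, for $i=1,2$. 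Combining (a)--(c), $C^{Y_{k^\ast}}(\mu)-\delta_{Y_{k^\ast}}$ has at least two infinite components, i.e.\ $N^\infty(Y_{k^\ast},\xi^{x_1,x_2,Y_1,\dots,Y_{k^\ast}})\ge2$. This gives the pointwise bound
\[
\I_{A(x_1,x_2)\cap B_n(x_1,x_2)}\le\sum_{k=1}^{n}\I\big\{N^\infty(Y_k,\xi^{x_1,x_2,Y_1,\dots,Y_k})\ge2\big\}.
\]

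To conclude I would take expectations and integrate in $(x_1,x_2)$ against $\lambda^2$: since $Y_1,\dots,Y_k$ are independent with law $\lambda_B/\lambda(B)$ and independent of $\xi$, and the infinite clusters of $\xi$ are deletion stable, Lemma~\ref{l:remove} applied with $n$ replaced by $k$ makes every summand integrate to $0$, whence $\int\BP(A(x_1,x_2)\cap B_n(x_1,x_2))\,\lambda^2(d(x_1,x_2))=0$. The step I expect to require the most care is (b)--(c): one must verify not just that deleting $Y_{k^\ast}$ separates $x_1$ from $x_2$, but that \emph{both} resulting clusters are infinite, and this is exactly where the full hypothesis $A(x_1,x_2)$ (both $C^{x_1}(\xi^{x_1,x_2})$ and $C^{x_2}(\xi^{x_1,x_2})$ infinite and unconnected) is used, together with the monotonicity of clusters under the addition of points. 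The joint measurability of the indicators in $(x_1,x_2,\omega)$ is routine and I would not dwell on it.
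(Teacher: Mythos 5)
Your proposal is correct and follows essentially the same route as the paper: identify the first index $k^\ast$ at which $x_1$ and $x_2$ become connected, observe that on $A(x_1,x_2)$ the deletion of $Y_{k^\ast}$ would split its cluster into two infinite clusters (both containing the infinite clusters $C^{x_1}(\xi^{x_1,x_2})$ and $C^{x_2}(\xi^{x_1,x_2})$, since adding points never destroys connections), and then invoke Lemma \ref{l:remove} for each $k\le n$. The only cosmetic difference is that the paper phrases this as a contradiction showing $B_n(x_1,x_2)\subset A^c(x_1,x_2)$ almost surely, whereas you make the same argument quantitative via a pointwise indicator bound summed over $k$; both are equally valid.
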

\begin{proof} We can remove the points $Y_n,\ldots,Y_1$ from
$\xi^{x_1,x_2,Y_1,\ldots,Y_n}$ one by one. Each time we
can apply Lemma \ref{l:remove}. Hence removing $Y_i$ (for $i\le n$) cannot
split the cluster of $Y_i$ in $\xi^{x_1,x_2,Y_1,\ldots,Y_i}$
into more than one infinite cluster.
Take $x_1,x_2\in\BX$ and $n\in\N_0$ such that $B_n(x_1,x_2)$ holds and assume
for the sake of contradiction that $A(x_1,x_2)$ holds. In particular
$C^{x_1}(\xi^{x_1,x_2})$ and  $C^{x_2}(\xi^{x_1,x_2})$ are vertex disjoint, so that
there must be an $i\in\{1,\ldots,n\}$ such that $x_1,x_2$ are connected 
in $\xi^{x_1,x_2,Y_1,\ldots,Y_i}$ but not in $\xi^{x_1,x_2,Y_1,\ldots,Y_{i-1}}$.
Hence, the removal of $Y_i$ would split the cluster of $Y_i$ in $\xi^{x_1,x_2,Y_1,\ldots,Y_i}$
into two infinite clusters. This is a contradiction, showing that
almost surely $B_n(x_1,x_2)\subset A^c(x_1,x_2)$ for $\lambda^2$-a.e.\ $(x_1,x_2)\in\BX^2$.
\end{proof}

\begin{proof}[Proof of Theorem \ref{t:unique}] Let us first assume that $\BP(\xi\in A_\infty)=0$,
where $A_\infty$ is the set of all $\mu\in\bG$ such that  $\mu$ has at least 
two infinite clusters. If $x\in\eta$ satisfies $N^\infty (x,\xi)\ge 2$ then $\xi-\delta_x\in A_\infty$.
Therefore we obtain from the Mecke equation \eqref{e:Mecke}
\begin{align*}
\BE \int \I\{N^\infty (x,\xi)\ge 2\}\,\eta(dx)\le \BE \int \I\{\xi-\delta_x\in A_\infty\}\,\eta(dx)
=\BE \int \I\{\xi\in A_\infty\}\,\lambda(dx)
\end{align*}
which comes to zero.

Let us now assume that $\xi$ is irreducible and that the infinite clusters are
deletion stable.
We need to show that almost surely two 
points of $\eta$ cannot belong to two different infinite clusters. By the Mecke equation 
\eqref{e:Mecke} for $n=2$ the latter is equivalent to
\begin{align}
\int \BP (A(x_1,x_2))\,\lambda^2(d(x_1,x_2))=0.
\end{align}
The following arguments apply to $\lambda^2$-a.e.\ $(x_1, x_2)\in\BX^2$. By Proposition \ref{p:irred} (vi)
there exist a set $B\in\cX$ with $0<\lambda(B)<\infty$, an $n\in\N$ and random variables $Y_1,\ldots,Y_n$ 
with distribution $\lambda_B/\lambda(B)$ such that $\BP(B'_n(x_1, x_2))>0$, where
\begin{align*}
    B'_n(x_1, x_2):=\{x_1\leftrightarrow x_2 \text{ in } \Xi'[x_1,x_2,Y_1,\ldots,Y_n]\}.
\end{align*}
We can couple the random graphs $\xi^{x_1,x_2,Y_1,\ldots,Y_{n}}$ and
$\Xi'[x_1,x_2,Y_1,\ldots,Y_n]$ in such a way that $\xi^{x_1,x_2}$ and $\Xi'[x_1,x_2,Y_1,\ldots,Y_n]$
are independent and every edge in the latter graph
is also present in the former. Then $B'_n(x_1,x_2)$ implies $B_n(x_1,x_2)$ and we obtain 
from Lemma \ref{l:connected}  that 
\begin{align*}
    \BP(A(x_1,x_2)\cap B'_n(x_1,x_2))=0.
\end{align*}
By the above coupling the events $A(x_1,x_2)$  and $B'_n(x_1,x_2)$ are independent.
Hence $\BP(A(x_1,x_2))=0$, as required.
\end{proof}

Motivated by \cite{NashWilliams} we might call an infinite graph $\mu\in\bG$ {\em $2$-indivisible} if
the removal of a finite number of vertices results in at most 
one infinite connected component. The 
following corollary of Proposition \ref{p:deltolerant} shows that
$\xi$ is almost surely $2$-indivisible.

\begin{corollary}\label{c:necess}\rm
Assume that $\xi$  has almost surely at most one infinite cluster. Assume further
that $\eta_0$ is a point process such that $\BP(\eta_0(\BX)<\infty)=1$ and $\BP(\eta_0\le\eta)=1$. Then
$\xi[\eta-\eta_0]$ has a.s.\ at most one infinite cluster.
\end{corollary}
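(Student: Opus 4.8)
The plan is to deduce Corollary \ref{c:necess} directly from Proposition \ref{p:deltolerant} by applying it to the (measurable) set of graphs having at least two infinite clusters. First I would recall that $A_\infty\subset\bG$, the set of all $\mu\in\bG$ with at least two infinite clusters, is measurable — this is essentially the set $\bG\setminus(C_\infty^{\le 1})$ and can be written as a countable combination of measurable events of the form ``there exist two vertices in distinct infinite clusters'', which are measurable with respect to the standard $\sigma$-field on $\bG$. By hypothesis $\BP(\xi\in A_\infty)=0$.

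Next I would simply invoke Proposition \ref{p:deltolerant} with $A:=A_\infty$ and the given point process $\eta_0$: since $\BP(\eta_0(\BX)<\infty)=1$ and $\BP(\eta_0\le\eta)=1$, the proposition yields $\BP(\xi[\eta-\eta_0]\in A_\infty)=0$, which is exactly the assertion that $\xi[\eta-\eta_0]$ has almost surely at most one infinite cluster. So the proof is essentially one line once the measurability of $A_\infty$ is noted.

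The only genuine point requiring a word of care — and the one I expect to be the main (minor) obstacle — is the measurability of $A_\infty$, since infinite clusters are not locally determined. I would handle this by noting that ``$x\leftrightarrow y$ in $\mu$'' is measurable for fixed $x,y$ (it is a countable union over path lengths of measurable events), that ``$|C^x(\mu)|=\infty$'' is measurable (a countable intersection over $n$ of ``$|C^x_{\le n}(\mu)|\ge n$'' type events, or more simply $\bigcap_n\{|C^x(\mu)|\ge n\}$), and that having at least two infinite clusters can be expressed as: there exist two points $x,y\in V(\mu)$ with $|C^x(\mu)|=|C^y(\mu)|=\infty$ and $x\not\leftrightarrow y$ in $\mu$. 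Integrating indicators of these events against $\mu^{(2)}$ produces a measurable function of $\mu$, and $A_\infty$ is where this function is positive. With that remark in place the corollary follows immediately from Proposition \ref{p:deltolerant}.

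\begin{proof} Let $A_\infty\subset\bG$ be the set of all $\mu\in\bG$ having at least two infinite clusters. This set is measurable: for fixed $x,y\in\BX$ the events $\{x,y\in V(\mu)\}$, $\{|C^x(\mu)|=\infty\}=\bigcap_{n\in\N}\{|C^x_{\le n}(\mu)|\ge n+1\}$ and $\{x\leftrightarrow y\text{ in }\mu\}=\bigcup_{k\in\N}\{d_\mu(x,y)\le k\}$ are measurable, and hence so is
\begin{align*}
\mu\mapsto \int \I\{|C^x(\mu)|=\infty,\,|C^y(\mu)|=\infty,\,x\not\leftrightarrow y\text{ in }\mu\}\,\mu^{(2)}(d(x,y)),
\end{align*}
with $A_\infty$ being the set on which this function is positive. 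By hypothesis $\BP(\xi\in A_\infty)=0$. Since $\BP(\eta_0(\BX)<\infty)=1$ and $\BP(\eta_0\le\eta)=1$, Proposition \ref{p:deltolerant} gives $\BP(\xi[\eta-\eta_0]\in A_\infty)=0$, i.e.\ $\xi[\eta-\eta_0]$ has almost surely at most one infinite cluster.
\end{proof}
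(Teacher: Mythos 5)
Your proposal is correct and matches the paper's intended argument: the corollary is stated there as an immediate consequence of Proposition \ref{p:deltolerant} applied to the measurable set $A_\infty$ of graphs with at least two infinite clusters. Your additional remark on the measurability of $A_\infty$ is a harmless (and reasonable) elaboration of a point the paper leaves implicit.
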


\begin{remark}\label{r:nondiffuseunique}\rm Consider the setting of Remark \ref{r:non-diffuse}.
Assume that the infinite clusters of $\xi$ are deletion stable.
This means that reducing the size
of an atom $x$ of $\eta$ by one (and removing the corresponding edges), cannot split the associate cluster into more
than one infinite clusters. 
This is not an intrinsic property of the random graph $\xi^*$, as defined in Remark \ref{r:non-diffuse}.
Assume in addition that $\hat\xi$ is irreducible (characterized in Remark \ref{r:non-diffuseirred}), so 
that Theorem  \ref{t:unique} applies. By Corollary \ref{c:necess} we can then remove any finite number
of points from  $\hat\eta$ without splitting the infinite cluster (if existent) in two or more infinite clusters.
In particular we can remove a finite number of points from the support of $\eta$,
without splitting the infinite cluster of $\xi^*$ in two or more infinite clusters.
\end{remark}

\begin{remark}\label{r:redpoints}\rm In accordance with the physics literature
(see e.g.\ \cite{BundeHavlin12}) we might call a point $x\in\eta$ {\em red}, if any doubly infinite path
in $\xi$ has to use $x$. If $\xi$ has a unique infinite cluster, then Corollary \ref{c:necess} says
in particular that $\xi$ cannot have red points. 
More generally, we may call
a subset of $\eta$ {\em red}, if any doubly infinite path in $\xi$ contains at least one point from this set. 
Corollary \ref{c:necess} says that $\xi$ cannot have a finite red set.
\end{remark}

\begin{remark}\label{r:BoJaRi}\rm The authors of \cite{BoJaRi07}
studied random connection models on finite point processes in an asymptotic setting.
Under a natural irreducibility assumption (similar to Proposition \ref{p:irred} (iv))
they  proved uniqueness of the giant component;
see Theorem 3.6 and Example 4.9 in \cite{BoJaRi07}.
\end{remark}

\begin{remark}\label{r:hyperbolic}\rm Consider Example \ref{e:generalBoolean}
in the special case where $\BY$ is the hyperbolic plane and $\nu=t\mu_2$, where $\mu_2$ is the 
invariant measure on $\BY$ and $t>0$ is an intensity parameter. Let $\BQ$ be concentrated
on a single positive radius and let $\xi$ be RCM with connection function
$\varphi(K,L)=\I\{K\cap L\ne\emptyset\}$. Then $\xi$ describes a hyperbolic Boolean model of balls.
Inspired by the seminal paper \cite{BenjSchramm01},
it was shown in \cite{Tykesson07} that there are numbers $0<t_c<t_u<\infty$ such
that there is no percolation for $t\in(0,t_c]$, infinitely many infinite clusters for 
$t\in(t_c,t_u)$ and a unique infinite cluster for $t\ge t_u$. Corollary \ref{c:necess}
shows that the unique infinite cluster cannot be destroyed by the removal of  a finite number of
Poisson points. A referee of this paper has asked whether it is possible to check (resp.\ to reject) 
deletion stability for certain values of $t$ which are then (by Theorem \ref{t:unique}) upper 
(resp.\ lower) bounds for $t_u$. We do not know whether this is possible; see also 
Remark \ref{r:amenability}.
\end{remark}

\section{A spatial Markov property}\label{secMarkov}

We again consider a general RCM $\xi$ based on a Poisson process $\eta$ on $\BX$ with diffuse intensity measure $\lambda$.
Let $v\in\BX$. In the next section  
we shall establish and exploit a useful explicit change of measure for the distribution of $C^v=C^v(\xi^v)$.
This is possible since  for $n\in\N_0$ the conditional distribution of $C^v_{n+1}$ given $C^v_{\le n}$
can be described in terms of a RCM driven by Poisson process with a thinned intensity measure.
In this section we derive a general version of this {\em spatial Markov property}.

Let $\nu$ be a locally finite and diffuse
measure on $\BX$. Then we denote by $\Pi_\nu$ the distribution
of a Poisson process with this intensity measure.
We define two kernels from $\bN$ to $\BX$ and from $\bN\times\bN$ to $\BX$ (using the same notation $K_\nu$ for simplicity), by 
\begin{align}
K_\nu(\mu,dx):=\bar\varphi(\mu,x)\nu(dx), \quad K_\nu(\mu,\mu',dx):=\bar\varphi(\mu,x)\varphi(\mu',x)\nu(dx),
\end{align} 
where we recall the definitions \eqref{e:barvarphi}. 
Proposition \ref{p2.1} will provide an interpretation of this kernel. 
Denoting by $0$ the zero measure, we note that
\begin{align}
K_\nu(0,dx)=\nu(dx), \quad K_\nu(0,\mu',dx)=\varphi(\mu',x)\nu(dx), \quad K_\nu(\mu,0,dx)=0.
\end{align}
We write $K_\nu(\mu):=K_\nu(\mu,\cdot)$ and $K_\nu(\mu,\mu'):=K_\nu(\mu,\mu',\cdot)$.
Note that $K_\lambda(0,\mu,\BX)=\varphi_\lambda(\mu)$; see \eqref{e:barvarphi}.

For  $n\in\N_0$, $\mu\in\bG$ and $v\in\BX$ let $\Gamma^v_n(\lambda,\mu,\cdot)$ denote the
distribution of a random graph $\xi_n$ defined as follows.
Let $\xi_n'$ be a RCM based on $\eta_n+C^v_{n}(\mu)$,
where $\eta_n$ is a Poisson process with intensity measure $K_\lambda(C^v_{\le{n-1}}(\mu))$, 
and where we recall that $C^v_{\le{-1}}:=0$.
Remove in $\xi_n'$ all edges between
vertices from $C^v_{n}(\mu)$ to obtain a random graph $\xi_n''$. Finally
set $\xi_n:=C^v_{\le n}(\mu)\oplus \xi_n''$, with an obvious definition of the operation $\oplus$.
We set $C^v_{\le 0}(\mu):=\delta_v$, which is the graph with vertex set $\{v\}$ and
no edges.

\begin{theorem}\label{spatMarkov} Let $v\in\BX$ and $n\in\N_0$. Then,
\begin{align}
\BP(\xi^v\in\cdot\mid C^v_{\le n})=\Gamma^v_n(\lambda,C^v_{\le n},\cdot),\quad \BP\text{-a.s.}
\end{align}
\end{theorem}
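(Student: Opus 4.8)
The plan is to prove the statement by induction on $n$, using the explicit construction of $\xi^v$ in terms of independent connection decisions together with the restriction and thinning properties of Poisson processes. The base case $n=0$ is essentially a direct computation: conditionally on $v\in\eta^v$ (which always holds), the set $C^v_1$ consists of those Poisson points that happen to be connected to $v$, and by the independence of the connection decisions (and the Mecke/marking structure) the point process of such neighbours is a Poisson process with intensity measure $\varphi(\delta_v,x)\lambda(dx)$, while the remaining points, those not connected to $v$, form an independent Poisson process with intensity measure $\bar\varphi(\delta_v,x)\lambda(dx)=K_\lambda(C^v_{\le -1},\cdot)$ restricted appropriately. Conditioning on $C^v_{\le 0}=\delta_v$, the graph $\xi^v$ is obtained by attaching to $\delta_v$ a RCM on $\eta_0 + C^v_0(\mu)$ with the edges inside $C^v_0(\mu)$ (there are none) deleted; this matches $\Gamma^v_0(\lambda,C^v_{\le 0},\cdot)$.

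For the inductive step, I would condition successively: assume the claim holds for $n-1$, and compute the conditional law of $\xi^v$ given $C^v_{\le n}$ by first conditioning on $C^v_{\le n-1}$. Under $\Gamma^v_{n-1}(\lambda,C^v_{\le n-1},\cdot)$, the part of the graph not yet revealed is built from a Poisson process $\eta_{n-1}$ with intensity $K_\lambda(C^v_{\le n-2})$ together with the ``boundary'' vertices $C^v_{n-1}$, with internal edges among $C^v_{n-1}$ deleted and re-drawn. The key point is that $C^v_n$ is precisely the set of points of $\eta_{n-1}$ (those not already forced to lie in $C^v_{\le n-1}$) that connect to $C^v_{n-1}$ but, being at graph distance exactly $n$, automatically do not connect to $C^v_{\le n-2}$; by construction of $\eta_{n-1}$ its points already satisfy $\bar\varphi(C^v_{\le n-2},x)$ weighting, so thinning this Poisson process by the connection event to $C^v_{n-1}$ produces a Poisson process with intensity $\bar\varphi(C^v_{\le n-2},x)\varphi(C^v_{n-1},x)\lambda(dx)=K_\lambda(C^v_{\le n-2},C^v_{n-1},dx)$ — wait, one must be careful: the correct intensity of the \emph{complement} (the points surviving to the next round) is $\bar\varphi(C^v_{\le n-2},x)\bar\varphi(C^v_{n-1},x)\lambda(dx)=\bar\varphi(C^v_{\le n-1},x)\lambda(dx)=K_\lambda(C^v_{\le n-1},dx)$, which is exactly the intensity of $\eta_n$ in the definition of $\Gamma^v_n$. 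Together with the standard fact that thinning a Poisson process yields independent Poisson processes on the two parts, and that the connection decisions among the surviving points (and between them and $C^v_n$) are untouched and independent, this identifies the conditional law of $\xi^v$ given $C^v_{\le n}$ with $\Gamma^v_n(\lambda,C^v_{\le n},\cdot)$.

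The main obstacle is bookkeeping: one must carefully justify that the construction of $\xi^v$ via the i.i.d.\ marks $Z_{m,n}$ can be re-organized as a ``breadth-first exploration'' in which, at each level, the connection marks between already-revealed and not-yet-revealed vertices are fresh and independent, so that the successive thinnings of the Poisson process are genuine (independent) thinnings and the conditional distributions telescope correctly. This is the continuum analogue of the discrete ``exploration reveals a Poisson process with deleted intensity'' argument; the diffuseness of $\lambda$ ensures $\eta$ is simple so that the restriction and thinning operations are unambiguous, and $\eqref{c1}$ together with $\eqref{e:kernel}$ (or directly the Mecke equation $\eqref{e:Mecke}$ applied conditionally) supplies the needed identities for the added point $v$ and for passing expectations through the kernel $\Gamma$. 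Once the exploration is set up cleanly, each step is a routine application of the Poisson thinning/superposition theorem and the definition of the RCM, and the induction closes.
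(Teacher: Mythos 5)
Your induction-by-exploration argument---revealing the cluster level by level and using the Poisson thinning/superposition theorem together with the fact that connection marks between unexplored points and between unexplored points and the current boundary $C^v_n$ are still untouched, hence fresh and independent---is correct and is essentially the same argument the paper relies on, which it does not write out but outsources to the proof of Lemma 3.3 in \cite{HHLM23} (see also Proposition 2 in \cite{MeePenSar97}). One minor slip worth fixing: in your base case, $K_\lambda(C^v_{\le -1},\cdot)=K_\lambda(0,\cdot)=\lambda$, not $\bar\varphi(\delta_v,x)\,\lambda(dx)$; the case $n=0$ is in fact trivial because $C^v_{\le 0}=\delta_v$ is deterministic and $\Gamma^v_0(\lambda,\delta_v,\cdot)$ is by definition the law of $\xi^v$, while the neighbour/non-neighbour splitting you describe is really the first inductive step.
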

\begin{proof} This follows from the proof of \cite[Lemma 3.3]{HHLM23};
see also Proposition 2 in \cite{MeePenSar97}.
Essentially
the assertion is equivalent to equation (3.6) in this proof. The arguments given there apply to a RCM on a general state space $\BX$ and not only to $\R^d$.
\end{proof}

A quick consequence of Theorem \ref{spatMarkov} is that
$\{(V(C^v_{\le{n-1}}),V(C^v_n))\}_{n\in\N_0}$ is a Markov process.

\begin{proposition}\label{p2.1} The sequence $\{(V(C^v_{\le{n-1}}),V(C^v_n))\}_{n\in\N_0}$ is a Markov
process with transition kernel
\begin{align*}
(\mu,\mu')\mapsto \int\I\{(\mu+\mu',\psi)\in\cdot\}\,\Pi_{K_\lambda(\mu, \mu')}(d\psi).
\end{align*}
\end{proposition}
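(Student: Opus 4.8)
The plan is to derive Proposition \ref{p2.1} directly from the spatial Markov property in Theorem \ref{spatMarkov}. The key observation is that the law of $(V(C^v_{\le n-1}),V(C^v_n))$ already contains all the information needed to describe the conditional law of the next pair, so one only has to read off from $\Gamma^v_n$ the marginal distribution of the vertex sets $V(C^v_{\le n})$ and $V(C^v_{n+1})$.

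First I would fix $n\in\N_0$ and condition on $C^v_{\le n}$. By Theorem \ref{spatMarkov}, the conditional distribution of $\xi^v$ given $C^v_{\le n}$ is $\Gamma^v_n(\lambda,C^v_{\le n},\cdot)$. Since $C^v_{n+1}$ and $C^v_{\le n}=C^v_{\le n}$ are measurable functions of $\xi^v$, this in particular determines the conditional distribution of $(V(C^v_{\le n}),V(C^v_{n+1}))$ given $C^v_{\le n}$, hence a fortiori given $(V(C^v_{\le n-1}),V(C^v_n))$ (which is a function of $C^v_{\le n}$). It remains to compute this conditional distribution from the explicit description of $\Gamma^v_n$. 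In the construction of $\xi_n$ underlying $\Gamma^v_n(\lambda,\mu,\cdot)$ with $\mu=C^v_{\le n}$, write $\mu'=C^v_n(\mu)$; the graph $\xi_n$ is $C^v_{\le n}(\mu)\oplus\xi_n''$, where $\xi_n''$ is a RCM on $\eta_{n+1}+C^v_n(\mu)$ with all edges inside $C^v_n(\mu)$ deleted, and $\eta_{n+1}$ is a Poisson process with intensity $K_\lambda(C^v_{\le n-1}(\mu))$. By definition of the graph-distance layers, $V(C^v_{n+1}(\xi_n))$ is exactly the set of points of $\eta_{n+1}$ that are connected (in $\xi_n''$) to at least one vertex of $C^v_n(\mu)=\mu'$ but — since $\eta_{n+1}$ is thinned by the factor $\bar\varphi(C^v_{\le n-1}(\mu),\cdot)$ — are not connected to any vertex of $C^v_{\le n-1}(\mu)$.

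The main technical step is therefore a thinning/colouring argument for Poisson processes: starting from the Poisson process $\eta_{n+1}$ with intensity $K_\lambda(C^v_{\le n-1}(\mu),dx)=\bar\varphi(C^v_{\le n-1}(\mu),x)\,\lambda(dx)$ and retaining each point $x$ independently with probability $\varphi(\mu',x)=1-\bar\varphi(\mu',x)$ (the probability that $x$ connects to $\mu'$, using independence of the connection decisions), one obtains by the marking/thinning theorem for Poisson processes (see \cite[Chapter 5]{LastPenrose18}) a Poisson process with intensity measure
\begin{align*}
\bar\varphi(C^v_{\le n-1}(\mu),x)\,\varphi(\mu',x)\,\lambda(dx)=K_\lambda(C^v_{\le n-1}(\mu),C^v_n(\mu),dx),
\end{align*}
which by definition is $K_\lambda(\mu'',\mu')$ with $\mu''=V(C^v_{\le n-1})$ and $\mu'=V(C^v_n)$. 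This identifies the conditional law of $V(C^v_{n+1})$ given $(V(C^v_{\le n-1}),V(C^v_n))=(\mu'',\mu')$ as $\Pi_{K_\lambda(\mu'',\mu')}$, while $V(C^v_{\le n})=\mu''+\mu'$ is then the deterministic value of the first coordinate; this is exactly the asserted transition kernel $(\mu,\mu')\mapsto\int\I\{(\mu+\mu',\psi)\in\cdot\}\,\Pi_{K_\lambda(\mu,\mu')}(d\psi)$. The Markov property itself is immediate: the conditional law of $(V(C^v_{\le n}),V(C^v_{n+1}))$ given the whole past $\{(V(C^v_{\le k-1}),V(C^v_k))\}_{k\le n}$ factors through $C^v_{\le n}$ by Theorem \ref{spatMarkov}, and we have just shown it depends on $C^v_{\le n}$ only through the pair $(V(C^v_{\le n-1}),V(C^v_n))$.

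The point I expect to require the most care is the bookkeeping in the thinning step: one must check that conditionally on $C^v_{\le n}$ the connection decisions between $\eta_{n+1}$ and $C^v_n(\mu)$ are genuinely independent of those already used to build $C^v_{\le n}$, so that the retention probability $\varphi(\mu',x)$ applied to $\eta_{n+1}$ really produces an independent Poisson thinning; this is built into the construction in the statement of Theorem \ref{spatMarkov} (the RCM $\xi_n'$ on $\eta_{n+1}+C^v_n(\mu)$ uses fresh connection variables), but it should be stated explicitly. A minor boundary case is $n=0$, where $C^v_{\le -1}=0$, $C^v_{\le 0}=\delta_v$, $\mu''=0$, $\mu'=\{v\}$, and the kernel correctly reduces to $\Pi_{K_\lambda(0,\delta_v)}=\Pi_{\varphi(\delta_v,\cdot)\lambda}$, matching the displayed identity $K_\nu(0,\mu',dx)=\varphi(\mu',x)\nu(dx)$.
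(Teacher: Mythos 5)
Your proposal is correct and follows exactly the route the paper intends: the paper states Proposition \ref{p2.1} as a quick consequence of Theorem \ref{spatMarkov}, and your argument — conditioning via $\Gamma^v_n$, identifying $V(C^v_{n+1})$ as the points of the auxiliary Poisson process (intensity $K_\lambda(C^v_{\le n-1})$) retained independently with probability $\varphi(C^v_n,\cdot)$, and invoking the thinning theorem to get intensity $K_\lambda(V(C^v_{\le n-1}),V(C^v_n))$ — is precisely that derivation, with the Markov property obtained correctly by noting the past $\sigma$-field is contained in $\sigma(C^v_{\le n})$ and the conditional law depends on $C^v_{\le n}$ only through the pair of vertex sets.
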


We also note that
\begin{align*}
K_{\lambda}(\mu,\mu',\BX)\le \int \varphi(\mu',x)\,\lambda(dx)
\le \iint \varphi(y,x)\,\mu'(dy)\,\lambda(dx),
\end{align*}
where we have used the Bernoulli inequality. Hence
\begin{align}\label{e3.7a} 
K_{\lambda}(\mu,\mu',\BX)\le\int D_\varphi(y)\,\mu'(dy).
\end{align}

\begin{corollary}\label{c3.23} Let $n\in\N_0$. Then we have  for $\lambda$-a.e.\ $v\in\BX$ that
$\BP(|C^v_n|<\infty)=1$.
\end{corollary}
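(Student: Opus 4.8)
Corollary \ref{c3.23} asserts that for $\lambda$-a.e.\ $v$, the sphere $C^v_n$ at graph-distance $n$ from $v$ in $\xi^v$ is almost surely finite.

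The plan is to prove the statement by induction on $n$, using the spatial Markov property of Proposition \ref{p2.1} together with the mass bound \eqref{e3.7a}.

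First I would dispose of trivialities. By assumption \eqref{evarphiintlocal} the set $\{D_\varphi=\infty\}$ is $\lambda$-null, so it suffices to prove $\BP(|C^v_n|<\infty)=1$ for every $v$ in the full-measure set $\{D_\varphi<\infty\}$ and every $n\in\N_0$. Fix such a $v$. Since $\eta$ has intensity measure $\lambda$, almost surely $\eta$ charges no $\lambda$-null set, so $V(C^v_n)\subseteq\{v\}\cup\supp\eta\subseteq\{D_\varphi<\infty\}$ almost surely; hence on the event $\{|C^v_n|<\infty\}$ we have $\int D_\varphi(y)\,C^v_n(dy)=\sum_{y\in V(C^v_n)}D_\varphi(y)<\infty$ almost surely. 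The base case $n=0$ is immediate, since $C^v_0=\delta_v$ and so $|C^v_0|=1$.

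For the inductive step, suppose $\BP(|C^v_n|<\infty)=1$. By Proposition \ref{p2.1}, conditionally on $(V(C^v_{\le n-1}),V(C^v_n))$ the vertex set $V(C^v_{n+1})$ is a Poisson process with intensity measure $K_\lambda(V(C^v_{\le n-1}),V(C^v_n))$, so $|C^v_{n+1}|$ is, conditionally, Poisson distributed with parameter
\begin{align*}
K_\lambda\big(V(C^v_{\le n-1}),V(C^v_n),\BX\big)\le \int D_\varphi(y)\,C^v_n(dy),
\end{align*}
where the inequality is \eqref{e3.7a}. By the induction hypothesis and the previous paragraph the right-hand side is almost surely finite; a Poisson variable with an almost surely finite parameter is almost surely finite, so $|C^v_{n+1}|<\infty$ almost surely. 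This closes the induction and proves the claim.

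The only delicate point — and the step I expect to be the main obstacle — is choosing the right conditioning. The naive bound obtained directly from Theorem \ref{spatMarkov}, which counts \emph{all} points of the thinned Poisson process $\Pi_{K_\lambda(C^v_{\le n-1})}$, is useless because $K_\lambda(C^v_{\le n-1},\BX)$ is typically infinite. The decisive feature is the extra factor $\varphi(C^v_n,\cdot)$ present in the two-argument kernel $K_\lambda(\cdot,\cdot)$: only those Poisson points that actually attach to the $n$-th shell can lie in the $(n+1)$-th shell, and this is exactly what renders the total mass integrable via \eqref{e3.7a}. This is why the argument must be run through Proposition \ref{p2.1} rather than through Theorem \ref{spatMarkov} alone.
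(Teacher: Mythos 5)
Your proposal is correct and follows essentially the same route as the paper: induction on $n$, the spatial Markov property in the form of Proposition \ref{p2.1}, and the mass bound \eqref{e3.7a}, with the $\lambda$-a.e.\ restriction handling the null set where $D_\varphi=\infty$. Your extra remark that one must use the two-argument kernel rather than Theorem \ref{spatMarkov} alone is exactly the point exploited in the paper's argument.
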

\begin{proof} We can proceed by induction. 
For $n=0$ the assertion is trivial.
Assume that  $\BP(|C^v_n|<\infty)=1$ for some $n\in\N_0$.
From Proposition \ref{p2.1} we know that
the conditional distribution of $V(C^v_{n+1})$ given 
$(V(C^v_{\le{n-1}}),V(C_n^v))$ is that of a Poisson process
with intensity measure $K_\lambda(V(C^v_{\le{n-1}}),V(C_n^v))$.
By \eqref{e3.7a} we obtain that 
\begin{align*}
\BE[|C^v_{n+1}|\mid (V(C^v_{\le{n-1}}),V(C_n^v))]\le \int D_\varphi(y)\,C_n^v(dy) 
\end{align*}
which is for $\lambda$-a.e.\ $v\in\BX$ a.s.\ finite by our general assumption \eqref{evarphiintlocal}
and induction hypothesis.
\end{proof}

The following useful property of the kernel $K_\lambda$ can easily be proved by induction.

\begin{lemma}\label{l3.1} Let  $n\in\N$ and $\mu_0,\ldots,\mu_n\in\bN$.
Then
\begin{align*}
K_\lambda(0,\mu_0)+K_\lambda(\mu_0,\mu_1)+\cdots+K_\lambda(\mu_0+\cdots+\mu_{n-1},\mu_{n})
=K_\lambda(0,\mu_0+\cdots+\mu_{n}).
\end{align*}
\end{lemma}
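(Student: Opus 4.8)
The plan is to reduce the claimed identity to a pointwise (in $x$) scalar identity and then telescope. Since every measure occurring in the statement has the form $g(x)\,\lambda(dx)$, and since $K_\lambda(0,\mu_0,dx)=\bar\varphi(0,x)\,\varphi(\mu_0,x)\,\lambda(dx)=\varphi(\mu_0,x)\,\lambda(dx)$ because the empty product $\bar\varphi(0,x)$ equals $1$, it suffices to prove that for every $x\in\BX$
\begin{align}\label{e:l31planpt}
\varphi(\mu_0,x)+\sum_{k=1}^{n}\bar\varphi(\mu_0+\cdots+\mu_{k-1},x)\,\varphi(\mu_k,x)=\varphi(\mu_0+\cdots+\mu_n,x).
\end{align}
The only ingredient I would need is multiplicativity of $\bar\varphi(\cdot,x)$ over disjoint sums: $\bar\varphi(\nu+\nu',x)=\bar\varphi(\nu,x)\,\bar\varphi(\nu',x)$ whenever $\nu,\nu'\in\bN$ have disjoint supports, which is immediate from the definition $\bar\varphi(\nu,x)=\prod_{y\in\nu}\bar\varphi(x,y)$ in \eqref{e:barvarphi}. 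Rewriting this via $\varphi=1-\bar\varphi$ yields the addition law
\begin{align}\label{e:l31planadd}
\varphi(\nu+\nu',x)=\varphi(\nu,x)+\bar\varphi(\nu,x)\,\varphi(\nu',x),\qquad \nu,\nu'\in\bN\ \text{disjoint}.
\end{align}

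Granting \eqref{e:l31planadd}, there are two equally short routes. One is induction on $n$: the case $n=1$ is exactly \eqref{e:l31planadd} with $(\nu,\nu')=(\mu_0,\mu_1)$; for the inductive step I would split off the last summand of the left side of \eqref{e:l31planpt}, use the induction hypothesis to replace the remaining sum by $\varphi(\mu_0+\cdots+\mu_{n-1},x)$, and then apply \eqref{e:l31planadd} once more with $\nu=\mu_0+\cdots+\mu_{n-1}$ and $\nu'=\mu_n$. The slicker route avoids induction: setting $b_{-1}:=1$ and $b_k:=\bar\varphi(\mu_0+\cdots+\mu_k,x)$ for $k\ge 0$, multiplicativity gives $b_k=b_{k-1}\,\bar\varphi(\mu_k,x)$, so the $k$-th term on the left of \eqref{e:l31planpt} equals $b_{k-1}\,\varphi(\mu_k,x)=b_{k-1}-b_k$ (including $k=0$, where it reads $\varphi(\mu_0,x)=1-b_0=b_{-1}-b_0$); the sum over $k=0,\dots,n$ telescopes to $b_{-1}-b_n=1-\bar\varphi(\mu_0+\cdots+\mu_n,x)=\varphi(\mu_0+\cdots+\mu_n,x)$.

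I do not expect any real obstacle; the statement is essentially a telescoping identity. The one point worth flagging is a matter of convention on sums in $\bN$: the multiplicativity used above is exact only when the summands have pairwise disjoint supports, so \eqref{e:l31planpt}, and hence the lemma, is to be read with $\mu_0,\dots,\mu_n$ pairwise disjoint — which is automatic in the intended applications, e.g.\ when $\mu_i=V(C^v_i)$ consists of the vertices at graph-distance $i$ from $v$, so that the $\mu_i$ are pairwise vertex-disjoint and $\mu_0+\cdots+\mu_n=V(C^v_{\le n})$. Once this is understood, the verification is the one-line computation above.
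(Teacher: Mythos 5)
Your proof is correct and matches the paper's intent: the paper offers no written proof beyond the remark that the identity "can easily be proved by induction", and your reduction to the pointwise density identity followed by induction/telescoping is exactly that argument. Your flag about pairwise disjointness (or the support convention for sums in $\bN$) is apt, since the identity can fail for overlapping supports, but in the intended application the $\mu_i=V(C^v_i)$ are vertex-disjoint.
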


\section{Perturbation formulas}\label{secperturb}

In the next sections we vary the intensity measure $\lambda$ and
consider $t\lambda$ for $t\in\R_+$. We fix $v\in\BX$ 
and let $\BP_t$ be a probability
measure governing a RCM $\xi$ based on $\eta$, where $\eta$
is a Poisson process with intensity measure $t \lambda$. 
The associated expectation is denoted by $\BE_t$.
Recall the definition \eqref{e:barvarphi}.

\begin{lemma}\label{l:abscont}
Let $\tilde\xi$ be a RCM based on a Poisson process $\tilde\eta$ with finite
intensity measure $\nu$. Let $f\colon\bG\to[0,\infty)$. Then
\begin{align*}
\BE_tf(\tilde\xi)=\BE_1f(\tilde\xi)t^{|\tilde\eta|}e^{(1-t)\nu(\BX)}
\end{align*}
\end{lemma}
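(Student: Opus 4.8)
The plan is to compare the distribution of the finite random connection model $\tilde\xi$ under $\BP_t$ with its distribution under $\BP_1$ by conditioning on the underlying Poisson process and using the explicit density between Poisson processes of different intensities. First I would recall that, conditionally on $\tilde\eta$, the random graph $\tilde\xi$ is obtained by making independent connection decisions with probabilities $\varphi(x,y)$ which do not depend on $t$; equivalently, via the kernel $\Gamma$ from Section \ref{s:basics} we have $\BP_t((\tilde\eta,\tilde\xi)\in\cdot)=\BE_t\int\I\{(\tilde\eta,\mu)\in\cdot\}\,\Gamma(\tilde\eta,d\mu)$, and the kernel $\Gamma$ itself carries no $t$-dependence. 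Hence $\BE_tf(\tilde\xi)=\BE_t\tilde f(\tilde\eta)$ where $\tilde f(\nu):=\int f(\mu)\,\Gamma(\nu,d\mu)$, and it suffices to prove the stated identity for functionals of the Poisson process alone.

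The key step is then the classical density formula for a Poisson process with finite intensity measure: for a Poisson process $\tilde\eta$ with finite intensity measure $\nu$, the distribution of the $t$-scaled version (intensity $t\nu$) is absolutely continuous with respect to the distribution at $t=1$ with Radon--Nikodym derivative $\nu\mapsto t^{\nu(\BX)}e^{(1-t)\nu(\BX)}$, applied to the total mass of the configuration. Concretely, writing $N=\tilde\eta(\BX)$ for the (Poisson-distributed) number of points and using that, conditionally on $N=k$, the points are i.i.d.\ with the normalized law $\nu/\nu(\BX)$ irrespective of $t$, one has for any measurable $g\ge 0$
\begin{align*}
\BE_t g(\tilde\eta)=\sum_{k=0}^\infty e^{-t\nu(\BX)}\frac{(t\nu(\BX))^k}{k!}\,\BE\big[g(\tilde\eta)\mid N=k\big],
\end{align*}
and the analogous expression holds with $t=1$. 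Taking the ratio of the $k$-th terms gives the factor $t^k e^{(1-t)\nu(\BX)}$, which is exactly $t^{|\tilde\eta|}e^{(1-t)\nu(\BX)}$ on the event $\{N=k\}$. This yields $\BE_t g(\tilde\eta)=\BE_1\big[g(\tilde\eta)\,t^{|\tilde\eta|}e^{(1-t)\nu(\BX)}\big]$, and applying it with $g=\tilde f$ (noting $|\tilde\eta|=|\tilde\xi|$) proves the lemma.

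I do not expect any genuine obstacle here; the statement is essentially the Poisson density formula dressed up through the conditional independence structure of the RCM. The one point requiring a little care is the interchange that lets us reduce from $f(\tilde\xi)$ to $\tilde f(\tilde\eta)$ — namely that $|\tilde\xi|=|\tilde\eta|$ almost surely (the vertex set of $\tilde\xi$ is the support of $\tilde\eta$, and $\nu$ being finite the process is a.s.\ finite, with multiplicities handled as in Remark \ref{r:non-diffuse} if $\nu$ is not diffuse) and that the conditional law of $\tilde\xi$ given $\tilde\eta$ is $t$-free. Both are immediate from the construction in Section \ref{s:definitions}. An alternative, essentially equivalent route is to invoke directly the absolute continuity statement between Poisson processes of intensities $t\nu$ and $\nu$ and combine it with \eqref{e:kernel}; I would present whichever is shorter, but the conditioning-on-$N$ argument above is self-contained and transparent.
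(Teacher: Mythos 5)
Your proposal is correct and follows essentially the same route as the paper: the paper's proof also combines the density formula $\Pi_{t\nu}=\int \I\{\mu\in\cdot\}t^{|\mu|}e^{(1-t)\nu(\BX)}\,\Pi_\nu(d\mu)$ (cited from \cite[Exercise 3.7]{LastPenrose18}) with conditioning via the $t$-free kernel $\Gamma$ from \eqref{e:kernel}. The only difference is that you derive the Poisson density formula explicitly by conditioning on the number of points, which the paper leaves as a citation plus an easy calculation.
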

\begin{proof} It is well-known that
\begin{align}\label{eabsc}
\Pi_{t\nu}=\int \I\{\mu\in\cdot\}t^{|\mu|} e^{(1-t)\nu(\BX)}\,\Pi_{\nu}(d\mu),\quad t\ge 0.
\end{align}
This follows, for instance from \cite[Exercise 3.7]{LastPenrose18}
and an easy calculation. The assertion then follows by conditioning, using
the kernel $\Gamma$ in \eqref{e:kernel}.
\end{proof}

\begin{proposition}\label{propert0} Let $v\in\BX$, $t\in\R_+$, $n\in\N$ and $t_0>0$. Then 
\begin{align*}
\BP_t(C^v_{\le n}\in\cdot)
=\BE_{t_0}\I\{C^v_{\le n}\in\cdot\}(t/t_0)^{\big|C^v_{\le n}\big|-1}e^{(t_0-t)\varphi_{\lambda}\big(C^v_{\le{n-1}}\big)}.
\end{align*}  
\end{proposition}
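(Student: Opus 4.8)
The plan is to decompose the event $\{C^v_{\le n}\in\cdot\}$ according to the finite data describing $C^v_{\le n}$, and then to apply the spatial Markov property of Theorem~\ref{spatMarkov} together with the absolute-continuity formula of Lemma~\ref{l:abscont}. First I would condition on $C^v_{\le n}$, which by Corollary~\ref{c3.23} is (for $\lambda$-a.e.\ $v$, and hence for the statement to make sense) a.s.\ a finite graph; write $C^v_{\le n} = C^v_{\le n}(\xi^v)$. By Theorem~\ref{spatMarkov} applied to the intensity measure $t\lambda$ (not $\lambda$), the conditional law of $\xi^v$ given $C^v_{\le n}$ is $\Gamma^v_n(t\lambda, C^v_{\le n},\cdot)$. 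But in fact the event $\{C^v_{\le n}\in A\}$ is already determined by $C^v_{\le n}$ together with the Poisson points that will or will not attach at level $n+1$; more precisely, $C^v_{\le n}$ is built inductively: given $C^v_{\le m-1}$, the vertex set $V(C^v_m)$ is a Poisson process with intensity $t\,K_\lambda(C^v_{\le m-1})$ (Proposition~\ref{p2.1}, rescaled by $t$), together with the Bernoulli connection decisions among those points and back to $C^v_m$. So $C^v_{\le n}$ is a measurable functional of $n$ independent ``layers'', the $m$-th layer being a Poisson process of intensity $t\,K_\lambda(V(C^v_{\le m-1}))$ plus independent connection marks.

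The key computation is then a change of measure layer by layer. For each layer, Lemma~\ref{l:abscont} (or directly \eqref{eabsc}) tells us that a Poisson process with intensity $t\,K_\lambda(\mu)$ is absolutely continuous with respect to one with intensity $t_0\,K_\lambda(\mu)$, with Radon--Nikodym density $(t/t_0)^{k}\,e^{(t_0-t)K_\lambda(\mu,\BX)}$ where $k$ is the number of points of that layer; the connection marks carry no $t$-dependence. Multiplying the densities over the $n$ layers $m=0,\dots,n-1$ (the layer producing $C^v_m$ has intensity governed by $K_\lambda(C^v_{\le m-1})$, and $C^v_{\le 0}=\delta_v$ contributes no new Poisson points), the exponents add up via Lemma~\ref{l3.1} to
\begin{align*}
K_\lambda(0,\delta_v) + K_\lambda(V(C^v_0),V(C^v_1)) + \cdots
= K_\lambda\big(0, V(C^v_{\le n-1})\big)(\BX) = \varphi_\lambda\big(C^v_{\le n-1}\big),
\end{align*}
wait --- one must be careful: the layers producing $C^v_1,\dots,C^v_n$ have intensities $K_\lambda(V(C^v_{\le 0})), \dots, K_\lambda(V(C^v_{\le n-1}))$, and since $K_\lambda(V(C^v_{\le -1}),\cdot)=K_\lambda(0,\cdot)=\lambda$ but $v$ is a deterministic added point contributing $t^0=1$, the total number of ``new'' Poisson points is exactly $|C^v_{\le n}| - 1$. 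The exponent sum is $K_\lambda(0,\delta_v)(\BX) + \sum_{m=1}^{n} K_\lambda(V(C^v_{\le m-2}), V(C^v_{\le m-1})\setminus V(C^v_{\le m-2}))(\BX)$, which by Lemma~\ref{l3.1} telescopes to $K_\lambda(0, V(C^v_{\le n-1}))(\BX) = \int \bar\varphi(C^v_{\le n-1},x)^{0}\cdots$; using the definition $K_\lambda(0,\mu,\BX)=\varphi_\lambda(\mu)$ noted right after the definition of the kernels, this is $\varphi_\lambda(C^v_{\le n-1})$. Hence the combined density is exactly $(t/t_0)^{|C^v_{\le n}|-1}\,e^{(t_0-t)\varphi_\lambda(C^v_{\le n-1})}$, as claimed.

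The main obstacle, and the place where care is required, is making the ``layer by layer'' decomposition rigorous: one needs a clean measurable representation of $C^v_{\le n}$ as a deterministic function of a finite sequence of conditionally independent Poisson layers and Bernoulli marks, so that the Radon--Nikodym densities genuinely multiply. This is essentially already contained in Proposition~\ref{p2.1} / Theorem~\ref{spatMarkov}: one proceeds by induction on $n$. For $n=0$, $C^v_{\le 0}=\delta_v$ is deterministic and $|C^v_{\le 0}|-1 = 0$, $\varphi_\lambda(C^v_{\le -1}) = \varphi_\lambda(0) = 0$, so both sides equal $\BP_t(\delta_v\in\cdot)$ and the identity is trivial. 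For the inductive step, condition on $C^v_{\le n}$, use Proposition~\ref{p2.1} to describe the conditional law of $V(C^v_{n+1})$ (a Poisson process of intensity $t\,K_\lambda(V(C^v_{\le n-1}),V(C^v_n))$) together with its independent connection marks, apply \eqref{eabsc} to that single layer, and combine with the inductive hypothesis; the exponent update $\varphi_\lambda(C^v_{\le n-1}) + K_\lambda(V(C^v_{\le n-1}),V(C^v_n))(\BX) = \varphi_\lambda(C^v_{\le n})$ is exactly Lemma~\ref{l3.1} with the reindexing above. I would also double-check the boundary bookkeeping of the added deterministic vertex $v$ (it contributes to $|C^v_{\le n}|$ but not to the Poisson layer count, hence the ``$-1$'' in the exponent of $t/t_0$) and the convention $C^v_{\le -1}=0$, $\varphi_\lambda(0)=0$.
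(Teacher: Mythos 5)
Your proposal is correct and follows essentially the same route as the paper's proof: condition through the spatial Markov property (Theorem \ref{spatMarkov}, Proposition \ref{p2.1}), change the intensity from $t$ to $t_0$ layer by layer via Lemma \ref{l:abscont} (i.e.\ \eqref{eabsc}), count the $|C^v_{\le n}|-1$ Poisson points, and telescope the exponents with Lemma \ref{l3.1}; the paper merely unrolls your induction as an iterated identity. One bookkeeping slip: the layer generating $C^v_m$ has intensity $K_{t\lambda}(V(C^v_{\le m-2}),V(C^v_{m-1}))$ (the two-argument kernel, not $K_{t\lambda}(V(C^v_{\le m-1}))$), and in your exponent sum the term $K_\lambda(0,\delta_v)(\BX)$ is already the $m=1$ summand and must not be added a second time --- your inductive-step identity $\varphi_\lambda(C^v_{\le n-1})+K_\lambda(V(C^v_{\le n-1}),V(C^v_n))(\BX)=\varphi_\lambda(C^v_{\le n})$ is the correct bookkeeping and yields the stated density.
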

\begin{proof} 
It is sufficient to consider the special case $t_0=1$.
The general case can be proved similarly or can be derived from the special case.
We omit the dependence on $v$ in our notation by writing $C_n:=C^v_n$,
and $C_{\le n}:=C^v_{\le n}$. 
Given $\mu\in\bG$ we let $C_n^+(\mu)$ denote the graph $\mu[V(C_{n-1}(\mu))+V(C_n(\mu))]$
with the edges between vertices of $C_{n-1}(\mu)$ removed.

Let $f\colon\bG\to[0,\infty)$ be measurable.
By Theorem \ref{spatMarkov},
\begin{align*}
\BE_t f(C_{\le n})=\BE_t\int f(C_{\le{n-1}}\oplus C_n^+(\mu))\,\Gamma_{n-1}(t\lambda,C_{\le{n-1}},d\mu).
\end{align*}
By \eqref{e3.7a}  we have
\begin{align*}
K_{t\lambda}(C_{\le{n-2}},C_{n-1},\BX)\le  t \int D_\varphi(y)\, C_{n-1}(dy)
\end{align*}
which is almost surely finite by Corollary \ref{c3.23}.
By definition of $\Gamma_n$ and the thinning properties of a Poisson
process, the distribution of $C^+_n(\cdot)$ under
$\Gamma_{n-1}(t\lambda,C_{\le{n-1}},\cdot)$ is that of a RCM 
driven by a Poisson process with intensity measure
$K_{t\lambda}(C_{\le{n-2}},C_{n-1})$
with additional independent connections to $V(C_{n-1})$; see also Proposition \ref{p2.1}.
Therefore we obtain from Lemma \ref{l:abscont} that
\begin{align*}
\BE_t f(C_{\le n})=\BE_t\int f(C_{\le{n-1}}\oplus C_n^+(\mu))
e^{(1-t)K_{\lambda}(C_{\le{n-2}},C_{n-1},\BX)}t^{|\mu|}
\,\Gamma_{n-1}(\lambda,C_{\le{n-1}},d\mu).
\end{align*}
Iterating this identity yields that the above equals
\begin{align*}
\idotsint f(C_1^+(\mu_1)&\oplus \cdots\oplus C_n^+(\mu_n))
e^{(1-t)K_{\lambda}(\delta_v+\mu_1+\cdots+\mu_{n-2},\mu_{n-1},\BX)}\cdots
e^{(1-t)K_{\lambda}(\delta_v,\mu_1,\BX)}\\
&t^{|\mu_n|}\cdots t^{|\mu_1|}
\Gamma_{n-1}(\lambda,C_1^+(\mu_1)\oplus \cdots\oplus C_{n-1}^+(\mu_{n-1}),d\mu_n)
\cdots \Gamma_0(\lambda,\delta_v,d\mu_1).
\end{align*}
By Lemma \ref{l3.1} this equals
\begin{align*} 
\idotsint &f(C_1^+(\mu_1)\oplus \cdots\oplus C_n^+(\mu_n))
e^{(1-t)K_{\lambda}(0,\delta_v+\mu_1+\cdots+\mu_{n-1},\BX)}
t^{|\mu_1|+\cdots+|\mu_n|}\\
&\Gamma_{n-1}(\lambda,C_1^+(\mu_1)\oplus \cdots\oplus C_{n-1}^+(\mu_{n-1}),d\mu_n)
\cdots \Gamma_0(\lambda,[\delta_v],d\mu_1).
\end{align*}
By Theorem \ref{spatMarkov} 
we obtain
\begin{align*}
\BE_t f(C_{\le n})=\BE_1 f(C_{\le n})t^{|C_{\le n}|-1}e^{(1-t)\varphi_{\lambda}(C_{\le{n-1}})}
\end{align*}
and hence the assertion.
\end{proof}

\begin{theorem}\label{tabsc} Let $v\in\BX$, $t\in\R_+$ and $t_0>0$. Then
\begin{align}
\BP_t(C^v\in\cdot,|C^v|<\infty)=\BE_{t_0} \I\{C^v\in\cdot,|C^v|<\infty\} (t/t_0)^{|C^v|-1} e^{(t_0-t)\varphi_{\lambda}(C^v)}.
\end{align}
\end{theorem}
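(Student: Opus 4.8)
The plan is to deduce Theorem \ref{tabsc} from Proposition \ref{propert0} by letting $n\to\infty$ on the event $\{|C^v|<\infty\}$. The key observation is that on this event the increasing sequence of graphs $C^v_{\le n}$ stabilizes: there is a (random) finite $N$ such that $C^v_{\le n}=C^v$ for all $n\ge N$. Indeed $C^v=\bigcup_n C^v_{\le n}$ as graphs, and if $|C^v|<\infty$ then $|C^v_{\le n}|$ is a nondecreasing integer sequence bounded by $|C^v|$, hence eventually constant; once $|C^v_{\le n}|=|C^v|$ the vertex sets agree, and since both graphs are restrictions of $\xi^v$ to the same vertex set the edges agree too. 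Moreover on $\{|C^v|<\infty\}$ one has $C^v_{n}=0$ for all large $n$, so $\varphi_\lambda(C^v_{\le n-1})=\varphi_\lambda(C^v)$ for all large $n$ as well (and in any case $\varphi_\lambda(C^v_{\le n-1})\uparrow\varphi_\lambda(C^v)$ by monotone convergence, using that $\nu\mapsto\varphi_\lambda(\nu)$ defined in \eqref{e:barvarphi} is monotone in $\nu$).

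First I would fix a measurable set $A\subset\bG$ and a finite-cardinality threshold: for $k\in\N$ consider $A_k:=\{\mu\in A: |\mu|\le k\}$. On the event $\{C^v\in A_k\}$ we have $|C^v|\le k$, hence (by the stabilization above) $C^v_{\le n}=C^v$ for all $n\ge k$, and in particular $\{C^v\in A_k\}=\{C^v_{\le n}\in A_k,\ |C^v_{\le n}|=|C^v|\}$ holds for every $n\ge k$; one can even drop the second condition once $n\ge k$ because $C^v_{\le n}\in A_k$ forces $|C^v_{\le n}|\le k$ and then $C^v_{\le n}$ is already the full cluster. So for $n\ge k$,
\begin{align*}
\BP_t(C^v\in A_k)
&=\BP_t(C^v_{\le n}\in A_k)\\
&=\BE_{t_0}\,\I\{C^v_{\le n}\in A_k\}\,(t/t_0)^{|C^v_{\le n}|-1}\,e^{(t_0-t)\varphi_\lambda(C^v_{\le n-1})}\\
&=\BE_{t_0}\,\I\{C^v\in A_k\}\,(t/t_0)^{|C^v|-1}\,e^{(t_0-t)\varphi_\lambda(C^v)},
\end{align*}
where the second line is Proposition \ref{propert0} and the third uses that on $\{C^v_{\le n}\in A_k\}$ (for $n\ge k$) we have $C^v_{\le n}=C^v$ and $\varphi_\lambda(C^v_{\le n-1})=\varphi_\lambda(C^v)$. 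This already gives the claimed identity with $A$ replaced by $A_k$.

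Finally I would let $k\to\infty$. On the left, $\I\{C^v\in A_k\}\uparrow\I\{C^v\in A,\ |C^v|<\infty\}$, so $\BP_t(C^v\in A_k)\to\BP_t(C^v\in A,\ |C^v|<\infty)$. On the right, $\I\{C^v\in A_k\}(t/t_0)^{|C^v|-1}e^{(t_0-t)\varphi_\lambda(C^v)}$ increases (for $t\le t_0$) or at least converges pointwise to $\I\{C^v\in A,\ |C^v|<\infty\}(t/t_0)^{|C^v|-1}e^{(t_0-t)\varphi_\lambda(C^v)}$; by monotone convergence when $t\le t_0$, and for $t>t_0$ one can either first establish the $t\le t_0$ case and then invert the roles of $t$ and $t_0$ (the identity is symmetric under $(t,t_0)\mapsto(t_0,t)$ after multiplying through), or dominate using that $\varphi_\lambda(C^v)$ is finite a.s.\ on $\{|C^v|<\infty\}$ (by \eqref{evarphiintlocal} together with Corollary \ref{c3.23}-type reasoning) — but the cleanest route is the symmetry argument, so no domination estimate is needed. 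This yields the asserted formula. The only real subtlety to be careful about is the stabilization claim $C^v_{\le n}=C^v$ for large $n$ on $\{|C^v|<\infty\}$ and the matching stabilization $\varphi_\lambda(C^v_{\le n-1})\to\varphi_\lambda(C^v)$; everything else is bookkeeping and a monotone limit.
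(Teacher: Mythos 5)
Your argument is correct, but it follows a genuinely different route from the paper. The paper also starts from Proposition \ref{propert0}, but it passes to the limit by viewing the densities $M^v_n=t^{|C^v_{\le n}|-1}e^{(1-t)\varphi_\lambda(C^v_{\le n-1})}$ as a nonnegative martingale with respect to $\{\sigma(C^v_{\le n})\}$ and invoking the Lebesgue-type decomposition for locally absolutely continuous measures (Shiryaev, Theorem VII.6.1), identifying $M^v_\infty$ on $\{|C^v|<\infty\}$ via the same stabilization you use. Your truncation by cluster size avoids all of that: the key observation that for $n\ge k$ the events $\{C^v\in A_k\}$ and $\{C^v_{\le n}\in A_k\}$ coincide pathwise (because a vertex at graph distance $n$ or more would force $|C^v_{\le n}|\ge n+1>k$), together with $C^v_{\le n-1}=C^v$ on that event, turns Proposition \ref{propert0} into an exact identity for each $k$, and then a single monotone-convergence step finishes the proof. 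What your approach buys is elementarity and self-containedness (no martingale convergence, no decomposition theorem, no finiteness discussion for $\varphi_\lambda(C^v)$, since both sides are well defined in $[0,\infty]$ throughout); what the paper's approach buys is the full decomposition of $\BP_t(C^v\in\cdot)$ including the singular part carried by $\{M^v_\infty=\infty\}$, which is extra information beyond the stated theorem. One small blemish in your write-up: the case distinction $t\le t_0$ versus $t>t_0$ at the end, and the proposed ``symmetry'' shortcut, are unnecessary and the latter is not really justified as stated — since only the indicator $\I\{C^v\in A_k\}$ depends on $k$ and the remaining factor is a fixed nonnegative random variable, monotone convergence applies directly for all $t\in\R_+$ and $t_0>0$.
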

\begin{proof} Again it is sufficient to consider the special case $t_0=1$.
By Proposition \ref{propert0} the distribution 
$\BP_t(C^v_n\in\cdot)$ is absolutely continuous w.r.t.\ 
$\BP_1(C^v_n\in\cdot)$ with Radon--Nikodym derivative
$M^v_n:=t^{|C^v_{\le n}|-1}e^{(1-t)\varphi_{\lambda}(C^v_{\le{n-1}})}$.
In particular $\{M^v_n\}_{n\in\N_0}$ is a (non-negative) martingale with respect to 
$\{\sigma(C^v_{\le n})\}_{n\in\N_0}$
and therefore converges a.s.\ towards  $M^v_\infty:=\limsup_{n\to\infty}M^v_n$.
By \cite[Theorem VII.6.1]{Shir} we have
\begin{align*}
\BP_t(C^v\in\cdot)
=\BE_1 \I\{C^v\in\cdot\} M^v_\infty+\BE_t\I\{C^v\in\cdot,M^v_\infty=\infty\}.
\end{align*}
On the event $\{|C^v|<\infty\}$ we clearly have
\begin{align*}
M^v_\infty= t^{|C^v|-1}e^{(1-t)\varphi_{\lambda}(C^v)}
\end{align*}
which is finite. This concludes the assertion.
\end{proof}

Let $f\colon\bG\to\R$ be a measurable mapping.
Define for $v\in\BX$, $n\in\N$, and $t\in\R_+$
\begin{align}\label{f_F_mom}
   F_n(\xi^v):=f(C^v)\I\{|C^v|=n\}, \ & \  f^{v}_n(t):=\BE_t F_n(\xi^v), \\
    F_{\le n}(\xi^v):=  f(C^v)\I\{|C^v|\le n\}, \ & \ f^{v}_{\le n}(t):= \BE_t  F_{\le n}(\xi^v),\\
    F(\xi^v):=  f(C^v)\I\{|C^v|<\infty\}, \ & \ f^{v}(t):= \BE_t  F(\xi^v).
\end{align}
We also write
$|f|^v(t):=\BE_t |F(\xi^v)|$ and define $|f|^v_n(t)$ and
$|f|^v_{\le n}(t)$ similarly. We are interested in the analytic properties of
the function $f^v(t)$ under the assumption $|f|^v(t)<\infty$. A key
example is the {\em position dependent cluster density}
\begin{align}\label{FE}
\kappa^v(t):=\BE_t |C^v|^{-1},\quad t\in\R_+.
\end{align}
Our terminology is motivated by the stationary marked case
(see Lemma \ref{l:kappa}) and also supported by the Mecke equation,
implying 
\begin{align*}
\int t \kappa^v(t)\,\lambda_B(dv)= \BE_t \int|C^v(\xi)|^{-1}\,\eta_B(dv),\quad B\in\cX.
\end{align*}

Suppose that $|f|^v_n(t_0)<\infty$ for some $t_0>0$ and $n\in\N$, then by Theorem \ref{tabsc}
\begin{align}\label{f_n(t)}
f^{v}_{n}(t)=\left(\frac{t}{t_0}\right)^{n-1}\int_0^\infty e^{-tu}\, \nu_{f,n,t_0}(du),
\end{align}
where the signed measure $\nu_{f,n,t_0}$ is defined by
\begin{align}\label{nun_f}
\nu_{f,n,t_0}(\cdot):=\BE_{t_0} \I\{\varphi_\lambda(C^v)\in \cdot\} e^{t_0 \varphi_\lambda(C^v)} F_n(\xi^v).
\end{align}
By Corollary \ref{c3.23} this is a locally finite signed measure on
$\R_+$.  
It follows from Theorem \ref{tabsc} that the function
$|f|^{v}_n(t)/t^{n-1}$ is monotone decreasing on $(0,\infty)$, so that
\begin{align*}
    |f|^{v}_n(t)\le \left(\frac{t}{t_0}\right)^{n-1} |f|^{v}_n(t_0),\quad t\ge t_0.
\end{align*}

\begin{lemma}\label{l:f_n_1} Let $v\in\BX$, $n\in\N$ and $t_0>0$. If $|f|^{v}_n(t_0)<\infty$, then  for $t\ge t_0$
\begin{align*}
f_n^{v}(t)=\frac{t^n}{t_0^{n-1}}\int_0^\infty \nu_{f,n,t_0}[0,u]e^{-t u}\, du.
\end{align*}
\end{lemma}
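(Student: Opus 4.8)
The plan is to start from the representation \eqref{f_n(t)} for $f^v_n(t)$, which is valid because $|f|^v_n(t_0)<\infty$, and then rewrite the Laplace transform $\int_0^\infty e^{-tu}\,\nu_{f,n,t_0}(du)$ as an integral against the ``tail'' function $u\mapsto \nu_{f,n,t_0}[0,u]$. The elementary identity behind this is $e^{-tu}=t\int_u^\infty e^{-ts}\,ds$; substituting this into the Laplace integral and applying Fubini's theorem to interchange the order of integration should transform $\int_0^\infty e^{-tu}\,\nu_{f,n,t_0}(du)$ into $t\int_0^\infty \nu_{f,n,t_0}[0,s]\,e^{-ts}\,ds$. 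Combining this with the prefactor $(t/t_0)^{n-1}$ from \eqref{f_n(t)} gives exactly $\frac{t^n}{t_0^{n-1}}\int_0^\infty \nu_{f,n,t_0}[0,u]\,e^{-tu}\,du$, which is the claimed formula.

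First I would record that for $t\ge t_0$ the total mass of $|\nu_{f,n,t_0}|$ is controlled: by Theorem \ref{tabsc} and the fact that $|f|^v_n(t_0)<\infty$, the measure $\nu_{f,n,t_0}$ as defined in \eqref{nun_f} has finite total variation on each bounded interval, and more importantly $\int_0^\infty e^{-t_0 u}\,|\nu_{f,n,t_0}|(du)=\BE_{t_0}|F_n(\xi^v)|=|f|^v_n(t_0)<\infty$; hence $\int_0^\infty e^{-tu}\,|\nu_{f,n,t_0}|(du)<\infty$ for all $t\ge t_0$, which is what justifies writing $f^v_n(t)$ in the form \eqref{f_n(t)} and also what makes the forthcoming Fubini step legitimate. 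Then I would estimate $|\nu_{f,n,t_0}[0,u]|\le \int_{[0,u]}|\nu_{f,n,t_0}|(ds)$; since $|\nu_{f,n,t_0}|(ds)\le e^{t_0 s}\,|\nu_{f,n,t_0}|(ds)e^{-t_0 s}$ and the exponential is bounded on $[0,u]$ by $e^{t_0 u}$... more simply, $|\nu_{f,n,t_0}|[0,u]$ is finite for each $u$, and $\int_0^\infty |\nu_{f,n,t_0}|[0,u]\,e^{-tu}\,du$ is finite for $t\ge t_0$ because after applying Fubini (in the positive setting, to $|\nu_{f,n,t_0}|$) it equals $t^{-1}\int_0^\infty e^{-tu}\,|\nu_{f,n,t_0}|(du)<\infty$. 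This double use of Tonelli/Fubini — once for the absolute-value measure to license the interchange, once for the signed measure to get the final identity — is the organizing idea.

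Concretely, the computation I would carry out is
\begin{align*}
\int_0^\infty e^{-tu}\,\nu_{f,n,t_0}(du)
=\int_0^\infty \Big(t\int_u^\infty e^{-ts}\,ds\Big)\,\nu_{f,n,t_0}(du)
=t\int_0^\infty e^{-ts}\Big(\int_{[0,s]}\nu_{f,n,t_0}(du)\Big)\,ds
=t\int_0^\infty e^{-ts}\,\nu_{f,n,t_0}[0,s]\,ds,
\end{align*}
where the middle interchange is Fubini's theorem, applicable because $\int_0^\infty\int_u^\infty t\,e^{-ts}\,ds\,|\nu_{f,n,t_0}|(du)=\int_0^\infty e^{-tu}\,|\nu_{f,n,t_0}|(du)<\infty$ for $t\ge t_0$ as noted above. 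Multiplying through by $(t/t_0)^{n-1}$ and invoking \eqref{f_n(t)} yields the stated formula.

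The main obstacle is not any deep idea but getting the integrability bookkeeping exactly right: one must be careful that $\nu_{f,n,t_0}$ is a genuine (signed) measure of locally finite variation — which is guaranteed by Corollary \ref{c3.23}, as already remarked after \eqref{nun_f} — and that the tail function $u\mapsto\nu_{f,n,t_0}[0,u]$ is well-defined, measurable, and of at most exponential growth so that $\int_0^\infty|\nu_{f,n,t_0}[0,u]|e^{-tu}\,du<\infty$ for $t\ge t_0$; only then is the right-hand side of the asserted identity meaningful and the Fubini interchange valid. Restricting to $t\ge t_0$ (as the statement does) is precisely what supplies the needed exponential decay $e^{-tu}\le e^{-t_0 u}$ to dominate $e^{t_0\varphi_\lambda(C^v)}$ appearing in \eqref{nun_f}.
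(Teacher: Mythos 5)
Your proposal is correct and follows essentially the same route as the paper: starting from \eqref{f_n(t)}, writing $e^{-tu}=t\int_u^\infty e^{-ts}\,ds$, and interchanging the integrals by Fubini, using the local finiteness/exponential bound on $\nu_{f,n,t_0}$ to justify the interchange. The only tiny imprecision is that $\int_0^\infty e^{-t_0u}\,|\nu_{f,n,t_0}|(du)$ is bounded by (rather than necessarily equal to) $|f|^v_n(t_0)$, since cancellation may occur conditionally on $\varphi_\lambda(C^v)$, but the inequality is all you need.
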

\begin{proof}  We obtain from \eqref{f_n(t)}
that
\begin{align}
f^{v}_n(t)=\frac{t^n}{t_0^{n-1}} \iint \I\{u\le s\}e^{-ts}\,ds\,\nu_{f,n,t_0}(du).
\end{align}
Since $\nu_{f,n,t_0}$ is locally finite, we can apply  Fubini's theorem to obtain the assertion.
\end{proof}

\begin{lemma}\label{l:f_n_2}
Let $v\in\BX$, $n\in\N$ and $t_0>0$. If $|f|^{v}_n(t_0)<\infty$, then the function $f^{v}_n$ 
is analytic on $(t_0,\infty)$ and  for $t>t_0$
\begin{align}\label{fndiff}
\frac{d}{dt}f_n^{v}(t)=\frac{nt^{n-1}}{t_0^{n-1}}\int_0^\infty \nu_{f,n,t_0}[0,u]e^{-t u}\, du
-\frac{t^n}{t_0^{n-1}}\int_0^\infty u \nu_{f,n,t_0}[0,u]e^{-t u}\, du.
\end{align}
\end{lemma}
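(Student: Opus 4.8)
The plan is to start from the integral representation established in Lemma~\ref{l:f_n_1}, namely
\begin{align*}
f_n^{v}(t)=\frac{t^n}{t_0^{n-1}}\int_0^\infty \nu_{f,n,t_0}[0,u]e^{-t u}\, du,\quad t\ge t_0,
\end{align*}
and differentiate under the integral sign. Writing $g(u):=\nu_{f,n,t_0}[0,u]$, the integrand is $t^n g(u)e^{-tu}$ and its $t$-derivative is $n t^{n-1}g(u)e^{-tu}-t^n u\, g(u)e^{-tu}$, which immediately gives the claimed formula \eqref{fndiff} \emph{provided} differentiation and integration can be interchanged. So the first step is to record this formal computation, and the second (main) step is to justify the interchange.

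For the justification I would fix $t>t_0$ and choose $\varepsilon>0$ with $t-\varepsilon>t_0$, and bound the $t$-derivative of the integrand uniformly for $s$ in the interval $[t-\varepsilon,t+\varepsilon]$ by an integrable (in $u$) function. Since $\nu_{f,n,t_0}$ is a \emph{signed} measure, $g$ is not monotone, but it is locally bounded: by Corollary~\ref{c3.23} the measure $\nu_{f,n,t_0}$ is locally finite, hence its total variation $|\nu_{f,n,t_0}|$ is locally finite, and $|g(u)|\le |\nu_{f,n,t_0}|[0,u]=:G(u)$, a nondecreasing function that is finite for every $u$. The key estimate is then that $G$ grows at most exponentially: more precisely, from the definition \eqref{nun_f} and $|C^v|=n$ one has $G(u)\le \BE_{t_0}\I\{\varphi_\lambda(C^v)\le u\}e^{t_0\varphi_\lambda(C^v)}|F_n(\xi^v)|\le e^{t_0 u}\,|f|^v_n(t_0)$. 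Therefore, for $s\in[t-\varepsilon,t+\varepsilon]$,
\begin{align*}
\left|\frac{\partial}{\partial s}\big(s^n g(u)e^{-su}\big)\right|
\le (n s^{n-1}+s^n u)\,G(u)\,e^{-su}
\le C(1+u)e^{t_0 u}e^{-(t-\varepsilon)u},
\end{align*}
with a constant $C$ depending only on $n,t,\varepsilon,|f|^v_n(t_0)$; since $t-\varepsilon>t_0$ this dominating function is integrable on $(0,\infty)$. By the standard theorem on differentiating parameter integrals (dominated convergence), $f_n^v$ is differentiable at $t$ with derivative given by \eqref{fndiff}.

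Finally, analyticity on $(t_0,\infty)$: the cleanest route is to extend $t\mapsto \int_0^\infty g(u)e^{-tu}\,du$ to a holomorphic function on the half-plane $\{\operatorname{Re} z>t_0\}$. The same exponential bound $|g(u)|\le e^{t_0 u}|f|^v_n(t_0)$ shows the integral converges absolutely and locally uniformly there, and holomorphy follows by Morera's theorem (or by the same dominated-differentiation argument applied to the complex parameter). Multiplying by the entire function $z^n/t_0^{n-1}$ preserves holomorphy, so $f_n^v$ is real-analytic on $(t_0,\infty)$. The only real obstacle is the one addressed above — producing a single integrable dominating function despite $\nu_{f,n,t_0}$ being signed rather than a measure — and this is handled by passing to the total variation and using the crude but sufficient bound $|\nu_{f,n,t_0}|[0,u]\le e^{t_0 u}|f|^v_n(t_0)$.
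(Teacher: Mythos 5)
Your proposal is correct and follows essentially the same route as the paper: the derivative formula is obtained from the representation in Lemma~\ref{l:f_n_1} by differentiating under the integral sign, justified by the bound $|\nu_{f,n,t_0}[0,u]|\le e^{t_0u}\,|f|^v_n(t_0)$ (which also handles the signed-measure issue exactly as the paper does), and analyticity is proved by extending $z\mapsto \frac{z^n}{t_0^{n-1}}\int_0^\infty \nu_{f,n,t_0}[0,u]e^{-zu}\,du$ holomorphically to the half-plane $\{\Re z>t_0\}$ using the same exponential estimate.
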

\begin{proof} Let
$\Omega_{t_0}:=\{z \in \mathbb{C}: \Re(z)>t_0\}$, 
and extend $f^{v}_n$ to $\Omega_{t_0}$ by setting 
\begin{align*}
    f^{v}_{n}(z):=\frac{z^n}{t_0^{n-1}} \int_0^\infty \nu_{f,n,t_0}[0,u] e^{-z u}\, d u,\quad z\in \Omega_{t_0}. 
\end{align*}
By \eqref{nun_f} we have
\begin{align*}
   |\nu_{f,n,t_0}[0,u]|\le e^{t_0 u} |f|^{v}_{n}(t_0).
\end{align*}
Since $|f|^{v}_{n}(t_0)<\infty$
this implies that $f^{v}_{n}$ is a complex analytic function on $\Omega_{t_0}$. Since $(t_0,\infty) \subset$ $\Omega_{t_0} \cap \R$, 
the restriction of this function to $(t_0,\infty)$ is real analytic. The formula \eqref{fndiff}
follows from  Lemma \ref{l:f_n_1} the product rule of calculus and the Leibniz rule for differentiating integrals.
The latter can be applied since for each $\varepsilon>0$ and all $u>0$
\begin{align*}
u   |\nu_{f,n,t_0}[0,u]|e^{-tu}\le u e^{(t_0 -t)u} |f|^{v}_{n}(t_0)\le u e^{-\varepsilon u} |f|^{v}_{n}(t_0) 
\end{align*}
uniformly for $t\ge t_0+\varepsilon$.
\end{proof}

To rewrite Lemma \ref{l:f_n_2} in a different way, we define
\begin{align}    
M^v_t:=|C^v|-1-t \varphi_{\lambda}(C^v),\quad t\in\R_+,\,v\in\BX.
\end{align}

\begin{lemma}\label{l:f_n_3b} 
Let $v\in\BX$, $n\in\N$ and $t_0>0$. If $|f|^{v}_n(t_0)<\infty$, then
function $f^{v}_n$ is analytic on $(t_0,\infty)$ and  for $t>t_0$
    \begin{align}\label{fndiff2}
        \frac{d}{dt}f_n^{v}(t)=t^{-1} \BE_t \big[ M^v_t F_n(\xi^v) \big]. 
    \end{align}
\end{lemma}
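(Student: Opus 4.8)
The plan is to start from the analyticity already established in Lemma \ref{l:f_n_2} and to identify the right-hand side of \eqref{fndiff} with $t^{-1}\BE_t[M^v_tF_n(\xi^v)]$ by expressing everything in terms of the measure $\nu_{f,n,t_0}$ and using the change-of-measure formula from Theorem \ref{tabsc}. First I would recall from Lemma \ref{l:f_n_1} that, integrating by parts in the Laplace transform, $\int_0^\infty \nu_{f,n,t_0}[0,u]e^{-tu}\,du = t^{-1}\int_0^\infty e^{-tu}\,\nu_{f,n,t_0}(du)$, so that \eqref{f_n(t)} and \eqref{fndiff} can be rewritten purely in terms of $\int e^{-tu}\,\nu_{f,n,t_0}(du)$ and $\int ue^{-tu}\,\nu_{f,n,t_0}(du)$. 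Concretely, from \eqref{f_n(t)} we have $f_n^v(t)=(t/t_0)^{n-1}\int_0^\infty e^{-tu}\,\nu_{f,n,t_0}(du)$, and differentiating this directly (the differentiation under the integral is justified exactly as in Lemma \ref{l:f_n_2}, using the bound $|\nu_{f,n,t_0}|(du)\le e^{t_0u}|f|^v_n(t_0)$ on compacts together with the strict gap $t>t_0$) gives
\begin{align*}
\frac{d}{dt}f_n^v(t)=\frac{(n-1)t^{n-2}}{t_0^{n-1}}\int_0^\infty e^{-tu}\,\nu_{f,n,t_0}(du)-\frac{t^{n-1}}{t_0^{n-1}}\int_0^\infty u\,e^{-tu}\,\nu_{f,n,t_0}(du).
\end{align*}

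Next I would interpret the two integrals probabilistically. By the definition \eqref{nun_f} of $\nu_{f,n,t_0}$ and Theorem \ref{tabsc} (in the form with general $t_0$, or after reducing to $t_0=1$ as in the proofs of Propositions \ref{propert0} and \ref{tabsc}), for any bounded measurable $g$ we have
\begin{align*}
\frac{t^{n-1}}{t_0^{n-1}}\int_0^\infty g(u)e^{-tu}\,\nu_{f,n,t_0}(du)=\BE_{t_0}\Big[g(\varphi_\lambda(C^v))e^{(t_0-t)\varphi_\lambda(C^v)}(t/t_0)^{n-1}F_n(\xi^v)\Big]=\BE_t\big[g(\varphi_\lambda(C^v))F_n(\xi^v)\big],
\end{align*}
where the last equality is precisely the change-of-measure identity of Theorem \ref{tabsc} restricted to $\{|C^v|=n\}$. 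Applying this with $g\equiv 1$ and with $g(u)=u$ turns the displayed derivative into
\begin{align*}
\frac{d}{dt}f_n^v(t)=\frac{n-1}{t}\,\BE_t\big[F_n(\xi^v)\big]-\BE_t\big[\varphi_\lambda(C^v)F_n(\xi^v)\big].
\end{align*}
Since on the event $\{|C^v|=n\}$ we have $M^v_t=|C^v|-1-t\varphi_\lambda(C^v)=n-1-t\varphi_\lambda(C^v)$, the bracket $\frac{n-1}{t}-\varphi_\lambda(C^v)$ equals $t^{-1}M^v_t$ on that event, so the right-hand side is exactly $t^{-1}\BE_t[M^v_tF_n(\xi^v)]$, which is \eqref{fndiff2}. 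Analyticity on $(t_0,\infty)$ is inherited verbatim from Lemma \ref{l:f_n_2}.

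The main obstacle I anticipate is bookkeeping the reduction to $t_0=1$ cleanly and making sure the change-of-measure step is applied with the correct power of $t/t_0$ and the correct exponential weight, since \eqref{nun_f} bakes in the factor $e^{t_0\varphi_\lambda(C^v)}$ while Theorem \ref{tabsc} carries $e^{(t_0-t)\varphi_\lambda(C^v)}$; the two combine to $e^{-t\varphi_\lambda(C^v)}$, which is what produces the Laplace transform in \eqref{f_n(t)}, and one must keep the exponents straight when differentiating. A minor secondary point is justifying the interchange of $d/dt$ with $\int e^{-tu}\,\nu_{f,n,t_0}(du)$ directly (rather than via the $[0,u]$ primitive as in Lemma \ref{l:f_n_2}); this is immediate from $\int ue^{-tu}e^{t_0u}\,|\nu_{f,n,t_0}|(du)$-type domination using the strict inequality $t>t_0$, exactly as in the Leibniz-rule argument already given there. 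Everything else is routine.
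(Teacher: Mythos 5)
Your proposal is correct and follows essentially the same route as the paper: the paper's proof also rewrites $f_n^v(t)=(t/t_0)^{n-1}\BE_{t_0}\big[F_n(\xi^v)e^{(t_0-t)\varphi_\lambda(C^v)}\big]$ via Theorem \ref{tabsc}, differentiates under the expectation with the same Leibniz-rule justification as in Lemma \ref{l:f_n_2}, and converts back to $\BE_t$, with analyticity inherited from Lemma \ref{l:f_n_2}. Your passage through the measure $\nu_{f,n,t_0}$ is just a repackaging of that same computation, and your identification $M^v_t=n-1-t\varphi_\lambda(C^v)$ on $\{|C^v|=n\}$ is exactly the final step implicit in the paper's ``calculus'' remark.
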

\begin{proof} By Theorem \ref{tabsc},
\begin{align*}
f_n^{v}(t)=\left(\frac{t}{t_0}\right)^{n-1}\BE_{t_0}\big[F_n(\xi^v)e^{(t_0-t)\varphi_{\lambda}(C^v)}\big].
\end{align*}
Hence the result follows from Lemma \ref{l:f_n_2} and calculus, where the application of the Leibniz differentiation rule
can be justified as in the proof of Lemma \ref{l:f_n_2}.
\end{proof}

\begin{lemma}\label{l:f_n_3}
Let $v\in\BX$, $n\in\N$ and $t_0>0$. If $|f|^{v}_{\le n}(t_0)<\infty$, then 
function $f^{v}_{\le n}$ 
is analytic on $(t_0,\infty)$ and for $t>t_0$
\begin{align}\label{Fndiff}
\frac{d}{dt}f_{\le n}^{v}(t)
=t^{-1} \BE_t \big[ M^v_t F_{\le n}(\xi^v)\big].
\end{align}
\end{lemma}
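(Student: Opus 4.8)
The plan is to reduce this to Lemma \ref{l:f_n_3b} by splitting $F_{\le n}$ into a finite sum of the functions $F_k$. Since $C^v=C^v(\xi^v)$ always contains the vertex $v$, we have $|C^v|\ge 1$, so $\I\{|C^v|\le n\}=\sum_{k=1}^n\I\{|C^v|=k\}$ and hence
\begin{align*}
F_{\le n}(\xi^v)=f(C^v)\I\{|C^v|\le n\}=\sum_{k=1}^n f(C^v)\I\{|C^v|=k\}=\sum_{k=1}^n F_k(\xi^v),
\end{align*}
which gives $f^{v}_{\le n}(t)=\sum_{k=1}^n f^{v}_k(t)$ for all $t$ at which the terms are defined.

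First I would check that the integrability hypothesis passes to each summand. On the event $\{|C^v|=k\}$ one has $F_k(\xi^v)=F_{\le n}(\xi^v)$, while $F_k(\xi^v)=0$ off this event; hence $|F_k(\xi^v)|\le |F_{\le n}(\xi^v)|$ pointwise, so $|f|^{v}_k(t_0)=\BE_{t_0}|F_k(\xi^v)|\le \BE_{t_0}|F_{\le n}(\xi^v)|=|f|^{v}_{\le n}(t_0)<\infty$ for every $k\in\{1,\ldots,n\}$. Therefore Lemma \ref{l:f_n_3b} applies to each $k$: the function $f^{v}_k$ is analytic on $(t_0,\infty)$ with $\frac{d}{dt}f^{v}_k(t)=t^{-1}\BE_t[M^v_t F_k(\xi^v)]$ for $t>t_0$, and in particular $\BE_t|M^v_t F_k(\xi^v)|<\infty$ for such $t$.

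It then follows that $f^{v}_{\le n}=\sum_{k=1}^n f^{v}_k$ is analytic on $(t_0,\infty)$, being a finite sum of functions analytic there, and termwise differentiation together with linearity of $\BE_t$ gives, for $t>t_0$,
\begin{align*}
\frac{d}{dt}f^{v}_{\le n}(t)=\sum_{k=1}^n\frac{d}{dt}f^{v}_k(t)=\sum_{k=1}^n t^{-1}\BE_t\big[M^v_t F_k(\xi^v)\big]=t^{-1}\BE_t\Big[M^v_t\sum_{k=1}^n F_k(\xi^v)\Big]=t^{-1}\BE_t\big[M^v_t F_{\le n}(\xi^v)\big],
\end{align*}
where interchanging the finite sum with $\BE_t$ is justified because each $\BE_t|M^v_t F_k(\xi^v)|$ is finite (so $|M^v_t F_{\le n}(\xi^v)|$ is dominated by an integrable sum). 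There is no real obstacle here; the only step needing a line of care is the pointwise domination $|F_k(\xi^v)|\le |F_{\le n}(\xi^v)|$, which is what licenses the application of Lemma \ref{l:f_n_3b} to each $k\le n$.
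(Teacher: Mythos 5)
Your proof is correct and follows essentially the same route as the paper: the paper's proof likewise writes $f^v_{\le n}=\sum_{k=1}^n f^v_k$, notes $|f|^v_{\le n}(t_0)=\sum_{k=1}^n|f|^v_k(t_0)$ (your pointwise domination is just another way to see each $|f|^v_k(t_0)<\infty$), and applies Lemma \ref{l:f_n_3b} termwise.
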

\begin{proof}
The result follows from the definition of $f^v_{\le n}$ and Lemma \ref{l:f_n_3b}, since $|f|^{v}_{\le n}(t_0)=\sum\nolimits_{k=1}^n |f|^v_k(t_0)$.
\end{proof}

\begin{theorem}\label{t:F_deriv} Let $0<t_0<t_1<\infty$. Assume for each $t\in[t_0,t_1]$
that $|f|^v(t)<\infty$. Assume moreover that for each $\varepsilon>0$,

\begin{align}\label{uniform_conv_1}
\lim_{n\to\infty}\sup_{t\in[t_0+\varepsilon,t_1]}\left|\sum_{k>n} \frac{d}{dt}f_{k}^v(t)
\right|=0.
\end{align}
Then $f^v$ is continuously differentiable on $(t_0,t_1]$
with derivative given by
\begin{align}\label{e:derivative}
  \frac{d}{dt} f^v(t)
=\lim_{n\to\infty}t^{-1}\frac{d}{dt} f^v_{\le n}(t)=t^{-1}\sum_{n=1}^\infty\frac{d}{dt} f^v_{n}(t).
\end{align}
\end{theorem}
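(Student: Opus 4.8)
The plan is to apply the classical theorem on term-by-term differentiation, taking as the approximating sequence the partial sums $f^v_{\le n}$ (so that $f^v=\lim_{n\to\infty}f^v_{\le n}$), whose derivatives are already identified in Lemma \ref{l:f_n_3}. Fix an arbitrary $\varepsilon>0$ and put $I_\varepsilon:=[t_0+\varepsilon,t_1]$. I would check three things on $I_\varepsilon$: (a) $f^v_{\le n}\to f^v$ pointwise; (b) the derivatives $\tfrac{d}{dt}f^v_{\le n}$ converge uniformly; (c) each $\tfrac{d}{dt}f^v_{\le n}$ is continuous. Granting (a)--(c), the standard theorem gives that $f^v$ is differentiable on $I_\varepsilon$ with $\tfrac{d}{dt}f^v=\lim_{n\to\infty}\tfrac{d}{dt}f^v_{\le n}$, and this derivative is continuous on $I_\varepsilon$ as a uniform limit of continuous functions. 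Since $\varepsilon>0$ is arbitrary, $f^v$ is continuously differentiable on $(t_0,t_1]$, and combining with Lemma \ref{l:f_n_3} gives \eqref{e:derivative}. The conclusion is stated on $(t_0,t_1]$ rather than on $[t_0,t_1]$ precisely because uniform control of the derivatives is only available on subintervals bounded away from $t_0$, which is what \eqref{uniform_conv_1} supplies.

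For (a), fix $t\in[t_0,t_1]$. The events $\{|C^v|\le n\}$ increase to $\{|C^v|<\infty\}$ as $n\to\infty$, so $F_{\le n}(\xi^v)=f(C^v)\I\{|C^v|\le n\}$ converges pointwise to $F(\xi^v)=f(C^v)\I\{|C^v|<\infty\}$, with $|F_{\le n}(\xi^v)|\le|F(\xi^v)|$ for every $n$. Since $\BE_t|F(\xi^v)|=|f|^v(t)<\infty$ by hypothesis, dominated convergence yields $f^v_{\le n}(t)=\BE_tF_{\le n}(\xi^v)\to\BE_tF(\xi^v)=f^v(t)$. This identifies the limit of the partial sums as $f^v$ and, in particular, provides the convergence at the single point $t_1$ that the differentiation theorem requires.

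For (b) and (c), first note that $|f|^v_{\le n}(t_0)\le|f|^v(t_0)<\infty$, so Lemma \ref{l:f_n_2} applies to each summand, $f^v_{\le n}$ is analytic on $(t_0,\infty)$, and by Lemma \ref{l:f_n_3} we have $\tfrac{d}{dt}f^v_{\le n}=\sum_{k=1}^n\tfrac{d}{dt}f^v_k$, a finite sum of functions that are continuous on $(t_0,\infty)$ by analyticity; this is (c). For (b), for $m>n$ and $t\in I_\varepsilon$ the difference $\tfrac{d}{dt}f^v_{\le m}(t)-\tfrac{d}{dt}f^v_{\le n}(t)$ equals $\sum_{k=n+1}^{m}\tfrac{d}{dt}f^v_k(t)$, whose absolute value is at most $|\sum_{k>n}\tfrac{d}{dt}f^v_k(t)|+|\sum_{k>m}\tfrac{d}{dt}f^v_k(t)|$, and by \eqref{uniform_conv_1} both terms tend to $0$ uniformly over $t\in I_\varepsilon$ as $n,m\to\infty$. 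Hence $(\tfrac{d}{dt}f^v_{\le n})_n$ is uniformly Cauchy on $I_\varepsilon$, so it converges uniformly there, necessarily to $\sum_{k\ge1}\tfrac{d}{dt}f^v_k$.

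Feeding (a)--(c) into the term-by-term differentiation theorem on each $I_\varepsilon$ and letting $\varepsilon\downarrow0$ completes the argument. I do not anticipate any serious obstacle: the differentiability, analyticity and continuity of the individual $f^v_k$ together with the explicit derivative formulas are all provided by Lemmas \ref{l:f_n_2} and \ref{l:f_n_3}, and \eqref{uniform_conv_1} is exactly the hypothesis that upgrades the elementary pointwise statement $f^v_{\le n}\to f^v$ to a statement about derivatives. The only mild care required is the bookkeeping near $t_0$ (one must work on compact subintervals of $(t_0,t_1]$) and noting that \eqref{uniform_conv_1} tacitly presupposes---and therefore also guarantees---that the series $\sum_k\tfrac{d}{dt}f^v_k(t)$ converges for each $t\in(t_0,t_1]$.
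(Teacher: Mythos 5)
Your proposal is correct and follows essentially the same route as the paper's proof: pointwise convergence $f^v_{\le n}\to f^v$ by dominated convergence, uniform convergence of the derivatives on $[t_0+\varepsilon,t_1]$ supplied by \eqref{uniform_conv_1}, continuity of each $\frac{d}{dt}f^v_{\le n}$ from the analyticity in Lemmas \ref{l:f_n_2} and \ref{l:f_n_3}, and then the standard term-by-term differentiation theorem on compact subintervals, letting $\varepsilon\downarrow 0$. The only discrepancy is the factor $t^{-1}$ appearing in \eqref{e:derivative}, which is an inconsistency in the displayed formula rather than in your argument: since $f^v_{\le n}=\sum_{k=1}^n f^v_k$, the identity you prove, $\frac{d}{dt}f^v=\lim_{n\to\infty}\frac{d}{dt}f^v_{\le n}=\sum_{n\ge 1}\frac{d}{dt}f^v_n$, is the correct one and agrees with Lemma \ref{l:f_n_3}.
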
  
\begin{proof} Let $n\in\N$.
Since $|f|^{v}_{\le n}(t_0)\le |f|^v(t_0)<\infty$, we can apply Lemma \ref{l:f_n_3}
to obtain that the function $f^{v}_{\le n}$ is analytic on $(t_0,\infty)$, with derivative  
\begin{align*}
\frac{d}{dt}f_{\le n}^{v}(t)=t^{-1}\sum_{k=1}^n \frac{d}{dt}f_n^{v}(t)=t^{-1} \BE_t \big[M^v_t F_{\le n}(\xi^v)\big], \quad t>t_0.
\end{align*}
By dominated convergence
\begin{align*}
\lim_{n\to\infty}f^{v}_{\le n}(t)=f^{v}(t).
\end{align*}

Furthermore we have
\begin{align*}
\left|\frac{d}{dt} f^v(t)-\frac{d}{dt}f_{\le n}^v(t)\right|=t^{-1}\left|\sum_{k> n}\frac{d}{dt}f^v_k(t) \right|.
\end{align*}
By assumption \eqref{uniform_conv_1} this tends to zero uniformly
in $t\in[t_0+\varepsilon,t_1]$ for each $\varepsilon>0$.
A standard result of analysis gives us that
$f^v$ is continuously differentiable on $(t_0,t_1]$ with derivative given by the right-hand side of 
\eqref{e:derivative}. 
\end{proof}

\begin{theorem}\label{t:F_deriv2}
     Let $0<t_0<t_1<\infty$. Assume for each $t\in[t_0,t_1]$
that $|f|^v(t)<\infty$. Assume moreover that for each $\varepsilon>0$,
\begin{align}\label{uniform_conv_2}
\lim_{n\to\infty}\sup_{t\in[t_0+\varepsilon,t_1]}\sum_{k>n} \BE_t\big|M^v_t F_{k}(\xi^v)\big|=0.
\end{align}
Then $f^v$ is continuously differentiable on $(t_0,t_1]$
with derivative given by
\begin{align}\label{e:derivative_2}
\frac{d}{dt} f^v(t)=t^{-1}\BE_t \big[M^v_t F(\xi^v)\big].
\end{align}
\end{theorem}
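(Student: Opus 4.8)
The plan is to derive Theorem \ref{t:F_deriv2} from the already established Theorem \ref{t:F_deriv}, since hypothesis \eqref{uniform_conv_2} is just a strengthening of \eqref{uniform_conv_1}. First I would apply Lemma \ref{l:f_n_3b}, which is available for every $k\in\N$ because $|f|^v_k(t_0)\le|f|^v(t_0)<\infty$; it yields both that $\BE_t|M^v_tF_k(\xi^v)|<\infty$ for $t>t_0$ and that $\frac{d}{dt}f^v_k(t)=t^{-1}\BE_t[M^v_tF_k(\xi^v)]$. Consequently $|\frac{d}{dt}f^v_k(t)|\le t^{-1}\BE_t|M^v_tF_k(\xi^v)|$, and summing over $k>n$ gives $|\sum_{k>n}\frac{d}{dt}f^v_k(t)|\le(t_0+\varepsilon)^{-1}\sum_{k>n}\BE_t|M^v_tF_k(\xi^v)|$ for $t\in[t_0+\varepsilon,t_1]$. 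By \eqref{uniform_conv_2} the right-hand side tends to $0$ uniformly on $[t_0+\varepsilon,t_1]$ as $n\to\infty$, so \eqref{uniform_conv_1} holds; since $|f|^v(t)<\infty$ on $[t_0,t_1]$ as well, Theorem \ref{t:F_deriv} applies and shows that $f^v$ is continuously differentiable on $(t_0,t_1]$ with $\frac{d}{dt}f^v(t)=\lim_{n\to\infty}\frac{d}{dt}f^v_{\le n}(t)$ for each $t\in(t_0,t_1]$.

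Next I would identify this limit with the right-hand side of \eqref{e:derivative_2}. Lemma \ref{l:f_n_3}, applicable since $|f|^v_{\le n}(t_0)=\sum_{k=1}^n|f|^v_k(t_0)\le|f|^v(t_0)<\infty$, gives $\frac{d}{dt}f^v_{\le n}(t)=t^{-1}\BE_t[M^v_tF_{\le n}(\xi^v)]$, so it suffices to prove $\BE_t[M^v_tF_{\le n}(\xi^v)]\to\BE_t[M^v_tF(\xi^v)]$ as $n\to\infty$. Here I would use the pointwise identity $F(\xi^v)-F_{\le n}(\xi^v)=f(C^v)\I\{n<|C^v|<\infty\}=\sum_{k>n}F_k(\xi^v)$ together with the fact that for any fixed realization at most one of the $F_k(\xi^v)$, $k\in\N$, is non-zero, which yields $|M^v_tF(\xi^v)|=\sum_{k\ge 1}|M^v_tF_k(\xi^v)|$ pointwise. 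Taking expectations via monotone convergence gives $\BE_t|M^v_tF(\xi^v)|=\sum_{k\ge 1}\BE_t|M^v_tF_k(\xi^v)|<\infty$ for $t>t_0$ (the tail being finite for large $n$ by \eqref{uniform_conv_2}, the finitely many remaining terms being finite by Lemma \ref{l:f_n_3b}), so the right-hand side of \eqref{e:derivative_2} is well defined. Then $|\BE_t[M^v_tF(\xi^v)]-\BE_t[M^v_tF_{\le n}(\xi^v)]|\le\sum_{k>n}\BE_t|M^v_tF_k(\xi^v)|\to 0$ by \eqref{uniform_conv_2}, which completes the identification and hence proves \eqref{e:derivative_2}; continuous differentiability is already provided by Theorem \ref{t:F_deriv}.

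The main difficulty is not analytic but a matter of careful integrability bookkeeping: one must check that every expectation written down is finite before manipulating it — this is exactly where Lemma \ref{l:f_n_3b} and \eqref{uniform_conv_2} must be used in tandem — and one must keep in mind that \eqref{uniform_conv_2} only furnishes control that is uniform on compact subintervals $[t_0+\varepsilon,t_1]$, which is nevertheless sufficient since $\varepsilon>0$ is arbitrary and the conclusion is pointwise on $(t_0,t_1]$. No estimate beyond those already obtained in this section is needed; the point of the theorem is that the stronger but more easily verifiable hypothesis \eqref{uniform_conv_2} yields the derivative in the clean closed form \eqref{e:derivative_2}.
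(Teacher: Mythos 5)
Your proposal is correct and follows essentially the same route as the paper: verify \eqref{uniform_conv_1} from \eqref{uniform_conv_2} via the term-wise bound $|\frac{d}{dt}f^v_k(t)|\le t^{-1}\BE_t|M^v_tF_k(\xi^v)|$, invoke Theorem \ref{t:F_deriv}, and then identify the limiting derivative with $t^{-1}\BE_t[M^v_tF(\xi^v)]$ using the summability $\BE_t|M^v_tF(\xi^v)|=\sum_k\BE_t|M^v_tF_k(\xi^v)|<\infty$ (the paper phrases this last step as Fubini over $k$, you phrase it via Lemma \ref{l:f_n_3} and monotone convergence, which is the same interchange).
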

\begin{proof} 
Let $n\in\N$.
Since $|f|^{v}_{n}(t_0)\le |f|^{v}_{\le n}(t_0)\le |f|^{v}(t_0)<\infty$, then by Lemma \ref{l:f_n_2}
the function $f^{v}_{n}$ is analytic on $(t_0,\infty)$, with derivative  
\begin{align*}
\left|\frac{d}{dt}f_{ n}^v(t)\right|=t^{-1}\left|\BE_t \big[ M^v_t F_{n}(\xi^v)\big]\right|\le t^{-1}\BE_t  \big|M^v_t F_{n}(\xi^v)\big|, \quad t>t_0.
\end{align*}
 Hence
\begin{align*}
    \left|\sum_{k>n} \frac{d}{dt}f_{k}^v(t)
\right| \le t^{-1}\sum_{k>n}\BE_t \big|M^v_t F_{n}(\xi^v)\big|.
\end{align*}
By assumption \eqref{uniform_conv_2} this tends to zero uniformly
in $t\in[t_0+\varepsilon,t_1]$ for each $\varepsilon>0$. Therefore by Theorem \ref{t:F_deriv} the function $f^v(t)$ is continuously differentiable on $(t_0,t_1]$ with derivative 
\begin{align*}
    \frac{d}{dt} f^v(t)
=t^{-1}\sum_{n=1}^\infty\frac{d}{dt} f^v_{n}(t) = t^{-1}\BE_t \big[M^v_t F(\xi^v)\big],
\end{align*}
where the last equality we get from Fubini's theorem, since by assumption \eqref{uniform_conv_2} we have that for $t\in(t_0,t_1]$
\begin{align}\tag*{\qedhere}
   \BE_t\big|M^v_t F(\xi^v)\big|=\sum_{n\ge 1}\BE_t \big|M^v_t F_{n}(\xi^v)\big|<\infty.
\end{align}

\end{proof}

The following theorem provides a large class of functions
satisfying the assumptions of Theorem \ref{t:F_deriv2}, covering the
cluster density \eqref{FE}.
We shall prove it in Section \ref{s:diffenergy}.

\begin{theorem}\label{t:F_deriv3}  
Let $f\colon\bG\to\R$ be a measurable mapping satisfying
$|f(\mu)|\le |\tilde{f}(|V(\mu)|)|$ for each $\mu\in\bG$,
where $\tilde{f}:\N\to\R$ satisfies
\begin{align}\label{prop_f_7}
    \lim_{n\to\infty}\tilde{f}(n)\sqrt{n\log n}=0.
\end{align}
Then $f^v$ is for each $v\in\BX$ continuously differentiable on $(0,\infty)$ with derivative given by \eqref{e:derivative_2}.
\end{theorem}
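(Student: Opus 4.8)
The strategy is to deduce the statement from Theorem~\ref{t:F_deriv2}. First I would dispose of a degenerate case: if $D_\varphi(v)=\infty$, then the first layer $C^v_1$ is a Poisson process with the infinite intensity measure $t\varphi(v,\cdot)\lambda$, so $|C^v|=\infty$ $\BP_t$-a.s.; hence $F(\xi^v)=0$, $f^v\equiv 0$, and both sides of \eqref{e:derivative_2} vanish. So assume $D_\varphi(v)<\infty$. Since $\tilde f(n)\to 0$ in \eqref{prop_f_7}, the sequence $\tilde f$ is bounded, whence $|f|^v(t)\le\sup_n|\tilde f(n)|<\infty$ for every $t$. It then suffices to fix $0<t_0<t_1<\infty$ and $\varepsilon>0$ and to verify the uniform-summability hypothesis \eqref{uniform_conv_2} on $[t_0+\varepsilon,t_1]$: Theorem~\ref{t:F_deriv2} then gives continuous differentiability on $(t_0,t_1]$ with the asserted formula, and letting $t_0\downarrow 0$, $t_1\uparrow\infty$ covers $(0,\infty)$.

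The key device is an Aizenman--Kesten--Newman type martingale. By the spatial Markov property (Proposition~\ref{p2.1}) and Lemma~\ref{l3.1}, under $\BP_t$ the conditional mean of $|C^v_{n+1}|$ given $C^v_{\le n}$ equals $tK_\lambda(C^v_{\le n-1},C^v_n,\BX)=t\big(\varphi_\lambda(C^v_{\le n})-\varphi_\lambda(C^v_{\le n-1})\big)$, and --- a Poisson count having variance equal to its mean --- so does its conditional variance. Hence
\begin{align*}
W^{v,t}_n:=|C^v_{\le n}|-1-t\,\varphi_\lambda(C^v_{\le n-1}),\qquad n\in\N_0,
\end{align*}
is a martingale for the filtration $(\sigma(C^v_{\le n}))_n$, starting at $0$, with predictable quadratic variation $\langle W^{v,t}\rangle_n=t\,\varphi_\lambda(C^v_{\le n-1})$. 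On $\{|C^v|<\infty\}$ the exploration terminates and $W^{v,t}_n$ stabilises to $M^v_t=|C^v|-1-t\,\varphi_\lambda(C^v)$ with total bracket $t\,\varphi_\lambda(C^v)$, each increment being bounded by $|C^v_{n+1}|+t\,\varphi_\lambda(C^v)$. Thus $M^v_t$ is a martingale limit whose fluctuation scale is governed by $\varphi_\lambda(C^v)$.

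To control $\varphi_\lambda(C^v)$ on $\{|C^v|=k\}$ I would use the exponential tilting of Theorem~\ref{tabsc}: for $0<s<t$ it yields $\BE_t\big[e^{s\varphi_\lambda(C^v)}\I\{|C^v|=k\}\big]=(t/(t-s))^{k-1}\BP_{t-s}(|C^v|=k)\le(t/(t-s))^{k-1}$. Fixing $s$ depending only on $t_0+\varepsilon$ and then choosing $\beta$ large, this gives --- uniformly in $t\in[t_0+\varepsilon,t_1]$, with constants $C_1,C_2>0$, $\rho_1,\rho_2\in(0,1)$ ---
\begin{align*}
\BP_t\big(\varphi_\lambda(C^v)>\beta k,\ |C^v|=k\big)\le\rho_1^{\,k},\qquad
\BE_t\big[\varphi_\lambda(C^v)^2\I\{|C^v|=k\}\big]\le C_1k^2\BP_t(|C^v|=k)+C_2\rho_2^{\,k},
\end{align*}
the second by truncating $\varphi_\lambda(C^v)$ at level $\beta k$ and using $x^2e^{sx}\le c(s,s')e^{s'x}$ for $s<s'<t$. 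On the event $\{|C^v|=k,\ \varphi_\lambda(C^v)\le\beta k\}$ the martingale $W^{v,t}$ (stopped at termination) reaches $M^v_t$ with total bracket $\le t\beta k$ and increments $\le(1+t\beta)k$, so a Freedman--Bernstein inequality gives, for $C$ large depending only on $[t_0+\varepsilon,t_1]$ and all large $k$,
\begin{align*}
\BP_t\big(|M^v_t|\ge\sqrt{Ck\log k},\ |C^v|=k,\ \varphi_\lambda(C^v)\le\beta k\big)\le k^{-3}.
\end{align*}

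Finally I would assemble the bound $\BE_t|M^v_tF_k(\xi^v)|\le|\tilde f(k)|\,\BE_t\big[|M^v_t|\I\{|C^v|=k\}\big]$ by splitting the last expectation over the three events $\{\varphi_\lambda(C^v)>\beta k\}$, $\{\varphi_\lambda(C^v)\le\beta k,\ |M^v_t|>\sqrt{Ck\log k}\}$, $\{|M^v_t|\le\sqrt{Ck\log k}\}$: on the first, Cauchy--Schwarz with the two displayed moment bounds contributes $\le C\rho^{\,k}$; on the second, Cauchy--Schwarz with $|M^v_t|\le(1+t\beta)k$ there together with the concentration bound contributes $\le C'k^{-1/2}\BP_t(|C^v|=k)^{1/2}$; on the third the contribution is $\le\sqrt{Ck\log k}\,\BP_t(|C^v|=k)$. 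Multiplying by $|\tilde f(k)|$ and summing over $k>n$: the first sum vanishes geometrically; the second vanishes because $|\tilde f(k)|k^{-1/2}$ is square-summable (as $|\tilde f(k)|\sqrt{k\log k}$ is bounded), by a further Cauchy--Schwarz in $k$ against $\sum_k\BP_t(|C^v|=k)\le 1$; and the dominant third sum is $\le\sqrt C\sum_{k>n}\big(|\tilde f(k)|\sqrt{k\log k}\big)\BP_t(|C^v|=k)\le\sqrt C\,\sup_{k>n}|\tilde f(k)|\sqrt{k\log k}$, which tends to $0$ by \eqref{prop_f_7}; all three bounds are uniform in $t\in[t_0+\varepsilon,t_1]$. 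This is exactly \eqref{uniform_conv_2}. The main obstacle is the concentration step: since $D_\varphi$ need not be bounded, the bracket and jumps of $W^{v,t}$ are not controlled a priori on $\{|C^v|=k\}$, so one must first excise the atypical event $\{\varphi_\lambda(C^v)>\beta k\}$ via the tilting identity and only then invoke the martingale inequality on a suitably stopped process (note that $\BE_t|C^v_n|$ may be infinite, which forces some care with stopping times); keeping all constants uniform over the compact $t$-interval also requires choosing $s$, $\beta$, $C$ in terms of $t_0+\varepsilon$ alone.
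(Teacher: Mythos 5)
Your overall strategy is sound and is genuinely different in implementation from the paper's. The paper proves Theorem \ref{t:F_deriv3} by reducing, exactly as you do, to the verification of \eqref{uniform_conv_2} via Theorem \ref{t:F_deriv2}, but it then handles the concentration of $M^v_t=|C^v|-1-t\varphi_\lambda(C^v)$ on $\{|C^v|=k\}$ purely analytically: the bound \eqref{e:4952} expresses $\BE_t|M^v_tF_k(\xi^v)|$ through the tilted measure $\nu^v_k$ of Theorem \ref{tabsc}, the a priori estimate $\nu^v_k[0,u]\le(eu/(k-1))^{k-1}$ (Lemma \ref{l:nun1}, obtained from $p^v_k(t)\le 1$ by optimizing in $t$) replaces your exponential-tilting tail bounds, and Lemma \ref{l6.9} with $\delta_k=\sqrt{9\log k/k}$ plays the role of your martingale concentration, yielding the same dominant term $|\tilde f(k)|\sqrt{k\log k}\,p^v_k(t)$ plus summable corrections. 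Your route — the exploration martingale $W^{v,t}_n$ built from Proposition \ref{p2.1} and Lemma \ref{l3.1}, the change-of-measure identity $\BE_t[e^{s\varphi_\lambda(C^v)}\I\{|C^v|=k\}]=(t/(t-s))^{k-1}\BP_{t-s}(|C^v|=k)$ (a correct consequence of Theorem \ref{tabsc}), and the three-way splitting of $\BE_t[|M^v_t|\I\{|C^v|=k\}]$ — is a legitimate probabilistic alternative, and your final summation step (bounded $\tilde f$, geometric and $k^{-1/2}$ pieces, and the dominant piece controlled by $\sup_{k>n}|\tilde f(k)|\sqrt{k\log k}$ uniformly in $t$ via $\sum_k p^v_k(t)\le1$) matches the paper's endgame. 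Your preliminary remarks (the case $D_\varphi(v)=\infty$, boundedness of $\tilde f$, covering $(0,\infty)$ by compact intervals) are fine.

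The one step whose stated justification would fail is the concentration bound $\BP_t\big(|M^v_t|\ge\sqrt{Ck\log k},\,|C^v|=k,\,\varphi_\lambda(C^v)\le\beta k\big)\le k^{-3}$ as derived from ``bracket $\le t\beta k$ and increments $\le(1+t\beta)k$ plus Freedman--Bernstein''. With an increment bound of order $k$, Freedman's inequality at deviation level $x=\sqrt{Ck\log k}$ gives an exponent of order $-x/k=-\sqrt{C\log k/k}$, which tends to $0$ and yields nothing like $k^{-3}$; bounded-difference-type inputs cannot work here because a single generation may indeed have size of order $k$ (a star-shaped cluster). The estimate is nevertheless true, but you must exploit the conditionally Poisson law of the increments that you yourself point out: given $C^v_{\le n}$, the centered increment has cumulant generating function $m_{n+1}(e^\theta-1-\theta)$ with $m_{n+1}=t\big(\varphi_\lambda(C^v_{\le n})-\varphi_\lambda(C^v_{\le n-1})\big)$, so stopping at $\tau:=\inf\{n:t\varphi_\lambda(C^v_{\le n})>\beta' k\ \text{or}\ |C^v_{\le n}|>k\}$ (which is determined by $C^v_{\le n}$, keeps the predictable bracket of the stopped process below $\beta' k$, and restores integrability) one gets $\BE_t e^{\theta W_{n\wedge\tau}}\le e^{\beta' k(e^\theta-1-\theta)}$ for both signs of $\theta$, hence a Bennett/Bernstein bound with variance proxy $\beta' k$ and scale parameter $O(1)$, which at $x=\sqrt{Ck\log k}$ does give $k^{-3}$ for $C$ large depending only on $t_1$ and $\beta$. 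With that replacement (and noting that on the good event $\tau=\infty$ and $W_{n\wedge\tau}\to M^v_t$), your argument goes through; as written, however, the quantitative input you cite does not support the claimed tail bound.
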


\section{Difference operators}\label{secdiffop}

In this section we shall rewrite Theorem \ref{t:F_deriv} and Theorem \ref{t:F_deriv2}
in the form of a Margulis--Russo formula. Recall that $\mu-\delta_x:=\mu[V(\mu)-\delta_x]$ is
the graph resulting from $\mu$ by removing the point $x$
(if $x\in V(\mu)$) along with all edges with vertex $x$  for $\mu\in\bG$ and $x\in\BX$.
Given a measurable function $f\colon\bG\to\R$ 
and $x\in\BX$ we define $\nabla_xf\colon\bG\to\R$ by
\begin{align}
\nabla_xf(\mu):=f(\mu)-f(\mu-\delta_x).
\end{align}

\begin{theorem}\label{t:MR} 
 Let the assumptions of Theorem \ref{t:F_deriv} be satisfied.
Then the function $f^v$ is continuously differentiable on $(t_0,t_1]$
with derivative given by
\begin{align}\label{e:derivative_3}
\frac{d}{dt}f^v(t)=\lim_{n\to\infty}t^{-1}\BE_t \int \nabla_x F_{\le n}(\xi^v)\,C^{v!}(dx)=\sum_{n\ge 1} t^{-1}\BE_t \int \nabla_x F_{n}(\xi^v)\,C^{v!}(dx).
\end{align}
\end{theorem}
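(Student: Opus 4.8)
The plan is to upgrade the finite-level derivative formulas of Lemmas~\ref{l:f_n_3} and \ref{l:f_n_3b} to Margulis--Russo form and then pass to the limit via Theorem~\ref{t:F_deriv}. Fix $v\in\BX$ and $t\in(t_0,t_1]$. Recall from Lemmas~\ref{l:f_n_3} and \ref{l:f_n_3b} that $\frac{d}{dt}f^v_{\le n}(t)=t^{-1}\BE_t[M^v_t F_{\le n}(\xi^v)]$ and $\frac{d}{dt}f^v_{n}(t)=t^{-1}\BE_t[M^v_t F_{n}(\xi^v)]$ for $t>t_0$, and from Theorem~\ref{t:F_deriv} that $f^v$ is continuously differentiable on $(t_0,t_1]$ with $\frac{d}{dt}f^v(t)=\lim_{n\to\infty}\frac{d}{dt}f^v_{\le n}(t)=\sum_{n\ge1}\frac{d}{dt}f^v_{n}(t)$. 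Consequently everything reduces to the single identity
\begin{align*}
\BE_t\int\nabla_x g(\xi^v)\,C^{v!}(dx)=\BE_t\big[M^v_t\,g(\xi^v)\big],\qquad g\in\{F_{\le n},F_n\},\ n\in\N,\ t>t_0,
\end{align*}
since substituting it into the two displayed derivatives and then letting $n\to\infty$ (respectively summing over $n$) yields exactly \eqref{e:derivative_3}.

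To establish this identity I would argue as follows. Since $F_{\le n}$ and $F_n$ vanish unless $C^v=C^v(\xi^v)$ is finite and has at most $n$ (respectively exactly $n$) vertices, and since $\int C^{v!}(dx)=|C^v|-1$, on the support of $g$ we have $\int C^{v!}(dx)\le n-1$ and therefore
\begin{align*}
\int\nabla_x g(\xi^v)\,C^{v!}(dx)=(|C^v|-1)\,g(\xi^v)-\int g(\xi^v-\delta_x)\,C^{v!}(dx),
\end{align*}
the first summand being integrable because $\BE_t|g(\xi^v)|\le|f|^v(t)<\infty$. For the second summand I would write $g=g^+-g^-$ with $g^\pm$ the positive and negative parts of the measurable map $\mu\mapsto f(C^v(\mu))\I\{|C^v(\mu)|\le n\}$ (respectively with $=n$) and apply Lemma~\ref{l:312} with the intensity measure $t\lambda$ in place of $\lambda$ (so that the $\varphi_\lambda$ there becomes $t\varphi_\lambda$, recalling \eqref{e:barvarphi}); this gives $\BE_t\int g^\pm(\xi^v-\delta_x)\,C^{v!}(dx)=t\,\BE_t[g^\pm(\xi^v)\varphi_\lambda(C^v)]$. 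Both sides are finite because $\BE_t[\varphi_\lambda(C^v)|g(\xi^v)|]<\infty$ for $t>t_0$; this is precisely the finiteness already needed in the proofs of Lemmas~\ref{l:f_n_3b} and \ref{l:f_n_3}, and it follows from Theorem~\ref{tabsc} together with the elementary estimate $u\,e^{-(t-t_0)u}\le[e(t-t_0)]^{-1}$. Hence $\int g^\pm(\xi^v-\delta_x)\,C^{v!}(dx)<\infty$ almost surely and subtraction is legitimate, so combining the two displays yields $\BE_t\int\nabla_x g(\xi^v)\,C^{v!}(dx)=\BE_t[(|C^v|-1)g(\xi^v)]-t\,\BE_t[\varphi_\lambda(C^v)g(\xi^v)]=\BE_t[M^v_t\,g(\xi^v)]$.

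With the identity in hand, plugging $g=F_{\le n}$ and $g=F_n$ into the derivative formulas of Lemmas~\ref{l:f_n_3} and \ref{l:f_n_3b} gives $\frac{d}{dt}f^v_{\le n}(t)=t^{-1}\BE_t\int\nabla_x F_{\le n}(\xi^v)\,C^{v!}(dx)$ and $\frac{d}{dt}f^v_{n}(t)=t^{-1}\BE_t\int\nabla_x F_{n}(\xi^v)\,C^{v!}(dx)$ for $t>t_0$, and Theorem~\ref{t:F_deriv} then delivers continuous differentiability of $f^v$ on $(t_0,t_1]$ together with \eqref{e:derivative_3}. The main obstacle is the integrability bookkeeping in the application of Lemma~\ref{l:312}: one must ensure $\BE_t[\varphi_\lambda(C^v)|g(\xi^v)|]<\infty$, so that the positive and negative parts of $g$ can be recombined and so that $\int g(\xi^v-\delta_x)\,C^{v!}(dx)$ is almost surely finite. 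The latter deserves care, since on the event $\{|C^v|=\infty\}$ this integral may well be a nonzero (albeit a.s.\ finite) sum even though $g(\xi^v)=0$ there; fortunately the needed bound is already contained in the proof of Theorem~\ref{tabsc}. Exactly the same argument with Theorem~\ref{t:F_deriv2} in place of Theorem~\ref{t:F_deriv} shows that, under the hypotheses of the former, $\frac{d}{dt}f^v(t)=t^{-1}\BE_t\int\nabla_x F(\xi^v)\,C^{v!}(dx)$.
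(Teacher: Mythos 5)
Your proposal is correct and follows essentially the same route as the paper: the paper's Lemmas \ref{l:f_n_MR} and \ref{l:f_n_MR_2} likewise convert the $M^v_t$-formulas of Lemmas \ref{l:f_n_3b} and \ref{l:f_n_3} into difference-operator form via Lemma \ref{l:312} after checking $\BE_t[|F_n(\xi^v)|\,|C^v|]<\infty$ and $\BE_t[|F_n(\xi^v)|\varphi_\lambda(C^v)]<\infty$ through the change of measure of Theorem \ref{tabsc}, and then invoke Theorem \ref{t:F_deriv}. The only difference is cosmetic: you bound $\BE_t[|F_n(\xi^v)|\varphi_\lambda(C^v)]$ by the pointwise estimate $u e^{-(t-t_0)u}\le [e(t-t_0)]^{-1}$, whereas the paper computes it via the measure $\nu_{|f|,n,t_0}$ and Fubini.
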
  

We start the proof with the counterpart of Lemma \ref{l:f_n_2} and Lemma \ref{l:f_n_3}. 

\begin{lemma}\label{l:f_n_MR}
Let $v\in\BX$, $n\in\N$ and $t_0>0$. If $|f|^{v}_{n}(t_0)<\infty$, then function $f^{v}_{n}$ is analytic on $(t_0,\infty)$ 
and for $t>t_0$
\begin{align*}
\frac{d}{dt}f_{n}^{v}(t)=t^{-1} \BE_t \int  \nabla_x F_{ n}(\xi^v)\,C^{v!}(dx).
\end{align*}
\end{lemma}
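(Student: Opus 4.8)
The plan is to reduce the claim to a single identity between cluster expectations and then derive that identity from Lemma~\ref{l:312}. Analyticity of $f^v_n$ on $(t_0,\infty)$ is already part of Lemma~\ref{l:f_n_3b}, which moreover gives $\frac{d}{dt}f^v_n(t)=t^{-1}\BE_t[M^v_t F_n(\xi^v)]$ for $t>t_0$, where $M^v_t=|C^v|-1-t\varphi_\lambda(C^v)$. Hence it suffices to show, for each fixed $t>t_0$, the identity
\[
\BE_t\int\nabla_x F_n(\xi^v)\,C^{v!}(dx)=\BE_t\big[M^v_t F_n(\xi^v)\big].
\]

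To establish this I would first split the difference operator, writing $\int\nabla_x F_n(\xi^v)\,C^{v!}(dx)=F_n(\xi^v)\,|C^{v!}|-\int F_n(\xi^v-\delta_x)\,C^{v!}(dx)$ and using $|C^{v!}|=|C^v|-1$. For the second term I would invoke Lemma~\ref{l:312}, but applied to the RCM driven by the Poisson process of intensity $t\lambda$, for which the role of $\varphi_\lambda$ is played by $\varphi_{t\lambda}=t\varphi_\lambda$; since $f$ need not be sign-definite, I would run this through the nonnegative functionals $F_n^{+}$ and $F_n^{-}$ separately (so that Lemma~\ref{l:312} applies verbatim), obtaining $\BE_t\int F_n^{\pm}(\xi^v-\delta_x)\,C^{v!}(dx)=t\,\BE_t[F_n^{\pm}(\xi^v)\varphi_\lambda(C^v)]$, and then subtract. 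Taking expectations in the split formula gives $\BE_t\int\nabla_x F_n(\xi^v)\,C^{v!}(dx)=\BE_t[F_n(\xi^v)(|C^v|-1)]-t\,\BE_t[F_n(\xi^v)\varphi_\lambda(C^v)]=\BE_t[M^v_tF_n(\xi^v)]$, and combining with Lemma~\ref{l:f_n_3b} finishes the proof.

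The only point needing genuine care -- and the main obstacle -- is the integrability bookkeeping that licenses the decomposition into $F_n^{\pm}$ and the recombination, together with making sure that the factor $t$ produced by Lemma~\ref{l:312} matches the explicit $t$ multiplying $\varphi_\lambda(C^v)$ in $M^v_t$. For the integrability I would use Theorem~\ref{tabsc} and the elementary bound $s\,e^{-(t-t_0)s}\le (e(t-t_0))^{-1}$, valid for $s\ge 0$: this yields $\BE_t[|F_n(\xi^v)|\varphi_\lambda(C^v)]\le (e(t-t_0))^{-1}(t/t_0)^{n-1}|f|^v_n(t_0)<\infty$, while $\BE_t[|F_n(\xi^v)|(|C^v|-1)]=(n-1)|f|^v_n(t)<\infty$ (finiteness of $|f|^v_n(t)$ for $t>t_0$ again from Theorem~\ref{tabsc}), so that $\BE_t|M^v_tF_n(\xi^v)|<\infty$; applying Lemma~\ref{l:312} with $h=|F_n|$ shows in addition that $\int F_n(\xi^v-\delta_x)\,C^{v!}(dx)$ is $\BP_t$-a.s.\ absolutely convergent, which legitimises the subtraction. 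Everything else is a routine use of the Mecke equation as already packaged in Lemmas~\ref{l:312} and~\ref{l:f_n_3b}.
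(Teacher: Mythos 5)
Your proposal is correct and follows essentially the same route as the paper: analyticity and the formula $\frac{d}{dt}f_n^{v}(t)=t^{-1}\BE_t[M^v_t F_n(\xi^v)]$ are taken from Lemma \ref{l:f_n_3b}, and the key identity $t\,\BE_t[F_n(\xi^v)\varphi_\lambda(C^v)]=\BE_t\int F_n(\xi^v-\delta_x)\,C^{v!}(dx)$ is obtained from Lemma \ref{l:312} (applied to the positive and negative parts) after an integrability check, exactly as in the paper's proof. The only difference is cosmetic: the paper verifies $\BE_t[|F_n(\xi^v)|\varphi_\lambda(C^v)]<\infty$ via the measure $\nu_{|f|,n,t_0}$ and a Fubini computation, whereas you get it directly from Theorem \ref{tabsc} and the elementary bound $s e^{-(t-t_0)s}\le (e(t-t_0))^{-1}$, which is a slightly more streamlined but equivalent bookkeeping.
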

\begin{proof}
Let $t>t_0$. We wish to apply Lemma \ref{l:f_n_3b}.  By definition we have
\begin{align*}
\BE_t |F_n(\xi^v)||C^v|=n|f|^v_n(t)\le  n\left(\frac{t}{t_0}\right)^{n-1} |f|^v_n(t_0)
\end{align*}
which is finite by assumption. Therefore we obtain from Theorem \ref{tabsc},
\eqref{nun_f} (with $|f|$ instead of $f$) and Fubini’s theorem 
\begin{align*}
        \BE_t |F_n(\xi^v)|\varphi_\lambda(C^v)=& \left(\frac{t}{t_0}\right)^{n-1} \int_0^\infty u e^{-tu} \, \nu_{|f|,n,t_0}(du)\\
        =&\left(\frac{t}{t_0}\right)^{n-1} \int_0^\infty\int_u^\infty (ts-1) e^{-ts}\, ds\, \nu_{|f|,n,t_0}(du)\\
        =&\left(\frac{t}{t_0}\right)^{n-1} \int_0^\infty \nu_{|f|,n,t_0}[0,s] (ts-1) e^{-ts}\, ds \\
\le & \, |f|^v_n(t_0)  \frac{t^n}{t_0^{n-1}}\int_0^\infty  s e^{(t_0-t)s}\, ds<\infty, 
    \end{align*}
where we have used that $\nu_{|f|,n,t_0}[0,s]\le e^{t_0s}|f|^v_n(t_0)$.
Hence we obtain from Lemma \ref{l:312} that
\begin{align*}
t \,\BE_t F_{n}(\xi^v) \varphi_\lambda(C^v)=\BE_t\int F_{ n}(\xi^v-\delta_x)\,C^{v!}(dx).
\end{align*}
Now the assertion follows from Lemma \ref{l:f_n_3b}.
\end{proof}

\begin{lemma}\label{l:f_n_MR_2}
Let $v\in\BX$, $n\in\N$ and $t_0>0$. If $|f|^{v}_{\le n}(t_0)<\infty$, then function $f^{v}_{\le n}$ is analytic on $(t_0,\infty)$ and for $t>t_0$
\begin{align*}
\frac{d}{dt}f_{\le n}^{v}(t)=t^{-1} \BE_t \int  \nabla_x F_{\le n}(\xi^v)\,C^{v!}(dx).
\end{align*}
\end{lemma}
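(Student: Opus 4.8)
The plan is to reduce the statement to the preceding Lemma~\ref{l:f_n_MR} by splitting $F_{\le n}$ into its finitely many level components $F_1,\dots,F_n$ and using linearity throughout.

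First I would note that $F_{\le n}(\xi^v)=\sum_{k=1}^{n}F_k(\xi^v)$ almost surely (recall $|C^v|\ge 1$), hence $|f|^{v}_{\le n}(t_0)=\sum_{k=1}^{n}|f|^{v}_k(t_0)$. In particular the hypothesis $|f|^{v}_{\le n}(t_0)<\infty$ forces $|f|^{v}_k(t_0)<\infty$ for each $k\le n$, so Lemma~\ref{l:f_n_MR} applies to every summand: each $f^{v}_k$ is analytic on $(t_0,\infty)$ with
\begin{align*}
\frac{d}{dt}f^{v}_k(t)=t^{-1}\BE_t\int\nabla_xF_k(\xi^v)\,C^{v!}(dx),\qquad t>t_0 .
\end{align*}
Summing over $k=1,\dots,n$, the function $f^{v}_{\le n}=\sum_{k=1}^{n}f^{v}_k$ is a finite sum of functions analytic on $(t_0,\infty)$, hence itself analytic and differentiable termwise, so that
\begin{align*}
\frac{d}{dt}f^{v}_{\le n}(t)=t^{-1}\sum_{k=1}^{n}\BE_t\int\nabla_xF_k(\xi^v)\,C^{v!}(dx),\qquad t>t_0 .
\end{align*}

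It then remains to pull the finite sum back inside $\BE_t$ and the $C^{v!}$-integration and to use the linearity of $\nabla_x$, which together with $F_{\le n}=\sum_{k=1}^{n}F_k$ yields $\sum_{k=1}^{n}\nabla_xF_k(\xi^v)=\nabla_xF_{\le n}(\xi^v)$. The interchange is legitimate because each term $\BE_t\int|\nabla_xF_k(\xi^v)|\,C^{v!}(dx)$ is finite: from $|\nabla_xF_k(\xi^v)|\le|F_k(\xi^v)|+|F_k(\xi^v-\delta_x)|$, the first contribution integrates to $\BE_t|F_k(\xi^v)|(|C^v|-1)\le\BE_t|F_k(\xi^v)|\,|C^v|<\infty$ and the second, by Lemma~\ref{l:312}, to $\BE_t|F_k(\xi^v)|\varphi_\lambda(C^v)$, which is finite by the estimate carried out in the proof of Lemma~\ref{l:f_n_MR}. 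I do not expect any genuine obstacle here: the argument is finite-sum bookkeeping on top of Lemma~\ref{l:f_n_MR}, the integrability bound just mentioned being the only point needing a sentence of justification.
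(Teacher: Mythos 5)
Your proof is correct and follows essentially the same route as the paper, which likewise reduces Lemma \ref{l:f_n_MR_2} to the decomposition $f^{v}_{\le n}=\sum_{k=1}^{n}f^{v}_{k}$ together with Lemma \ref{l:f_n_MR} (the paper additionally cites Lemma \ref{l:f_n_3}, which you replace by the equally valid observation that a finite sum of analytic functions is analytic). Your extra integrability check for interchanging the finite sum with $\BE_t$ and the $C^{v!}$-integration is sound and merely makes explicit what the paper leaves implicit.
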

\begin{proof}
The result follows from the definition of $f^{v}_{\le n}$, Lemma \ref{l:f_n_3} and Lemma \ref{l:f_n_MR}.
\end{proof}

\begin{proof}[Proof of Theorem \ref{t:MR}] 
By Theorem \ref{t:F_deriv} we have that $f^v$ is continuously differentiable on $(t_0,t_1]$ with derivative given by 
\eqref{e:derivative}. Hence we can apply Lemma \ref{l:f_n_MR} and Lemma \ref{l:f_n_MR_2} to obtain the assertion. 
\end{proof}

\begin{theorem}\label{t:MR_2} 
Let the assumptions of Theorem \ref{t:F_deriv2} be satisfied. Assume moreover that for each $t\in[t_0,t_1]$
\begin{align}\label{uniform_conv_3}
\BE_t\big[|F(\xi^v)|\left(|C^v|+\varphi_\lambda(C^v)\right)\big]<\infty.
\end{align}
Then $f^v$ is continuously differentiable on $(t_0,t_1]$
with derivative given by
\begin{align}\label{e:derivative_4}
\frac{d}{dt}f^v(t)= t^{-1}\BE_t \int \nabla_x F(\xi^v)\,C^{v!}(dx).
\end{align}
\end{theorem}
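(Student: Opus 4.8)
The plan is to take as given the conclusion of Theorem \ref{t:F_deriv2}, all of whose hypotheses are assumed: $f^v$ is already known to be continuously differentiable on $(t_0,t_1]$ with $\frac{d}{dt}f^v(t)=t^{-1}\BE_t[M^v_tF(\xi^v)]$, where $M^v_t=|C^v|-1-t\varphi_\lambda(C^v)$. Hence the only thing left is to verify, for each fixed $t\in(t_0,t_1]$, the identity $\BE_t[M^v_tF(\xi^v)]=\BE_t\int\nabla_xF(\xi^v)\,C^{v!}(dx)$. Since $F(\xi^v)=f(C^v)\I\{|C^v|<\infty\}$ vanishes on $\{|C^v|=\infty\}$, I would split $\BE_t[M^v_tF(\xi^v)]=\BE_t[(|C^v|-1)F(\xi^v)]-t\,\BE_t[\varphi_\lambda(C^v)F(\xi^v)]$; both terms are finite by the extra hypothesis \eqref{uniform_conv_3} together with $t\le t_1<\infty$, so the splitting is legitimate.

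For the first term I would use that $C^{v!}=C^v-\delta_v$ carries exactly $|C^v|-1$ vertices on $\{|C^v|<\infty\}$ while $F(\xi^v)=0$ off that event, so that $(|C^v|-1)F(\xi^v)=\int F(\xi^v)\,C^{v!}(dx)$ pointwise and thus $\BE_t[(|C^v|-1)F(\xi^v)]=\BE_t\int F(\xi^v)\,C^{v!}(dx)$. For the second term I would invoke Lemma \ref{l:312}, applied under $\BP_t$, i.e.\ with the ambient intensity measure $t\lambda$ in place of $\lambda$; since $\varphi_{t\lambda}(C^v)=t\,\varphi_\lambda(C^v)$ this reads $\BE_t\int h(\xi^v-\delta_x)\,C^{v!}(dx)=t\,\BE_t[h(\xi^v)\varphi_\lambda(C^v)]$ for nonnegative measurable $h$ --- the same identity already used inside the proof of Lemma \ref{l:f_n_MR}. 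Applying it to the positive and negative parts $F^+$ and $F^-$ of $F$, whose right-hand sides are each bounded by $t\,\BE_t[|F(\xi^v)|\varphi_\lambda(C^v)]<\infty$ by \eqref{uniform_conv_3} so that the resulting finite quantities may be subtracted, yields $\BE_t\int F(\xi^v-\delta_x)\,C^{v!}(dx)=t\,\BE_t[\varphi_\lambda(C^v)F(\xi^v)]$.

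Combining the two identities and recalling $\nabla_xF(\xi^v)=F(\xi^v)-F(\xi^v-\delta_x)$ then gives $\BE_t[M^v_tF(\xi^v)]=\BE_t\int F(\xi^v)\,C^{v!}(dx)-\BE_t\int F(\xi^v-\delta_x)\,C^{v!}(dx)=\BE_t\int\nabla_xF(\xi^v)\,C^{v!}(dx)$, which is \eqref{e:derivative_4}. An essentially equivalent route would be to sum the identity of Lemma \ref{l:f_n_MR} over $n$ and interchange the summation with the $C^{v!}$-integration, appealing to Theorem \ref{t:MR} and justifying the interchange via the crude bound $\sum_{n\ge1}\BE_t\int|\nabla_xF_n(\xi^v)|\,C^{v!}(dx)\le\BE_t[(|C^v|-1)|F(\xi^v)|]+t\,\BE_t[|F(\xi^v)|\varphi_\lambda(C^v)]<\infty$, again finite by \eqref{uniform_conv_3}. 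I expect the only genuinely delicate point to be the passage from the nonnegative statement of Lemma \ref{l:312} to a signed integrand, which is precisely what \eqref{uniform_conv_3} is there to secure; the remainder is bookkeeping about the scaling $\varphi_{t\lambda}=t\varphi_\lambda$ and about the $\BP_t$-null behaviour of $F$ on $\{|C^v|=\infty\}$.
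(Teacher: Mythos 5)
Your proposal is correct and follows essentially the same route as the paper: invoke Theorem \ref{t:F_deriv2} for the derivative $t^{-1}\BE_t[M^v_tF(\xi^v)]$, then split $F$ into positive and negative parts and apply Lemma \ref{l:312} under $\BP_t$ (with the scaling $\varphi_{t\lambda}=t\varphi_\lambda$) to convert $t\,\BE_t[F(\xi^v)\varphi_\lambda(C^v)]$ into $\BE_t\int F(\xi^v-\delta_x)\,C^{v!}(dx)$, the finiteness needed for the signed version being exactly \eqref{uniform_conv_3}. Your additional bookkeeping (the pointwise identity $(|C^v|-1)F(\xi^v)=\int F(\xi^v)\,C^{v!}(dx)$ and the vanishing of $F$ on the infinite-cluster event) is what the paper leaves implicit in ``the result follows.''
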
  
\begin{proof} Let $t>t_0$. Theorem \ref{t:F_deriv2} states that $f^v$
  is continuously differentiable on $(t_0,t_1]$ with derivative given
  by \eqref{e:derivative_2}. The assertion follows from
  \eqref{uniform_conv_3} and Lemma \ref{l:312}, since splitting $f$
into its negative and positive part we can apply Lemma \ref{l:312} to
  get
\begin{align*}
t \,\BE_t F(\xi^v) \varphi_\lambda(C^v)=\BE_t\int F(\xi^v-\delta_x)\,C^{v!}(dx).
\end{align*}
The result follows.
\end{proof}

\begin{remark}\label{r:MR}\rm Let the assumptions of 
Theorem \ref{t:MR_2} be satisfied.
By the Mecke equation \eqref{e:Mecke2v} we have 
\begin{align*}
\BE_t \int  \nabla_x F(\xi^v)\,C^{v!}(dx)
=t\,\BE_t \int  (F(\xi^{v,x})-F(\xi^{v}))
\I\{\text{$v \leftrightarrow x$ in $\xi^{v,x}$}\}\,\lambda(dx).
\end{align*}
If $v$ and $x$ are not connected in $\xi^{v,x}$, then
$F(\xi^{v,x})=F(\xi^{v})$. Therefore we can rewrite
\eqref{e:derivative_4} as
\begin{align}\label{e:derivative_7}
\frac{d}{dt}f^v(t)= \BE_t \int (F(\xi^{v,x})-F(\xi^v))\,\lambda(dx).
\end{align}
    \end{remark}

 \section{Differentiability of the cluster density}\label{s:diffenergy}
 
In this section we prove in particular that the position dependent cluster density
(given by \eqref{FE}) is continuously differentiable on $(0,\infty)$. 

\begin{theorem}\label{tdiffenergy} Suppose that $f\colon\N\to\R$ is a function satisfying 
\begin{align}\label{prop_f_1}
    \lim_{n\to\infty}f(n)\sqrt{n\log n}=0.
\end{align}
Then $t\mapsto \BE_t f(|C^v|)$ is for each $v\in\BX$ continuously differentiable on $(0,\infty)$ with derivative given by \eqref{e:derivative_2}.
\end{theorem}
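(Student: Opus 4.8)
The plan is to deduce the statement from Theorem \ref{t:F_deriv2}, applied to the measurable function $\bG\ni\mu\mapsto f(|V(\mu)|)$, for which $F(\xi^v)=f(|C^v|)\I\{|C^v|<\infty\}$ and $F_k(\xi^v)=f(k)\I\{|C^v|=k\}$; then $f^v(t)=\BE_t[f(|C^v|)\,;|C^v|<\infty]$, which is what $\BE_tf(|C^v|)$ means under the convention $f(\infty):=0$ (legitimate since \eqref{prop_f_1} forces $f(n)\to0$). As $f$ is then bounded, $|f|^v(t)=\BE_t|F(\xi^v)|<\infty$ for every $t$, so the only nontrivial point is the uniform tail condition \eqref{uniform_conv_2}. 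Since $\BE_t|M^v_tF_k(\xi^v)|=|f(k)|\,\BE_t[|M^v_t|\,\I\{|C^v|=k\}]$, everything comes down to the bound
\begin{align}\label{key}
\BE_t\big[|M^v_t|\,\I\{|C^v|=k\}\big]\le c\,\sqrt{k\log k}\;\BP_t(|C^v|=k),\qquad k\ge2,
\end{align}
with a finite constant $c=c(t_0,t_1)$ depending only on a compact interval $[t_0,t_1]\subset(0,\infty)$ containing $t$. Granting \eqref{key}, assumption \eqref{prop_f_1} gives $|f(k)|\sqrt{k\log k}\to0$, so for $\delta>0$ there is $N$ with $c\,|f(k)|\sqrt{k\log k}\le\delta$ for $k>N$; then for all $n\ge N$ and every $t\in[t_0+\varepsilon,t_1]$, by \eqref{key} and the choice of $N$, $\sum_{k>n}\BE_t|M^v_tF_k(\xi^v)|\le\delta\sum_{k>n}\BP_t(|C^v|=k)\le\delta$, whence $\limsup_{n\to\infty}\sup_{t\in[t_0+\varepsilon,t_1]}\sum_{k>n}\BE_t|M^v_tF_k(\xi^v)|\le\delta$; as $\delta$ was arbitrary this is \eqref{uniform_conv_2}. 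Theorem \ref{t:F_deriv2} then gives continuous differentiability of $f^v$ on every $(t_0,t_1]$, hence on $(0,\infty)$, with derivative \eqref{e:derivative_2}. The same bound \eqref{key} also proves Theorem \ref{t:F_deriv3}, since there $|F_k(\xi^v)|\le|\tilde f(k)|\I\{|C^v|=k\}$.

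To prove \eqref{key} I would realise $M^v_t$ as the terminal value of the martingale that drives the exploration of $C^v$. Put $\mathcal F_n:=\sigma(C^v_{\le n})$. By the spatial Markov property in the form of Proposition \ref{p2.1}, conditionally on $\mathcal F_{m-1}$ the generation size $|C^v_m|$ is Poisson with mean $\BE_t[|C^v_m|\mid\mathcal F_{m-1}]=K_{t\lambda}(V(C^v_{\le m-2}),V(C^v_{m-1}),\BX)$, and by Lemma \ref{l3.1} these conditional means telescope to $\sum_{m=1}^n\BE_t[|C^v_m|\mid\mathcal F_{m-1}]=t\varphi_\lambda(C^v_{\le n-1})$. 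Hence
\begin{align*}
L_n:=|C^v_{\le n}|-1-t\varphi_\lambda(C^v_{\le n-1})=\sum_{m=1}^n\big(|C^v_m|-\BE_t[|C^v_m|\mid\mathcal F_{m-1}]\big)
\end{align*}
is an $(\mathcal F_n)$-martingale with $L_0=0$ and conditionally Poisson increments; it converges $\BP_t$-a.s.\ on $\{|C^v|<\infty\}$ to $M^v_t$, and on $\{|C^v|=k\}$ the cluster is exhausted by generation $k$, so $L_n=M^v_t$ for $n\ge k$ and the predictable quadratic variation has stabilised at $\langle L\rangle_\infty=t\varphi_\lambda(C^v)$. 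The point of this representation is the cancellation in $M^v_t=|C^v|-1-t\varphi_\lambda(C^v)$: a crude triangle-inequality bound gives only $\BE_t[|M^v_t|\I\{|C^v|=k\}]=O(k)\BP_t(|C^v|=k)$, which is too weak under the hypothesis on $f$.

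A Freedman/Bernstein-type deviation inequality for martingales with conditionally Poisson steps then furnishes a sub-Gaussian tail for $|M^v_t|$ at scale $\sqrt{\langle L\rangle_\infty}$, valid on the event where $\langle L\rangle_\infty=t\varphi_\lambda(C^v)$ is controlled. The remaining, and central, task is to control $\varphi_\lambda(C^v)$ on $\{|C^v|=k\}$, for which I would use Theorem \ref{tabsc}: for $0<s<t$ it yields the identity $\BE_t[e^{s\varphi_\lambda(C^v)}\I\{|C^v|=k\}]=(t/(t-s))^{k-1}\BP_{t-s}(|C^v|=k)$, and since $\BP_{t-s}(|C^v|=k)=O((t-s)^{k-1})$ as $s\uparrow t$ (a cluster of size $k$ requires at least $k-1$ Poisson points), this produces, on $\{|C^v|=k\}$, an exponential tail for $t\varphi_\lambda(C^v)$ with the correct $k$-dependence. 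Feeding this into the martingale concentration bound, integrating the resulting tail of $|M^v_t|$, and optimising over the free parameters yields the right-hand side of \eqref{key}: the $\sqrt k$ is the martingale fluctuation scale and the extra $\sqrt{\log k}$ is the loss in the Chernoff optimisation, exactly as in \cite{Aizenman87} and in the Gilbert-graph version in \cite{Jiang11}; throughout one keeps the constants uniform over $t\in[t_0,t_1]$.

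The main obstacle is precisely this last step. The subtlety is that the quadratic variation $t\varphi_\lambda(C^v)$ is not bounded pathwise: under the standing assumption \eqref{evarphiintlocal} the function $D_\varphi$ is only locally finite, so there is no deterministic bound on $\varphi_\lambda(C^v)$ in terms of $|C^v|$, and it is the change of measure of Theorem \ref{tabsc} that lets the conditioning on $\{|C^v|=k\}$ suppress the atypically large contributions. Making the resulting estimate uniform in $t$ over compact subintervals of $(0,\infty)$, and tracking the power of $\log$ through both the concentration inequality and the tail integration, is where the care is required; once \eqref{key} is established, the rest is a routine application of Theorem \ref{t:F_deriv2}.
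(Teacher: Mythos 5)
Your overall route is the paper's: reduce to Theorem \ref{t:F_deriv2} by verifying the uniform tail condition \eqref{uniform_conv_2}, with $\sqrt{k\log k}$ as the relevant scale. But the inequality you make everything rest on, namely $\BE_t\big[|M^v_t|\,\I\{|C^v|=k\}\big]\le c\sqrt{k\log k}\;\BP_t(|C^v|=k)$ with $c=c(t_0,t_1)$, is false in general, and no concentration argument can establish it. Here is a model satisfying all standing assumptions: $\BX=\R^d$, $\lambda$ Lebesgue, $v=0$, $A$ a Borel set with $0\notin A$ and $\lambda(A)=M$, and $\varphi(x,y):=\I\{x=0,\,y\in A\}+\I\{y=0,\,x\in A\}$. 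Then $C^v=\{0\}\cup(\eta\cap A)$, so $|C^v|-1$ is Poisson with mean $tM$, while $\varphi_\lambda(C^v)=M$ deterministically. Hence on $\{|C^v|=k\}$ one has $|M^v_t|=|k-1-tM|$, which is of order $k$ for large $k$, so the ratio $\BE_t[|M^v_t|\mid |C^v|=k]/\sqrt{k\log k}$ tends to infinity; your bound fails for every constant $c$. The point is that conditionally on $\{|C^v|=k\}$ the quantity $M^v_t$ need not be small at the martingale-fluctuation scale at all; what is true is that such events are themselves extremely unlikely, and this must be captured by an \emph{additive} error term, not by a factor of $\BP_t(|C^v|=k)$.

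This is exactly how the paper proceeds: from $p^v_k(t)=t^{k-1}\int e^{-tu}\,\nu^v_k(du)$ and the a priori bound $\nu^v_k[0,u]\le (eu/(k-1))^{k-1}$ (Lemma \ref{l:nun1}, a consequence of nothing more than $p^v_k\le 1$), the integral defining $\BE_t|M^v_t F_k(\xi^v)|$ is split at $|1-tu/k|\le\delta_k$ with $\delta_k=\sqrt{9\log k/k}$; the near-diagonal part is compared with $p^v_k(t)$, while the off-diagonal part is bounded by Gamma-type integrals and elementary Chernoff inequalities, yielding $\BE_t\big[|M^v_t|\I\{|C^v|=k\}\big]\le c\sqrt{k\log k}\,p^v_k(t)+c'k^{-2}$ uniformly in $t\ge t_0$ (see Lemma \ref{l6.9} and \eqref{e6.20}); in the toy model above it is the $k^{-2}$ term that absorbs the contribution you cannot control multiplicatively. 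With this corrected two-term bound your deduction of \eqref{uniform_conv_2} still works, since \eqref{prop_f_1} makes $f$ bounded and the additive terms summable, so the gap is reparable; but as written your central claim is wrong, the martingale/Freedman step you defer is aimed at proving precisely that false statement, and in addition your intermediate assertion $\BP_{t-s}(|C^v|=k)=O((t-s)^{k-1})$ as $s\uparrow t$ implicitly requires $\nu^v_k(\R_+)=\BE_1[e^{\varphi_\lambda(C^v)}\I\{|C^v|=k\}]<\infty$, which the standing assumptions do not guarantee (the paper only uses local finiteness of $\nu^v_k$).
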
 

We prove the theorem via some lemmas, partially following
the proof of \cite[(LP) (3.6)]{BeGrimLoff98}. Let $v\in\BX$.
For $t>0$ and $n\in\N$ we define
 \begin{align*}
 p^v_{n}(t):= \BP_t(|C^v|=n).
 \end{align*}
Specializing definition \eqref{nun_f} in the case $f\equiv 1$ we set
\begin{align}\label{eboundnu}
\nu^v_n(\cdot):=\BE_1 \I\{\varphi_\lambda(C^v)\in \cdot\} \I\{ |C^v|=n\} 
e^{\varphi_\lambda(C^v)}.
\end{align}
Then we obtain from \eqref{f_n(t)} in the case $t_0=1$ that
\begin{align}\label{e672}
p^v_{n}(t)=t^{n-1}\int_0^\infty e^{-tu}\, \nu^v_n(du).
\end{align}
Since $p^v_1(t)=e^{-tD_\varphi (v)}$
we have
\begin{align}\label{nu1}
\nu^v_1=\delta_{D_\varphi(v)},\quad v\in\BX.
\end{align}

\begin{lemma}\label{l:nun1} Let $v\in\BX$, $n\in\N$ and $u>0$. Then
\begin{align}
    \nu^v_n[0,u]\le \left( \frac{e u}{n-1}\right)^{n-1},
\end{align}
where the right-hand side has to be interpreted as $1$ if $n=1$.
\end{lemma}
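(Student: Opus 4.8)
The plan is to treat this as a Chernoff/exponential–Markov estimate for the nonnegative measure $\nu^v_n$, whose Laplace transform is already under control via \eqref{e672}.

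First I would note that $p^v_n(t)=\BP_t(|C^v|=n)\le 1$ for every $t>0$, so that \eqref{e672} immediately gives the Laplace transform bound
\begin{align*}
\int_0^\infty e^{-ts}\,\nu^v_n(ds)=t^{1-n}p^v_n(t)\le t^{1-n},\quad t>0.
\end{align*}
Next, for a fixed $u>0$ and any $t>0$, using the elementary inequality $\I\{s\le u\}\le e^{t(u-s)}$ for $s\ge 0$ together with the nonnegativity of $\nu^v_n$ (immediate from its defining expression \eqref{eboundnu}), I would write
\begin{align*}
\nu^v_n[0,u]=\int_0^\infty\I\{s\le u\}\,\nu^v_n(ds)\le e^{tu}\int_0^\infty e^{-ts}\,\nu^v_n(ds)\le e^{tu}t^{1-n}.
\end{align*}

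Finally I would optimise the right-hand side over $t>0$. For $n\ge 2$ the function $t\mapsto e^{tu}t^{1-n}$ is minimised at $t=(n-1)/u$, where it equals $e^{n-1}\big(u/(n-1)\big)^{n-1}=\big(eu/(n-1)\big)^{n-1}$, which is exactly the claimed bound. For $n=1$ the right-hand side is just $e^{tu}$, so letting $t\downarrow 0$ yields $\nu^v_1[0,u]\le 1$, matching the stated convention (and of course also clear directly from $\nu^v_1=\delta_{D_\varphi(v)}$ in \eqref{nu1}).

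I do not expect any genuine obstacle here: the estimate is short, and the only points to verify carefully are that $\nu^v_n$ is genuinely a nonnegative locally finite measure — nonnegativity being obvious from \eqref{eboundnu}, and local finiteness having been recorded right after \eqref{nun_f} by way of Corollary \ref{c3.23} — so that splitting the Laplace transform and inserting the bound $\I\{s\le u\}\le e^{t(u-s)}$ under the integral sign are legitimate.
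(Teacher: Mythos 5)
Your proof is correct and follows essentially the same route as the paper: bound the Laplace transform of $\nu^v_n$ by $t^{1-n}$ via $p^v_n(t)\le 1$ and \eqref{e672}, insert the factor $e^{-tu}$ (the paper restricts the integral to $[0,u]$, you use $\I\{s\le u\}\le e^{t(u-s)}$, which is the same estimate), and optimise at $t=(n-1)/u$; the $n=1$ case is likewise disposed of via \eqref{nu1}.
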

\begin{proof} In view of \eqref{nu1} we can assume that $n\ge 2$.
Since $p^v_{n}(t)\le 1$ for $t>0$, we have that
\begin{align*}
    t^{-n+1}\ge \int_0^u e^{-tu} \nu^v_n(d u)\ge e^{-t u} \nu^v_n[0,u).
\end{align*}
Optimizing over $t\in(0,\infty)$ yields the assertion.
\end{proof}

\begin{lemma}\label{l:nun3} Let $v\in\BX$, $n\in\N$ and $t>0$. Then
\begin{align*}
p_n^v(t)=t^n\int_0^\infty \nu^v_n[0,u]e^{-t u}\, du.
\end{align*}
\end{lemma}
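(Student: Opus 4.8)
The plan is to mimic the proof of Lemma~\ref{l:f_n_1}, starting from the Laplace-transform representation \eqref{e672}. The key identity is that for $t>0$ and $u\ge 0$,
\begin{align*}
e^{-tu}=t\int_u^\infty e^{-ts}\,ds=t\int_0^\infty \I\{u\le s\}e^{-ts}\,ds.
\end{align*}
First I would record that $\nu^v_n$ is a $\sigma$-finite (indeed locally finite) measure on $\R_+$: by definition \eqref{eboundnu} it is nonnegative, and Lemma~\ref{l:nun1} gives $\nu^v_n[0,u]\le (eu/(n-1))^{n-1}<\infty$ for every $u>0$, so $\nu^v_n$ restricted to any bounded interval is finite.

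Substituting the identity into \eqref{e672} yields
\begin{align*}
p^v_n(t)=t^{n-1}\int_0^\infty e^{-tu}\,\nu^v_n(du)
=t^n\iint \I\{u\le s\}e^{-ts}\,ds\,\nu^v_n(du).
\end{align*}
Since the integrand $(s,u)\mapsto \I\{u\le s\}e^{-ts}$ is nonnegative and $\nu^v_n$ is $\sigma$-finite, Tonelli's theorem permits interchanging the order of integration, giving
\begin{align*}
p^v_n(t)=t^n\int_0^\infty\Big(\int_0^\infty \I\{u\le s\}\,\nu^v_n(du)\Big)e^{-ts}\,ds
=t^n\int_0^\infty \nu^v_n[0,s]\,e^{-ts}\,ds,
\end{align*}
which is the asserted formula.

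I do not expect any genuine obstacle here: the statement is a one-line Fubini-type manipulation. The only point deserving attention is the legitimacy of the interchange of integrals, and this is precisely where the bound of Lemma~\ref{l:nun1} enters, by guaranteeing $\sigma$-finiteness of $\nu^v_n$. (Note that, in contrast to Lemma~\ref{l:f_n_1}, the measure $\nu^v_n$ is nonnegative, so the nonnegativity of the integrand already suffices and one need not invoke any finiteness just to handle a signed measure.)
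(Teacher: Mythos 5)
Your proof is correct and is essentially the paper's argument: the paper simply cites Lemma \ref{l:f_n_1}, whose proof is exactly this interchange-of-integration step applied to \eqref{f_n(t)}, which with $f\equiv 1$, $t_0=1$ is \eqref{e672}. If anything, your direct use of Tonelli for the nonnegative measure $\nu^v_n$ is slightly tighter, since Lemma \ref{l:f_n_1} as stated only covers $t\ge t_0=1$, whereas nonnegativity gives the identity for all $t>0$ as claimed.
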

\begin{proof}  The assertion follows from Lemma \ref{l:f_n_1}.
\end{proof}

\begin{lemma}\label{l:nun4} Let $v\in\BX$, $n\in\N$. Then
$t\mapsto p_n^v(t)$ is analytic on $(0,\infty)$ with derivative given by
\begin{align}\label{pndiff}
\frac{d}{dt}p_n^v(t)=nt^{n-1}\int_0^\infty \nu^v_n[0,u]e^{-t u}\, du
-t^n\int_0^\infty u \nu^v_n[0,u]e^{-t u}\, du
\end{align}
\end{lemma}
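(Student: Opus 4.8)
The plan is to differentiate the integral representation of $p_n^v$ supplied by Lemma \ref{l:nun3} directly, following the same pattern as the proof of Lemma \ref{l:f_n_2}, the essential point being that this representation is valid for \emph{all} $t>0$ rather than just on a half-line $(t_0,\infty)$.

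First I would record the growth bound $\nu^v_n[0,u]\le \big(eu/(n-1)\big)^{n-1}$ from Lemma \ref{l:nun1} (read as $1$ when $n=1$). Since the right-hand side is a polynomial in $u$, for every $k\in\N_0$ the map $u\mapsto u^k\,\nu^v_n[0,u]\,e^{-tu}$ is integrable on $(0,\infty)$, and the estimate $u^k\,\nu^v_n[0,u]\,e^{-tu}\le u^k\big(eu/(n-1)\big)^{n-1}e^{-au}$ is uniform for $t\ge a$, for any fixed $a>0$. For analyticity I would then extend $p_n^v$ to the half-plane $\Omega_0:=\{z\in\mathbb{C}:\Re(z)>0\}$ by $p_n^v(z):=z^n\int_0^\infty \nu^v_n[0,u]e^{-zu}\,du$; by the above bound this integral converges absolutely, and being (up to the factor $z^n$) the Laplace transform of a polynomially bounded function it defines a holomorphic function on $\Omega_0$, e.g.\ by Morera's theorem combined with Fubini, exactly as in the proof of Lemma \ref{l:f_n_2}. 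Since $(0,\infty)\subset\Omega_0\cap\R$ and $p_n^v(z)$ agrees with $p_n^v(t)$ on $(0,\infty)$ by Lemma \ref{l:nun3}, the restriction to $(0,\infty)$ is real-analytic.

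Finally, formula \eqref{pndiff} follows from Lemma \ref{l:nun3} by the product rule of calculus and the Leibniz rule for differentiating under the integral sign, the latter being justified by the domination established above with $k=1$, which is uniform for $t$ in any compact subset of $(0,\infty)$. I do not anticipate a genuine obstacle: the only subtlety, in contrast to Lemma \ref{l:f_n_2}, is that we must work over all of $(0,\infty)$ rather than on $(t_0,\infty)$, but Lemma \ref{l:nun3} and Lemma \ref{l:nun1} already furnish precisely the integral representation and the growth bound needed on this larger range. (Alternatively, one could simply invoke Lemma \ref{l:f_n_2} with $f\equiv 1$ to get the claim on $(t_0,\infty)$ for every $t_0>0$, hence on $(0,\infty)$; the argument above has the advantage of producing \eqref{pndiff} with the single measure $\nu^v_n$ directly.)
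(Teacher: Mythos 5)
Your proof is correct and follows essentially the same route as the paper, which simply invokes Lemma \ref{l:f_n_2} (i.e.\ the argument with $f\equiv 1$); your version additionally makes explicit why the conclusion holds on all of $(0,\infty)$ with the single measure $\nu^v_n$, namely because the polynomial bound of Lemma \ref{l:nun1} lets the complex-analytic extension live on the full half-plane $\Re(z)>0$ rather than only on $\Re(z)>t_0$. This is a faithful, slightly more detailed rendering of the paper's intended argument.
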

\begin{proof} 
The assertion follows from Lemma \ref{l:f_n_2}.
\end{proof}

Lemma \ref{l:nun4} implies 
\begin{align}\label{pndiffbound7}
\Big|\frac{d}{dt}p_n^v(t)\Big|\le \frac{n}{t}\int_0^\infty \nu^v_n[0, u] 
\left|1-\frac{ut}{n}\right|t^{n} e^{-t u}\,  du.
\end{align}
The next lemma provides a bound for the above right-hand side.

\begin{lemma}\label{l6.9} Let $v\in\BX$, $n\ge 2$ and $\delta\in(0,1)$. Then we have
for all $t>0$ that 
\begin{align}\label{pndiffbound}
\int_0^\infty \nu^v_n[0, u] \left|1-\frac{ut}{n}\right|t^{n} e^{-t u}  du
\le \delta p^v_{n}(t)
+ (1-\delta)^{n} e^{ \delta n}+(1+\delta)^{n} e^{-  \delta n}.
\end{align}
\end{lemma}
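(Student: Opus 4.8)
The plan is to split the integral on the left-hand side according to whether $|1-ut/n|$ is at most $\delta$ or exceeds $\delta$. On the central region $\{u>0:|1-ut/n|\le\delta\}$ I would simply bound the factor $|1-ut/n|$ by $\delta$ and then enlarge the domain of integration back to all of $(0,\infty)$. By Lemma~\ref{l:nun3} this contributes at most $\delta\, t^{n}\int_0^\infty \nu^v_n[0,u]e^{-tu}\,du=\delta\, p^v_{n}(t)$, which is the first term on the right-hand side.

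The substance is the tail region $\{u>0:|1-ut/n|>\delta\}=\{u<n(1-\delta)/t\}\cup\{u>n(1+\delta)/t\}$. Here I would first invoke Lemma~\ref{l:nun1} to replace $\nu^v_n[0,u]$ by $\big(eu/(n-1)\big)^{n-1}$, which is legitimate since $n\ge 2$, and then carry out the change of variables $s=ut/n$. A short computation shows that all powers of $t$ cancel, leaving
\[
\Big(\tfrac{en}{n-1}\Big)^{n-1} n\int_{\{|1-s|>\delta\}} s^{n-1}\,|1-s|\,e^{-ns}\,ds ,
\]
and the prefactor is at most $e^{n}$ because $\big(1+\tfrac{1}{n-1}\big)^{n-1}\le e$ for $n\ge 2$.

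The key observation is that, setting $h(s):=s^{n}e^{-ns}$, one has $h'(s)=n\,s^{n-1}(1-s)e^{-ns}$, so the integrand above is exactly $|h'(s)|$. Since $h$ is non-decreasing on $[0,1-\delta]$ and non-increasing on $[1+\delta,\infty)$, with $h(0)=\lim_{s\to\infty}h(s)=0$, the fundamental theorem of calculus gives
\[
\int_{\{|1-s|>\delta\}}|h'(s)|\,ds=h(1-\delta)+h(1+\delta)=(1-\delta)^{n}e^{-n(1-\delta)}+(1+\delta)^{n}e^{-n(1+\delta)} .
\]
Multiplying by the bound $e^{n}$ on the prefactor converts this into $(1-\delta)^{n}e^{\delta n}+(1+\delta)^{n}e^{-\delta n}$, the two remaining terms; adding the central and tail contributions yields the claim.

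I do not expect a genuine obstacle here; the only points requiring a little care are checking that the prefactor $\big(en/(n-1)\big)^{n-1}$ is indeed dominated by $e^{n}$, and verifying the monotonicity of $h$ on the two tail intervals so that $\int|h'|$ may be evaluated without boundary contributions. Everything else is a routine substitution and estimate.
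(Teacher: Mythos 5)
Your proposal is correct and follows essentially the same route as the paper: bound the factor by $\delta$ on the central region and identify $\delta p^v_n(t)$ via Lemma \ref{l:nun3}, then use Lemma \ref{l:nun1} on the tail region, rescale, and evaluate the two tail integrals exactly. The only cosmetic difference is your normalization $s=ut/n$ and the observation that the integrand is $|h'(s)|$ for $h(s)=s^ne^{-ns}$, which is the same antiderivative computation the paper carries out directly (in the unnormalized variable) before applying $(1+1/(n-1))^{n-1}\le e$.
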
 
\begin{proof} By Lemma \ref{l:nun3} we have 
\begin{align*}
\int_0^\infty \nu^v_n[0, u] \left|1-\frac{ut}{n}\right|t^{n} e^{-t u}  du
 & \le \delta p^v_{n}(t)
+ \int_{\left|1-\frac{tu}{n}\right|> \delta} \nu^v_n[0, u] t^{n} e^{-t u} \left|1-\frac{t u}{n}\right|d u \\
 & \le\delta p^v_{n}(t)+  \left(\frac{e}{n-1}\right)^{n-1}\int_{\left|1-\frac{tu}{n}\right|>\delta} t^nu^{n-1} 
e^{-t u} \left|1-\frac{t u}{n}\right|\,du.
\end{align*}
Changing variables yields that the above equals
\begin{align*}
\delta p^v_{n}(t)+ \left(\frac{e}{n-1}\right)^{n-1}\int_{\left|1-\frac{u}{n}\right|>\delta} u^{n-1} 
e^{-u} \left|1-\frac{u}{n}\right|\,du.
\end{align*}
Splitting the integral on the above right-hand side into two
pieces corresponding to $tu<n(1- \delta)$ and $t u>n(1+ \delta)$ yields
\begin{align*}
    \begin{split}
        \int_0^{n(1- \delta)} u^{n-1} e^{-u} \left(1-\frac{u}{n}\right) d u 
&=n^{n-1}(1-\delta )^ne^{-n(1-\delta )},\\
\int_{n(1+ \delta)}^\infty u^{n-1} e^{-u} \left(\frac{u}{n}-1\right) d u 
&=  n^{n-1}(1+\delta )^ne^{-n(1-\delta )}.
    \end{split}
\end{align*}
Since $(1+1/(n-1))^{n-1}<e$ for all $n\ge 2$, we obtain the assertion
\eqref{pndiffbound}.
\end{proof}

Let $f$ be as in Theorem \ref{tdiffenergy} and $v\in\BX$. 
Then $f$ is bounded and
\begin{align*}
\BE_t f(|C^v|)=\BE_t F(\xi^v)=\sum_{n=1}^\infty f(n) p^v_n(t).
\end{align*}
In order to prove Theorem \ref{tdiffenergy} we will check the condition \eqref{uniform_conv_2}
on $[t_0,\infty)$ for each $t_0>0$.
This is achieved by the previous and the following lemma.

\begin{lemma}
Suppose that $f\colon\N\to\R$. Then
    \begin{align}\label{e:4952}
        \BE_t|M^v_t F_n(\xi^v)|\le |f(n)|\int_0^\infty \nu^v_n[0, u] 
\left|n-ut\right|t^{n} e^{-t u} du.
    \end{align}
\end{lemma}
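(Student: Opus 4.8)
The plan is to reduce the left-hand side to a one-dimensional integral against the locally finite measure $\nu^v_n$ of \eqref{eboundnu} and then integrate by parts against its distribution function $u\mapsto\nu^v_n[0,u]$. The only genuinely delicate point will be this integration by parts, since the relevant integrand has a corner; everything else is a matter of unwinding the definitions and applying the change-of-measure formula already established.

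First I would unwind the definitions. On the event $\{|C^v|=n\}$ one has $M^v_t=n-1-t\varphi_\lambda(C^v)$ and $F_n(\xi^v)=f(n)\I\{|C^v|=n\}$, so
\begin{align*}
\BE_t\big|M^v_t F_n(\xi^v)\big|=|f(n)|\,\BE_t\big[\big|n-1-t\varphi_\lambda(C^v)\big|\,\I\{|C^v|=n\}\big].
\end{align*}
Applying the change of measure of Theorem \ref{tabsc} with $t_0=1$ and then inserting the definition \eqref{eboundnu} of $\nu^v_n$ (using $e^{(1-t)\varphi_\lambda(C^v)}=e^{\varphi_\lambda(C^v)}e^{-t\varphi_\lambda(C^v)}$) this becomes
\begin{align*}
\BE_t\big|M^v_t F_n(\xi^v)\big|=|f(n)|\,t^{n-1}\int_0^\infty\big|n-1-tu\big|\,e^{-tu}\,\nu^v_n(du).
\end{align*}
By Lemma \ref{l:nun1} the function $u\mapsto\nu^v_n[0,u]$ grows at most polynomially in $u$ (and $\nu^v_1=\delta_{D_\varphi(v)}$ by \eqref{nu1}), so this integral and all the integrals appearing below are finite.

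It then remains to integrate by parts. I would set $\psi(u):=\big|n-1-tu\big|\,e^{-tu}$; this function is nonnegative, tends to $0$ as $u\to\infty$, is continuously differentiable away from the single point $u=(n-1)/t$ at which it is still continuous (indeed equal to $0$), and is therefore absolutely continuous with $\psi(u)=-\int_u^\infty\psi'(s)\,ds$. A short computation on each of the two linear pieces shows $|\psi'(u)|=t\,|n-tu|\,e^{-tu}$ for a.e.\ $u>0$. Hence, by Fubini's theorem,
\begin{align*}
\int_0^\infty\psi(u)\,\nu^v_n(du)=-\int_0^\infty\psi'(s)\,\nu^v_n[0,s]\,ds\le\int_0^\infty|\psi'(s)|\,\nu^v_n[0,s]\,ds=\int_0^\infty t\,|n-ts|\,e^{-ts}\,\nu^v_n[0,s]\,ds,
\end{align*}
and multiplying through by $|f(n)|\,t^{n-1}$ yields exactly \eqref{e:4952}. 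The one place requiring a little care is the corner of $\psi$ at $u=(n-1)/t$: because $\psi$ remains continuous there no boundary contribution arises, so the Stieltjes identity above is valid with $\psi'$ interpreted Lebesgue-almost everywhere; the rest is routine bookkeeping with Theorem \ref{tabsc} and the definition of $\nu^v_n$.
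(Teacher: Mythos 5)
Your proposal is correct and follows essentially the same route as the paper: both reduce the expectation via Theorem \ref{tabsc} and the definition \eqref{eboundnu} of $\nu^v_n$ to the integral $t^{n-1}\int_0^\infty|n-1-tu|e^{-tu}\,\nu^v_n(du)$ and then integrate by parts against the distribution function $\nu^v_n[0,\cdot]$ using Fubini, with the corner at $u=(n-1)/t$ causing no boundary term. The only cosmetic difference is that the paper keeps track of signs and obtains an exact identity (the right-hand side of \eqref{e:4952} minus an explicit nonnegative correction term supported on $[(n-1)/t,n/t]$), whereas you bound $-\psi'$ by $|\psi'|$ directly, which yields the same inequality.
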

\begin{proof}
It is easy to see the following identities which follow from integration by parts
\begin{align*}
    \int_u^{(n-1)/t}(n-ts)e^{-ts} ds&=-t^{-1}(n-ts)e^{-ts}\Big|_u^{(n-1)/t}-\int_u^{(n-1)/t}e^{-ts} ds\\
    &=t^{-1}\left((n-tu)e^{-tu}\pm e^{-(n-1)}-e^{-tu}\right)=t^{-1}(n-1-tu)e^{-tu},\\
    \int_u^{\infty}(n-ts)e^{-ts} ds&=t^{-1}(n-tu)e^{-tu}-\int_u^{\infty}e^{-ts} ds=t^{-1}(n-1-tu)e^{-tu}.
\end{align*}
Since $\nu^v_n$ is locally finite, we can apply  Fubini's theorem to obtain that
    \begin{align*}
      &\BE_t|M^v_t F_n(\xi^v)|\\
      &=|f(n)| \BE_t\big[|n-1-t\varphi_\lambda(C^v)|\I\{|C^v|=n\}\big]= |f(n)| t^{n-1}\int_0^\infty |n-1-tu|e^{-tu}\nu^v_n(d u)\\
      &= |f(n)|t^{n-1}\left(\int_0^{(n-1)/t} (n-1-tu)e^{-tu}\nu^v_n(d u) - \int_{(n-1)/t}^\infty (n-1-tu)e^{-tu}\nu^v_n(d u)\right)\\
      &= |f(n)|t^{n}\left(\int_0^{(n-1)/t} \int_u^{(n-1)/t}(n-ts)e^{-ts} ds\,\nu^v_n(d u) - 
\int_{(n-1)/t}^\infty\int_u^\infty (n-ts)e^{-ts} ds \,\nu^v_n(d u)\right)\\
      &= |f(n)|t^{n}\left(\int_0^{(n-1)/t} \nu_n^v[0,s](n-ts)e^{-ts} ds - \int_{(n-1)/t}^\infty \nu_n^v[0,s] (n-ts)e^{-ts} ds \right)\\
      &= |f(n)|\int_0^\infty \nu^v_n[0, s] 
         \left|n-st\right|t^{n} e^{-t s}\,ds-2|f(n)|\int_{(n-1)/t}^{n/t} \nu_n^v[0,s](n-ts)t^ne^{-ts} ds.
    \end{align*}
\end{proof}

\begin{proof}[Proof of Theorem \ref{tdiffenergy}]
Let $v\in\BX$, $t_0>0$  and $n\ge 2$.
We need to check the condition \eqref{uniform_conv_2}.
To do so, we start with inequality \eqref{e:4952}.
In \eqref{pndiffbound} we choose
$\delta\equiv \delta_n$ by $\delta_n:=\sqrt{9\log n / n}$. 
We use the inequalities $(1-r)e^r\le e^{-r^2/2}$ which holds
for all $r\ge 0$ and $(1+r)e^{-r}\le e^{-r^2/3}$ which holds for all
$r\in[0,1/3)$. Then we obtain for  all sufficiently large  $n\in\N$
that $(1-\delta_n)^ne^{-n\delta_n}\le n^{-9/2}$ and
$(1-\delta_n)^ne^{-n\delta_n}\le n^{-3}$.
Hence there exist $n_0\in\N$ such that for each $t\ge t_0$
\begin{align}\label{e6.20}
    \sum_{n=n_0}^\infty t^{-1}|f(n)|\int_0^\infty \nu^v_n[0, u] 
\left|n-ut\right|t^{n} e^{-t u} du
\le \frac{\sqrt{9}}{t_0} \sum_{n=n_0}^\infty  |f(n)|\sqrt{n \log n} p^v_{n}(t)
+  \frac{2}{t_0}  \sum_{n=n_0}^\infty n^{-2}.
\end{align}
Let $\varepsilon>0$ and choose $n_1\ge n_0$ such that
$|f(n)|\sqrt{n \log n}\le \varepsilon$ for each $n\ge n_1$.
Then 
$$
\sum_{n=n_1}^\infty  |f(n)|\sqrt{n \log n} p^v_{n}(t)\le \varepsilon,
$$
finishing the proof.
\end{proof}

\begin{proof}[Proof of Theorem \ref{t:F_deriv3}] 
We check condition 
\eqref{uniform_conv_2}. By assumption \eqref{prop_f_7} it suffices to show that 
\begin{align*}
\lim_{n\to\infty}\sup_{t\ge t_0}\sum_{k>n} |\tilde{f}(k)|\BE_t\big|M^v_t\big| \I\{|C^v|=k\})=0,
\end{align*}
for any $t_0>0$. This follows from \eqref{e:4952} and the proof of Theorem \ref{tdiffenergy}.
\end{proof}

Later we shall need the following integrated version of
Theorem \ref{tdiffenergy}

\begin{theorem}\label{tdiffenergymarked} Assume that $\BX=\BY\times\BM$ is the product
of two complete separable metric spaces and 
let $\BQ$ be a finite measure $\BQ$ on $\BM$.  
Suppose that $f\colon\N\to\R$ is a function satisfying  \eqref{prop_f_1}.
Then $t\mapsto \int \BE_t f(|C^{(y,q)}|)\,\BQ(dq)$ is for each $y\in\BY$ continuously differentiable on $(0,\infty)$.
\end{theorem}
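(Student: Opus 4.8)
The plan is to reduce the statement to Theorem \ref{tdiffenergy} by recognizing that the mark coordinate plays no structural role here — the space $\BX = \BY\times\BM$ is just a particular complete separable metric space, and for each fixed $y\in\BY$ and each $q\in\BM$, the point $v=(y,q)$ is an element of $\BX$ to which Theorem \ref{tdiffenergy} applies directly. So the integrand $t\mapsto \BE_t f(|C^{(y,q)}|)$ is, for each fixed $q$, continuously differentiable on $(0,\infty)$, with derivative given by $t^{-1}\BE_t[M^{(y,q)}_t F(\xi^{(y,q)})]$ via \eqref{e:derivative_2}. The task is then to push the derivative through the integral $\int\,\cdot\,\BQ(dq)$, for which I would invoke a dominated-convergence / uniform-bound argument (e.g.\ the standard theorem on differentiating under the integral sign): it suffices to bound $\big|\tfrac{d}{dt}\BE_t f(|C^{(y,q)}|)\big|$ uniformly in $q$, locally uniformly in $t$, by a $\BQ$-integrable function of $q$ — and since $\BQ$ is finite, a bound uniform in $q$ and locally uniform in $t$ is enough.

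First I would fix $y\in\BY$ and a compact interval $[t_0,t_1]\subset(0,\infty)$ with $t_0>0$, and estimate $\big|\tfrac{d}{dt}\BE_t f(|C^{(y,q)}|)\big|$. Writing $v=(y,q)$, from Lemma \ref{l:f_n_2} (or Lemma \ref{l:f_n_3b}) and the bound \eqref{e:4952} together with Lemma \ref{l6.9}, the proof of Theorem \ref{tdiffenergy} already produces, for the choice $\delta_n=\sqrt{9\log n/n}$, an estimate of the form
\begin{align*}
\Big|\frac{d}{dt}\BE_t f(|C^v|)\Big|\le \sum_{n=1}^\infty t^{-1}|f(n)|\int_0^\infty \nu^v_n[0,u]\,|n-ut|\,t^n e^{-tu}\,du
\le \frac{3}{t_0}\sum_{n=1}^\infty |f(n)|\sqrt{n\log n}\,p^v_n(t)+\frac{C}{t_0},
\end{align*}
where $C$ is an absolute constant coming from $\sum_n n^{-2}$. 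Since $|f(n)|\sqrt{n\log n}\to 0$ by \eqref{prop_f_1}, the sequence $(|f(n)|\sqrt{n\log n})_n$ is bounded, say by $K$, and since $\sum_n p^v_n(t)\le 1$, the right-hand side is bounded by $3K/t_0 + C/t_0$, a constant independent of $q$ and of $t\in[t_0,t_1]$.

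With this uniform bound in hand, I would then invoke the standard differentiation-under-the-integral theorem: the family of functions $t\mapsto \BE_t f(|C^{(y,q)}|)$, $q\in\BM$, is differentiable on $(t_0,t_1)$ with derivatives uniformly bounded and (by Theorem \ref{tdiffenergy}) continuous in $t$; since $\BQ$ is finite, dominated convergence applies to both the difference quotients and the derivatives, giving that $t\mapsto \int \BE_t f(|C^{(y,q)}|)\,\BQ(dq)$ is differentiable on $(t_0,t_1)$ with derivative $\int t^{-1}\BE_t[M^{(y,q)}_t F(\xi^{(y,q)})]\,\BQ(dq)$, and continuity of this derivative in $t$ follows once more from dominated convergence using the same uniform bound. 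Since $t_0>0$ and $t_1$ were arbitrary, the conclusion holds on all of $(0,\infty)$. The only mild subtlety — and the step I would be most careful about — is measurability of $(t,q)\mapsto \BE_t f(|C^{(y,q)}|)$ and of its $t$-derivative in $q$, which is needed to make the integrals well defined; this is routine from the explicit representations \eqref{e672} and \eqref{pndiff} together with measurability of $q\mapsto \nu^{(y,q)}_n(\cdot)$ inherited from the measurable dependence of the RCM on the added point, but it should be stated.
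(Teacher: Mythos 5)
Your proposal is correct and follows essentially the same route as the paper's proof: apply Theorem \ref{tdiffenergy} pointwise in $q$, use \eqref{e:4952} together with Lemma \ref{l6.9} and the choice $\delta_n=\sqrt{9\log n/n}$ (this is exactly the paper's bound \eqref{e6.20}) to obtain a bound on the derivative series that is uniform in $q$ and locally uniform in $t$, and then differentiate under the integral using that $\BQ$ is finite. The only cosmetic differences are that the paper treats the finitely many small-$n$ terms separately via \eqref{pndiffbound} with a fixed $\delta$ (your $\delta_n$ lies outside $(0,1)$ for small $n$, so Lemma \ref{l6.9} does not literally apply there, though those terms are trivially bounded uniformly in $q$ and $t\ge t_0$), and that the paper gets continuity of the derivative from analyticity of $\int f^{(y,q)}_n(t)\,\BQ(dq)$ plus uniform convergence of the tail, whereas you use dominated convergence with the same uniform bound, which works equally well.
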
 
\begin{proof} Let $y\in\BY$ and $t_0>0$. We know from Theorem \ref{tdiffenergy} that
$t\mapsto  \BE_t f(|C^{(y,q)}|)$ is for each $(y,q)\in\BY\times\BM$ continuously differentiable.
The assertion follows from the Leibniz differentiation rule once we can show that 
\begin{align}
\sum_{n=1}^\infty |f(n)|\int_0^\infty \nu^{(y,q)}_n[0, u] 
\left|n-ut\right|t^{n} e^{-t u} du  \le c,\quad t\ge t_0,\,q\in\BM,
\end{align} 
for some $c>0$.
Since $f(n)\sqrt{n\log n}$ is bounded, we see from \eqref{e6.20} that the above series, starting from
$n=n_0$ is bounded in $q\in\BM$ and $t\ge t_0$. The remaining terms in the series can be bounded by
\eqref{pndiffbound}. 
Similarly as in the proof of Lemma \ref{l:f_n_2} one can show that $\int f^{(y,q)}_n(t)\, \BQ(dq)$ is an analytic function on $(0,\infty)$. Therefore the continuity of the derivative follows from \eqref{e6.20}, since $\sum\limits_{n\ge n_0} |f(n)|\sqrt{n \log n} \int p_n^{(y,q)}(t)\,\BQ(d q)\to 0$ as $n_0\to\infty$ uniformly in $t\in \R_+$.
\end{proof}

Strengthening the assumption on the function $f$ in Theorem \ref{tdiffenergymarked},
we can write the derivative as a Margulis--Russo type formula.

\begin{theorem}\label{tdiff_MR} Suppose that $f\colon\N\to\R$ is a function satisfying 
\begin{align}\label{prop_f_2}
    \sup\limits_{n\ge1}|f(n)|n<\infty.
\end{align}
Then $t\mapsto \BE_t f(|C^v|)$ is for each $v\in\BX$ continuously differentiable on $(0,\infty)$ with derivative given by \eqref{e:derivative_4}.
\end{theorem}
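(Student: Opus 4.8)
The plan is to obtain the statement from Theorem~\ref{t:MR_2}, whose conclusion is exactly \eqref{e:derivative_4}, by checking its hypotheses on an arbitrary compact subinterval $[t_0,t_1]\subset(0,\infty)$: namely the assumptions of Theorem~\ref{t:F_deriv2} together with the extra integrability condition \eqref{uniform_conv_3}.

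First I would note that \eqref{prop_f_2} implies \eqref{prop_f_1}: writing $c:=\sup_{n\ge1}|f(n)|n<\infty$, we have $|f(n)|\sqrt{n\log n}\le c\sqrt{(\log n)/n}\to 0$. In particular $f$ is bounded, so $|f|^v(t)=\BE_t|F(\xi^v)|\le\sup_n|f(n)|<\infty$ for every $t>0$, giving the first assumption of Theorem~\ref{t:F_deriv2}. By Theorem~\ref{tdiffenergy} the function $f^v(t)=\BE_t f(|C^v|)$ is continuously differentiable on $(0,\infty)$, and—this is what I shall actually use—its proof verifies condition \eqref{uniform_conv_2} on $[t_0,\infty)$ for every $t_0>0$. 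Hence the hypotheses of Theorem~\ref{t:F_deriv2} hold on every $[t_0,t_1]\subset(0,\infty)$, and, exactly as in the last display of the proof of Theorem~\ref{t:F_deriv2}, we obtain $\BE_t|M^v_t F(\xi^v)|=\sum_{n\ge1}\BE_t|M^v_t F_n(\xi^v)|<\infty$ for every $t>0$.

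It then remains to verify \eqref{uniform_conv_3}. By the definition of $M^v_t$ we have, on $\{|C^v|<\infty\}$, the identity $t\,\varphi_\lambda(C^v)=|C^v|-1-M^v_t$, so that
\begin{align*}
|F(\xi^v)|\big(|C^v|+\varphi_\lambda(C^v)\big)
&=|F(\xi^v)|\,|C^v|+t^{-1}|F(\xi^v)|\,\big(|C^v|-1-M^v_t\big)\\
&\le (1+t^{-1})\,|F(\xi^v)|\,|C^v|+t^{-1}\,|M^v_t F(\xi^v)|.
\end{align*}
Since $|F(\xi^v)|\,|C^v|=|f(|C^v|)|\,|C^v|\,\I\{|C^v|<\infty\}\le c$ by \eqref{prop_f_2}, taking $\BE_t$ and using the finiteness of $\BE_t|M^v_t F(\xi^v)|$ established above yields $\BE_t\big[|F(\xi^v)|(|C^v|+\varphi_\lambda(C^v))\big]\le (1+t^{-1})c+t^{-1}\BE_t|M^v_t F(\xi^v)|<\infty$ for every $t>0$, which is \eqref{uniform_conv_3}. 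Applying Theorem~\ref{t:MR_2} on $[t_0,t_1]$ and letting $t_0\downarrow 0$, $t_1\uparrow\infty$, we conclude that $f^v$ is continuously differentiable on $(0,\infty)$ with derivative given by \eqref{e:derivative_4}.

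\textbf{Main obstacle.} The steps are essentially routine once the reduction is set up; the only point requiring care is that the finiteness $\BE_t|M^v_t F(\xi^v)|<\infty$—needed both to invoke Theorem~\ref{t:F_deriv2} and to control \eqref{uniform_conv_3}—is not an assumption but must be extracted from the verification of \eqref{uniform_conv_2} carried out in the proof of Theorem~\ref{tdiffenergy}. There is no genuinely hard part beyond organizing these citations correctly.
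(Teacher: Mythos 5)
Your proposal is correct, and its overall skeleton matches the paper's: reduce to Theorem \ref{t:MR_2} by noting that \eqref{prop_f_2} implies \eqref{prop_f_1} (hence the hypotheses of Theorem \ref{t:F_deriv2} hold, with \eqref{uniform_conv_2} supplied by the proof of Theorem \ref{tdiffenergy}), so that only \eqref{uniform_conv_3} remains to be checked. Where you genuinely diverge is in how \eqref{uniform_conv_3} is verified. The paper performs a fresh direct estimate: it writes $\BE_t|F_n(\xi^v)|\varphi_\lambda(C^v)$ as an integral against the measure $\nu^v_n$, bounds $\nu^v_n[0,s]$ by Lemma \ref{l:nun1}, and controls the resulting sum over $n$ using \eqref{prop_f_2} together with a Chernoff bound $\BP(X_n\le n)\le(2/e)^n$ for a Poisson variable $X_n$ with parameter $2n$. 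You instead exploit the algebraic identity $t\varphi_\lambda(C^v)=|C^v|-1-M^v_t$ (valid a.s.\ on $\{|C^v|<\infty\}$, where $F$ is supported) to reduce $\BE_t\big[|F(\xi^v)|\varphi_\lambda(C^v)\big]$ to the trivial bound $|F(\xi^v)|\,|C^v|\le\sup_n|f(n)|n$ plus the finiteness of $\BE_t|M^v_t F(\xi^v)|$, which the paper itself extracts from \eqref{uniform_conv_2} in the last display of the proof of Theorem \ref{t:F_deriv2}. Your route is shorter and avoids any new computation with $\nu^v_n$, at the price of leaning more heavily on the internal displays of earlier proofs (in particular, one should note that the individual terms $\BE_t|M^v_t F_n(\xi^v)|$ are finite, as follows from \eqref{e:4952} and Lemma \ref{l:nun1}, so that \eqref{uniform_conv_2} really does yield summability); the paper's version is self-contained and produces explicit quantitative bounds, but both arguments rest on the same machinery from Section \ref{s:diffenergy}, and both are sound.
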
 
\begin{proof}
    It is enough to check condition \eqref{uniform_conv_3} on $[t_0,\infty)$ for each $t_0>0$. Condition \eqref{prop_f_2} implies that 
    \begin{align*}
        \BE_t |f(|C^v|)| |C^v|<\infty.
    \end{align*}
    It follows from Fubini's theorem and Lemma  \ref{l:nun1} that for $n\ge 2$ 
    \begin{align*}
        \BE_t |F_n(\xi^v)|\varphi_\lambda(C^v)&=|f(n)|t^{n-1} \int_0^\infty u e^{-tu} \nu_n^v(du)
        =|f(n)|t^{n-1} \int_0^\infty\int_u^\infty (ts-1) e^{-ts}\, ds\, \nu_n^v(du)\\
        &=|f(n)|t^{n-1} \int_0^\infty \nu_n^v[0,s] (ts-1) e^{-ts}\, ds < |f(n)|t^{n} \int_0^\infty \nu_n^v[0,s] s e^{-ts}\, ds \\
        &\le  |f(n)|\left( 2n p^v_n(t)+t^n\int_{2n}^\infty \nu_n^v[0,s] s e^{-ts}\, ds\right)\\
        &\le  |f(n)|\left(2n p^v_n(t)+t^{-1}\left(\frac{e}{n-1}\right)^{n-1}\int_{2n}^\infty u^n e^{-u}\, du\right)\\
        &=  |f(n)|2n p^v_n(t)+t^{-1}|f(n)|\left(\frac{e}{n-1}\right)^{n-1} n! \  \BP(X_n\le n),
    \end{align*}
where $X_n$ has a Poisson distribution with parameter $2n$. 
By assumption \eqref{prop_f_2} the sum over the first terms is converging.
By a rather elementary concentration
inequality (using a Chernoff bound argument) we have $\BP(X_n\le n)\le (2/e)^{n}$ for each $n\in\N$. Therefore
the sum over the second terms is converging too. 
\end{proof}

\begin{remark}\label{r:FE_MR}\rm
The position dependent cluster density satisfies the condition \eqref{prop_f_2} and its derivative 
can be represented by \eqref{e:derivative_4}, i.e.
\begin{align*}
\frac{d}{dt} \kappa^v(t)
&= t^{-1} \left( \BP_t(|C^v|<\infty)-\kappa^v(t)-\BE_t\int |C^v-\delta_x|^{-1} \, C^{v!}(dx)\right)\\
&= \BE_t \int (|C^v(\xi^{v,x})|^{-1}-|C^v|^{-1})\I\{x\in C^v(\xi^{v,x})\}\, \lambda(dx).
\end{align*}
\end{remark}

\section{Deletion stability of the  stationary marked RCM}\label{s:tolRCM}

In this section we consider the stationary marked  RCM as introduced
in Section \ref{s:stationaryRCM}.  Hence we take a Poisson process $\eta$ on $\R^d\times \BM$
with intensity measure $t\lambda=t\lambda_d\otimes \BQ$ and consider
the random connection model $\xi$ based on $\eta$ and
a fixed connection function $\varphi\colon (\R^d\times\BM)^2\to [0,1]$
satisfying \eqref{e:4.1} and \eqref{e:intmarked}.

\begin{theorem}\label{tmain1} The infinite clusters of a stationary
marked random connection model are deletion stable. 
\end{theorem}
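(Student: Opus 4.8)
The plan is to prove the equivalent assertion $N_{ds}=0$ almost surely, with $N_{ds}$ as in \eqref{DS_2}, by transporting to the continuum the Aizenman--Kesten--Newman scheme in the form carried out by Jiang for the Gilbert graph \cite{Aizenman87,Jiang11}. Fix $t>0$ and suppose, for a contradiction, that deletion stability fails, so that $\rho:=t\int\BP_t\big(N^\infty((0,p),\xi^{(0,p)})\ge 2\big)\,\BQ(dp)>0$. Call a point $(x,p)\in\eta$ a \emph{cut point} if $C^{(x,p)}(\xi)-\delta_{(x,p)}$ has at least two infinite components. Writing $\Lambda_L:=[-L,L]^d$, the Mecke equation \eqref{e:Mecke} and stationarity (exactly as in Lemma \ref{l:915}) give that the expected number of cut points in $\Lambda_{L/2}$ equals $\rho\,L^d$. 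The goal is to contradict this by showing that this expected number is $o(L^d)$.

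The first ingredient is a \emph{surface-order upper bound} on the number $K_L$ of distinct infinite clusters of $\xi$ that meet $\Lambda_L$: since every such cluster, being unbounded, contains an edge of $\xi$ with one endpoint in $\Lambda_L$ and the other in $\Lambda_L^c$, the Mecke equation bounds $\BE_t K_L$ by $t^2\iiint \I\{x\in\Lambda_L,\,y\notin\Lambda_L\}\,\varphi(y-x,p,q)\,dx\,dy\,\BQ^2(d(p,q))$, and estimating the inner integral for $x$ at distance $s$ from $\partial\Lambda_L$ by the tail $\iint\I\{\|z\|\ge s\}\varphi(z,p,q)\,dz\,\BQ^2(d(p,q))$, which decreases to $0$ as $s\to\infty$ by \eqref{e:intmarked} and dominated convergence, one obtains $\BE_t K_L=o(L^d)$. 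The same computation controls the expected number of edges of $\xi$ crossing $\partial\Lambda_L$, and hence, by an Euler/first-Betti-number bookkeeping of the finite graph $\xi[\Lambda_L]$ together with the spatial Markov property of Theorem \ref{spatMarkov}, shows that the expected number of clusters of $\xi[\Lambda_L]$ that are pieces of infinite clusters, as well as the expected number of finite clusters of $\xi$ straddling $\partial\Lambda_L$, are both $o(L^d)$.

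The second and decisive ingredient is the analytic control of the cluster density: by Theorem \ref{t:convex}, $\kappa$ is continuously differentiable on $(0,\infty)$ and $t\kappa(t)+\tfrac12 d_\varphi t^2$ is convex, while by Remark \ref{r:FE_MR} and its integrated counterpart Theorem \ref{tdiffenergymarked} the derivative of $t\kappa(t)$ has a Margulis--Russo representation. Following \cite{Aizenman87,Jiang11}, I would compare the expected number of clusters of $\xi[\Lambda_L]$ with that obtained after a local modification in the annulus $\Lambda_L\setminus\Lambda_{L/2}$ (deleting, respectively reconnecting, the $O(L^d)$ points located there). On the one hand, each cut point in $\Lambda_{L/2}$ forces a contribution to this difference, since its two infinite arms cannot be rejoined inside $\Lambda_L$, so that the difference is $\gtrsim\rho L^d$ up to the $o(L^d)$ corrections supplied by the first ingredient; on the other hand, telescoping the modification along the spatial Markov filtration and using that the increments of $\tfrac{d}{dt}[t\kappa(t)]$ are bounded below by $-d_\varphi$ (the convexity), the difference is of order $|\partial\Lambda_L|\cdot L=o(L^d)$, where the amenability of $\R^d$ (cf.\ Remark \ref{r:amenability}) is essential. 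Combining the two bounds gives $\rho L^d=o(L^d)$, hence $\rho=0$, and then \eqref{DS_2} yields the theorem. The main obstacle is to make this volume-order lower bound genuinely rigorous: a priori an infinite cluster could be repeatedly pinched along a thin arm (a ``string of beads''), so that merely counting infinite arms reaching $\partial\Lambda_L$ does not suffice; it is precisely at this point that one must track the number of clusters in the spirit of \cite{Aizenman87} and invoke the convexity and differentiability of $\kappa$, and it is this step whose extension to the continuum with a general, possibly long-range connection function $\varphi$ is delicate and forces one to go well beyond the arguments of \cite{Jiang11}.
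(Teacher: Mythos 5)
Your proposal assembles the right raw materials (Mecke/stationarity to express the density $\rho$ of ``cut points'', the boundary estimate based on the integrability \eqref{e:intmarked}, and the convexity/differentiability of the cluster density), but the decisive step is missing, and the way you sketch it does not work. In the paper the mechanism is a free/wired boundary-condition comparison: one introduces $M_{n,0}$ and $M_{n,+}$, the numbers of (finite) clusters under free and wired boundary conditions, shows that \emph{both}, normalized by $\lambda_d(B_n)$, converge to $t\kappa(t)$ (Lemma \ref{lapproximate}), computes their derivatives by Margulis--Russo as $\lambda_d(B_n)-\BE_t\iint\I\{x\in B_n\}N^\star_n(x,p)\,dx\,\BQ(dp)$ (Lemma \ref{l7.7}), proves convexity of the finite-volume functions $\BE_tM_{n,\star}+\lambda_d(B_n)d_\varphi t^2/2$ (Lemma \ref{lconvex}), and then uses the classical fact that derivatives of convex functions converge to the derivative of a differentiable convex limit (\cite[Theorem 25.7]{Rock69}) to force the free and wired derivatives to have the \emph{same} limit. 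Combined with Lemma \ref{l7.9}, this yields $\int\BE_tN^0(0,p,\xi^{(0,p)})\,\BQ(dp)\le\int\BE_tN^+(0,p,\xi^{(0,p)})\,\BQ(dp)$, and since pointwise $N^0-N^+=\I\{N^\infty\ge1\}(N^\infty-1)\ge0$, the density of cut points vanishes. Your proposal never articulates this identity between the difference of the two Russo derivatives and the quantity $N^0-N^+$ that encodes deletion stability; that identity is precisely what turns the analytic input into the geometric conclusion.

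Concretely, two of your steps fail as stated. First, your claimed upper bound for the effect of the annulus modification, ``of order $|\partial\Lambda_L|\cdot L=o(L^d)$'', is not $o(L^d)$: $|\partial\Lambda_L|\cdot L\asymp L^{d}$ is of volume order, so the proposed comparison gives no contradiction; nor is it explained how convexity of the \emph{limiting} function $t\kappa(t)+d_\varphi t^2/2$ (Theorem \ref{t:convex}) would quantitatively control a spatial modification in an annulus --- in the paper the convexity is exploited at the level of the finite-volume approximants, where it meshes with the free/wired sandwich $M_{n,+}\le M_n\le M_{n,0}$. Second, the volume-order lower bound ``each cut point contributes, so the difference is $\gtrsim\rho L^d$'' is asserted without the bookkeeping that in \cite{Aizenman87} (and here) comes from the difference $N^0-N^+$ in the Russo formulas; you yourself flag this as the main obstacle and leave it unresolved. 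So while the high-level strategy is in the right family, the proof as proposed has a genuine gap at its central step.
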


Our proof of the theorem partially follows the seminal paper
\cite{Aizenman87}. It requires a significant extension of some of
the arguments in \cite{Jiang11} treating the Gilbert graph with deterministic radii; 
see Example \ref{ex:Boolean}.

We need to introduce some further notation.  For
$\mu\in\bG$ and $(x,p)\in V(\mu)$ we denote by
$N^0(x,p,\mu)$  
the number of clusters   
in $C^{(x,p)}(\mu)-\delta_{(x,p)}$. 
Hence $N^0(x,p,\mu)$ is the number
of clusters in $\mu-\delta_{(x,p)}$ which are connected in $\mu$ with $(x,p)$.   
We then define $N^+(x,p,\mu)$ similarly to $N^0(x,p,\mu)$, except that at most one
infinite cluster is counted, i.e.
\begin{align*}
    N^+(x,p,\mu):= N^0(x,p,\mu)-\I\{N^\infty(x,p,\mu)\ge 1\}(N^\infty(x,p,\mu)-1).
\end{align*}
Given $B\in\cB(\R^d)$ and a measure $\nu$ on $\R^d\times\BM$ it will be convenient to write
$\nu_B:=\nu_{B\times\BM}$ for the restriction of $\nu$ to  $B\times\BM$.

We fix some arbitrary $t_0>0$. It is then no restriction of generality
to assume that $t\in (0,t_0]$.  Let $(B_n)_{n\in\N}$ be an increasing
sequence of convex and compact sets with union $\R^d$.  Our proofs
require a specific {\em coupling} of the RCM $\xi$ with two random
graphs $\xi_{n,0}$ and $\xi_{n,+}$, $n\in\N$, according two different
boundary conditions: {\em free} and {\em wired}.  To this end we let
$\tilde\xi$ be a RCM based on a Poisson process $\tilde\eta$ with
intensity measure $t_0\lambda$. We can assume without loss of
generality that $\eta$ is $t/t_0$-thinning of $\tilde\eta$ (see
\cite[Corollary 5.9]{LastPenrose18}) and that $\xi$ is given as the
restriction $\tilde\xi[\eta]$ of $\tilde\xi$ to the vertices from
$\eta$. Let us first set
$\xi_n:=\tilde\xi[\eta_{B_n}+\tilde\eta_{B^c_n}]$. This is a RCM
driven by the Poisson process $\eta_{B_n}+\tilde\eta_{B^c_n}$ which
has intensity measure $t\lambda_{B_n}+t_0\lambda_{B^c_n}$. We define
$\xi_{n,0}$ as the restriction $\xi[\eta_{B_n}]=\xi_n[\eta_{B_n}]$ of
$\xi$ to $B_n\times\BM$. This is a RCM driven by $\eta_{B_n}$. We
  let $\xi_{n,+}$ be the random graph resulting from $\xi_n$ by
  connecting all points from $\tilde\eta_{B^c_n}$ to each other. Then
$\xi_{n,+}$ is a RCM driven by $\eta_{B_n}+\tilde\eta_{B^c_n}$
with a connection function which is equal to one for any pair of
  points from $(B^c_n\times\BM)^2$ and $\varphi$ otherwise. The
reader should keep in mind that $\xi_{n,+}$ is a very simple function
of the RCM $\xi_n$.  
An important property of
this coupling is that $\xi_{n,0}$ is a subgraph of $\xi$, while $\xi$
is a subgraph of $\xi_{n,+}$ (in fact of $\xi_n$).

For $(x,p)\in\eta_{B_n}$ we define  $C^{(x,p)}_{n,0}:=C^{(x,p)}(\xi_{n,0})$ and
$C^{(x,p)}_{n,+}:=C^{(x,p)}(\xi_{n,+})$ 
noting that 
$$
V(C^{(x,p)}_{n,+})=V(C^{(x,p)}_{n,0})+\I\{(x,p)\leftrightarrow \tilde\eta_{B^c_n} \text{in $\xi_{n}$}\}\tilde\eta_{B^c_n}.
$$
Note also that if $(x,p)\notin \eta_{B_n}$, then $C^{(x,p)}_{n,0}=C^{(x,p)}_{n,+}=0$. 
Note also that $C^{(x,p)}_{n,+}$ is infinite if and only if
$(x,p)$ is connected (in $\xi_n$) to $\tilde\eta_{B^c_n}$. Otherwise it is finite and coincides (by the coupling construction) with $C^{(x,p)}_{n,0}=C^{(x,p)}(\xi)$.

\begin{lemma}\label{l7.3} Let $(x,p)\in\R^d\times\BM$. Then, almost surely,
$V(C^{(x,p)}_{n,0})\uparrow V(C^{(x,p)}(\xi))$ and $V(C^{(x,p)}_{n,+})\downarrow V(C^{(x,p)}(\xi))$ as $n\to \infty$.
\end{lemma}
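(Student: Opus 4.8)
\medskip
\noindent\emph{Proof proposal.} The plan is to argue pathwise and split according to whether $(x,p)$ is a vertex of $\eta$. Since $t\lambda=t\lambda_d\otimes\BQ$ is diffuse and $(x,p)$ is deterministic, $\BP((x,p)\in\eta)=0$; on the complementary event $C^{(x,p)}(\xi)$, $C^{(x,p)}_{n,0}$ and $C^{(x,p)}_{n,+}$ all equal the empty graph $0$ for every $n$, so both convergences are trivial. What remains is to establish the genuine monotone structure on the event $\{(x,p)\in\eta\}$; there, fix $N$ with $(x,p)\in\eta_{B_n}$ for all $n\ge N$ and consider only $n\ge N$.

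For the free statement, $\xi_{n,0}=\xi[\eta_{B_n}]$ is a subgraph of $\xi_{n+1,0}$ and of $\xi$, so $V(C^{(x,p)}_{n,0})$ is non-decreasing and contained in $V(C^{(x,p)}(\xi))$; conversely, any $(y,q)\in V(C^{(x,p)}(\xi))$ is joined to $(x,p)$ by a $\xi$-path through finitely many points of $\eta$, and since $\bigcup_n B_n=\R^d$ this path lies entirely in $\xi_{n,0}$ for all large $n$, so that $(y,q)\in V(C^{(x,p)}_{n,0})$ eventually. Hence $V(C^{(x,p)}_{n,0})\uparrow V(C^{(x,p)}(\xi))$.

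For the wired statement I would first observe that $\xi$ is a subgraph of $\xi_{n,+}$ and that $\xi_{n+1,+}$ is a subgraph of $\xi_{n,+}$: its vertex measure $\eta_{B_{n+1}}+\tilde\eta_{B^c_{n+1}}$ is dominated by $\eta_{B_n}+\tilde\eta_{B^c_n}$ (since $B_n\subseteq B_{n+1}$ and $\eta\le\tilde\eta$) and the clique added on $\tilde\eta_{B^c_{n+1}}$ lies inside the clique added on $\tilde\eta_{B^c_n}$, so every edge of $\xi_{n+1,+}$ is also an edge of $\xi_{n,+}$. This already yields $V(C^{(x,p)}(\xi))\subseteq V(C^{(x,p)}_{n+1,+})\subseteq V(C^{(x,p)}_{n,+})$, and it remains to identify $L:=\bigcap_{n\ge N}V(C^{(x,p)}_{n,+})$ with $V(C^{(x,p)}(\xi))$. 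If $|C^{(x,p)}(\xi)|<\infty$ this is easy: by \eqref{evarphiintlocal} and the Mecke equation applied to $\tilde\xi$, $\BP$-a.s.\ every point of $\eta$ has only finitely many $\tilde\xi$-neighbours, so the bounded finite set $V(C^{(x,p)}(\xi))$ together with all of its $\tilde\xi$-neighbours lies in some $B_{n_0}\times\BM$; then for $n\ge\max(N,n_0)$ one has $(x,p)\not\leftrightarrow\tilde\eta_{B^c_n}$ in $\xi_n$, because the last vertex of any $\xi_n$-path from $(x,p)$ before it exits $B_n$ lies in $C^{(x,p)}(\xi_{n,0})=C^{(x,p)}(\xi)$ while the exiting edge is a $\tilde\xi$-edge from that vertex into $\tilde\eta_{B^c_n}$, contradicting the choice of $n_0$; hence $C^{(x,p)}_{n,+}=C^{(x,p)}(\xi)$ for all large $n$ and $L=V(C^{(x,p)}(\xi))$.

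The step I expect to be the main obstacle is the last sub-case, $(x,p)\in\eta$ with $|C^{(x,p)}(\xi)|=\infty$. Here $L$ may in fact be strictly larger than $V(C^{(x,p)}(\xi))$: every infinite $\xi$-cluster meets $B^c_n$, hence $\tilde\eta_{B^c_n}$, for all large $n$, and is therefore absorbed into the (infinite) cluster $C^{(x,p)}_{n,+}$, so the identification cannot be carried out pathwise without already knowing that $\xi$ has at most one infinite cluster. The plan is to bypass this precisely through the opening remark: this sub-case is contained in $\{(x,p)\in\eta\}$, a $\BP$-null event for the fixed deterministic $(x,p)$, so that $\BP$-a.s.\ it never occurs and the asserted a.s.\ convergences follow from the two cases already handled.
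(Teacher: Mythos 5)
Your proof is correct for the lemma as literally stated, but it takes a genuinely different route in the wired half, and your closing worry pinpoints a real soft spot. The paper also dismisses $(x,p)\notin\eta$ as trivial, proves the free convergence by the same exhaustion argument as you, and asserts the monotonicity $\xi_{n+1,+}\subseteq\xi_{n,+}$ that you verify in detail. The difference is that the paper then argues pathwise on $\{(x,p)\in\eta\}$ for \emph{all} clusters: from ``$(y,q)\in C^{(x,p)}_{n,+}$ for every $n$ and $y\in B_n$ for large $n$'' it concludes directly that $(y,q)\in C^{(x,p)}(\xi)$. As you observe, this ``hence'' is not justified when $\xi$ has two or more infinite clusters: if $(x,p)$ and $(y,q)$ lie in distinct infinite clusters, both clusters meet $\tilde\eta_{B^c_n}$ for all large $n$, so $(x,p)\leftrightarrow(y,q)$ in $\xi_{n,+}$ through the clique on $\tilde\eta_{B^c_n}$, and $(y,q)$ survives in the decreasing limit although it is not in $C^{(x,p)}(\xi)$ (the identity displayed just before the lemma, which would close this gap, fails in the same scenario); excluding this would presuppose the uniqueness that Section \ref{s:tolRCM} together with Theorem \ref{t:unique} is designed to deliver. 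Your proposal instead proves the wired identification only on $\{|C^{(x,p)}(\xi)|<\infty\}$ — your argument there, via a.s.\ finiteness of $\tilde\xi$-degrees and the exit-edge contradiction, is correct and more careful than the paper's — and settles the remaining sub-case by noting that $\{(x,p)\in\eta\}$ is a null event for deterministic $(x,p)$, which, with the convention $C^v(\mu)=0$ for $v\notin V(\mu)$, indeed yields the stated a.s.\ convergence (vacuously). What the paper's route would buy, if the wired step were repaired (e.g.\ under uniqueness, or restricted to finite clusters as in your argument), is the stronger pathwise statement simultaneously for the points of $\eta$, which is the morally intended content; your route buys a rigorous proof of the statement as written and an honest acknowledgment that the stronger version is out of reach at this stage. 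Since Lemma \ref{l7.3} is not invoked in the proofs of Lemmas \ref{lapproximate}, \ref{l7.7}, \ref{l7.9} or Theorem \ref{tmain1}, nothing downstream depends on the stronger reading.
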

\begin{proof} 
Let $(x,p)\in\eta$ otherwise the statement is trivial.
There exists $m\in\N$ such that $x\in B_m$. 
We shall always take $n\ge m$.
The second assertion has to be interpreted this way.
Clearly $C^{(x,p)}_{n,0}$ is a subgraph of  $C^{(x,p)}(\xi)$.
Assume that $(y,q)\in C^{(x,p)}(\xi)$.
Then there exists $n$ such that $(x,p)$ is connected to  
$(y,q)$ within $\xi_{n,0}$. This proves the first assertion.
Next we note that 
$C^{(x,p)}(\xi)$ is a subgraph of  $C^{(x,p)}_{n+1,+}$ while
$C^{(x,p)}_{n+1,+}$ is a subgraph of $C^{(x,p)}_{n,+}$. 
Assume that $(y,q)\in C^{(x,p)}_{n,+}$ for each $n\ge m$. For large enough $n$ we then
have  $y\in B_{n}$ and hence $(y,q)\in C^{(x,p)}(\xi)$.
\end{proof}

For each $n\in\N$ we define
\begin{align*}
    M_{n,\star}:=\int |C^{(x,p)}_{n,\star}|^{-1}\,\eta_{B_n}(d(x,p)),
\end{align*}
where we use a star to denote either $0$ or $+$. 
A simple counting argument shows that $M_{n,0}$ is the number of clusters (finite)
in $\xi_{n,0}$ while $M_{n,+}$ is the number of finite clusters in $\xi_{n,+}$.
Moreover, we have that
\begin{align}\label{e7.35}
    M_{n,+}\le  M_n \le M_{n,0},
\end{align}
where (see also Proposition \ref{p:ergodiclimit})
\begin{align}
M_n:=\int |C^{(x,p)}(\xi)|^{-1}\,\eta_{B_n}(d(x,p)).
\end{align}

Recalling the definition \eqref{e:markedfree} of the cluster density $\kappa(t)$, we have the following lemma. 

\begin{lemma}\label{lapproximate}
Let $t\in[0,t_0]$. Then  $(\lambda_d(B_n))^{-1}\BE_t M_{n,\star}\to t\kappa(t)$ as $n\to\infty$. 
\end{lemma}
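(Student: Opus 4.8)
The plan is to sandwich $M_{n,\star}$ between the exact quantity $M_n$, whose normalized expectation is $t\kappa(t)$, and a boundary term negligible compared with $\lambda_d(B_n)$, using \eqref{e7.35}. First, applying Lemma \ref{l:915} with $B=B_n$ and $f(k)=1/k$ (convention $1/\infty=0$), together with the definition \eqref{e:markedfree} of $\kappa$, yields the exact identity
\begin{align*}
\BE_t M_n=\BE_t\int\I\{x\in B_n\}\,|C^{(x,p)}(\xi)|^{-1}\,\eta(d(x,p))=t\,\lambda_d(B_n)\,\kappa(t).
\end{align*}
In view of \eqref{e7.35} it therefore suffices to show $\BE_t(M_{n,0}-M_n)=o(\lambda_d(B_n))$ and $\BE_t(M_n-M_{n,+})=o(\lambda_d(B_n))$; the case $t=0$ is trivial, so assume $t>0$.

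For the first difference I would group the (a.s.\ finitely many) vertices of $\eta_{B_n}$ according to the clusters of $\xi$ they lie in. A cluster $C$ of $\xi$ contributes $k_C-\ell_C/|C|$ to $M_{n,0}-M_n$, where $\ell_C:=|V(C)\cap(B_n\times\BM)|$ and $k_C$ is the number of connected components of $C$ left after deleting all its vertices with spatial coordinate in $B_n^c$. This is $\ge 0$, and it vanishes unless $C$ is \emph{crossing}, i.e.\ has vertices both inside and outside $B_n\times\BM$, in which case it is at most $k_C$. Since $C$ is connected, a short path-tracing argument shows that each of those $k_C$ components contains a vertex of $\eta_{B_n}$ that is $\xi$-adjacent to a point of $\eta$ with spatial coordinate in $B_n^c$, and distinct components give distinct such witnesses; hence $M_{n,0}-M_n\le N_n:=\#\{(y,q)\in\eta: y\in B_n,\ (y,q)\sim B_n^c\times\BM\text{ in }\xi\}$. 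For the second difference I would use the facts recorded just before Lemma \ref{l7.3}: $C^{(x,p)}_{n,+}$ is infinite precisely when $(x,p)$ is joined to $\tilde\eta_{B_n^c}$ in $\xi_n$, and otherwise equals $C^{(x,p)}(\xi)$. Thus $M_n-M_{n,+}$ equals the sum of $|C^{(x,p)}(\xi)|^{-1}$ over $(x,p)\in\eta_{B_n}$ joined to $\tilde\eta_{B_n^c}$ in $\xi_n$; grouping these by their $\xi_{n,0}$-components $D$ (each satisfying $D\subseteq C^{(x,p)}(\xi)$, so $\sum_{(x,p)\in D}|C^{(x,p)}(\xi)|^{-1}\le 1$) and noting each such $D$ contains a vertex $\tilde\xi$-adjacent to $\tilde\eta_{B_n^c}$, I obtain $M_n-M_{n,+}\le\tilde N_n:=\#\{(y,q)\in\tilde\eta: y\in B_n,\ (y,q)\sim\tilde\eta_{B_n^c}\text{ in }\tilde\xi\}$.

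Both expected values are then bounded in the same way. By the Mecke equation \eqref{e:Mecke} and the inequality (probability of at least one edge) $\le$ (expected number of edges), $\BE_t N_n\le t^2\iint_{\BM^2}\int_{B_n}\int_{B_n^c}\varphi(z-y,q,r)\,dz\,dy\,\BQ^2(d(q,r))$, and likewise for $\BE_t\tilde N_n$ with $t$ replaced by $t_0$ (recall $\tilde\xi$ has intensity $t_0\lambda$); since $t\le t_0$ both are at most this expression with $t_0^2$ in front. Substituting $w=z-y$ rewrites the inner double integral as $\int_{\R^d}\varphi(w,q,r)\,\lambda_d\big(B_n\setminus(B_n-w)\big)\,dw$. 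Dividing by $\lambda_d(B_n)$ and applying dominated convergence completes the proof: the integrand $\varphi(w,q,r)\,\lambda_d(B_n\setminus(B_n-w))/\lambda_d(B_n)$ is bounded by $\varphi(w,q,r)$, which is integrable over $\R^d\times\BM^2$ by \eqref{e:intmarked}, while for each fixed $w$ the ratio tends to $0$ because $B_n\setminus(B_n-w)$ is contained in the $\|w\|$-boundary layer of the convex body $B_n$, whose relative volume vanishes since $\bigcup_n B_n=\R^d$ forces the inradius $\rho_n$ of $B_n$ to diverge (use $\lambda_d(B_n\ominus B(0,r))\ge(1-r/\rho_n)_+^d\,\lambda_d(B_n)$).

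The main obstacle is the bookkeeping of the middle step, in particular the path-tracing argument bounding the number of ``crossing components'' of a cluster by the number of its vertices adjacent to $B_n^c$ --- the continuum analogue of a step in \cite{Aizenman87, Jiang11}. The final estimate is then routine, with \eqref{e:intmarked} supplying the integrable dominating function and the convexity and unboundedness of the $B_n$ (amenability of $\R^d$) supplying the pointwise decay of the boundary ratio.
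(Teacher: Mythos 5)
Your proof is correct and follows essentially the same route as the paper: the exact identity $\BE_t M_n=t\lambda_d(B_n)\kappa(t)$ from Lemma \ref{l:915}, the sandwich \eqref{e7.35} with both gaps bounded by the number of points of (respectively) $\eta_{B_n}$ and $\tilde\eta_{B_n}$ directly connected across $\partial B_n$, a Mecke-equation bound, and the vanishing relative volume of the boundary layer of the convex sets $B_n$. Your only deviations are cosmetic: you spell out the cluster-by-cluster bookkeeping that the paper states in one line, and you conclude via a change of variables and dominated convergence where the paper uses an explicit $\varepsilon$--$r$ splitting of the integral.
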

\begin{proof} By Lemma \ref{l:915},
\begin{align*}
\BE_t M_n&=t\iint \I\{x\in B_n\}\BE_t |C^{(x,p)}|^{-1}\,dx\,\BQ(dp)
=\lambda_d(B_n) t\kappa(t).
\end{align*}
Almost surely  $M_{n,0}-M_n$ is less than the number
of clusters with points from $\eta_{B_n}$ which are connected in $\xi$ with
$\eta_{B_n^c}$, and therefore less than the number of points from
$\eta_{B_n}$ which are directly connected in $\xi$ with $\eta_{B_n^c}$. Analogously,
$M_n-M_{n,+}$ is less than number of clusters with points from $\eta_{B_n}$ which are 
connected in $\xi_n$ with
$\tilde\eta_{B_n^c}$, and therefore less than the number of points from
$\eta_{B_n}$ which are directly connected in $\xi_n$ with $\tilde\eta_{B_n^c}$. Then with
probability one, we have
\begin{align*}
\begin{split}
  M_{n,0}- \int \I\{(x,p)\sim \tilde\eta_{B^c_n}\textrm{ in $\xi_n$}\} \eta_{B_n}(d (x,p))
&\le  M_{n,0}- \int \I\{(x,p)\sim \eta_{B^c_n}\textrm{ in $\xi$}\} \eta_{B_n}(d(x,p))\\
  \le  M_n &\le M_{n,+}+ \int \I\{(x,p)\sim \tilde\eta_{B^c_n}\textrm{ in $\xi_n$}\}  \eta_{B_n}(d(x,p)).
\end{split}
\end{align*}
By the Mecke equation, we have
\begin{align*}
    &\BE_t \int \I\{(x,p)\sim \tilde\eta_{B^c_n}\textrm{ in $\xi_n$}\}\eta_{B_n}(d (x,p))\\
&=t \iint \I\{x\in B_n\}\left(1-e^{-t_0\int\I\{y\in B^c_n\} \varphi(y-x,p,q)\, d y\,\BQ(dq)}\right)\, d x\,\BQ(dp) \\
 &   \le  t_0 t \iiint\I\{x\in B_n, y\in B^c_n\} \varphi(y-x,p,q)\, d x\, d y\,\BQ^2(d(p,q))\\
&=t_0 t \iint\I\{x\in B_n, y\in B^c_n\} \psi(y-x)\, d x\, d y,
\end{align*}
where 
\begin{align}
\psi(x):=\int \varphi(x,p,q)\,\,\BQ^2(d(p,q)),\quad x\in\R^d.
\end{align}
By assumption \eqref{e:intmarked},  $\psi$ is integrable.

Let $\varepsilon>0$ and choose $r>0$ so large that $\int \I\{|z|> r\} \psi(z)\, d z\le\varepsilon$. Then 
\begin{align*}
    \int \I\{x\in B_n, y\in B^c_n, |y-x|> r\} \psi(y-x)\, d x\,d y\le \varepsilon \lambda_d (B_n).
\end{align*}
Further
\begin{align}\label{c:amenability}
 \frac{1}{\lambda_d(B_n)}\int \I\{x\in B_n, y\in B^c_n, |y-x|\le r\} \psi(y-x)\, dx\, dy\le d_\varphi \frac{ \lambda_d((B_{n})_{\ominus r})}{\lambda_d(B_n)}
\overset{n\to\infty}{\longrightarrow}0,
\end{align}
where, for a bounded set $B\subset\R^d$,  $B_{\ominus r}:=\{x\in B: d(x,\partial B)\le r\}$ 
and $\partial B$ denotes the boundary of $B$.
Therefore 
\begin{align*}
 \limsup_{n\to\infty} \frac{\BE_t M_{n,0}}{  \lambda_d(B_n)}-\varepsilon t_0 t & \le t\kappa(t) \le
      \liminf_{n\to\infty} \frac{\BE_t M_{n,+}}{ \lambda_d(B_n)}+\varepsilon t_0 t
\end{align*}
Taking into account \eqref{e7.35},  this yields the assertion.
\end{proof}

Let $n\in\N$. We will now explore the derivatives of
$t\mapsto \BE_t M_{n,\star}$.  For $(x,p)\in B_n\times\BM$ we define
$N^\star_{n}(x,p):=N^0(x,p,\xi^{(x,p)}_{n,\star})$, the finite
volume counterparts of $N^0(x,p,\xi^{(x,p)})$ and  $N^+(x,p,\xi^{(x,p)})$.
By this definition  $N^0_n(x,p)$
is the number of (finite) clusters in $\xi_{n,0}$ which are connected
to $(x,p)$ in $\xi^{(x,p)}_{n,0}$, and $N^+_n(x,p)$ is the number of
clusters (with at most one infinite) in $\xi_{n,+}$ which are
connected to $(x,p)$ in $\xi^{(x,p)}_{n,+}$.

\begin{lemma}\label{l7.7} For any $n\in\N$ and either choice of boundary conditions
the function $t\mapsto \BE_t M_{n,\star}$ is differentiable on $[0,t_0)$ and the derivative is given by
\begin{align*}
\frac{d}{d t}\BE_t M_{n,\star}
=\lambda_d(B_n)-\BE_t \iint \I\{x\in B_n\}N^\star_n(x,p)\,dx\,\BQ(dp).
\end{align*}
\end{lemma}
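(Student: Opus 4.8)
The plan is to apply the Margulis–Russo formula \eqref{MargulisRusso} to the bounded function $f(\mu):=M_{n,\star}(\mu)$, where we view $M_{n,\star}$ as a measurable functional on $\bG$ via
$M_{n,\star}(\mu)=\int |C^{(x,p)}(\mu[\cdot])|^{-1}\,(V(\mu)_{B_n})(d(x,p))$
with the appropriate boundary modification in the ``$+$'' case. The key point is that $M_{n,\star}$ depends only on the restriction of the underlying Poisson configuration to $B_n\times\BM$ (together with, in the ``$+$'' case, the auxiliary process $\tilde\eta_{B_n^c}$, which is \emph{not} scaled by $t$ and plays no role in the differentiation). So the relevant intensity measure is $\lambda_1+t\lambda_2$ with $\lambda_2:=\lambda_{B_n}=(\lambda_d\otimes\BQ)_{B_n\times\BM}$ a finite measure and $\lambda_1$ carrying the (unscaled) boundary part; since $0\le M_{n,\star}\le \eta_{B_n}(\R^d\times\BM)$, which has finite expectation for every $t\le t_0$, the integrability hypothesis of \eqref{MargulisRusso} holds on all of $[0,t_0)$. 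Hence
\begin{align*}
\frac{d}{dt}\BE_t M_{n,\star}=\iint \BE_t\big[M_{n,\star}(\xi^{(x,p)})-M_{n,\star}(\xi)\big]\,\I\{x\in B_n\}\,dx\,\BQ(dp).
\end{align*}

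The next step is the combinatorial identity describing how $M_{n,\star}$ changes when we insert one new point $(x,p)\in B_n\times\BM$. Adding $(x,p)$ to $\xi_{n,\star}$ destroys the $N^\star_n(x,p)$ clusters of $\xi_{n,\star}$ to which $(x,p)$ becomes attached and replaces them by a single cluster (namely $C^{(x,p)}(\xi_{n,\star}^{(x,p)})$), which itself contributes reciprocal weight — but one has to be careful in the ``$+$'' case, where ``cluster counted with multiplicity $|C|^{-1}$'' for an infinite cluster contributes $0$, and the definition of $N^+_n$ precisely counts infinite clusters only once. Concretely I would argue: writing the value $M_{n,\star}$ as (number of finite clusters of $\xi_{n,\star}$), inserting $(x,p)$ merges the clusters hit by $(x,p)$ into one; if none of them is infinite the finite–cluster count goes down by $N^\star_n(x,p)-1$; if at least one is infinite (only possible in the ``$+$'' case) the resulting cluster is infinite, so the finite–cluster count goes down by exactly $N^+_n(x,p)$. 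In both cases one checks, using the definitions of $N^0_n$ and $N^+_n$, that
\begin{align*}
M_{n,\star}(\xi_{n,\star}^{(x,p)})-M_{n,\star}(\xi_{n,\star})=1-N^\star_n(x,p),\qquad (x,p)\in B_n\times\BM,
\end{align*}
almost surely (the ``$1$'' is the reciprocal weight of the single new finite cluster when the merge stays finite, and one verifies the formula still reads $1-N^+_n$ when it becomes infinite, since then $N^+_n(x,p)$ already subtracts the count of the merged block down to the ``one infinite cluster'' level). Plugging this into the Margulis–Russo expression and using $\int_{B_n\times\BM} 1\,dx\,\BQ(dp)=\lambda_d(B_n)$ yields the stated formula.

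The main obstacle is making the ``$+$'' bookkeeping rigorous: one must verify that $M_{n,\star}(\xi^{(x,p)})$ — where $\xi^{(x,p)}$ is built by the Mecke coupling of Section \ref{s:basics} — genuinely equals $M_{n,\star}$ computed from $\xi_{n,\star}^{(x,p)}$, i.e. that inserting the Poisson point before passing to the finite–volume graph with its boundary condition commutes correctly, and that the infinite/finite dichotomy interacts with the reciprocal weighting exactly as the definition of $N^+$ was engineered to make it. This is where the careful coupling set up before the lemma (that $\xi_{n,+}$ is a deterministic function of $\xi_n$, and $\xi_{n,0}=\xi[\eta_{B_n}]$) does the work: the new point $(x,p)$ is connected to $\eta_{B_n}$ by the usual independent decisions and its connections to $\tilde\eta_{B_n^c}$ are handled inside $\xi_n$, so $\xi^{(x,p)}_{n,\star}$ is well-defined and the identity above can be checked configuration by configuration. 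Everything else — measurability of $(x,p)\mapsto N^\star_n(x,p)$, finiteness of expectations, applicability of \eqref{MargulisRusso} — is routine given the boundedness $M_{n,\star}\le\eta_{B_n}(\R^d\times\BM)$ and $N^\star_n(x,p)\le 1+\eta_{B_n}(\R^d\times\BM)$.
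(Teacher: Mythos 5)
Your proposal follows the paper's proof essentially verbatim: apply the Margulis--Russo formula \eqref{MargulisRusso} with $\lambda_2=(\lambda_d)_{B_n}\otimes\BQ$ (and $\lambda_1$ the unscaled boundary part $t_0(\lambda_d)_{B_n^c}\otimes\BQ$ in the wired case, $\lambda_1=0$ in the free case), and then use the pointwise identity $M_{n,\star}(\xi^{(x,p)}_{n,\star})-M_{n,\star}(\xi_{n,\star})=1-N^\star_n(x,p)$ together with $\int_{B_n\times\BM}1\,dx\,\BQ(dp)=\lambda_d(B_n)$. One small slip in your wired-case bookkeeping: when the added point hits the (a.s.\ unique) infinite cluster of $\xi_{n,+}$, the finite-cluster count drops by $N^+_n(x,p)-1$ (the number of finite clusters absorbed), not by $N^+_n(x,p)$ — which is precisely why the identity $1-N^+_n(x,p)$ that you ultimately plug in is the correct one in that case as well.
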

\begin{proof} Since $M_{n,\star}\le \eta(B_n)$ we have 
$\BE_tM_{n,\star}<\infty$ for all $t>0$.  We now apply the Margulis-Russo formula \eqref{MargulisRusso}, where 
$\lambda_2=(\lambda_d)_{B_n}\otimes\BQ$ and $\lambda_1=0$ for the free boundary condition ($\star=0$) and $\lambda_1=t_0(\lambda_d)_{B^c_n}\otimes\BQ$ for the wired boundary condition ($\star=+$).
Hence
$\BE_t M_{n,\star}$ is a differentiable function of $t$ and
\begin{align*}
  \frac{d}{d t}\BE_t M_{n,\star}=\BE_t\iint\I\{x\in B_n\}\big(M_{n,\star}(\xi_{n,\star}^{(x,p)})-M_{n,\star}(\xi_{n,\star})\big)\,dx\,\BQ(dp).
\end{align*}

Let $(x,p)\in B_n\times\BM$.  If $N^\star_n(x,p)=0$,
then with probability one $M_{n,\star}(\xi_{n,\star}^{(x,p)})-M_{n,\star}(\xi_{n,\star})=1$.
Otherwise the removal of $(x,p)$ from $\xi^{(x,p)}_{n,\star}$ results in 
$M_{n,\star}(\xi_{n,\star}^{(x,p)})-M_{n,\star}(\xi_{n,\star})=1-N^\star_n(x,p)$ a.s.,
proving the result.
\end{proof}

\begin{lemma}\label{lconvex}
For any $n\in\N$ and either choice of boundary conditions
$\BE_t M_{n,\star} + \lambda_d(B_n)d_\varphi t^2/2$ is a convex function of $t$
on $[0,t_0)$.
\end{lemma}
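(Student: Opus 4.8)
The plan is to differentiate the identity of Lemma \ref{l7.7} a second time in $t$ and to establish that
\[
\frac{d^2}{dt^2}\,\BE_t M_{n,\star}\ \ge\ -\,\lambda_d(B_n)\,d_\varphi,\qquad t\in[0,t_0),
\]
from which the assertion follows immediately, since a twice differentiable function with non\-negative second derivative is convex. First I would abbreviate $h(t):=\BE_t\iint\I\{x\in B_n\}N^\star_n(x,p)\,dx\,\BQ(dp)$, so that $\frac{d}{dt}\BE_t M_{n,\star}=\lambda_d(B_n)-h(t)$ by Lemma \ref{l7.7}, and note that $N^\star_n(x,p)$ is bounded by the degree of the added vertex $(x,p)$ in $\xi^{(x,p)}_{n,\star}$; together with \eqref{e:intmarked} this shows that $h$ is finite and, more importantly, that the functional of $\xi_{n,\star}$ whose expectation equals $h(t)$ is integrable. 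A second application of the Margulis--Russo formula \eqref{MargulisRusso}, with the same choice of $\lambda_1,\lambda_2$ used in the proof of Lemma \ref{l7.7} ($\lambda_2=(\lambda_d)_{B_n}\otimes\BQ$, and $\lambda_1=0$ resp.\ $\lambda_1=t_0(\lambda_d)_{B^c_n}\otimes\BQ$), then gives, after using that adding first $(y,q)$ and then $(x,p)$ produces a graph distributed as $\xi^{(x,p),(y,q)}_{n,\star}$,
\[
h'(t)=\BE_t\iiint\I\{x\in B_n,\,y\in B_n\}\big(N^0(x,p,\xi^{(x,p),(y,q)}_{n,\star})-N^0(x,p,\xi^{(x,p)}_{n,\star})\big)\,dx\,dy\,\BQ^2(d(p,q)).
\]

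The combinatorial core is the deterministic inequality, valid for every $\mu\in\bG$ and distinct $(x,p),(y,q)$,
\[
N^0(x,p,\mu^{(x,p),(y,q)})-N^0(x,p,\mu^{(x,p)})\ \le\ \I\{\text{$(x,p)\sim(y,q)$ in $\mu^{(x,p),(y,q)}$}\}.
\]
To prove it I would argue as follows. If $(y,q)$ is not connected to $(x,p)$ in $\mu^{(x,p),(y,q)}$, then $C^{(x,p)}(\mu^{(x,p),(y,q)})=C^{(x,p)}(\mu^{(x,p)})$ and the left side vanishes, while the right side is $0$ as well (an edge between them would force a connection). If $(y,q)$ is connected to $(x,p)$, let $K_1,\dots,K_N$ be the components of $C^{(x,p)}(\mu^{(x,p)})-\delta_{(x,p)}$ and let $m$ be the number of those that contain a neighbour of $(y,q)$; adding $(y,q)$ merges $(y,q)$, these $m$ components, and any outside components newly attached through $(y,q)$ into one component of $C^{(x,p)}(\mu^{(x,p),(y,q)})-\delta_{(x,p)}$, so the count passes from $N$ to $N-m+1$, i.e.\ changes by $1-m\le 1$; equality forces $m=0$, and then $(y,q)$ can lie in the cluster of $(x,p)$ only because of a direct edge, so the right side equals $1$ too.

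Plugging this bound into the formula for $h'(t)$, taking expectations (the edge between two added points whose spatial coordinates lie in $B_n$ is present with probability $\varphi(y-x,p,q)$, since the wired modification only affects pairs inside $B^c_n\times\BM$), and using translation invariance \eqref{e:4.1}, I would obtain
\[
h'(t)\ \le\ \iiint\I\{x\in B_n,\,y\in B_n\}\,\varphi(y-x,p,q)\,dx\,dy\,\BQ^2(d(p,q))\ \le\ \lambda_d(B_n)\,d_\varphi,
\]
so that $\frac{d^2}{dt^2}\BE_t M_{n,\star}=-h'(t)\ge -\lambda_d(B_n)d_\varphi$, which proves the lemma for either boundary condition. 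The main obstacle I foresee is making the second Margulis--Russo differentiation fully rigorous --- writing $N^\star_n(x,p)$ as a genuine integrable functional of $\xi_{n,\star}$ after averaging over the connection decisions of the re\-added point $(x,p)$, and dealing with the diagonal $x=y$ (a Lebesgue null set) --- together with a careful verification of the component count underlying the displayed combinatorial inequality, which must also cover the wired case where one of the touched components may be the infinite ``wired'' cluster.
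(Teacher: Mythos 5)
Your proposal is correct and follows essentially the same route as the paper: the paper likewise starts from Lemma \ref{l7.7}, applies the Margulis--Russo formula a second time, and uses exactly your deterministic observation that adding a point $(y,q)$ can increase $N^\star_n(x,p)$ by at most $1$, and only when $(y,q)$ is directly connected to $(x,p)$. The only cosmetic difference is that the paper phrases this as monotonicity in $t$ of $\BE_t\big[|\Psi_{B_n}(x,p)|-N^\star_n(x,p)\big]$ (comparing with the neighbour count rather than bounding $h'(t)$ by $\lambda_d(B_n)d_\varphi$ directly), which amounts to the same estimate.
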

\begin{proof} For $(x,p)\in\R^d\times\BM$
we let $\Psi(x,p)$ denote the point process of the {\em Poisson neighbours}
of $(x,p)$ in $\xi^{(x,p)}$, that is the points in $\eta$ which are
directly connected to $(x,p)$ in $\xi^{(x,p)}$.  For a Borel set $B\subset\R^d$ we let $\Psi_B(x,p)$
denote the restriction of $\Psi(x,p)$ to $B\times\BM$.
We further denote $d_\varphi(p):=\int d(p,q)\,\BQ(dq)$ so that
$d_\varphi=\int d_\varphi(p) \,\BQ(dp)$. By Lemma \ref{l7.7},
\begin{align*}
  &\frac{d}{d t}\bigg[\BE_t M_{n,\star}+\lambda_d(B_n)d_\varphi\frac{t^2}{2}\bigg]\\
&=\lambda_d(B_n)-\BE_t \iint \I\{x\in B_n\}N^\star_n(x,p)\,dx\,\BQ(dp)+t\lambda_d(B_n) d_\varphi\\
&=\lambda_d(B_n)+\iint \I\{x\in B_n\}\big(t d_\varphi(p)-\BE_t N^\star_n(x,p)\big)\,dx\,\BQ(dp)\\
&=\lambda_d(B_n)+\iint \I\{x\in B_n\}\big(\BE_t|\Psi(x,p)|- \BE_t N^\star_n(x,p)\big)\,dx\,\BQ(dp)\\
&=\lambda_d(B_n)+\iint \I\{x\in B_n\}\left( \BE_t|\Psi_{B^c_n}(x,p)|+\BE_t\big(|\Psi_{B_n}(x,p)|- N^\star_n(x,p)\big)\right)\,dx\,\BQ(dp),
\end{align*}
Clearly $\BE_t|\Psi_{B^c_n}(x,p)|$ is increasing in $t$.  
We shall now argue that
$\BE_t\big[\Psi_{B_n}(x,p)-N^\star_n(x,p)\big]$ is increasing in $t$.  
Applying the Margulis-Russo  formula \eqref{MargulisRusso} 
similarly as in the proof of Lemma \ref{l7.7}, we see that
it is sufficient to check that $\Psi_{B_n}(x,p)-N^\star_n(x,p)$
cannot strictly decrease when adding a point $(y,q)\in B_n\times\BM $ to $\eta$.
Assume first that $(y,q)$ is not directly connected to
$(x,p)$. Then $\Psi_{B_n}(x,p)$ does not change while $N^\star_n(x,p)$ can
only decrease (namely by connecting some of the clusters in
$\xi_{n,\star}$ which are connected to $(x,p)$ in
$\xi^{(x,p)}_{n,\star}$).  Assume now that $(y,q)$ is directly
connected to $(x,p)$, so that $\Psi_{B_n}(x,p)$ increases by one. In
that case $N^\star_n(x,p)$ can increase by at most $1$,
namely if some of the clusters in $\xi_{n,\star}$ which are not
connected to $(x,p)$ in $\xi^{(x,p)}_{n,\star}$ get connected to the
new point $(y,q)$ while none of the clusters in $\xi_{n,\star}$ which
are connected to $(x,p)$ in $\xi^{(x,p)}_{n,\star}$ are connected by
$(y,q)$.  This proves the asserted monotonicity and hence the
convexity assertion.
\end{proof}

Now we are in the position to prove the first main result in this section.

\begin{theorem}\label{t:convex} The function $t\mapsto t\kappa(t)+d_\varphi t^2/2$ is continuously differentiable 
on $(0,\infty)$, convex on $\R_+$ and right differentiable at zero.
\end{theorem}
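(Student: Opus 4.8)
The plan is to deduce the statement by combining three results already established: the approximation Lemma~\ref{lapproximate}, the finite‑volume convexity Lemma~\ref{lconvex}, and the differentiability Theorem~\ref{tdiffenergymarked}. Write $h(t):=t\kappa(t)+d_\varphi t^2/2$. First note that $h$ is finite on $\R_+$, since $0\le\kappa(t)\le 1$, and that $h(0)=0$: at $t=0$ the Poisson process is empty, so $\xi^{(0,p)}$ consists of the single vertex $(0,p)$ and hence $|C^{(0,p)}|=1$ $\BP_0$-a.s., giving $\kappa(0)=1$.

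\textbf{Convexity.} Fix $t_0>0$ and use, say, the free boundary condition. For $n\in\N$ put $g_n(t):=(\lambda_d(B_n))^{-1}\BE_t M_{n,0}+d_\varphi t^2/2$ for $t\in[0,t_0)$. By Lemma~\ref{lconvex} each $g_n$ is convex on $[0,t_0)$, and by Lemma~\ref{lapproximate} we have $g_n(t)\to h(t)$ for every $t\in[0,t_0)$. Passing to the limit in the defining inequality of convexity shows that the pointwise limit $h$ is convex on $[0,t_0)$. Since $t_0>0$ was arbitrary, $h$ is convex on $\R_+$.

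\textbf{Continuous differentiability on $(0,\infty)$ and right differentiability at $0$.} Apply Theorem~\ref{tdiffenergymarked} with $\BY=\R^d$, $\BM$ the mark space, $\BQ$ the mark distribution, $\lambda=\lambda_d\otimes\BQ$ (which is locally finite and diffuse, and satisfies \eqref{evarphiintlocal} by \eqref{e:intmarked}), and $f(n):=n^{-1}$. Then $f(n)\sqrt{n\log n}=\sqrt{(\log n)/n}\to 0$, so \eqref{prop_f_1} holds, and the theorem yields that $t\mapsto\int\BE_t|C^{(0,p)}|^{-1}\,\BQ(dp)=\kappa(t)$ is continuously differentiable on $(0,\infty)$; hence so is $h$. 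For the endpoint, the difference quotient at $0$ is $(h(t)-h(0))/t=\kappa(t)+d_\varphi t/2$ for $t>0$; by the convexity of $h$ just proved, this is non-decreasing in $t$, so $\lim_{t\to 0^+}(h(t)-h(0))/t$ exists in $[-\infty,\infty)$, and since $\kappa(t)\ge 0$ and $d_\varphi\ge 0$ this difference quotient is non-negative, so the limit is finite. Thus $h$ is right differentiable at $0$.

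\textbf{Main obstacle.} With the three cited results in hand the argument is short; the one place that needs attention is the endpoint $t=0$. A finite convex function on $[0,\infty)$ need not be right differentiable at its left endpoint (its right derivative there can be $-\infty$), and what rules this out here is simply that $\kappa\ge 0$, so the difference quotient stays bounded below. The only other point to verify is the growth condition \eqref{prop_f_1} for $f(n)=1/n$, which is immediate.
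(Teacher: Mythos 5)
Your proposal is correct and follows essentially the same route as the paper: convexity from Lemmas \ref{lapproximate} and \ref{lconvex} together with the fact that a pointwise limit of convex functions is convex, and continuous differentiability of $\kappa$ (hence of $t\kappa(t)+d_\varphi t^2/2$) from Theorem \ref{tdiffenergymarked} applied with $f(n)=n^{-1}$. The only (harmless) difference is at the endpoint $t=0$, where the paper invokes monotonicity of $\kappa$ while you use convexity plus the lower bound $\kappa\ge 0$; both yield finiteness of the right derivative.
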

\begin{proof} The first assertion follows from Theorem
  \ref{tdiffenergymarked} while the second follows from Lemmas
  \ref{lapproximate} and \ref{lconvex} and the (elementary) fact that
the limit of a sequence of convex functions is convex. The
    function is right differentiable at zero since $\kappa$ is a
    monotone function.
\end{proof}

In the final step of the proof of Theorem \ref{tmain1} we need to identify
the limits of the derivatives in Lemma \ref{l7.7}. 

\begin{lemma}\label{l7.9} Let $t\in[0,t_0]$. Then
 \begin{align*}
\liminf_{n\to\infty} \, (\lambda_d(B_n))^{-1} \BE_t \iint \I\{x\in B_n\}N^0_n(x,p)\,dx\,\BQ(dp)
&\ge \int \BE_t N^0(0,p,\xi^{(0,p)})\,\BQ(dp),\\
   \limsup_{n\to\infty} \, (\lambda_d(B_n))^{-1} 
   \BE_t \iint \I\{x\in B_n\}N^+_n(x,p)\,dx\,\BQ(dp)
&\le \int \BE_t N^+(0,p,\xi^{(0,p)})\,\BQ(dp).
\end{align*}
\end{lemma}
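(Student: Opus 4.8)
\textbf{Proof plan for Lemma \ref{l7.9}.}

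The plan is to establish both inequalities by transferring the spatial averages over $B_n$ to the Palm expectations at the origin, using the Mecke equation, and then carefully tracking how the finite-volume quantities $N^\star_n(x,p)$ relate to the infinite-volume quantity $N^0(0,p,\xi^{(0,p)})$, respectively $N^+(0,p,\xi^{(0,p)})$, under the coupling set up just before Lemma \ref{l7.3}. First I would use the Mecke equation in the form of Lemma \ref{l:915} (or a direct application of \eqref{e:Mecke} together with stationarity, as in the proof of that lemma) to rewrite
\begin{align*}
\BE_t \iint \I\{x\in B_n\}N^\star_n(x,p)\,dx\,\BQ(dp)
= \BE_t \int \I\{x\in B_n\} N^\star_n(x,p)\big|_{(x,p)\in\eta}\;\text{-type terms},
\end{align*}
so that after dividing by $\lambda_d(B_n)$ the claim becomes a statement about the ergodic average of a translation-covariant functional. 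Concretely, for the free case one gets $(\lambda_d(B_n))^{-1}\BE_t\iint \I\{x\in B_n\}N^0_n(x,p)\,dx\,\BQ(dp) = t^{-1}$ times a Palm-type average, and one needs a lower bound by the corresponding infinite-volume Palm expectation.

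For the free boundary lower bound, the key observation is that $\xi_{n,0}$ is a \emph{subgraph} of $\xi$, and by Lemma \ref{l7.3}, $V(C^{(x,p)}_{n,0})\uparrow V(C^{(x,p)}(\xi))$. Since adding more edges and vertices to a graph can only increase the number of clusters attached to a vertex from its neighbourhood — more precisely, $N^0_n(x,p) = N^0(x,p,\xi^{(x,p)}_{n,0})$ counts clusters in $\xi_{n,0}$ adjacent to $(x,p)$, and as $n$ grows these clusters merge — one has to be careful: merging \emph{decreases} $N^0$. The correct monotonicity is the other way: $N^0_n(x,p)$ is \emph{not} monotone, but the Poisson neighbour count $|\Psi_{B_n}(x,p)|$ is, and the two differ by the number of ``redundant'' neighbours. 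The clean route is to fix an arbitrary radius $R$, restrict attention to the event that all the distinct clusters of $(x,p)$ are witnessed within distance $R$ (which happens with probability tending to $1$ as $R\to\infty$ for each fixed configuration), and note that for $x$ with $B(x,R)\subset B_n$ the value $N^0_n(x,p)$ coincides with the value computed in the graph $\xi^{(x,p)}$ restricted to $B(x,R)$, which is $\le N^0(x,p,\xi^{(x,p)})$ but converges to it as $R\to\infty$. Combining with $\lambda_d((B_n)_{\ominus R})/\lambda_d(B_n)\to 1$ (amenability of $\R^d$, exactly as in \eqref{c:amenability}) and Fatou's lemma gives the $\liminf$ bound.

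For the wired boundary upper bound, the argument is dual: $\xi$ is a subgraph of $\xi_{n,+}$, and $C^{(x,p)}_{n,+}$ is infinite precisely when $(x,p)$ connects to $\tilde\eta_{B^c_n}$ in $\xi_n$. The functional $N^+_n(x,p)$ counts finite clusters attached to $(x,p)$ plus at most one infinite one. For $x$ deep inside $B_n$ (distance $\ge R$ from the boundary), any finite cluster of $(x,p)$ in $\xi^{(x,p)}$ of diameter $\le R$ is also a finite cluster in $\xi^{(x,p)}_{n,+}$, and the wiring only \emph{merges} the clusters that reach $\partial B_n$ into the single counted infinite cluster. Hence on the event that all finite clusters of $(x,p)$ are witnessed within radius $R$, one has $N^+_n(x,p) \le N^+(x,p,\xi^{(x,p)})$ for $x\in (B_n)_{\ominus R}$; on the boundary layer one bounds $N^+_n(x,p)\le |\Psi(x,p)|$ whose spatial-Palm average is $t\,d_\varphi/t = d_\varphi$-controlled and the layer has vanishing relative volume. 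Taking $R\to\infty$, using monotone/dominated convergence (the dominating function being $N^+(x,p,\xi^{(x,p)})\le |\Psi(x,p)|\in L^1$) and the amenability estimate again yields the $\limsup$ bound. The main obstacle is the bookkeeping in these two ``deep interior vs.\ boundary layer'' decompositions — in particular verifying that the event ``all relevant clusters are witnessed within radius $R$'' has probability $\to 1$ as $R\to\infty$ uniformly enough to apply the convergence theorems, and matching the finite-$n$ combinatorial definition of $N^\star_n$ with its infinite-volume analogue precisely on that event; the amenability input \eqref{c:amenability} and the integrability \eqref{e:intmarked} handle everything near the boundary.
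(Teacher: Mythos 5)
Your overall strategy---compare the spatial average with the stationary Palm expectation and then control the discrepancy between $N^\star_n(x,p)$ and $N^\star(x,p,\xi^{(x,p)})$ using the integrability of $\varphi$ and the amenability estimate \eqref{c:amenability}---is the same as the paper's, and the first step matches (only stationarity is needed; no Mecke equation, since the left-hand sides are already intensity-measure integrals). But the comparison step, which is the heart of the lemma, is not carried out correctly. For the free boundary case you assert that, deep inside $B_n$ and on a ``witnessing'' event, $N^0_n(x,p)$ coincides with the corresponding count in $\xi^{(x,p)}$ restricted to $B(x,R)$, and that this restricted count is $\le N^0(x,p,\xi^{(x,p)})$. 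Both statements are false as written: $N^0_n$ is computed in the restriction to $B_n$, not to $B(x,R)$, and restricting a graph can only split clusters, so (on the event that all neighbours lie in the window) the local count dominates $N^0$ rather than being dominated by it. What makes the argument work, and what the paper does, is a one-sided comparison through the auxiliary quantity $N^0_B(x,p)$, the number of clusters of $\xi$ met by the Poisson neighbours of $(x,p)$ lying in $B\times\BM$: one has a.s.\ $N^0_n(x,p)\ge N^0_{B_n}(x,p)$ and $N^0(x,p,\xi^{(x,p)})\le N^0_{B_n}(x,p)+N^0_{B_n^c}(x,p)$, hence $N^0_n\ge N^0-N^0_{B_n^c}$, together with the bound $\BE_t N^0_{B_n^c}(x,p)\le t\iint\I\{y\in B_n^c\}\varphi(y-x,p,q)\,dy\,\BQ(dq)$, whose average over $x\in B_n$ vanishes exactly as in the proof of Lemma \ref{lapproximate}. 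Your proposal gestures at this (``the two differ by the number of redundant neighbours'') but never states these inequalities, and the localization-by-$R$ events you introduce are unnecessary once they are available.

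For the wired case the paper's proof is a single pointwise domination, $N^+_n(x,p)\le N^+(x,p,\xi^{(x,p)})$ a.s., with no interior/boundary-layer decomposition at all; your intermediate claim that any finite cluster of $(x,p)$ in $\xi^{(x,p)}$ of small diameter ``is also a finite cluster in $\xi^{(x,p)}_{n,+}$'' is false, since such a cluster can be absorbed into the single wired infinite cluster via an edge of $\xi_n$ to $\tilde\eta_{B^c_n}$ (this particular error is harmless for an upper bound, because merging only decreases the count, but it shows the bookkeeping is not under control). In summary: same high-level route as the paper, but the decisive comparison inequalities are missing or stated with the wrong direction, so as written the plan has a genuine gap precisely at the step the lemma is about.
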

\begin{proof} Similarly as in the proof of Proposition \ref{p:ergodic} by stationarity, we have 
\begin{align*}
\int \BE_t N^\star(0,p,\xi^{(0,p)})\,\BQ(dp)
=(\lambda_d(B_n))^{-1}\BE_ t \iint \I\{x\in B_n\}N^\star(x,p,\xi^{(x,p)})\,dx\,\BQ(dp),\quad n\in\N.
\end{align*}
Hence our task is to show that $N^\star(x,p,\xi^{(x,p)})$ is well approximated by $N^\star_n(x,p)$.
For a given Borel set $B\subset \R^d$ and $(x,p)\in \R^d\times\BM$ we denote by $N^0_B(x,p)$
the number of clusters in $\xi$ 
to which the Poisson neighbors of $(x,p)$
in $B\times\BM$ belong. Note that a.s.
\begin{align*}
N^0_B(x,p)+N^0_{B^c}(x,p)\ge  N^0(x,p,\xi^{(x,p)}). 
\end{align*}
It is, moreover, easy to see that $N^0_n(x,p)\ge N^0_{B_n}(x,p)$ a.s.
for each $n\in\N$. It follows that a.s.
\begin{align}\label{e:2345}
N^0_n(x,p)\ge N^0(x,p,\xi^{(x,p)})-N^0_{B_n^c}(x,p).
\end{align}
Obviously $N^0_{B_n^c}(x,p)$ is dominated by the number
of points  from $\eta_{B_n^c}$ which are directly connected to $(x,p)$ in $\xi^{(x,p)}$.
Therefore
\begin{align*}
\BE_tN^0_{B_n^c}(x,p)\le t \iint \I\{y\in B^c_n\}\varphi(y-x,p,q)\,dy\,\BQ(dq).
\end{align*}
It now follows from \eqref{e:2345} and exactly as in the proof of Lemma \ref{lapproximate} that
for each $\varepsilon>0$
\begin{align*}
\liminf_{n\to\infty}\, (\lambda_d(B_n))^{-1} \BE_t \iint \I\{x\in B_n\}N^0_n(x,p)\,dx\,\BQ(dp)
&\ge \int \BE_t N^0(0,p,\xi^{(0,p)})\,\BQ(dp)-\varepsilon t.
\end{align*}

This implies the first asserted inequality. The second follows from
$N^+_n(x,p)\le N^+(x,p,\xi^{(x,p)})$ a.s..
\end{proof}

\begin{proof}[Proof of Theorem \ref{tmain1}]
The convex function in Theorem \ref{t:convex} is differentiable and approximated
by the differentiable convex functions 
$(\lambda_d(B_n))^{-1}\BE_t M_{n,\star}+d_\varphi t^2/2$; see Lemmas \ref{lapproximate}
and \ref{l7.7}.
A classical result from convex analysis (see \cite[Theorem 25.7]{Rock69}) implies that
\begin{align*}
\lim_{n\to\infty}(\lambda_d(B_n))^{-1}\frac{d}{dt}\BE_t M_{n,\star}=\frac{d}{dt} t\kappa(t).
\end{align*}
Therefore we obtain from Lemma \ref{l7.7} that the limit inferior in Lemma \ref{l7.9}
coincides with the limit superior. Hence Lemma \ref{l7.9} yields
\begin{align*}
\int \BE_t N^0(0,p,\xi^{(0,p)})\,\BQ(dp)\le \int \BE_t N^+(0,p,\xi^{(0,p)})\,\BQ(dp),
\end{align*}
or 
\begin{align*}
\int \BE_t (N^0(0,p,\xi^{(0,p)})-N^+(0,p,\xi^{(0,p)}))\,\BQ(dp)\le 0.
\end{align*}
Since 
\begin{align*}
N^0(0,p,\xi^{(0,p)})- N^+(0,p,\xi^{(0,p)})= \I\{N^\infty(0,p,\xi^{(0,p)})\ge 1\}(N^\infty(0,p,\xi^{(0,p)})-1),
\end{align*}
we obtain 
\begin{align}
\int \BP_t(N^\infty(0,p,\xi^{(0,p)})\ge 2)\,\BQ(dp)=0.
\end{align}
Using stationarity as in the proof of Lemma \ref{l:915}  we see, that this is equivalent to the assertion.
\end{proof}

The preceding proof yields the following corollary.

\begin{corollary}\label{c:energyderiv} The cluster density is continuously differentiable function and
\begin{align*}
\frac{d}{dt}(t\kappa(t))=1-\int \BE_t N^0(0,p,\xi^{(0,p)})\,\BQ(dp).
\end{align*}
\end{corollary}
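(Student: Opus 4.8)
The plan is to extract the formula from the apparatus already built for the proof of Theorem~\ref{tmain1}; the statement is essentially a byproduct of that argument. Continuous differentiability of $t\mapsto\kappa(t)$ on $(0,\infty)$ (equivalently of $t\mapsto t\kappa(t)$) is not new: it follows from Theorem~\ref{tdiffenergymarked} applied with $f(n)=1/n$, which satisfies \eqref{prop_f_1}, and alternatively from Theorem~\ref{t:convex} together with the smoothness of $d_\varphi t^2/2$. So the real content is the identification of the derivative of $t\kappa(t)$.

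Fix $t_0>0$ and work with $t\in(0,t_0)$; since $t_0$ is arbitrary this suffices. First I would recall that, by Lemmas~\ref{lapproximate} and \ref{lconvex} and Theorem~\ref{t:convex}, the convex functions $t\mapsto(\lambda_d(B_n))^{-1}\BE_t M_{n,\star}+d_\varphi t^2/2$ converge pointwise on $[0,t_0)$ to the differentiable convex function $t\mapsto t\kappa(t)+d_\varphi t^2/2$. Hence \cite[Theorem~25.7]{Rock69} gives convergence of the derivatives, $(\lambda_d(B_n))^{-1}\frac{d}{dt}\BE_t M_{n,\star}\to\frac{d}{dt}(t\kappa(t))$, for both choices $\star\in\{0,+\}$. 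Substituting the explicit derivative from Lemma~\ref{l7.7} then yields
\[
\frac{d}{dt}(t\kappa(t))=1-L,\qquad L:=\lim_{n\to\infty}(\lambda_d(B_n))^{-1}\BE_t\iint\I\{x\in B_n\}N^\star_n(x,p)\,dx\,\BQ(dp),
\]
where the limit $L$ is the same for $\star=0$ and $\star=+$ by the convergence of derivatives just stated.

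Finally I would invoke Lemma~\ref{l7.9}: the $\star=0$ estimate gives $L\ge\int\BE_t N^0(0,p,\xi^{(0,p)})\,\BQ(dp)$ and the $\star=+$ estimate gives $L\le\int\BE_t N^+(0,p,\xi^{(0,p)})\,\BQ(dp)$. Since $N^+\le N^0$ pointwise by the very definition of $N^+$, we obtain
\[
\int\BE_t N^0(0,p,\xi^{(0,p)})\,\BQ(dp)\le L\le\int\BE_t N^+(0,p,\xi^{(0,p)})\,\BQ(dp)\le\int\BE_t N^0(0,p,\xi^{(0,p)})\,\BQ(dp),
\]
so all four quantities coincide; in particular $L=\int\BE_t N^0(0,p,\xi^{(0,p)})\,\BQ(dp)$ and the asserted formula follows. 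The one genuinely delicate point is upgrading the two one-sided estimates of Lemma~\ref{l7.9} to the exact value of $L$, and this is precisely what the convexity of the finite-volume approximants together with Rockafellar's theorem provide; a direct passage to the limit inside the expectation is not available here.
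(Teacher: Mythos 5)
Your proposal is correct and follows essentially the same route as the paper: the corollary is stated there as a direct byproduct of the proof of Theorem \ref{tmain1}, which uses exactly your chain of arguments (Lemmas \ref{lapproximate}, \ref{lconvex}, \ref{l7.7}, Rockafellar's theorem for convergence of derivatives of the convex finite-volume approximants, and the sandwich via Lemma \ref{l7.9} together with $N^+\le N^0$, which is closed by the equality $N^0=N^+$ established in that proof). Your explicit identification of the common limit $L$ and the appeal to Theorem \ref{tdiffenergymarked} for continuous differentiability match the paper's reasoning.
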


\begin{remark}\label{r:amenability}\rm The convergence
on the right-hand side of \eqref{c:amenability} is crucial
for the proof of Lemma \ref{lapproximate}.
This {\em amenability} property of Euclidean space  
is also important for Lemma \ref{l7.9}. 
It might be possible to extend the methods of this section to establish
deletion stability for other amenable homogeneous spaces.
But we do not know, how to prove or disprove deletion stability in a non-amenable
situation, like the hyperbolic space.
\end{remark}

\begin{remark}\label{r:coupling}\rm Assume that $\varphi(x,p,q)=\tilde\varphi(\|x\|,p,q)$ and $t=1$,
for a measurable function $\tilde\varphi\colon [0,\infty)\times\BM\times\BM\to [0,1]$
which is decreasing and right-continuous in the first coordinate.
Using the notation at \eqref{e:defxi} and \eqref{e:Qm} we define
\begin{align}\label{e:weight}
W_{m,n}:=\frac{\|X_m-X_n\|}{\tilde\varphi^{-1}(Z_{m,n},Q_m,Q_n)},\quad m,n\in\N,
\end{align}
where $\tilde\varphi^{-1}(s,p,q):=\inf\{r\ge 0:\varphi(r,p,q)\le s\}$, $(s,p,q)\in[0,1]\times\BM\times\BM$.
Given $r>0$ we define a RCM $\xi_r$ with vertex set $\eta$ by connecting $X_m$ with $X_n$
if $W_{m,n}\le r$. Note that $W_{m,n}\le r$ if and only if
\begin{align*}
Z_{m,n}\le \tilde\varphi(r^{-1}\|X_m-X_n\|,Q_m,Q_n).
\end{align*}
Since $\sum^\infty_{n=1}\delta_{(r^{-1}X_n,Q_n)}$ is under $\BP_1$ a Poisson process
with intensity measure $r^d\lambda_d\otimes\BQ$, we hence have
\begin{align}
\BP_1(\xi_r\in\cdot)=\BP_{r^d}(\xi\in\cdot),\quad r>0,
\end{align}
i.e.\ a joint coupling of the RCMs with different intensity parameters. In the unmarked case this
construction can be found in  \cite[Example 1.3]{Alexander95b}.
\end{remark}

\begin{remark}\label{r:MST}\rm Consider the setting of Remark \ref{r:coupling}
and the complete graph with vertex set $\eta$. We can interpret the random variable
\eqref{e:weight} as {\em weight} of the edge between $(X_m,Q_m)$ and $(X_n,Q_n)$.
As in \cite{Alexander95} we define the associated {\em minimal spanning forest} $T$ as the
forest (a graph without cycles) with vertex set $\eta$ and an edge between 
$(X_m,Q_m)$ and $(X_n,Q_n)$ if there is no path between these points with weights strictly
less than $W_{m,n}$. In special cases it was observed in \cite{AldousSteele92,Alexander95,Alexander95b,BeGrimLoff98}
that there is a close relationship between the RCM $\xi_r$ and $T$. For instance it was proved in 
\cite{Alexander95} that the trees (clusters) of $T$ are all infinite and can only have one or two ends.
Two-ended trees $T$ can only occur if $r$ equals the percolation threshold in which case $T$ contains
all points of the infinite clusters (should they exist). It would be interesting to explore the consequences of deletion stability of $\xi_r$ for $T$.
\end{remark}

\section{The stationary marked RCM: irreducibility and uniqueness}\label{s:markedirrunique}

In this section we consider a  stationary marked RCM $\xi$ as
introduced in Section \ref{s:stationaryRCM}.
When combined with Theorem \ref{tmain1},  Theorem \ref{t:unique}
immediately yields the following result.

\begin{theorem}\label{t:markedunique} An irreducible stationary
marked random connection model can almost surely have 
at most one infinite cluster.
\end{theorem}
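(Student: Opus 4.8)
The plan is to combine the two principal theorems of the paper with the irreducibility results for the stationary marked case. By Theorem \ref{tmain1}, the infinite clusters of a stationary marked RCM are deletion stable, i.e. condition \eqref{DT_1} holds. By hypothesis the RCM $\xi$ is irreducible in the sense of \eqref{e:irred}. These are precisely the two hypotheses of Theorem \ref{t:unique}, so we may invoke it directly.

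First I would recall that the stationary marked RCM is a special case of the general RCM of Sections \ref{s:definitions}--\ref{s:basics}: the state space is $\BX=\R^d\times\BM$, the intensity measure is $t\lambda=t\lambda_d\otimes\BQ$, which is locally finite and diffuse (since $\lambda_d$ is diffuse), and the connection function $\varphi$ satisfies \eqref{evarphiintlocal} because of the integrability assumption \eqref{e:intmarked} (indeed $D_\varphi((x,p))=t\int\varphi(x,p,q)\,dx\,\BQ(dq)$ is finite for $\BQ$-a.e.\ $p$). Hence all the general machinery applies with $\lambda$ replaced by $t\lambda$; in particular the notion of deletion stability \eqref{DT_1} and Theorem \ref{t:unique} are available verbatim.

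Next I would fix the intensity parameter $t>0$ (the case $t=0$ being trivial since then $\eta$ is a.s.\ empty) and apply Theorem \ref{tmain1}, which asserts that the infinite clusters of $\xi$ are deletion stable: $\BP_t(N^\infty(x,\xi^x)\ge 2)=0$ for $\lambda$-a.e.\ $x$. Together with the standing assumption that $\xi$ is irreducible, the hypotheses of Theorem \ref{t:unique} are met. Therefore $\xi$ has $\BP_t$-almost surely at most one infinite cluster. Since $t>0$ was arbitrary, the conclusion holds for every positive intensity, which is the assertion of the theorem.

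There is essentially no obstacle here: the statement is a bookkeeping corollary, and the only point requiring a word is that the stationary marked RCM genuinely falls under the hypotheses of the general Theorems \ref{t:unique} and \ref{tmain1}. Theorem \ref{tmain1} does the analytic heavy lifting (via the convexity of $t\kappa(t)+d_\varphi t^2/2$ from Theorem \ref{t:convex} and the comparison of boundary conditions), and Theorem \ref{t:unique} supplies the addition--removal argument converting deletion stability plus irreducibility into uniqueness; the present proof merely assembles these. For completeness one may add that, by Corollary \ref{c:necess}, the unique infinite cluster (when it exists, i.e.\ for $t>t_c$) is moreover $2$-indivisible: removing any finite number of Poisson points cannot split it into two or more infinite components.
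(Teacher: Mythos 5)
Your proposal is correct and coincides with the paper's own proof, which likewise obtains Theorem \ref{t:markedunique} immediately by combining the deletion stability of Theorem \ref{tmain1} with the general uniqueness criterion of Theorem \ref{t:unique}. The additional verification that the stationary marked model fits the general framework (diffuse intensity, \eqref{evarphiintlocal} from \eqref{e:intmarked}) is accurate but is left implicit in the paper.
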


\begin{remark}\rm
Theorem \ref{t:markedunique} and Corollary \ref{c:necess} show that an irreducible stationary
marked RCM is $2$-indivisible. In particular this holds
at the critical intensity $t_c$. 
This provides some evidence for
the absence of doubly-infinite paths at criticality.
In fact, it is a common belief that in Euclidean space there
is no infinite cluster in the critical phase.
\end{remark}

We now present several examples, starting with the classical stationary RCM;
see Example \ref{ex:classical}.

\begin{example}\label{r:RCMunique} \rm 
By Theorem \ref{t:markedunique} and Proposition \ref{p:irredunmarked}
the (unmarked) stationary RCM can have at most one infinite component.
This generalizes \cite[Theorem 6.3]{MeesterRoy}, where it is assumed
that $\varphi(x)=\tilde\varphi(\|x\|)$, $x\in\R^d$, for a
decreasing function $\tilde\varphi\colon[0,\infty)\to [0,1]$. 
The proof there is based on an extension of
the approach from \cite{BurtonKeane89} to the continuum and is very different from ours. 
\end{example}

Next we treat the simple case, where the connection
factorizes; see also \cite[Section 1.2]{CaicedoDickson24}.

\begin{example}\label{r:factor} \rm Let $\psi\colon\R^d\to[0,1]$ be a symmetric
function with $0<m_\psi:=\int\psi(x)\,dx<\infty$ and let $K\colon\BM^2\to[0,1]$
be measurable and symmetric. Assume that
$\varphi(x,p,q)=\psi(x)K(p,q)$, $(x,p,q)\in\R^d\times\BM^2$. 
Then $d_\varphi(p,q)=m_\psi K(p,q)$, $(p,q)\in\BM^2$. 
By Theorem \ref{l:77}   
$\xi$ is irreducible if and only if 
\begin{align*}
\sup_{n\ge 1}K^n(p,q)>0,\quad \BQ^2\text{-a.e.\ $(p,q)\in\BM^2$}.
\end{align*}
\end{example}

We continue with the examples from Section \ref{s:stationaryRCM}.

\begin{example}\label{r:BMunique} \rm
Let us consider Example \ref{ex:Boolean}.
Assume that there exists $x_0\in\R^d$ (for instance $x_0=0$) 
such that $\BQ(\{L:B(x_0,\varepsilon_0)\subset K\})>0$,
where $B(x_0,\varepsilon)$ denotes the ball with center $x_0$ and radius $\varepsilon$.
Assume also that the function $V$ is increasing w.r.t.\ set inclusion and that
$V(K)>0$ if $K\ne \emptyset$. We show now how irreducibility of $\xi$ follows
from Corollary \ref{t:irredatom}. As point $p_0$ in \eqref{min3} we can take the ball  $B(x_0,\varepsilon_0)$,
while the set $A$ is chosen as $\{L\in \mathcal{C}^d:B(x_0,\varepsilon_0)\subset L\}$. Then $\BQ(A)>0$ and 
\eqref{min1} holds.
To check  \eqref{min3} we take $K\in\mathcal{C}^d$. Then
\begin{align*}
d_\varphi(B(x_0,\varepsilon_0),K)=\int \big(1-e^{-V(K\cap (B(x,\varepsilon_0))}\big)\,dx.
\end{align*}
By assumption on $V$ this is positive, since
\begin{align*}
\int \I\{K\cap B(x,\varepsilon_0)\ne\emptyset\}\,dx
=\int \I\{y\in B(x,\varepsilon_0)\}\,dx>0,
\end{align*}
where $y$ is some point from $K$.
We can now apply Theorem \ref{t:markedunique} to conclude that
the infinite cluster is unique. For the spherical Boolean model this result
can be found as Theorem 3.6 in \cite{MeesterRoy}. For general Boolean models
(i.e.\ $\varphi(x,K,L)=\I\{K\cap (L+x)\ne \emptyset\}$) the result seems to be new. 
\end{example}

\begin{example}\label{ex:weightedunique}\rm The weighted RCM from Example \ref{ex:weighted}
is irreducible by Corollary \ref{c:irredreal}. Indeed we have $d_\varphi(p,q)=m_\rho g(p,q)^{-1}$, which is positive and monotone.
By Theorem \ref{t:markedunique} the infinite cluster is unique.
This was asserted in \cite{GHMM22} without providing details of a proof.
A more detailed proof  in a special case (based on the approach in \cite{BurtonKeane89})
was given in \cite{JaMoe17}.
\end{example}

\begin{example}\label{ex:NN}\rm
Consider a stationary marked RCM with $\BM$ as the space of
all locally finite simple counting measures on $\R^d$. 
Let $\BQ$ be a distribution of a simple stationary point process
$\chi$ satisfying $\BQ\{0\}=0$. 
For $x\in\R^d$ and $p\in\BM$ let $d(x,p)$ be the distance
between $x$ and $p$. 
Similarly as in Example \ref{ex:weightedunique} we consider a connection function of the form
\begin{align*}
\varphi(x,p,q)=\rho(d(-x,p)^{-\alpha}d(x,q)^{-\alpha}\|x\|^d)
\end{align*}
for a decreasing function $\rho\colon [0,\infty)\to [0,1]$ such
that $m_\rho:=\int \rho(\|x\|^d)\,dx$ is positive and finite
and where $\alpha>0$ is a fixed parameter.
By stationarity,
\begin{align*}
\iint \varphi(x,p,q)\,\BQ^{2}(d(p,q))\,dx
&=\iint \rho(d(0,p)^{-\alpha}d(0,q)^{-\alpha}\|x\|^d)\,dx\,\BQ^{2}(d(p,q))\\
&=m_\rho\int \I\{d(0,p)<\infty,d(0,q)<\infty\}d(0,p)^{\alpha}d(0,q)^{\alpha}\,\BQ^{2}(d(p,q))\\
&=m_\rho \bigg(\int d(0,p)^{\alpha}\,\BQ(dp)\bigg)^2.
\end{align*}
To ensure \eqref{e:intmarked} we assume that $\int d(0,p)^{\alpha}\,\BQ(dp)<\infty$,
which is a rather weak assumption. 

Next we check that $\int d_\varphi(p,q)\,\BQ(dq)>0$ for $\BQ$-a.e.\ $p$, 
so that \eqref{e:minred} holds.
Fix $p\in\bM\setminus\{0\}$ such that $0\notin p$.  As above we have
\begin{align*}
\iint \varphi(x,p,q)\,\BQ(dq)\,dx
=\iint \rho(d(x,p)^{-\alpha}d(0,q)^{-\alpha}\|x\|^d)\,dx\,\BQ(dq).
\end{align*}
Moreover, since $0\notin p$ there exist $\varepsilon,c>0$ such 
that $d(x,p)\ge c$ for $\|x\|\le\varepsilon$. It follows that
\begin{align*}
\iint \varphi(x,p,q)\,dx\,\BQ(dq)
\ge \iint\I\{\|x\|\le\varepsilon\} \rho(c^{-\alpha}d(0,q)^{-\alpha}\|x\|^d)\,dx\,\BQ(dq).
\end{align*}
Assume for the sake of contradiction that the above outer integral vanishes
and take $q\in\bM\setminus\{0\}$ such that
$\int\I\{\|x\|\le\varepsilon\} \rho(c_1^{-\alpha}d(0,q)^{-\alpha}\|x\|^d)\,dx\,=0$.
However, since $m_\rho>0$ and $\rho$ is decreasing, 
$\rho(r)$ is positive for sufficiently small $r>0$. The resulting contradiction shows
that $\int d_\varphi(p,q)\,\BQ(dq)>0$.
The function $\varphi(x,p,\cdot)$ is for all
$(x,p)\in\R^d\times\BM$ non-decreasing with respect to the natural
partial ordering on $\BM$.  Therefore, if $\BQ$ is associated,
then Theorem \ref{t:irredPOP} implies that $\xi$ is irreducible.
For instance we might take $\BQ$ as the distribution
of a Poisson process; see e.g.\ \cite{LaSzYo20}. Hence Theorem \ref{t:markedunique} applies.
\end{example}

\noindent 
\textbf{Acknowledgement:}
This research was supported by the Deutsche Forschungsgemeinschaft (DFG, German
Research Foundation as part of the DFG priority program `Random Geometric Systems'
SPP 2265) under grant LA 965/11-1. The authors wish to thank Markus Heydenreich for several stimulating discussions.

\end{document}